\theoremstyle{plain}
\numberwithin{equation}{section} 
\newtheorem{theorem}[subsection]{Theorem}
\newtheorem{proposition}[subsection]{Proposition}
\newtheorem{lemma}[subsection]{Lemma}
\newtheorem{corollary}[subsection]{Corollary}
\newtheorem{definition}[subsection]{Definition}
\newtheorem{remark}[subsection]{Remark}
\newtheorem{Example}[subsection]{Example}
\renewcommand{\leq}{\leqslant}
\renewcommand{\geq}{\geqslant}
\newsavebox{\proofbox}
\savebox{\proofbox}{\begin{picture}(7,7)  \put(0,0){\framebox(7,7){}}\end{picture}}
\newcommand\Z{\mathbb{Z}}
\newcommand\R{\mathbb{R}}
\newcommand\C{\mathbb{C}}
\newcommand\N{\mathbb{N}}
\newcommand\M{\operatorname{M}}
\newcommand\Gr{\operatorname{Gr}}
\newcommand\SL{\operatorname{SL}}
\newcommand\SO{\operatorname{SO}}
\newcommand\PSL{\operatorname{PSL}}
\newcommand\inte{\operatorname{int}}
\newcommand\GL{\operatorname{GL}}
\newcommand\Mat{\operatorname{Mat}}
\newcommand\axe{\operatorname{axis}}
\newcommand\rk{\operatorname{rk}}
\newcommand\ri{\operatorname{ri}}
\newcommand\Supp{\operatorname{Supp}}
\newcommand\Ima{\operatorname{Im}}
\newcommand\tr{\operatorname{tr}}
\newcommand\Hom{\operatorname{Hom}}
\newcommand\Endo{\operatorname{End}}
\newcommand\Lie{\operatorname{Lie}}
\newcommand\diag{\operatorname{diag}}
\renewcommand\P{\mathbb{P}}
\newcommand\G{\mathbb{G}}
\newcommand\Q{\mathbb{Q}}
\renewcommand\b{{\bf b}}
\def\g{\mathfrak{g}}
\def\a{\mathfrak{a}}
\def\b{\mathfrak{b}}
\newcommand\eps{\varepsilon}
\newcommand\id{\operatorname{id}}
\begin{document}

\title{The Joint spectrum}
\author{Emmanuel BREUILLARD}
\author{Cagri SERT}
\address{DPMMS, Wilberforce Road, University of Cambridge, CB30WB, U.K.}
\email{breuillard@maths.cam.ac.uk}
\address{\textsc{Departement Mathematik, ETH Z\"{u}rich, 101 R\"{a}mistrasse, 8092, Z\"{u}rich, Switzerland}}
\email{cagri.sert@math.ethz.ch}

\subjclass[2010]{15A18,37D30,20G20,22E46}

\begin{abstract}
We introduce the notion of \emph{joint spectrum} of a compact set of matrices $S \subset \GL_d(\C)$, which is a multi-dimensional generalization of the joint spectral radius. We begin with a thorough study of its properties (under various assumptions: irreducibility, Zariski-density, domination). Several classical properties of the joint spectral radius are shown to hold in this generalized setting and an analogue of the Lagarias-Wang finiteness conjecture is discussed. Then we relate the joint spectrum to matrix valued random processes and study what points of it can be realized as Lyapunov vectors. We also show how the joint spectrum encodes all word metrics on reductive groups. Several examples are worked out in detail. 
\end{abstract}

\maketitle

\section{Introduction}\label{intro.section}

Let $S$ be a compact subset of matrices $S \subset \M_d(\C)$. The \emph{joint spectral radius} $R(S)$ is the quantity

$$R(S) =\lim_{n \to +\infty} \sup_{g \in S^n} \|g\|^{\frac{1}{n}},$$
where $S^n$ is the $n$-fold product set $S^n:=\{g_1\cdot \ldots \cdot g_n : g_i \in S, i=1,\ldots, n\}$. The limit exists by submultiplicativity and does not depend on the choice of norm. In particular $R(S)$ is a conjugation invariant: $R(gSg^{-1})=R(S)$ for all $g \in \GL_d(\C)$. This notion was introduced by Rota and Strang \cite{rota-strang} in the 60's and has since been extensively studied in a variety of contexts, pure and applied, in particular in the study of wavelets, in control theory, ergodic optimization and beyond. See for example \cite{daubechies-lagarias, lagarias-wang, berger-wang}, \cite{barabanov, gurvitz} and \cite{bousch-mairesse, jenkinson-survey, morris-mather-sets} respectively as well as the many references therein.

The main goal of this paper is to introduce and study a very natural multi-dimensional version of this notion, which we will call the \emph{joint spectrum} $J(S)$ of the subset $S$. For simplicity in this paper we will assume that all matrices are invertible.  We denote by $a_1(g) \geq \ldots \geq a_d(g) > 0$ the singular values of a matrix $g\in \GL_d(\C)$, i.e. the square roots of the eigenvalues of $g^*g$. We set $\kappa(g)=(\log a_1(g),\ldots, \log a_d(g)) \in \R^d$ and call it the Cartan vector of $g$. So $\kappa(S)$ denotes the compact set of all $\kappa(g)$, $g \in S$. 

\begin{theorem}\label{main1} Let $S \subset \GL_d(\C)$ be a compact set and assume that the subgroup $\Gamma$ it generates acts irreducibly on $\C^d$. Then $\frac{1}{n}\kappa(S^n)$ converges in Hausdorff metric to a compact subset of $\R^d$, which we call the \emph{joint spectrum} $J(S)$ of $S$.
\end{theorem}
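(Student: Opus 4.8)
The plan is to exhibit the limit as the set $J$ of all subsequential limits of sequences $(x_n)$ with $x_n\in K_n:=\tfrac1n\kappa(S^n)$, and to check that $J$ is simultaneously the Kuratowski upper and lower limit of $(K_n)$; for nonempty sets contained in a fixed compact box this is equivalent to Hausdorff convergence to $J$. One first records the a priori bounds: since $S$ is compact in $\GL_d(\C)$, the numbers $M_+:=\sup_{s\in S}\|s\|$ and $M_-:=\sup_{s\in S}\|s^{-1}\|$ are finite, and submultiplicativity of the operator norm gives $a_1(g)\le M_+^{\,n}$ and $a_d(g)=a_1(g^{-1})^{-1}\ge M_-^{-n}$ for every $g\in S^n$; as $a_1\ge\dots\ge a_d$, each $K_n$ is a compact subset of the fixed box $\mathcal B:=[-\log M_-,\log M_+]^d$. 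Put $J:=\bigcap_{N\ge1}\overline{\bigcup_{n\ge N}K_n}$. Then $J$ is a nonempty compact subset of $\mathcal B$, and a routine compactness argument in $\mathcal B$ shows that for each $\eps>0$ one has $K_n\subseteq J^{(\eps)}$ (the $\eps$-neighbourhood) for all large $n$. Everything thus reduces to the opposite inclusion: every $v\in J$ lies within $\eps$ of $K_n$ for all large $n$, i.e. $J\subseteq\liminf_n K_n$.

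The crucial input is an approximate super-additivity coming from irreducibility: there exist $\ell_0\ge1$ and $C\ge0$ such that for all $p,q\ge1$, all $g\in S^p$ and $h\in S^q$, one can find $s$, a product of at most $\ell_0$ elements of $S$, with $\|\kappa(gsh)-\kappa(g)-\kappa(h)\|\le C$. To get this I would first note, by the Cayley–Hamilton identity $s^{-1}\in\Alg(\{s\})$ for $s\in\GL_d(\C)$, that $\Alg(S)=\Alg(S\cup S^{-1})\supseteq\Alg(\Gamma)=\M_d(\C)$ by Burnside's theorem, so that — $\M_d(\C)$ being finite-dimensional — already products of bounded length from $S$ span $\M_d(\C)$. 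I would then combine this with the standard estimate on the Cartan projection of a `generic' product: writing singular value decompositions $g=k_1ak_2$, $h=l_1bl_2$ with $k_i,l_i$ unitary, one has $\kappa(gsh)=\kappa(g)+\kappa(h)+O(1)$ as soon as the middle factor $k_2sl_1$ has all its principal minors — equivalently all exterior-power coefficients $\langle\omega,\Lambda^j(k_2sl_1)\,\omega'\rangle$, $1\le j\le d-1$, for decomposable unit vectors $\omega,\omega'$ — bounded away from $0$; a compactness argument on the (compact) unitary group, together with the spanning property, then yields a single finite family $F$ of bounded-length products of $S$ realising this uniformly in $g$ and $h$.

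Granting this, fix $v\in J$ and $\eps>0$. By definition of $J$ there is an arbitrarily large $p$ and a $g\in S^p$ with $\|\tfrac1p\kappa(g)-v\|$, $C/p$ and $\ell_0\max(\log M_+,\log M_-)/p$ all small; fix such $p$ and $g$. For $n$ large I would build, applying the previous paragraph repeatedly, a word $g s_1 g s_2\cdots s_{q-1}g$ whose Cartan projection is within $(q-1)C$ of $q\kappa(g)$, and pad it by a word of $S$ of bounded length so that the total length is exactly $n$; a routine bookkeeping of word lengths makes this possible with $q$ of order $n/(p+\ell_0)$. Using the bounded-perturbation inequality $\|\kappa(ab)-\kappa(a)\|\le c_d\max(\log\|b\|,\log\|b^{-1}\|)$, together with $q/n\to 1/(p+\ell_0)$ and $\|\kappa(g)\|=O(p)$, one checks that the element $h_n\in S^n$ so produced satisfies $\|\tfrac1n\kappa(h_n)-v\|<\eps$ once $p$ (depending on $\eps$) and then $n$ (depending on $p$) are large enough. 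Hence $v\in\liminf_n K_n$, so $\liminf_nK_n=\limsup_nK_n=J$, and $K_n$ converges in Hausdorff metric to the compact set $J=:J(S)$.

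The point I expect to be the main obstacle is the uniform $O(1)$ estimate in the second paragraph — namely, that irreducibility alone, rather than Zariski-density or strong irreducibility, supplies enough genericity inside bounded-length products of $S$. One must cope with the fact that exterior powers of an irreducible representation can be reducible, so that in the corresponding directions of $\R^d$ this strict super-additivity fails; there, though, the coordinate $\Phi_j(g)=\log\|\Lambda^j g\|$ is controlled directly by submultiplicative (operator-norm) data, for which convergence is the classical Fekete argument underlying the joint spectral radius. Reconciling the two mechanisms — so that the actual set, and not merely its support function in the dominant directions, converges — is where the real work lies.
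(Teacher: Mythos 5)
Your plan --- recasting Hausdorff convergence as agreement of Kuratowski upper and lower limits, proving a bounded-error super-additivity of the Cartan projection, and then manufacturing arbitrarily long words approximating any $v \in J$ --- is a genuinely different route from the paper's, which passes to the Zariski closure of $\Gamma$ and uses the Abels--Margulis--Soifer proximality theorem. However, the super-additivity estimate at the heart of your second paragraph is \emph{false} in the generality you state it, and the failure is precisely the phenomenon you flag in your closing paragraph.

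Here is the obstruction concretely. Let $\Gamma$ be Zariski-dense in $G=\SL_2(\C)\times\SL_2(\C)\subset\GL_4(\C)$ acting on $\C^2\otimes\C^2$; this is irreducible. Arrange $S$ to contain $(a,a),(a,1),(1,a)$ with $a=\diag(e,e^{-1})$, plus enough generic elements to ensure Zariski-density. For $g:=(a^{2n},a^n)\in S^{2n}$ and $h:=(a^n,a^{2n})\in S^{2n}$ one computes $\kappa(g)=\kappa(h)=(3n,n,-n,-3n)$, hence $\kappa(g)+\kappa(h)=(6n,2n,-2n,-6n)$. Every bounded-length word $s$ in $S$ lies in $G$, and both coordinates of the $\SL_2\times\SL_2$-Cartan $\kappa_G(gsh)$ are at most $3n+O(1)$; the $\GL_4$-Cartan of $gsh$ is obtained from $\kappa_G(gsh)=(x_1,x_2)$ by the \emph{folding} $(x_1,x_2)\mapsto(x_1+x_2,|x_1-x_2|,-|x_1-x_2|,-x_1-x_2)$, so its first two entries cannot simultaneously be $6n+O(1)$ and $2n+O(1)$, and $\|\kappa(gsh)-\kappa(g)-\kappa(h)\|\ge n-O(1)$ for \emph{every} $s\in G$. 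No $\ell_0,C$ exist. In the language of your sketch: the leading $2\times 2$ minor of $k_2sl_1$ that you need non-vanishing reads $\langle w,\Lambda^2(s)w'\rangle$ with $w,w'$ decomposable; when $g$ and $h$ sit in opposite folding chambers, $w$ and $w'$ land in the two orthogonal $G$-irreducible summands $V_1,V_2$ of $\Lambda^2\C^4$ as a $G$-module, and since $\Lambda^2(s)$ preserves each summand the minor vanishes identically on $G$. Burnside gives you $\Alg(S)=\M_d(\C)$, i.e.\ irreducibility of $\C^d$ itself, but says nothing about the $\Gamma$-module structure of $\Lambda^j\C^d$, which is what governs the higher minors. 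Your fallback --- Fekete for $\log\|\Lambda^j g\|$ --- only controls the support function $\max_{g\in S^n}\tfrac1n\log\|\Lambda^j g\|$, not the set $\tfrac1n\kappa(S^n)$, so the gap you acknowledge is not repaired by it.

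The genericity you want is available, but for $\kappa_G$ rather than for $\kappa$: the paper uses irreducibility exactly once, to show the identity component $G$ of the Zariski closure of $\Gamma$ is reductive; then, with $\Gamma$ Zariski-dense in $G$, Abels--Margulis--Soifer provides a finite $F\subset\Gamma$ with every $gf$ ($f\in F$) $(r,\eps)$-proximal in $G$, whence $\kappa_G(g)=\lambda_G(gf)+O(1)$ and $\tfrac1m\kappa_G((gf)^m)\to\lambda_G(gf)$, which yields the lower-limit inclusion inside $G$. Transporting back to $\GL_d(\C)$ uses only that the folding map is positively homogeneous (so compatible with powers of a fixed element), never that it is additive. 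In short, your scheme can be repaired by replacing $\kappa$ with $\kappa_G$ throughout, but that requires identifying $G$ as reductive and working with its representation theory --- exactly the structure-theoretic step Burnside does not let you bypass.
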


Clearly $J(S)$ is a conjugation invariant: $J(S)=J(gSg^{-1})$. The joint spectral radius $R(S)$ can easily be read off the joint spectrum. Indeed $\log R(S)=\max \{ x_1 : x=(x_1,\ldots,x_d) \in J(S)\}$. Also the lower joint spectral radius (see e.g. \cite{bochi-morris}) $R_{sub}(S):=\lim_{n\to +\infty} \min_{g \in S^n} \|g\|^{\frac{1}{n}}$ is such that  $\log R_{sub}(S)=\min \{ x_1 : x=(x_1,\ldots,x_d) \in J(S)\}$. In fact the joint spectral radius of $\rho(S)$ for any linear representation of $\rho$ of $\GL_d(\C)$ can also easily be read off the joint spectrum: its logarithm is $\max\{n_1x_1+\ldots+n_dx_d : x=(x_1,\ldots,x_d) \in J(S)\}$, where $(n_1,\ldots,n_d)\in \N^d$ is the highest weight of the representation $\rho$.

The proof of the Hausdorff convergence in this theorem makes use of the notion of proximal transformation, as in the proof of the Tits alternative, see e.g. \cite{abels-proximal, tits}. A proximal transformation is a linear map with a unique eigenvalue of maximal modulus. The main idea is to show, following \cite{AMS}, that there is a finite subset $F$ of the semigroup $\Gamma$ generated by $S$ such that every large matrix  $g$ in $\Gamma$ can be made simultaneously proximal in sufficiently many irreducible representations of $\Gamma$ simply by multiplying it on the left by some element $f$ of $F$. Then the Cartan vector of $g$ becomes very close to the Jordan vector of $gf$, namely the ordered vector of logarithms of moduli of eigenvalues of $gf$. It thus remains close to $\frac{1}{n}\kappa((gf)^n)$ for all $n \geq 1$, and the Hausdorff convergence follows easily.

This argument draws a connection between the joint spectrum and the eigenvalues of elements in $\Gamma$. In the study of discrete subgroups of Lie groups it is often important and challenging to understand what numbers (resp. vectors) can arise as eigenvalues (resp. vectors of eigenvalues) of group elements under a given linear representation. For example, the density properties of the set of traces of elements of a lattice in $\SL_2(\R)$ are related to the arithmeticity or non-arithmeticity of the lattice (see \cite{sarnak, schmutz, geninska}). Similarly, the arithmetic properties of the vectors of eigenvalues of elements of a discrete subgroup of a semisimple algebraic group yield information on the Zariski-closure as in the notion of weak-commensurability studied in \cite{prasad-rapinchuk}. More importantly for our considerations in this article, Benoist defined in \cite{benoist-asymptotic1, benoist-asymptotic2} the notion of limit cone of a semi-group $\Gamma$ of $\GL_d(\C)$, which we will call the \emph{Benoist cone} $\mathcal{BC}(\Gamma)$ in the sequel. It is the closed cone generated by all Jordan vectors of elements of $\Gamma$. Namely if $\lambda(g):=(\log |\lambda_1|,\ldots,\log |\lambda_d|) \in \R^d$ denotes the Jordan vector of $g$,  where the $\lambda_i$'s are the eigenvalues of $g$ ordered so that $|\lambda_1|\geq \ldots \geq |\lambda_d|$, then $\mathcal{BC}(\Gamma)$ is the closure of all positive linear combinations of all $ \lambda(g)$, $g \in \Gamma$.

\begin{theorem}\label{main2} Keep the notation and assumptions of Theorem \ref{main1}. Then  $\frac{1}{n}\lambda(S^n) \subset J(S)$ for all $n \geq 1$,  \begin{equation}\label{closure}\overline{\bigcup_{n \geq 1} \frac{1}{n}\lambda(S^n)}=J(S),\end{equation} and $\mathcal{BC}(\Gamma)$ is the cone spanned by $J(S)$.
\end{theorem}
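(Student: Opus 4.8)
The plan is to build on the mechanism already used to prove Theorem \ref{main1}, namely the existence of a finite subset $F \subset \Gamma$ that renders any sufficiently large element simultaneously proximal (in all the relevant irreducible representations). Concretely, I would first record the easy inclusion $\frac{1}{n}\lambda(S^n) \subset J(S)$: for $g \in S^n$ one has $\lambda(g) = \lim_{k\to\infty} \frac{1}{k}\kappa(g^k)$ (spectral radius formula applied coordinatewise, using that $\kappa$ of a power grows like $k\lambda$), and $\frac{1}{nk}\kappa(g^k) \in \frac{1}{nk}\kappa(S^{nk})$, which converges to $J(S)$; since $J(S)$ is closed the limit $\frac{1}{n}\lambda(g)$ lies in $J(S)$. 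This already gives the inclusion $\subseteq$ in \eqref{closure}, and also shows the cone spanned by $\bigcup_n \frac{1}{n}\lambda(S^n)$ is contained in the cone spanned by $J(S)$, hence $\mathcal{BC}(\Gamma) \subseteq \operatorname{cone}(J(S))$.

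The substance is the reverse inclusion in \eqref{closure}: every point of $J(S)$ is approximated by normalized Jordan vectors. Fix $x \in J(S)$ and $\eps > 0$. Since $\frac{1}{n}\kappa(S^n) \to J(S)$ in Hausdorff metric, for $n$ large there is $g \in S^n$ with $\|\frac{1}{n}\kappa(g) - x\| < \eps$. Now apply the proximality trick from the proof of Theorem \ref{main1}: there is $f \in F$ (with word length bounded independently of $g$, say $f \in S^\ell$ for a fixed $\ell$) such that $gf$ is simultaneously proximal enough that $\|\kappa(g) - \lambda(gf)\|$ is bounded by a constant $C = C(S,F)$ independent of $n$. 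Here one uses the standard estimates relating Cartan projections and Jordan projections of proximal elements (as in \cite{AMS, benoist-asymptotic1}): proximality in the representations $\Lambda^i \C^d$ forces the top $i$ singular values of $gf$ to be comparable, up to bounded multiplicative error, to the moduli of the top $i$ eigenvalues, and multiplying by the bounded element $f$ changes $\kappa$ by at most a bounded additive amount. Then $gf \in S^{n+\ell}$ and
\[
\left\| \tfrac{1}{n+\ell}\lambda(gf) - x \right\| \leq \left\| \tfrac{1}{n+\ell}\lambda(gf) - \tfrac{1}{n+\ell}\kappa(g) \right\| + \left\| \tfrac{1}{n+\ell}\kappa(g) - \tfrac{1}{n}\kappa(g) \right\| + \left\| \tfrac{1}{n}\kappa(g) - x \right\|,
\]
and all three terms are $O(1/n) + \eps$, so taking $n \to \infty$ gives points of $\frac{1}{n+\ell}\lambda(S^{n+\ell})$ arbitrarily close to $x$. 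This proves $J(S) \subseteq \overline{\bigcup_n \frac{1}{n}\lambda(S^n)}$ and hence \eqref{closure}.

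Finally, for the cone statement: \eqref{closure} gives $J(S) \subseteq \operatorname{cone}\big(\bigcup_n \frac{1}{n}\lambda(S^n)\big) = \mathcal{BC}(\Gamma)$ (the last equality because scaling absorbs the $\frac{1}{n}$ factors and taking closures of a union of sets whose cone is $\mathcal{BC}(\Gamma)$ by definition), so $\operatorname{cone}(J(S)) \subseteq \mathcal{BC}(\Gamma)$; combined with the reverse inclusion from the first paragraph, $\mathcal{BC}(\Gamma) = \operatorname{cone}(J(S))$. The main obstacle is the uniform bound $\|\kappa(g) - \lambda(gf)\| \leq C$ independent of the size of $g$: this is exactly where irreducibility and the Abels--Margulis--Soifer-type construction of $F$ enter, and it requires that the proximality gap produced by multiplication by $f$ does not degrade as $g$ grows — a point that should already be available from the proof of Theorem \ref{main1}, but whose quantitative form (a genuine additive constant, uniform in $g$) needs to be extracted carefully.
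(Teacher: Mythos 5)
Your proposal is correct and follows essentially the same route as the paper: the easy inclusion via the spectral radius formula, the reverse inclusion via Abels--Margulis--Soifer to find $f\in F$ of bounded word length making $gf$ simultaneously proximal and then the two standard estimates $\|\kappa(gf)-\kappa(g)\|\leq C_G\|\kappa(f)\|$ and $\|\kappa(gf)-\lambda(gf)\|\leq C_r$, and the cone statement as a formal consequence. One minor imprecision to flag when you write it up: AMS delivers proximality in the distinguished irreducible representations of the connected component $G$ of the Zariski closure of $\langle S\rangle$ (which may be a proper subgroup of $\GL_d(\C)$), not in the exterior powers $\Lambda^i\C^d$ of the ambient group; the paper first reduces to $S^n\cap G$ and then transfers the resulting bound on $\|\kappa_G-\lambda_G\|$ back to $\GL_d(\C)$ via the folding map of equation (\ref{kappaGGL}), and you should make that reduction explicit rather than speak of proximality in $\Lambda^i\C^d$ directly.
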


The sequence $\frac{1}{n}\lambda(S^n)$ may not, in general, converge in Hausdorff metric, because of a periodicity phenomenon that can arise only when $S$ is contained in a single coset of a proper subgroup of finite index in $\langle S \rangle$ containing the commutator subgroup (see Example \ref{ex-non-conv} and the discussion in \S \ref{nonconnected}). Otherwise it converges.

Theorem \ref{main2} can be seen as a multidimensional version of the Berger-Wang theorem \cite{berger-wang, bochi-radius}, which asserts that the joint spectral radius $R(S)$ coincides with $\limsup_{n \to +\infty} \sup_{g \in S^n} |\lambda_1(g)|^{1/n}$. Indeed, in our context, this immediately follows from Theorem \ref{main2} and the aforementioned description of $R(S)$ in terms of $J(S)$. 

The joint spectrum $J(S)$ is not convex in general. In the general case we will show that the joint spectrum is the image of a closed convex set of non-empty interior (i.e. a \emph{convex body}) in some $\R^k$, $k \le d$, under a certain piecewise affine folding map $\phi: \R^k \to \R^d$. See Figure \ref{fig.folding}.


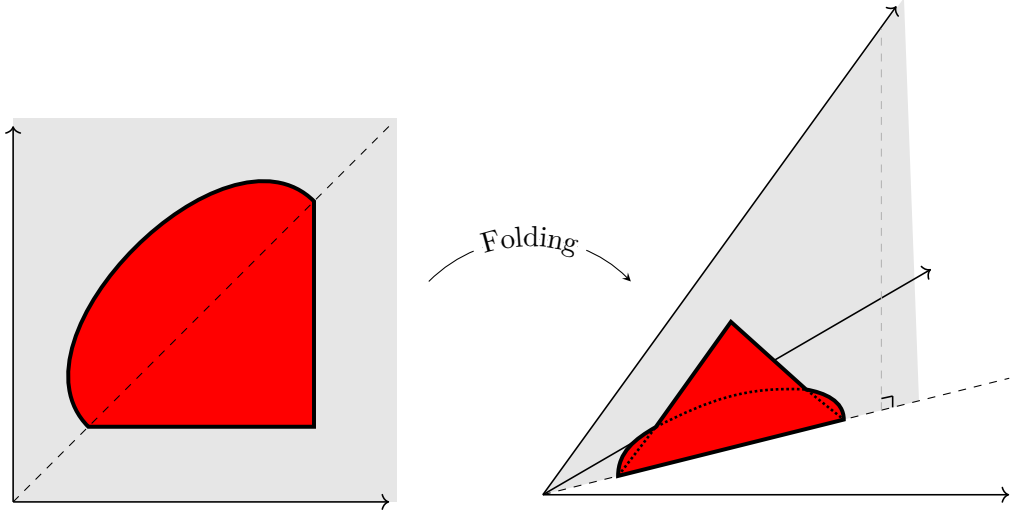
\begin{figure}
\begin{flushleft}
\begin{minipage}{0.25\textwidth}
\vspace{1.6cm}
\begin{flushleft}
\begin{tikzpicture}
\node (A) at (1,  1) {};
\node (B) at (4, 1) {};
\node (C) at (4,  4) {};

\fill[fill=gray!20] (0,0)--(0,5.1)--(5.1,5.1)--(5.1,0);
\filldraw[draw=black, fill=red, line width=1.5pt]
(A.center) to[out=135, in=135] (C.center) -- (B.center) -- cycle;
\draw[semithick, ->] (0,0) -- (5,0);
\draw[semithick, ->] (0,0) -- (0,5);
\draw[dashed] (0,0)--(5,5);
\end{tikzpicture}
\end{flushleft}
\end{minipage}
\hspace{2cm}
\begin{minipage}{0.1\textwidth}
\begin{tikzpicture}
\path[draw=black,-stealth, postaction={decorate,decoration={text effects along path,
    text={\ Folding\ }, text align=center,
    text effects/.cd, 
      text along path, 
      every character/.style={fill=white, yshift=-0.5ex}}}]
  (-2, 2) arc [start angle=135, end angle=45, radius=1.9];
\end{tikzpicture}
\end{minipage}
\begin{minipage}{0.4\textwidth}
\begin{flushright}
\begin{tikzpicture}
\node (A) at (1,  0.25) {};
\node (B) at (1.5, 0.9) {};
\node (C) at (2.5,  2.3) {};
\node (D) at (3.5,  1.4) {};
\node (E) at (4,1) {};

\fill[fill=gray!20] (0,0)--(4.8,6.61)--(5,1.25);
\draw[semithick, ->] (0,0) -- (6.2,0);
\draw[semithick, ->] (0,0) -- (5.16,3); 
\draw[semithick, ->] (0,0) -- (4.7,6.5); 
\draw[dashed] (0,0) -- (6.2,1.55); 
\draw[dashed,gray!60] (4.5,1.125) -- (4.5,6.21); 
\draw[semithick] (4.65,1.1625) -- (4.65,1.3125);
\draw[semithick] (4.5,1.28) -- (4.65,1.3125);
\filldraw[draw=black, fill=red, line width=1.5pt]
(A.center) to[out=90, in=210] (B.center) -- (C.center) -- (D.center) to[out=350, in=90] (E.center) -- cycle;
\draw[densely dotted,line width=1pt] (A.center) -- (B.center);
\draw[densely dotted,line width=1pt] (D.center) -- (E.center);
\draw[densely dotted,line width=1pt] (B.center) to[out=27, in=177] (D.center);
\end{tikzpicture}
\end{flushright}
\end{minipage}
\end{flushleft}
\caption{$G:=\SL_2(\R) \times \SL_2(\R)$ acts on $M_2(\R)$ by left and right multiplication, giving a representation $\rho:G \to \SL_4(\R)$. On the left hand side we see the joint spectrum of a set $S \subset G$ in the Weyl chamber $\R_+ \times \R_+$ and on the right hand side is the joint spectrum of $\rho(S)$ in the $3$-dimensional Weyl chamber of $\SL_4(\R)$.  }\label{fig.folding}
\end{figure}


 We will also show that every convex body contained in the Weyl chamber $\{x \in \R^d : x_1 \geq \ldots \geq x_d\}$ can be realized as the joint spectrum of some compact set $S \subset \GL_d(\C)$ generating a subgroup acting strongly irreducibly on $\C^d$, and that one can similarly realise every convex polyhedron with finitely many vertices in the chamber using a \emph{finite} set $S$. However we will also provide examples of finite sets with non-polyhedral joint spectrum, and even such examples where the boundary of the joint spectrum exhibits a dense set of conical singularities. This is related to the Lagarias-Wang finiteness conjecture, and will be discussed in Section \ref{section.non.polygonal.jointspectrum}.

The joint spectrum appears naturally in the context of  random matrix products. In \cite{sert-cras, Sert.LDP} the second named author establishes a large deviation principle for independent and identically distributed (i.i.d.) random matrix products. In particular, under the assumptions of Theorem \ref{main1}, if $\mu$ is a probability measure on $\GL_d(\C)$ whose support is $S$, he proves the existence of a rate function $I_\mu:\R^d \to \overline{\R}_+$ such that for an open set $U$
\begin{equation}\label{LDP}\P(\frac{1}{n}\kappa(g_1\cdot \ldots \cdot g_n) \in U) \simeq \exp(-n \inf_{x \in U} I_\mu(x))\end{equation}
as $n \to +\infty$ and the matrices $g_i$'s are chosen independently at random according to the distribution $\mu$. As it turns out, the joint spectrum $J(S)$ coincides with the closure of the effective support of the rate function $I_\mu$, namely 
$$J(S)=\overline{\{x \in \R^d: I_\mu(x) < +\infty}\}.$$This means that the Cartan vector of the random walk at time $n$ can get close to every point of the joint spectrum with at least exponentially small probability as $n \to +\infty$. In fact the function $I_\mu$ is strictly positive everywhere, except at a single point, which we call the \emph{Lyapunov vector} $\vec{\lambda}_\mu \in \R^d$, where it vanishes: $I_\mu(\vec{\lambda}_\mu)=0$. The Lyapunov vector is characterized by the law of large numbers for random matrix products \cite{furstenberg-kesten,bougerol-lacroix,benoist-quint-book}, which says that almost surely 
\begin{equation}\label{lyapunov}\frac{1}{n}\kappa(g_1\cdot \ldots \cdot g_n)  \underset{n \to +\infty}{\longrightarrow} \vec{\lambda}_\mu.\end{equation} Clearly the Lyapunov vector $ \vec{\lambda}_\mu$ belongs to $J(S)$. In fact we will show that it belongs to the relative interior of $J(S)$. For this we will rely on the central limit theorem for random matrix products \cite{goldsheid-guivarch, guivarch, benoist-quint-central} and \cite[Ch. 13]{benoist-quint-book}. 

It turns out  that when the law $\mu$ is allowed to vary among all laws supported on $S$,  the Lyapunov vector $\vec{\lambda}_\mu$ cannot reach every point in the relative interior of joint spectrum $J(S)$. We will give simple examples (see \S \ref{subsub.iid.Lyap}) showing that $\vec{\lambda}_\mu$ can be confined to a proper closed subset of $J(S)$. However this restriction disappears if we allow arbitrary ergodic stationary processes instead of restricting to the i.i.d.\ case. Indeed we will show in Section \ref{section5} that every point in the interior of $J(S)$ arises as the Lyapunov vector (defined by the convergence $(\ref{lyapunov})$) of an ergodic stationary process on the shift space $S^{\N}$. 

The set of such Lyapunov vectors is investigated in a recent article of Bochi \cite{bochi-icm}. Given a topological dynamical system $(X,T)$, with $X$ compact and $T:X\to X$ continuous, he fixes a  linear cocycle, i.e. a continuous map $F:X \to M_d(\C)$ and defines its Lyapunov spectrum $L^+(F) \subset \R^d$ as the set of all possible  Lyapunov vectors $\vec{\lambda}(\nu,F)$, where $\nu$ is an arbitrary $T$-invariant ergodic probability measure on $X$. This Lyapunov spectrum is closely related to the joint spectrum, and at least when $S \subset \GL_d(\C)$ generates a subgroup acting irreducibly on $\C^d$, the Lyapunov spectrum $L^+(F)$ will be sandwiched (see Theorem \ref{lyapspec}) between the interior of $J(S)$ and $J(S)$ itself, with $F$ taken to be the time $1$ map on the shift space $X:=S^\N$.

A simple but non-trivial observation of Daubechies and Lagarias \cite[Theorem 3.1.]{daubechies-lagarias} is that the joint spectral radius $R(S)$ can always be realized along a fixed sequence in $S^\N$. Namely there is $b=(b_1,b_2,\ldots) \in S^\N$ such that $R(S)=\lim_{n \to +\infty} \|b_1\cdot \ldots \cdot b_n\|^{\frac{1}{n}}$. We will see that analogously every point of the joint spectrum $J(S)$ is a limit of Cartan projections along a fixed sequence. Nevertheless,  $L^+(F)$ is not always closed and not every point of $J(S)$ can always be realized as the Lyapunov vector of some ergodic process on $S^\N$ (see \cite[Remark 1.2]{bochi-morris}).

Finally we will investigate the continuity properties of the joint spectrum, in the spirit of the article \cite{bochi-morris} by Bochi and Morris. We will show that under a domination condition, namely if $J(S)$ lies in the open chamber  $\{x \in \R^d: x_1>\ldots>x_d\}$, the joint spectrum is continuous under small perturbations of $S$ in the Hausdorff metric on compact subsets. In fact, in this case the Lyapunov vector $\vec{\lambda}(\nu)$ depends continuously on the measure $\nu$.

We will derive the above results in a more general setting, where the ambient group $\GL_d(\C)$ is replaced by an arbitrary reductive Lie group $G$ and $\Gamma=\langle S \rangle$ is assumed to be Zariski-dense in $G$. This covers the situation above because, modulo some finite index issues, the irreducibility assumption in Theorem \ref{main1} forces the Zariski closure of $\Gamma$ to be a reductive Lie group. We will not tackle here the interesting problem of removing irreducibility (or rather complete reducibility) in our main theorems and leave it for a forthcoming work. However we indicate two things about this issue: the first is that the existence of the limit holds when the irreducibility assumption is replaced by a \emph{domination condition} requiring that the renormalized Cartan projection stay away from the walls of the Weyl chamber of some reductive subgroup containing $S$ (see Theorem \ref{dominated-theorem}). The second is that the limit of Cartan vectors may in general (for non-reductive Zariski-closure) be strictly larger than the limit of Jordan vectors (see Example \ref{JneqK} below). Finally we assume throughout that our matrices are invertible. This allows us to use the well developed representation theory of reductive groups. Although it is likely that part of the above (in particular Theorem \ref{main1}) remains true for general non necessarily invertible matrices, and this is not difficult to confirm say when $d=2$, new ideas and  additional technical tools are required to handle the general case.

\bigskip
		
\begin{center}
* 
\nopagebreak
*    *
\end{center}

We now assume that $G$ is a connected reductive real Lie group, namely the connected component of the real points of a linear reductive algebraic group defined over $\R$. We refer the reader to \S \ref{reductive} below for a  review of the standard terminology on reductive groups needed to formulate our results. We recall here that a subgroup of $G$ is called Zariski-dense if it is not contained in any proper algebraic subgroup of $G$. Furthermore $G$ has a Cartan decomposition $G=K\exp(\a^{+})K$, where $K$ is a maximal compact subgroup of $G$ and $\a$ is the Lie algebra of a  maximal $\R$-split torus $A$, with $\a^+$ a positive (closed) Weyl chamber, i.e. the cone in $\a$ defined by the requirement that all positive roots be non-negative. This yields the so-called \emph{Cartan projection}:
$$\kappa: G \to \a^+.$$
At the same time we may define the \emph{Jordan projection}:
$$\lambda: G \to \a^+,$$ where $\lambda(g)$ is the unique element of $\a^+$ such that $\exp(\lambda(g))$ is conjugate to the hyperbolic (or polar) part of $g$ in the Jordan decomposition of $g$ as a commuting product of a unipotent element, an elliptic element and a hyperbolic element (i.e. an element with a conjugate in $A$). We will also consider the open Weyl chamber $\a^{++}$, which is the part of $\a^+$ where all positive roots are strictly positive.

We are ready to state our main results. The first statement encompasses Theorems \ref{main1} and \ref{main2} above.

\begin{theorem}[the joint spectrum] \label{joint} Let $G$ be a connected reductive Lie group and $S \subset G$ a compact subset generating a Zariski-dense subgroup. Then the sequences
$$\frac{1}{n}\kappa(S^n), \frac{1}{n}\lambda(S^n)$$
converge in Hausdorff metric  as $n\to+\infty$ to the same compact subset of $\a^+$, which we denote by $J(S)$ and call the \emph{joint spectrum of $S$}. Moreover for every $x \in J(S)$ there is a sequence $b=(b_1,b_2,\ldots) \in S^\N$ such that \begin{equation}\label{convergence-sequence}\lim_{n \to +\infty} \frac{1}{n}\kappa(b_1\cdot\ldots\cdot b_n) = x.\end{equation}
\end{theorem}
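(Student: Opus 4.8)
The plan is to put $J:=\overline{\bigcup_{n\ge 1}\tfrac1n\lambda(S^n)}$ and prove that both sequences $\tfrac1n\kappa(S^n)$ and $\tfrac1n\lambda(S^n)$ converge to $J$ in the Hausdorff metric, and then to obtain the last assertion by concatenating loxodromic blocks. Three soft facts will be used throughout: (i) $\tfrac1k\kappa(g^k)\to\lambda(g)$ for every $g\in G$; (ii) $\kappa$ is boundedly distorted under multiplication, $\|\kappa(xh)-\kappa(x)\|\le\|\kappa(h)\|+\|\kappa(h^{-1})\|$, because $\kappa(xh)-\kappa(x)$ stays in the convex hull of the Weyl orbit of $\kappa(h)$; and (iii) submultiplicativity in highest-weight representations, which gives $\|\kappa(g)\|\le nM$, and hence also $\|\lambda(g)\|\le nM$ by (i), for every $g\in S^n$, where $M=\max_{s\in S}(\|\kappa(s)\|+\|\kappa(s^{-1})\|)$.

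The heart of the matter is a semigroup version of the Abels--Margulis--Soifer regularization lemma, in the spirit of \cite{AMS, benoist-asymptotic1}: there are a finite subset $F\subset\bigcup_{k\le L}S^k$ and constants $L,C>0$ so that every element $g$ of the semigroup generated by $S$ can be left-multiplied by some $f\in F$ to become simultaneously proximal, with constants independent of $g$, in a fixed finite family of irreducible representations of $G$ whose highest weights span $\a^\ast$; such an $fg$ is then loxodromic and satisfies $\|\kappa(g)-\lambda(fg)\|\le C$. Proving this uniform statement --- and arranging proximality in enough representations to pin down every coordinate of the Cartan projection --- is the main obstacle, and is carried out by an analysis of the action of the semigroup on the relevant flag varieties, as in the Tits alternative; one uses here that the semigroup generated by $S$ is Zariski-dense in $G$.

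Granting the regularization lemma, the upper bound is immediate: given $g\in S^n$, choose $f$ as above with $\ell=\ell(f)\le L$, so that $fg\in S^{n+\ell}$ and $\tfrac1n\kappa(g)=\tfrac{n+\ell}{n}\cdot\tfrac1{n+\ell}\lambda(fg)+O(1/n)$, where $\tfrac1{n+\ell}\lambda(fg)\in J$ has norm $\le M$; hence $\tfrac1n\kappa(S^n)$ lies in the $O(1/n)$-neighbourhood of $J$, uniformly. For the reverse bound, every $x\in J$ is within $\epsilon$ of some $\tfrac1m\lambda(g_0)$ with $g_0\in S^m$; writing a large $n$ as $km+r$ with $0\le r<m$ and fixing $s_0\in S$, facts (i) and (ii) give $\kappa(g_0^{\,k}s_0^{\,r})=k\lambda(g_0)+o(k)$, so $\tfrac1n\kappa(g_0^{\,k}s_0^{\,r})\to\tfrac1m\lambda(g_0)$. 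This proves $\tfrac1n\kappa(S^n)\to J$. Feeding into the regularization lemma a sequence $g_j\in S^{n_j}$ with $\tfrac1{n_j}\kappa(g_j)\to x$ then shows that the loxodromic elements are dense in $J$, in the sense that $\{\tfrac1m\lambda(h):h\in S^m\ \text{loxodromic}\}$ approaches every point of $J$. For $\tfrac1n\lambda(S^n)$, the inclusion $\tfrac1n\lambda(S^n)\subset J$ is definitional; for the reverse one runs the previous argument with a loxodromic $h\in S^m$ in place of $g_0$, invoking the standard estimate that on the loxodromic locus $\lambda$ differs from $\kappa$ by a bounded amount, so that $\|\lambda(h^{\,k}s_0^{\,r})-k\lambda(h)\|=O(1)$ once $h^k$ is deeply loxodromic --- here the connectedness of $G$ is what lets one choose $h$ and $s_0$ in sufficiently generic position for all residues $r$, ruling out the periodicity phenomenon discussed above.

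For the realization along a fixed sequence, fix $x\in J$, loxodromic $h_j\in S^{m_j}$ with $\tfrac1{m_j}\lambda(h_j)\to x$, and warm-up times $t_j$ after which $\|\tfrac1t\kappa(h_j^{\,t})-\lambda(h_j)\|$ is small, and let $b\in S^\N$ be the concatenation $h_1^{\,k_1}h_2^{\,k_2}\cdots$ with exponents $k_j$ chosen recursively, with enough look-ahead, so large that each block dwarfs the accumulated prefix $P_{j-1}$ of length $T_{j-1}$. Discarding $P_{j-1}$ up to an additive $O(T_{j-1})$ by (ii), and using (i) inside the $j$-th block, one checks that $\tfrac1n\kappa(b_1\cdots b_n)$ remains within $o(1)$ of $x$ throughout block $j$, including its short transient; controlling these inter-block transients is the only delicate point, and it is again handled by (i), (ii) and the greedy choice of the $k_j$.
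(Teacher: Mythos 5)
Your overall plan matches the paper's: the Abels--Margulis--Soifer regularization furnishes, for each $g\in S^n$, an $f\in F$ with $\|\kappa(g)-\lambda(fg)\|\le C$, and this drives the Hausdorff convergence of $\frac{1}{n}\kappa(S^n)$ to $J:=\overline{\bigcup_n\frac{1}{n}\lambda(S^n)}$; the upper bound via $fg\in S^{n+\ell}$ and the lower bound via padding $g_0^ks_0^r$ are both correct, because for $\kappa$ the bounded-distortion inequality (ii) is all one needs.

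The genuine gap is in the realization along a sequence, and in milder form in the Jordan lower bound. Inequality (ii) is only a one-sided error bound: $\|\kappa(PQ)-\kappa(Q)\|\le C\|\kappa(P)\|$ does not make the increments of $\kappa$ add up coherently. At a block transition, when the accumulated prefix $P_{j-1}$ (length $T_{j-1}$) and the new partial block $h_j^q$ have comparable lengths, $qm_j\sim T_{j-1}$, both applications of (ii) give an error of order $T_{j-1}\sim n$, the same size as the Cartan projections being compared, and nothing prevents $\kappa$ from collapsing. Concretely, in $\SL_2(\R)$ take $h_1=\diag(R,R^{-1})$ and $h_2=uh_1u^{-1}$ with $u$ the rotation by $\pi/2$: then $h_1^kh_2^k=\id$, so the Cartan projection of the concatenation vanishes at the transition although $h_1,h_2$ are loxodromic with the same Jordan projection. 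Choosing the $k_j$ large does not help, since every block must cross the dangerous regime $qm_j\sim T_{j-1}$ on its way up. What is actually needed is ping-pong: the paper sets $g_n=hf_na_nf_n'h$ with $h$ a fixed proximal element and $f_n,f_n'$ in the AMS set, so that the $g_n$ form an $(r,\eps)$-Schottky family in $G$ with attracting/repelling data aligned with that of $h$; Proposition~\ref{Best} then supplies the \emph{additive} control $\lambda(g_{n_0}^{\ell_{n_0}}\cdots g_n^{\ell_n}g_{n+1}^{\ell})=\sum\ell_i\lambda(g_i)+\ell\lambda(g_{n+1})+O(n)$ that handles the transitions. The same difficulty undermines your Jordan lower bound: for $\|\lambda(h^ks_0^r)-k\lambda(h)\|=O(1)$ you need $h^ks_0^r$ to stay $(r,\eps)$-proximal in $G$ with uniform constants for every residue $r$, and a power $s_0^r$ can easily carry $v^+_h$ onto $H^<_h$ (the rotation example again). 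Invoking ``connectedness'' and ``generic position of $h$ and $s_0$'' does not deliver this, because $s_0\in S$ is not free to choose. The paper instead inserts an extra factor $\gamma\in\Gamma$, produced by the simultaneous transversality Lemma~\ref{dispersion.lemma} (a Zariski-density argument), so that $\gamma h_0^j(g^{\ell}f)^m$ is uniformly proximal for all residues $j$ at once. These ping-pong inputs are the substance your proposal elides.
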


Recall that the Benoist cone $\mathcal{BC}(\Gamma)$ of a semi-group $\Gamma$ is the closure in  $\a^+$ of the positive linear combinations of $\lambda(g)$, $g \in \Gamma$. Obviously the cone spanned by $0$ and $J(S)$ is exactly the Benoist cone $\mathcal{BC}(\Gamma)$, where $\Gamma$ is the semigroup generated by $S$. Benoist proved in \cite{benoist-asymptotic1} that $\mathcal{BC}(\Gamma)$ is convex. When $G$ is semisimple he further showed that it has non-empty interior in $\a^+$. In fact $J(S)$ already has these properties:

\begin{theorem}[convex body]\label{body}  Under the same assumptions, the joint spectrum $J(S)$ is a closed convex subset of $\a^+$. Its affine hull contains a translate of $\a_S=Lie(A \cap [G,G])$. Moreover if $S$ is not contained in a coset of a proper closed connected Lie subgroup of $G$ containing $[G,G]$, then the affine hull of $J(S)$ is all of $\a$ and thus $J(S)$ has non-empty interior in $\a^+$.
\end{theorem}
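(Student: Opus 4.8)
The proof breaks into three parts: convexity, identifying the affine hull, and showing that under the non-coset hypothesis the affine hull is all of $\a$.

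For convexity, the plan is to exploit Theorem \ref{joint}, which identifies $J(S)$ with the Hausdorff limit of $\frac{1}{n}\lambda(S^n)$ (equivalently of $\frac1n\kappa(S^n)$). First I would show a sub/super-additivity property: given $g\in S^n$ and $h\in S^m$, the Jordan (or Cartan) projection of a suitable product $gfh\in S^{n+m+c}$ — where $f$ ranges over a fixed finite set $F$ of ``proximalizing'' elements as in the proof sketch following Theorem \ref{main1} — is close to $\lambda(g)+\lambda(h)$. Concretely, if $g$ and $h$ are made simultaneously proximal in all the relevant fundamental representations, then the highest weight of $gfh$ in each such representation is approximately the sum of those of $g$ and $h$, because the attracting fixed point of $g$ is generic with respect to the repelling hyperplane of $h$ (this is exactly the kind of estimate underlying Benoist's convexity theorem \cite{benoist-asymptotic1}). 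From this, $J(S)+J(S)\subset 2J(S)$ up to $o(1)$ errors that vanish in the limit, and since $J(S)$ is compact this yields midpoint-convexity and hence convexity. An alternative, cleaner route: invoke that $\frac1n\lambda(S^n)\to J(S)$ together with $\lambda(g^k)=k\lambda(g)$ to get that $J(S)$ is stable under the operations needed, and combine with Benoist's theorem that the cone $\mathcal{BC}(\Gamma)$ is convex — then $J(S)$ is a ``slice'' of a convex cone at height given by the first coordinate functional (the logarithm of the spectral radius in the standard representation), except that slices of convex cones need not be convex, so one really does need the additive estimate above to control how $\lambda_1$ behaves under products. I expect this additive estimate to be the main obstacle: one must uniformly proximalize in finitely many representations at once and track the error terms, using the contraction properties of proximal elements on projective space and the distance-like functions (Iwasawa cocycle estimates) of Benoist--Quint.

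For the affine hull, I would argue that $J(S)$ contains a translate of $\a_S=\Lie(A\cap[G,G])$. The idea: pick $g\in\Gamma$ loxodromic (regular semisimple, hyperbolic part in the open chamber $\a^{++}$ of the derived group) — such elements exist and are abundant by Zariski-density, in fact by Benoist's results the Jordan projections of $\Gamma$ span $\a_S$. Then $\frac1n\lambda(g^{kn})$ and more generally products of powers of several loxodromic elements $g_1,\dots,g_r$ whose Jordan projections $\lambda(g_1),\dots,\lambda(g_r)$ form a basis of (a lattice in) $\a_S$ produce, via the additive estimate, points of $J(S)$ of the form $t_1\lambda(g_1)+\dots+t_r\lambda(g_r)$ for $t_i$ in a common positive rational simplex; taking closures and using that $J(S)$ is already shown convex, we get a genuine $r$-dimensional convex subset of $J(S)$, whose affine hull is a translate of $\a_S$. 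The $\a_S$-direction is forced because the central directions (in $\a$ but not in $\a_S$) are rigid: the component of $\kappa(g)$ in the center of $\g$ is a homomorphism, so it is constant equal to some $c_0$ on a fixed coset only if $S$ lies in that coset — which is precisely the dichotomy in the statement.

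For the final clause, suppose $J(S)$ does not have full affine hull, i.e. its affine hull is a proper affine subspace $V$ of $\a$. Since $V$ already contains a translate of $\a_S$, we have $\a_S\subset V-V\subsetneq\a$, so $V-V$ is a proper subspace containing $\a_S$; thus the image of $J(S)$ under projection to $\a/\a_S$ (the ``central'' part) is a single point $c_0$. The plan is to show this forces $S$ into a coset of a proper closed connected subgroup $H\supset[G,G]$: the map $g\mapsto$ (central component of $\kappa(g)$) is, modulo the compact part, essentially a continuous homomorphism $G\to\a/\a_S$ (this is where I would use that on the center the Cartan projection is additive/homomorphic — $\kappa(gh)$ and $\kappa(g)+\kappa(h)$ have the same central component because $a_i(gh)$-products telescope via $\det$-type invariants), so the condition that this component is constantly $c_0$ on all of $\Gamma$, hence on $\langle S\rangle$, means $\langle S\rangle$ lies in a coset of the kernel $H$, which is a closed connected subgroup containing $[G,G]$ and is proper since $c_0\ne 0$ would be needed for a nontrivial coset — and if $c_0=0$ then $\langle S\rangle\subset H$ proper contradicts Zariski-density unless $H=G$, i.e. unless $\a/\a_S=0$, i.e. $G$ semisimple, in which case $\a_S=\a$ and there is nothing to prove. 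Hence the contrapositive gives the statement. I expect the bookkeeping here — making ``central component of $\kappa$ is a homomorphism'' precise and identifying the resulting subgroup $H$ correctly (it is the preimage of the torus $\exp(\a_S)(A\cap[G,G])^{\circ}$-type subgroup under the relevant quotient) — to be routine but slightly delicate, and the genuinely substantive input remains the additive estimate from the first paragraph, which is reused to produce the full-dimensional simplex inside $J(S)$ once one knows the Jordan projections span $\a_S$ and, modulo the coset obstruction, also a complementary central direction.
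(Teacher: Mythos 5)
Your convexity argument follows the paper's route. The paper's Lemma~\ref{trace.angle.lemma} is exactly the additive estimate you envision: for $G$-proximal $g,h$ there is a fixed $u\in\Gamma$ with $\|\lambda(g^{k}uh^{k}u)-k\lambda(g)-k\lambda(h)\|$ uniformly bounded in $k$, and combining this with Theorem~\ref{AMS} (to proximalize arbitrary long words) and Lemmas~\ref{Cartan.stability}, \ref{loxodromy.implies.Cartan.close.to.Jordan} gives midpoint convexity of the Hausdorff limit $J(S)$. So the first part matches.

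For the affine-hull assertion you take a genuinely different path from the paper, and it contains a gap. The paper obtains both claims about the affine hull from the \emph{non-degeneracy} of the limiting Gaussian in the central limit theorem for the Cartan projection (Theorem~\ref{CLT}), via Lemmas~\ref{interior.lemma} and \ref{CLT.lemma}, which identify the affine span of $J(S)$ as $\vec{\lambda}_{\mu}+\a_{\mu}$ with $\a_{\mu}\supseteq\a_S$. You instead propose to build a full-dimensional simplex inside $J(S)$ by the additive estimate applied to $g_1^{n_1}\cdots g_r^{n_r}$. But the points you produce are $\frac{\sum n_i\lambda(g_i)}{\sum n_i m_i}$, where $m_i$ is the word length of $g_i$ in $S$ -- that is, they are \emph{convex combinations} of the fixed points $\lambda(g_i)/m_i$. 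This image is at most $(r-1)$-dimensional no matter what $r$ is, and it can degenerate further: nothing in ``\(\lambda(g_1),\dots,\lambda(g_r)\) are a basis of (a lattice in) $\a_S$'' prevents the rescaled points $\lambda(g_i)/m_i$ from lying in an affine hyperplane of $\a_S$. In the rank-one case $G=\SL_2(\R)$ your construction produces only the single point $\lambda(g_1)/m_1$, not an interval, so it visibly fails to recover non-empty interior. The missing step is precisely a form of Benoist's aperiodicity: if the $\lambda(\gamma)/m_\gamma$ all lay on an affine hyperplane $\ell(\cdot)=c$ of $\a_S$, then $\ell(\lambda(\gamma))\in c\Z$ for all $\gamma\in\Gamma$, so the additive group generated by the Jordan vectors would not be dense in $\a_S$, contradicting \cite{benoist-asymptotic1,quint-comptage}. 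With this added observation your argument can be repaired; as written it is off by (at least) one dimension. It is worth noting that the CLT non-degeneracy invoked by the paper is itself established using this same aperiodicity of Jordan vectors, so the two proofs rest on the same structural fact, but the CLT packages the dimension count for you.

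Your treatment of the final clause (projecting to the abelianization $G/[G,G]$ and using that the central component of the Cartan projection is a homomorphism, hence additive over words in $S$) is essentially the same reduction the paper performs after establishing $\a_S\subseteq\a_{\mu}$.
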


For example if $G=\GL_d(\R)$, $S\subset G$ generates a Zariski-dense subgroup and $\{|\det(g)|: g \in S\}$ has more than one element, then $J(S)$ has non-empty interior in $\R^d$. 

\begin{figure}[H]
\begin{tikzpicture}

\coordinate (O) at (0, 0) {};
\coordinate (A) at (0.48, 1.8) {};
\coordinate (B) at (0.98, 3.2) {};
\coordinate (C) at (1.4, 3.62) {};
\coordinate (D) at (1.8, 3.74) {};
\coordinate (E) at (2.1, 3.7) {};
\coordinate (F) at (2.3, 3.6) {};
\coordinate (G) at (2.5, 3.3) {};
\coordinate (H) at (2.55, 2.75) {};
\coordinate (I) at (2.2, 2) {};
\coordinate (J) at (1.5, 1.2) {};

\draw[semithick, ->] (0,0) -- (0,5);
\draw[semithick, ->] (0,0) -- (4.86,2.7);
\draw[dashed] (0,0) -- (1.2, 4.5);

\draw[dashed] (0,0) -- (5, 4);
\draw[dashed] (0,0) -- (5, 4);
\draw[dashed] (0,0) -- (5, 4);

\fill[shade, top color=gray!10, bottom color=gray!70]
(0,0) -- (1.2, 4.5) -- (5.62,4.5) -- cycle;

\draw[gray] (0,3.74) -- (3.9, 3.74);
\draw[gray] (0,3.81) -- (0.07,3.81);
\draw[gray] (0.07,3.81) -- (0.07,3.74);
\draw[gray] (3.34,1.84) -- (1.7,4.7);
\draw[gray] (0.7,4.13) -- (3.9, 3.16);
\draw[gray] (0.7,4.814) -- (3.9, 2.414);
\draw[gray] (3.34,1.84) -- (1.7,4.7);
\draw[gray] (3.41,1.88) -- (3.37,1.95);
\draw[gray] (3.37,1.95) -- (3.3, 1.91) ;

\filldraw[draw=black, fill=red]
(O) -- (A) -- (B) -- (C) -- (D) -- (E) -- (F) -- (G) -- (H) -- (I) -- (J) -- cycle;

\draw (3.3,4.25) node {\small Benoist cone};
\draw (1.6,2.8) node {\small Joint};
\draw (1.6, 2.4) node  {\small spectrum};
\draw (1.2, 5) node  {$\mathfrak{a}^+$};

\end{tikzpicture}
\caption{Example of the joint spectrum of a compact set $S \subset \SL_3(\R)$ containing $1$, and the Benoist cone of the semigroup it generates inside the Weyl chamber $\a^+$.} \label{fig.benoist}
\end{figure}
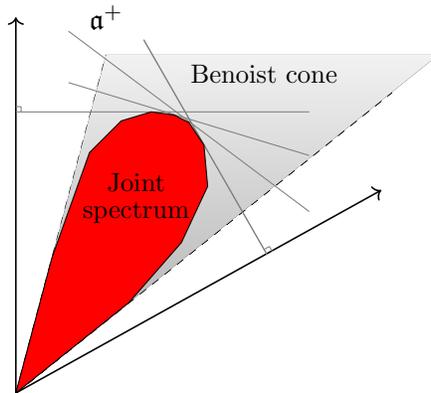

It is known \cite{benoist-asymptotic1} that every convex cone in $\a^+$ with non-empty interior can be realized as the Benoist cone of some Zariski-dense semigroup. The analogous property also holds for the joint spectrum.

\begin{theorem}[realization]\label{realization} Every convex body  $\mathcal{C}$ (i.e. closed convex subset with non-empty interior) in $\a^+$  can be realized as the joint spectrum $\mathcal{C}=J(S)$ for some compact subset $S$ generating a Zariski-dense subgroup of $G$. Furthermore if additionally $\mathcal{C}$ is a polyhedron with finitely many vertices, then $S$ can be chosen to be finite.
\end{theorem}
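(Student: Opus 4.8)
The plan is to build $S$ explicitly out of elements whose Jordan projections we control, then invoke Theorem \ref{joint} to identify the joint spectrum with the closure of $\bigcup_n \tfrac1n\lambda(S^n)$. The starting observation is that if $g \in G$ has Jordan projection $\lambda(g) = v$, then $\tfrac1n\lambda(g^n) = v$ for all $n$, so $v \in J(S)$ whenever $g \in \langle S\rangle$ (using that $J(S)$ is closed and the semigroup analogue of Theorem \ref{main2}). More generally, for a product $g_{i_1}\cdots g_{i_k}$ we get $\tfrac1k\lambda(g_{i_1}\cdots g_{i_k}) \in J(S)$. Since $J(S)$ is convex (Theorem \ref{body}), it contains the convex hull of all such normalized Jordan projections, and by Theorem \ref{joint} it is exactly the closure of this set. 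So the strategy is: (i) choose finitely many (or a compact family of) elements $h_1,\dots,h_m \in G$ with prescribed Jordan projections $v_1,\dots,v_m \in \a^+$; (ii) ensure $\langle h_1,\dots,h_m\rangle$ is Zariski-dense in $G$; (iii) show that the closed convex hull of the normalized Jordan projections of all words equals the target $\mathcal{C}$.

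For step (iii) the key input is Benoist's asymptoticity/interior result \cite{benoist-asymptotic1, benoist-asymptotic2}: for a Zariski-dense semigroup, the normalized Jordan projections of words approximate any point of the Benoist cone, and in particular one has good control near loxodromic elements. Concretely I would first use the polyhedral case as a building block. Given a polyhedron $\mathcal{C}$ with vertices $v_1,\dots,v_N \in \a^{++}$ (one can reduce to the case $\mathcal{C} \subset \a^{++}$ by a perturbation/limiting argument, or handle boundary-of-chamber vertices separately), pick loxodromic elements $h_j$ with $\lambda(h_j) = v_j$ that are moreover in ``general position'' (transverse attracting/repelling flags), so that by the ping-pong/AMS machinery underlying Theorem \ref{main1} the group they generate is Zariski-dense, and the normalized Jordan projection of a word $h_{j_1}^{n_1}\cdots h_{j_r}^{n_r}$ with $n_i \to \infty$ converges to the barycenter $\sum \tfrac{n_i}{\sum n_\ell} v_{j_i}$ (this is exactly the additivity of Jordan projections in a Schottky/dominated configuration, cf.\ the proof that Cartan and Jordan projections agree asymptotically). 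This shows the closed convex hull of $\{v_1,\dots,v_N\}$, i.e.\ $\mathcal{C}$ itself, is contained in $J(S)$ with $S = \{1,h_1,\dots,h_N,h_1^{-1},\dots,h_N^{-1}\}$ or similar. The reverse inclusion $J(S) \subseteq \mathcal{C}$ requires an upper bound: one needs that every word's normalized Jordan projection lands in $\mathcal{C}$; this follows because $\kappa$ (hence asymptotically $\lambda$) is subadditive along products and $\mathcal{C}$, being the convex hull of the $\lambda(h_j)$ together with $0$ adjusted — actually one must arrange the generators so that $\kappa(h_j) = \lambda(h_j)$ exactly (choose $h_j \in \exp(\a^+)$, i.e.\ symmetric positive), giving $\tfrac1n\kappa(w) \le$ a convex combination of the $\kappa(h_j)$ by submultiplicativity of singular values, hence $\le \mathcal{C}$ coordinatewise in the dominance order; combined with closedness and Theorem \ref{joint} this pins down $J(S) = \mathcal{C}$.

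For the general convex body $\mathcal{C}$ (not polyhedral), I would exhaust $\mathcal{C}$ from inside by an increasing sequence of polyhedra $\mathcal{C}_k \nearrow \mathcal{C}$ and take a corresponding sequence of finite sets $S_k$; then assemble a single compact $S$ as a suitably convergent union, e.g.\ $S = \overline{\bigcup_k \varepsilon_k\text{-scaled copies}}$ — more precisely, take $S = \{1\}\cup\{g_v : v \in E\}$ where $E \subset \a^+$ is a compact set with $\overline{\mathrm{conv}}(E \cup \{0\})$-type data matching $\mathcal{C}$ and $g_v = \exp(v)$ chosen inside a fixed Zariski-dense configuration (one can take a single Zariski-dense finitely generated $\Gamma_0 \subset G$ and conjugate a compact torus family into it, or use that $\exp(\a^{++})$ together with one well-chosen finite Zariski-dense set already generates a Zariski-dense group with the required Jordan projections dense in $\mathcal{C}$). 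The main obstacle I anticipate is precisely step (ii)–(iii) interaction: guaranteeing Zariski-density of the generated group \emph{simultaneously} with exact control of Jordan projections and with the upper bound $J(S)\subseteq\mathcal{C}$. Zariski-density forces the group to be ``spread out'', which a priori could push Jordan projections outside $\mathcal{C}$; the resolution is to make the ``Zariski-density-ensuring'' generators have very small Cartan projection (hence negligible normalized contribution) while the ``shape-determining'' generators lie in $\exp(\a^+)$ and dominate — this separation of roles, which mirrors Benoist's construction in \cite{benoist-asymptotic1}, is the technical heart, and verifying that adding the small generators perturbs $J(S)$ by an arbitrarily small amount (using continuity/monotonicity of the joint spectrum, as in the domination discussion preceding Theorem \ref{dominated-theorem}) is the step I would spend the most care on.
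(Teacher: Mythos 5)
You correctly pinpoint the central tension---exact control of singular values favours commuting torus elements, while Zariski-density forces the generators to spread out---and you correctly propose to split the roles of ``shape-determining'' and ``Zariski-density-ensuring'' generators. But both of your concrete resolutions fail, and this is precisely where the paper's construction differs. Including $1\in S$ or the inverses $h_j^{-1}$, or taking Zariski-density-ensuring generators with ``very small Cartan projection,'' forces $0$ (or Weyl-reflected points) into $J(S)$: since $\kappa(1^n)/n=0$ and $\kappa(u^n)/n\to\lambda(u)\approx 0$ for $u$ near $1$, one gets $0\in J(S)$, which lies outside $\mathcal{C}$ except in the special case $0\in\mathcal{C}$. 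The appeal to Theorem~\ref{continuity} cannot rescue this, because adjoining new elements away from $S$ is not a small Hausdorff perturbation, and monotonicity of $J$ under $S\subset S'$ only gives $J(S)\subseteq J(S')$, i.e.\ the lower bound, not the endangered upper bound $J(S)\subseteq\mathcal{C}$. The paper's fix is to base the Zariski-density perturbations at $a_0=\exp(w_0)$ for $w_0$ a point in the \emph{interior} of $\mathcal{C}$: it takes $S=\exp(\mathcal{C})\cup a_0F$, where $F$ is a finite subset of a small neighbourhood $V_\eps$ of $1$ chosen so that $a_0F$ generates a Zariski-dense semigroup (possible in any nonempty open subset of $G$, by a separate lemma). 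The perturbed generators then contribute approximately $w_0\in\inte(\mathcal{C})$, which convexity absorbs, and $\exp(\mathcal{C})\subset S$ makes the lower bound $\mathcal{C}\subseteq J(S)$ immediate.

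Your upper-bound argument also has a gap in both of its incarnations. Submultiplicativity of $\kappa$ in the distinguished representations, $\overline{\chi}_\rho(\kappa(gh))\le\overline{\chi}_\rho(\kappa(g))+\overline{\chi}_\rho(\kappa(h))$, only controls $\kappa(w)$ in Kostant's dominance order, hence bounds the outer envelope $O(S)$, not $J(S)$ itself; when $\mathcal{C}$ sits away from the origin its dominance-downward closure in $\a^+$ is strictly larger than $\mathcal{C}$, so this cannot give $J(S)\subseteq\mathcal{C}$. Nor does the off-the-shelf Schottky additivity of Proposition~\ref{Best} suffice: its error is $C_r\ell$, a fixed constant per block coming from the $\beta$-term of Proposition~\ref{spectralcontrolproximal}, so for words made of many short blocks the error does not vanish after normalising by the word length; and choosing the $h_j$ in ``general position'' (well-separated flags) actually maximises that $\beta$-error. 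The paper instead uses the tailor-made Lemmas~\ref{diagonal} and~\ref{produ}: because every generator in $S$ lies in $\exp(\a^+)\cdot V_\eps$, each $\rho_i$-image preserves a small ball around the single highest-weight direction $p_i$ and acts as a $(1-\eta)$-Lipschitz map there, which replaces the $C_r$ error per block by one of size $O(\eps)$, controllable by shrinking $V_\eps$. This yields $\lambda(g)=\sum_j\lambda(a_ja_0)+O(\eps\ell)$ and hence $\tfrac{1}{n}\lambda(g)\in\mathcal{C}$ up to an arbitrarily small error. In short: the student's plan has the right architecture, but the Zariski-density generators must be placed at an interior point of $\mathcal{C}$ rather than near $1$, and the quantitative additivity must come from a local contraction estimate with $\eps$-controllable error near a single flag, not from generic Schottky transversality.
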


 A compact subset $S \subset \GL_d(\R)$ is said to be $1$-dominated if the ratio between the first two singular values of elements of $S^n$ grows exponentially fast in $n$ at some fixed rate. This notion and its variants have been studied a lot in dynamics in recent years \cite{bonatti-diaz-viana, avila-bochi-yoccoz,yoccoz,bochi-gourmelon,bochi-morris,bochi-icm}. An important result due to Bochi-Gourmelon \cite[Theorem B]{bochi-gourmelon} (see also \cite{bochi-sambarino-potrie,q-t-z}) asserts that $1$-dominated families are precisely those families satisfying a cone condition: there is a closed salient projective cone in $\mathbb{P}(\R^n)$, which is mapped into its interior by all elements of $S$. We will show in Proposition \ref{prop.domination.and.schottky} below that this is also equivalent to all large enough powers of $S$ forming a so-called Schottky family in the sense of \cite{benoist-asymptotic1}.
 
Analogously to this definition, we will say that a compact subset $S$ of a connected reductive Lie group $G$ is \emph{$G$-dominated} if $\kappa(S^n)/n$ remains in a compact subset of the open Weyl chamber $\a^{++}$ for all large enough $n$. When $G=\GL_d(\R)$ this is equivalent to asking that the wedge powers $\Lambda^k(S)$ are $1$-dominated for every $k=1,\ldots,d-1$. The following is a simple consequence of the proof of Theorem \ref{joint}.

\begin{theorem}[joint spectrum under a domination condition] \label{dominated-theorem} Let $G$ be a connected real reductive Lie group and $S \subset G$ a $G$-dominated compact set. Then $\frac{1}{n}\kappa(S^n)$ and $\frac{1}{n}\lambda(S^n)$ both converge to the same limit, $J(S)$, which is a convex body lying in $\a^{++}$. Moreover for every $x \in J(S)$ there is $b \in S^\N$ such that $(\ref{convergence-sequence})$ holds.
\end{theorem}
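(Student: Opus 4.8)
The plan is to run the proof of Theorem~\ref{joint} essentially verbatim, replacing the single place where Zariski-density (together with the Abels--Margulis--Soifer production of proximal elements) enters by the observation that the $G$-domination hypothesis already supplies the required contraction. Concretely, the first step is to record that, by the reductive analogue of Proposition~\ref{prop.domination.and.schottky} --- applied in each fundamental representation, equivalently via the Bochi--Gourmelon cone condition \cite{bochi-gourmelon, q-t-z} --- a $G$-dominated compact set $S$ has the property that $S^{n_0}$ is a Schottky family in the sense of \cite{benoist-asymptotic1} for every large enough $n_0$: the elements $g\in S^{n_0}$ carry attracting and repelling flags that vary continuously with $g$ and are uniformly transverse across all pairs, together with a uniform strict contraction towards the attracting flag. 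From this one extracts, exactly as in \cite{benoist-asymptotic1, AMS}, a constant $C$ (depending only on $S$ and $n_0$) such that
\[
\Bigl\|\kappa(g_1\cdots g_k) - \sum_{i=1}^{k}\kappa(g_i)\Bigr\| \le C \quad\text{and}\quad \Bigl\|\lambda(g_1\cdots g_k) - \sum_{i=1}^{k}\kappa(g_i)\Bigr\| \le C
\]
for every $k\ge 1$ and all $g_1,\dots,g_k\in S^{n_0}$. This quasi-additivity of $\kappa$ and $\lambda$ along products of $n_0$-blocks is the crux of the matter and is where domination does real work: a priori it is only a statement about the \emph{size} of $\kappa$ on long products, and the content is that it forces the \emph{geometric} transversality of attracting and repelling flags. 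I expect this to be the main obstacle; once it is in hand the rest is soft and parallels Theorem~\ref{joint}.

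Given the quasi-additivity, convergence of $\tfrac1n\kappa(S^n)$ follows by a Fekete-type argument for compact sets. Along $n\in n_0\N$: the maps $g\mapsto\varpi_\alpha(\kappa(g))$ ($\varpi_\alpha$ a fundamental weight, realized as the logarithm of the operator norm in the corresponding representation) are subadditive on products, which bounds $\tfrac1{kn_0}\kappa(S^{kn_0})$ from the outside, while the lower estimate above shows $\tfrac1{kn_0}\kappa(S^{kn_0})$ contains, up to an error $C/(kn_0)$, the average of any $k$ points of $\tfrac1{n_0}\kappa(S^{n_0})$; together these give Hausdorff convergence of $\tfrac1{kn_0}\kappa(S^{kn_0})$ to a compact set $J$. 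Since $\tfrac1n\kappa(S^n)$ lies in a fixed compact subset of $\a^{++}$ for $n$ large by the very definition of $G$-domination, $J\subset\a^{++}$. Convexity of $J$ is the usual concatenation argument: if $\tfrac1{kn_0}\kappa(g)\to x$ and $\tfrac1{ln_0}\kappa(h)\to y$ with $k/(k+l)\to t$, then $gh\in S^{(k+l)n_0}$ splits into $k+l$ blocks in $S^{n_0}$, so $\kappa(gh)=\kappa(g)+\kappa(h)+O(C)$ and $\tfrac1{(k+l)n_0}\kappa(gh)\to tx+(1-t)y\in J$. (The assertion that $J$ is a convex \emph{body}, i.e.\ has nonempty interior, holds under the extra nondegeneracy hypothesis of Theorem~\ref{body} by the argument given there; in general $J$ is compact convex but can be of lower dimension, e.g.\ a point when $S$ reduces to a single regular element.) To pass from $n\in n_0\N$ to all $n$, write $n=kn_0+r$ with $0\le r<n_0$ and use $\|\kappa(gh)-\kappa(g)\|\le\|\kappa(h)\|$, bounded for $h\in S^r$, to see that $\tfrac1n\kappa(S^n)$ is within $O(1/n)$ of $\tfrac1{kn_0}\kappa(S^{kn_0})$, so the limit is again $J$.

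For the Jordan projections, the inclusion $\tfrac1n\lambda(S^n)\subseteq J$ for every $n$ is soft: for $g\in S^n$ one has $\tfrac1n\lambda(g)=\lim_{m\to\infty}\tfrac1{nm}\kappa(g^m)$ with $g^m\in S^{nm}$, and $\tfrac1{nm}\kappa(S^{nm})\to J$, so $\tfrac1n\lambda(g)\in J$. For the reverse inclusion, every $g\in S^{kn_0}$ is a product $g_1\cdots g_k$ of $n_0$-blocks, so the displayed estimates give $\|\lambda(g)-\kappa(g)\|\le 2C$; hence $\tfrac1{kn_0}\lambda(S^{kn_0})$ lies within $O(1/k)$ of $\tfrac1{kn_0}\kappa(S^{kn_0})$ and therefore converges to $J$, so in particular $\overline{\bigcup_n\tfrac1n\lambda(S^n)}=J$. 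Upgrading this to Hausdorff convergence of the full sequence $\tfrac1n\lambda(S^n)$ is done by the same bridging as in the proof of Theorem~\ref{joint}; the periodicity obstruction of Example~\ref{ex-non-conv} that could in principle spoil convergence does not occur here, precisely because $G$-domination forces the Cartan, hence also the Jordan, projections of all long products deep into $\a^{++}$, away from the walls where that oscillation lives.

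Finally, the realization of every $x\in J$ along a fixed sequence $b\in S^{\N}$ is the diagonal construction of Theorem~\ref{joint}. Using $\tfrac1{qn_0}\kappa(S^{qn_0})\to J$, pick Schottky words $h_j\in S^{q_jn_0}$ with $q_j\to\infty$ and $\tfrac1{q_jn_0}\kappa(h_j)\to x$, set $g^{(j)}=(h_j)^{p_j}$ with $p_j\to\infty$, and let $b$ be the concatenation $g^{(1)}g^{(2)}g^{(3)}\cdots$. A prefix $b_1\cdots b_N$ is a product of completed blocks $g^{(1)},\dots,g^{(j-1)}$ followed by a power $(h_j)^{t}$ and a remainder of length $<n_0$; decomposing everything into $S^{n_0}$-blocks and applying the quasi-additivity estimate shows $\kappa(b_1\cdots b_N)=\sum_{i<j}\kappa(g^{(i)})+\kappa\bigl((h_j)^{t}\bigr)+O(1)$, and since $\|\kappa(g^{(i)})-|g^{(i)}|\,x\|=o(|g^{(i)}|)$ and $\|\kappa((h_j)^{t})-tq_jn_0\,x\|=o(tq_jn_0)$ uniformly, a weighted-average argument (the weights $|g^{(i)}|\to\infty$) gives $\tfrac1N\kappa(b_1\cdots b_N)\to x$ along all of $N$, which is $(\ref{convergence-sequence})$.
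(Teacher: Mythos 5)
Your approach is essentially the paper's: use Proposition~\ref{prop.domination.and.schottky} to convert $G$-domination into the statement that $\bigcup_{n\geq n_0}S^n$ is an $(r,\eps)$-Schottky family in $G$, then rerun the proof of Theorem~\ref{joint} with the Abels--Margulis--Soifer step removed (every element of $S^n$ is already $(r,\eps)$-$G$-proximal). The paper's proof is exactly this one-line reduction.

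There is, however, a genuine quantitative error that propagates through your argument. You assert a quasi-additivity estimate
\[
\Bigl\|\lambda(g_1\cdots g_k)-\sum_{i=1}^{k}\kappa(g_i)\Bigr\|\le C
\]
with $C$ depending only on $S$ and $n_0$, in particular independent of $k$. Proposition~\ref{Best} does \emph{not} give this: it gives an error $C_r\,\ell$, growing linearly in the number $\ell$ of Schottky factors (the $\log\beta$ term and the product of $D_{r,\eps}$'s both contribute per factor). Correctly the error over $k$ blocks of length $n_0$ is $O(k)$, so when you amortize over length $kn_0$ you get an error $O(1/n_0)$, not $O(1/(kn_0))$ as you claim. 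This matters: your Fekete-type argument then only shows that $\tfrac1{kn_0}\kappa(S^{kn_0})$ is within $O(1/n_0)$ of the relevant averages, not that it converges; and in your realization step, decomposing a prefix of length $N$ into $n_0$-blocks gives a cumulative error $O(N/n_0)$, not $O(1)$, so $\tfrac1N\kappa(b_1\cdots b_N)$ need not converge for fixed $n_0$. The fix is either the one the paper uses (apply Lemma~\ref{Hausdorff.convergence.lemma}: given $\delta$ one chooses $n_0$ depending on $\delta$, so the $O(1/n_0)$ error is absorbed), or, in the realization step, decompose into the \emph{big} Schottky blocks $g^{(1)},\dots,g^{(j-1)},(h_j)^t$ — of which there are only $j=o(N)$ — so the error from Proposition~\ref{Best} is $O(j)$, which is $o(N)$; this is precisely the variable-block-size device in the proof of~(\ref{convergence-sequence}) for Theorem~\ref{joint}. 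Your parenthetical remark that the ``convex body'' claim is an over-statement in general (e.g.\ $S$ a single regular element gives a point) is a fair observation about the theorem as stated.
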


The only difference with Theorem \ref{joint} is that we no longer assume that $S$ generates a Zariski-dense subgroup, but assume instead that $S$ is $G$-dominated. Note further that if $S$ generates a Zariski-dense subgroup, and thus the joint spectrum $J(S)$ is well-defined by Theorem \ref{joint}, then $S$ is $G$-dominated if and only if $J(S) \subset \a^{++}$.

Under a $G$-domination condition we have the following useful continuity property.

\begin{theorem}[continuity]\label{continuity} Keep the assumptions of Theorem \ref{dominated-theorem}. For every $\eps>0$ there is $\delta>0$ such that if $d(S,S')<\delta$, then $d(J(S),J(S'))<\eps$.
\end{theorem}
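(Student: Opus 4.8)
The plan is to deduce the continuity of $S \mapsto J(S)$ under $G$-domination directly from the quantitative estimates that go into the proof of Theorem \ref{dominated-theorem}. The key point is that the proof of Theorem \ref{dominated-theorem} (like that of Theorem \ref{joint}) produces, for a $G$-dominated set $S$, a \emph{uniform scale} $N=N(S)$ and a \emph{uniform error}: for all $n \ge N$ the Hausdorff distance $d\!\left(\tfrac1n\kappa(S^n), J(S)\right)$ is at most $C/n$ for a constant $C$ depending only on the (compact) domination data of $S$, i.e.\ on how far $\kappa(S^n)/n$ stays from the walls of $\a^{++}$ and on $\sup_{g\in S}\|\kappa(g)\|$. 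The first step is therefore to record this as an explicit lemma: there exist $\eps_0>0$, $N_0\in\N$ and $C>0$ such that whenever $S'$ is $G$-dominated with the same domination constants (say $\kappa((S')^n)/n$ lies in the fixed compact $\mathcal{K}\subset\a^{++}$ for $n\ge N_0$ and $\Supp\kappa(S')$ has diameter $\le D$), one has $d\!\left(\tfrac1n\kappa((S')^n), J(S')\right)\le C/n$ for all $n\ge N_0$, with $\eps_0,N_0,C$ depending only on $\mathcal{K}$ and $D$.

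Second, I would verify that $G$-domination is an \emph{open} condition that is moreover uniform on small Hausdorff balls: if $S$ is $G$-dominated then there is $\delta_0>0$ and a compact $\mathcal{K}\subset\a^{++}$ such that every $S'$ with $d(S,S')<\delta_0$ is $G$-dominated with $\kappa((S')^n)/n\in\mathcal{K}$ for all $n\ge N_0$. This follows because $\kappa$ is $1$-Lipschitz (sub-additive) for a suitable invariant metric on $G$, so the finitely many words of length $\le N_0$ move continuously, the contraction-of-a-cone characterization of domination (the analogue for $G$ of the Bochi--Gourmelon cone condition invoked before Theorem \ref{dominated-theorem}, via Schottky families as in Proposition \ref{prop.domination.and.schottky}) is open, and an invariant cone that is strictly contracted by $S$ is still strictly contracted by any sufficiently close $S'$, giving a common $\mathcal{K}$ and common $N_0$.

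Third, the $\eps/3$ argument: given $\eps>0$, pick $n\ge N_0$ with $C/n<\eps/3$ (using the uniform $C$ from step one, valid for all $S'$ in the $\delta_0$-ball by step two). Then for $d(S,S')<\delta$ with $\delta\le\delta_0$ small enough that the $n$-fold product sets satisfy $d(S^n,(S')^n)<$ (something tending to $0$ with $\delta$) — which holds since multiplication is continuous and $S,S'$ are uniformly bounded — we get $d\!\left(\tfrac1n\kappa(S^n),\tfrac1n\kappa((S')^n)\right)<\eps/3$ by the Lipschitz property of $\kappa$. Combining,
\[
d(J(S),J(S')) \le d\!\left(J(S),\tfrac1n\kappa(S^n)\right) + d\!\left(\tfrac1n\kappa(S^n),\tfrac1n\kappa((S')^n)\right) + d\!\left(\tfrac1n\kappa((S')^n),J(S')\right) < \eps .
\]

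The main obstacle is step one: extracting from the proof of Theorem \ref{dominated-theorem} a genuinely \emph{uniform} rate $C/n$ with $C$ depending only on the domination data and not on finer features of $S$. The proof of convergence goes through making large elements simultaneously proximal in the relevant representations by left-multiplication by a bounded finite set $F$, and then comparing $\kappa(g)$ to $\lambda(gf)$; one must check that the size of $F$, the proximality margins, and hence the resulting $O(1/n)$ bound can all be controlled purely in terms of $\mathcal{K}$ and $D$. Under the $G$-domination hypothesis this is exactly what the cone/Schottky picture buys us — the contracted cone and its contraction ratio are the only inputs needed — so the uniformity is real, but spelling it out cleanly is where the work lies. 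Everything else is soft: Lipschitzness of $\kappa$, continuity of finite products, and the triangle inequality for the Hausdorff metric.
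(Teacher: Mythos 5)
Your strategy shares all the key inputs with the paper's proof — openness of $G$-domination via Proposition \ref{prop.domination.and.schottky}, and the approximate additivity of $\lambda$ on Schottky products via Proposition \ref{Best} — but the two proofs are organized differently, and the paper's organization dispenses with your step one entirely. The paper never establishes any rate of Hausdorff convergence to $J(S)$. It fixes $n_0$ with $(C_{r_1}+C_{r_2}+1)/n_0<\eps$, matches each word of length $m=qn_0$ in $S$ letter-by-letter with a word in $S'$, applies Proposition \ref{Best} to the $n_0$-blocks of both words, and concludes directly that $d\bigl(\tfrac{1}{m}\lambda(S^m),\tfrac{1}{m}\lambda((S')^m)\bigr)<\eps$ for \emph{every} multiple $m$ of $n_0$; letting $m\to\infty$ and using the already-known Hausdorff convergence of both sequences yields $d(J(S),J(S'))\leq\eps$. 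No rate is needed because the comparison is done at each finite scale before passing to the limit. Your route instead compares each of $J(S)$ and $J(S')$ to its own finite approximants and hence genuinely requires a uniform approximation estimate. That estimate is true, but your claimed form $d\bigl(\tfrac{1}{n}\kappa(S^n),J(S)\bigr)\leq C/n$ is too strong: what the Schottky picture gives (Proposition \ref{Best} on $n_0$-blocks plus Lemma \ref{loxodromy.implies.Cartan.close.to.Jordan}) is a two-parameter bound of the form $d\bigl(\tfrac{1}{qn_0}\kappa(S^{qn_0}),J(S)\bigr)\lesssim C_r/n_0+\diam(J(S))/q$, where the $C_r/n_0$ term does not decay as $q$ grows. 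So one must first pick $n_0$ with $C_r/n_0<\eps/3$ and then take $n$ a large multiple of $n_0$; with that two-step choice your $\eps/3$ argument closes. Your step two (uniformity of the Schottky data over a small Hausdorff ball) is exactly what the paper does by continuity of eigenvalues and eigendirections, producing $(r_2,\eps_2)$-Schottky constants valid for all nearby $S'$. In sum: your proof is correct after the repair to the rate, but it is less economical than the paper's, which replaces the uniform approximation lemma by a pointwise comparison at each finite scale followed by a limit.
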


Here $d(.,.)$ is the Hausdorff distance on compact subsets of $G$ and $\a$ respectively.  If $J(S)$ touches the walls of the Weyl chamber, then continuity will typically fail (see Figure \ref{fig.discontinuity}). Bochi and Morris \cite{bochi-morris} studied the continuity properties of the lower spectral radius (which can be obtained from $J(S)$ by the formula $\min\{\chi(x), x \in J(S)\}$, where $\chi$ is the highest weight of $G \subset \GL_n(\C)$). The situation for $\GL_2(\R)$ has been entirely analysed there and there is a complete description of the degeneracies: in this case $\a^+$ is a half plane $\{(x,y)\in \R^2: x\geq 0\}$ and if $J(S)$ touches the wall $\{x=0\}$, then under small perturbations of $S$ one can reach all points in the convex hull of the union of $J(S)$ and its reflection across the wall, see \S \ref{discon}. It is likely that the same phenomenon happens in general, cf. the notion of Morse spectrum in \cite{bochi-icm}.

In \cite{OregonReyes} and \cite{breuillard-fujiwara}  a geometric analogue to the notion of joint spectral radius was studied for subsets of isometries of an arbitrary metric space and a geometric Berger-Wang identity was obtained. In symmetric spaces, this can also be derived from the joint spectrum, see \S \ref{asymp} below. One can also consider arbitrary word metrics on reductive groups, in the spirit of the work of Abels and Margulis \cite{abels-margulis} and deduce a Berger-Wang identity for those, see \S \ref{abmarpar}. In fact the Abels-Margulis norm-like metric associated to a generating set $S$ can be read off  the joint spectrum:

\begin{theorem}[Word balls in reductive groups] \label{word-balls} Let $G$ be a connected reductive Lie group and $S$ a compact neighborhood of the identity with $S^{-1}=S$. Then there is $c \in \N$ such that for every $n \geq c$,
$$S^{n-c} \subset \{g \in G : \kappa(g) \in nJ(S) \} \subset S^{n+c}.$$
\end{theorem}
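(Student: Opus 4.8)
The plan is to convert the statement into a coarse comparison between $S$-word length and a norm-like function, and then to identify that function with the Minkowski gauge of $J(S)$. Write $|g|_S:=\min\{k\geq 0: g\in S^k\}$ for the $S$-word length. Since $S$ is a compact symmetric neighbourhood of the identity in the connected reductive group $G$, its word metric is coarsely geodesic and, by the theorem of Abels and Margulis \cite{abels-margulis}, there is a (unique) norm-like function $\omega:\a^+\to\R_{\geq 0}$ — continuous, positively homogeneous of degree one, subadditive, vanishing only at $0$ — with $|g|_S=\omega(\kappa(g))+O(1)$ uniformly in $g\in G$. Granting this, the theorem reduces to the single identity
\begin{equation}\label{gaugeid}
nJ(S)=\{v\in\a^+:\omega(v)\leq n\}\qquad(n\geq 1),
\end{equation}
which by homogeneity of $\omega$ amounts to $J(S)=\{\omega\leq 1\}$: if $(\ref{gaugeid})$ holds and $c$ is the Abels–Margulis constant, then $g\in S^{n-c}$ gives $\omega(\kappa(g))=|g|_S-O(1)\leq n$, hence $\kappa(g)\in nJ(S)$, while $\kappa(g)\in nJ(S)$ gives $\omega(\kappa(g))\leq n$, hence $|g|_S=\omega(\kappa(g))+O(1)\leq n+c$.

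To prove $(\ref{gaugeid})$ I would first record the facts it rests on: $S$ being a neighbourhood of $1$ in a connected group it generates all of $G$, so $\langle S\rangle$ is Zariski-dense, Theorem \ref{joint} applies, $\tfrac1m\kappa(S^m)\to J(S)$ in Hausdorff metric, and every point of $J(S)$ is realized along a fixed sequence $b\in S^\N$ as in $(\ref{convergence-sequence})$; moreover $0=\kappa(1)\in J(S)$ and $\kappa:G\to\a^+$ is surjective. Then I would prove the two inclusions $J(S)\subseteq\{\omega\leq 1\}$ and $\{\omega\leq 1\}\subseteq J(S)$ using only these inputs. For the first: given $v\in J(S)$ take $b\in S^\N$ with $\tfrac1n\kappa(b_1\cdots b_n)\to v$; since $|b_1\cdots b_n|_S\leq n$ and $\omega$ is homogeneous, $\omega\big(\tfrac1n\kappa(b_1\cdots b_n)\big)\leq\tfrac1n|b_1\cdots b_n|_S+O(1/n)\leq 1+O(1/n)$, and continuity of $\omega$ gives $\omega(v)\leq 1$. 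For the second: given $v\in\a^+$ with $\omega(v)\leq 1$, choose $g_n$ with $\kappa(g_n)=nv$; then $|g_n|_S=\omega(nv)+O(1)=n\omega(v)+O(1)\leq n+O(1)$, so $g_n\in S^{m_n}$ with $m_n=n+O(1)$, whence $\tfrac{n}{m_n}v=\tfrac1{m_n}\kappa(g_n)\in\tfrac1{m_n}\kappa(S^{m_n})$; since $m_n\to\infty$, Theorem \ref{joint} gives $d\big(\tfrac{n}{m_n}v,J(S)\big)\to 0$ while $\tfrac{n}{m_n}v\to v$, so $v\in J(S)$ because $J(S)$ is closed. This proves $(\ref{gaugeid})$ and hence the theorem, and identifies $\omega$ with the gauge of $J(S)$, as stated.

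The main difficulty is entirely contained in the input from \cite{abels-margulis}, which one may well wish to reprove within the present framework: one must establish the coarse geodesicity with a genuine \emph{additive} $O(1)$ error, since Theorem \ref{joint} alone only furnishes an $o(n)$ error. This is where the quantitative core of the proof of Theorem \ref{joint} is needed: following \cite{AMS}, any sufficiently long $g\in S^n$ can be made simultaneously proximal in all the relevant irreducible representations after left multiplication by one of a fixed finite set of elements, so that $\kappa(g)$ lies within a bounded distance of the Jordan vector of a word of length $n+O(1)$; this yields both $|g|_S\leq\omega(\kappa(g))+O(1)$ and $|g|_S\geq\omega(\kappa(g))-O(1)$. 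A secondary subtlety, in that route, is absorbing the bounded discrepancy: the discrepancy vector need not be dominant, but since $S$ generates $G$ the cone spanned by $J(S)$ is all of $\a^+$, so — using the simpliciality of the Weyl chamber and convexity of $J(S)$ — $J(S)$ contains a neighbourhood of $0$ inside $\a^+$; together with the fact that replacing a target Cartan vector by one within bounded distance of it changes the minimal length of a realizing word only by $O(1)$ (Cartan decomposition, $K\subset S^{O(1)}$, and $\exp$ being a homomorphism on $\a$), the two inequalities close up with a single constant $c$.
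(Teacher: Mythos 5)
Your argument is correct and follows essentially the same route as the paper's: both reduce the statement, via the Abels--Margulis coarse equivalence $(\ref{abmar})$, to the identification of $J(S)$ with $\{N_S\leq 1\}\cap\a^+$ (the unit ball of the $W$-invariant norm intersected with the Weyl chamber), proving one inclusion from the word-length bound implicit in $(\ref{abmar})$ and the other by taking $g_n=\exp(nv)$, observing $g_n\in S^{n+O(1)}$, and invoking Hausdorff convergence from Theorem~\ref{joint}. The paper merely interposes the additional observation $J(S)=O(S)\cap\a^+$ as a standalone lemma, which is of independent interest but plays no essential role beyond what you establish directly.
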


We now move on to describe the role of random matrix products in the study of the joint spectrum. Let $(g_n)_{n \geq 1}$ be an arbitrary ergodic stationary process with law $\mu$ on the shift space $S^{\N}$. Then by Kingman's subadditive ergodic theorem the following law of large numbers \cite{furstenberg-kesten, guivarch-ldgn, benoist-quint-book} holds almost surely:
$$\frac{1}{n}\kappa(g_1\cdot \ldots \cdot g_n)  \underset{n \to +\infty}{\longrightarrow} \vec{\lambda}_\mu.$$ The right hand side is called the \emph{Lyapunov vector} of the ergodic process and clearly lies in $J(S)$. In \cite{sert-cras,Sert.LDP} the second named author proves a large deviation principle for the Cartan projection of i.i.d.\ random walks supported on $S$. He shows the existence of a convex rate function such that $(\ref{LDP})$ holds for every open set $U \subset \a^+$. He moreover establishes that the support of the rate function coincides with the joint spectrum $J(S)$. See \cite[Theorem 5.1.]{Sert.LDP}. Here we show:

\begin{theorem}[Lyapunov lies in the interior]\label{interior} Let $(g_n)_{n\geq 1}$ be a sequence of i.i.d.\ random variables in $G$ and $S \subset G$ the support of their common law $\mu$. Assume that $S$ generates a Zariski-dense subgroup of $G$. Then the Lyapunov vector $\vec{\lambda}_\mu$ lies in the relative interior of the joint spectrum $J(S)$, and in particular in the interior of the Benoist cone of the semigroup generated by $S$.
\end{theorem}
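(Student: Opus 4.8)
The plan is to combine the large deviation principle of \cite{Sert.LDP} with the central limit theorem for random matrix products. By the large deviation principle $(\ref{LDP})$, the joint spectrum $J(S)$ equals the closure of $\{x : I_\mu(x) < +\infty\}$, and the Lyapunov vector $\vec\lambda_\mu$ is the unique zero of the convex rate function $I_\mu$. Since $\vec\lambda_\mu \in J(S)$ always, the content of the statement is that $\vec\lambda_\mu$ cannot lie on the relative boundary of $J(S)$. The key point is that the relative interior of $J(S)$ contains a full neighborhood of $\vec\lambda_\mu$ \emph{in the affine hull of $J(S)$}, and by Theorem \ref{body} this affine hull is a translate of $\a_S = \Lie(A\cap[G,G])$ together with the $\det$-type directions; so it suffices to show that the Cartan projection $\frac1n\kappa(g_1\cdots g_n)$ can, with subexponentially small (in fact polynomially small) probability, be found at $\vec\lambda_\mu + v/\sqrt n$ for every direction $v$ in that affine hull.

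The main step is therefore the following fluctuation statement: under the Zariski-density assumption, the central limit theorem of Goldsheid--Guivarc'h, Guivarc'h and Benoist--Quint (\cite{goldsheid-guivarch, guivarch, benoist-quint-central}, \cite[Ch.~13]{benoist-quint-book}) asserts that $\frac{1}{\sqrt n}\bigl(\kappa(g_1\cdots g_n) - n\vec\lambda_\mu\bigr)$ converges in law to a (possibly degenerate) Gaussian on $\a$ whose covariance form is nondegenerate precisely on the subspace spanned by the $\a_S$-directions — more precisely, on the linear span of the differences $\lambda(g) - \lambda(h)$, which by Benoist's work is exactly the relevant subspace. First I would record that this Gaussian limit has full support in the affine hull $V$ of $J(S)$ translated to the origin: this is exactly the non-degeneracy of the covariance, and it is where Zariski-density is used (it rules out the measure being supported on a single coset of a subgroup containing $[G,G]$, which is the exceptional case excluded in Theorem \ref{body}). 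Consequently, for any $v \in V$ and any $\eps>0$, one has $\P\bigl(\|\tfrac{1}{\sqrt n}(\kappa(g_1\cdots g_n) - n\vec\lambda_\mu) - v\| < \eps\bigr) \geq c(v,\eps) > 0$ for all large $n$; in particular this probability is not exponentially small, so $\inf_{x \in B(\vec\lambda_\mu + v/\sqrt n,\, \eps/\sqrt n)} I_\mu(x) \to 0$, which by lower semicontinuity and convexity of $I_\mu$ forces $\vec\lambda_\mu + tv$ to lie in $\{I_\mu < +\infty\}$ for all small $t>0$, hence in $J(S)$. Since $v \in V$ was arbitrary and $\vec\lambda_\mu \in J(S)$, convexity of $J(S)$ (Theorem \ref{body}) gives that $\vec\lambda_\mu$ is in the relative interior.

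An alternative, and perhaps cleaner, route avoids quoting the CLT in the degenerate setting: one shows directly that for every extreme direction of $J(S)$ there is an element $g$ of the semigroup with $\lambda(g)$ pointing "into" $J(S)$ from $\vec\lambda_\mu$, using that $\overline{\bigcup_n \frac1n\lambda(S^n)} = J(S)$ (Theorem \ref{main2}) and that the i.i.d.\ walk visits any finite product $g_1\cdots g_k$ with positive probability, so that by the law of large numbers applied to a perturbed stationary process (blocking the walk into copies of a suitable word) the Lyapunov vector can be pushed in any prescribed direction of $V$ while staying in $J(S)$; convexity then pins $\vec\lambda_\mu$ to the relative interior. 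I expect the main obstacle to be the degenerate-covariance bookkeeping: one must identify the support of the Gaussian (equivalently the linear span of $\{\lambda(g)-\lambda(h)\}$) with the translated affine hull of $J(S)$ from Theorem \ref{body}, and check that Zariski-density is exactly what makes these coincide; the non-degeneracy criterion in \cite{benoist-quint-central}, \cite[Ch.~13]{benoist-quint-book} should supply this, but matching it precisely with the coset condition of Theorem \ref{body} is the delicate point. The final assertion about the Benoist cone is then immediate, since $\mathcal{BC}(\Gamma)$ is the cone spanned by $0$ and $J(S)$, so the relative interior of $J(S)$ lies in the interior of $\mathcal{BC}(\Gamma)$ whenever $J(S)$ is not contained in a hyperplane through the origin — and if it were, the span of $\{\lambda(g)\}$ would be that hyperplane, contradicting Zariski-density via Benoist's non-degeneracy of the limit cone.
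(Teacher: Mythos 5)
Your instinct to use the non-degeneracy of the Gaussian in the central limit theorem is exactly right — this is the key probabilistic input in the paper's proof. However, the way you try to convert the CLT fluctuation into a statement about the relative interior of $J(S)$ has a genuine gap, and your fallback argument is circular.

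The gap in your main argument is a scaling mismatch. The CLT produces deviations of order $\sqrt n$, i.e.\ points $y_n=\kappa(g_1\cdots g_n)\in\kappa(S^n)$ with $\|y_n-n\vec\lambda_\mu\|\asymp\sqrt n$. But the effective support of the rate function $I_\mu$ is an $n$-scale object: to show $\vec\lambda_\mu+tv\in D_{I_\mu}$ for a \emph{fixed} $t>0$, you need control over events at distance $tn$ from $n\vec\lambda_\mu$, and the CLT says nothing at that scale. Your deduction ``$\inf_{B(\vec\lambda_\mu+v/\sqrt n,\eps/\sqrt n)}I_\mu\to 0$ forces $\vec\lambda_\mu+tv\in\{I_\mu<\infty\}$'' does not follow: the infima are over balls shrinking to $\vec\lambda_\mu$, so this only restates that $I_\mu(\vec\lambda_\mu)=0$. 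It is perfectly possible for a proper convex LSC function with unique zero at a point to be $+\infty$ on one side of that point (e.g.\ $I(x)=x^2$ for $x\le 0$, $I(x)=+\infty$ for $x>0$), so convexity and lower semicontinuity alone do not save you, and the relationship $\overline{D_{I_\mu}}=J(S)$ does not resolve the issue — a point $y_n/n\in\kappa(S^n)/n$ is only $o(1)$-close to $J(S)$, and one would need $o(1/\sqrt n)$ closeness for your argument to detect the $\sqrt n$ deviation. Your second (``alternative, cleaner'') route is circular: asserting that ``for every extreme direction of $J(S)$ there is an element $g$ with $\lambda(g)$ pointing into $J(S)$ from $\vec\lambda_\mu$'' is exactly the statement you are trying to prove — if $\vec\lambda_\mu$ were on the boundary, no such $g$ exists in the outward directions.

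What closes the gap in the paper's proof is the Abels--Margulis--Soifer / Schottky machinery, which you do not invoke. Given $y_n=\kappa(g_n)$ with $g_n\in S^n$ furnished by the CLT, Theorem~\ref{AMS} provides $f_n$ in a \emph{fixed finite set} $F\subset\Gamma$ such that $g_nf_n$ is $(r,\eps)$-proximal, and Lemmas~\ref{Cartan.stability} and~\ref{loxodromy.implies.Cartan.close.to.Jordan} then give $\|\lambda(g_nf_n)-\kappa(g_n)\|\le C$ with $C$ \emph{uniform in $n$}. The Jordan projection $z_n:=\lambda(g_nf_n)/m$ (with $g_nf_n\in S^m$, $m-n$ bounded) lies \emph{exactly} in $J(S)$ by the spectral radius formula, and the crucial point is that the Cartan-to-Jordan conversion error is $O(1)$, which is dominated by the $\sqrt n$ CLT fluctuation. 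Consequently, for every linear form $\ell$ with $\a_\mu\not\subset\ker\ell$, one gets $\ell(z_n)>\ell(\vec\lambda_\mu)$ for large $n$, and convexity of $J(S)$ then pins $\vec\lambda_\mu$ to the relative interior (this is Lemma~\ref{interior.lemma} and Lemma~\ref{CLT.lemma} in the paper). Your concern about ``degenerate-covariance bookkeeping'' — identifying the support $\a_\mu$ of the Gaussian with the translated affine hull of $J(S)$ — is a real but secondary issue, handled in the paper by $\a_S\subset\a_\mu$ (the CLT non-degeneracy) together with the elementary CLT for the abelianized walk on $G/[G,G]$. The essential idea your proposal is missing is the uniform $O(1)$ Cartan--Jordan comparison via Schottky families; without it the $\sqrt n$ input from the CLT cannot be turned into a statement about $J(S)$.
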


By \emph{relative interior} we mean the interior of $J(S)$ inside its affine span, which by Theorem \ref{body} is all of $\a$ provided $S$ does not lie in a fixed coset of a proper closed connected Lie subgroup of $G$ containing $[G,G]$. The result was already mentioned in \cite[Theorem 5.1.]{Sert.LDP} with the proof being deferred to the present article. Theorem \ref{interior} refines another landmark theorem \cite{guivarch-raugi, goldsheid-margulis} \cite[Ch. 9]{benoist-quint-book} in the theory of random matrix products, namely the so-called \emph{simplicity of the Lyapunov spectrum}, which in our terminology means that $\vec{\lambda}_\mu$ belongs to the open Weyl chamber $\a^{++}$.  Our proof also gives that if $\mu$ has a finite second moment (and is not necessarily compactly supported), then the Lyapunov vector $\vec{\lambda}_\mu$  lies inside the Benoist cone, see Proposition \ref{prop.lyapunov.in.Benoist}.

The proof follows easily from the non-degeneracy of the normal law in the central limit theorem for the Cartan projection established in \cite{goldsheid-guivarch,guivarch} and studied further in \cite{benoist-quint-central} and \cite[Ch. 13]{benoist-quint-book}. Interestingly the proof of the non-degeneracy given in \cite{guivarch, benoist-quint-central} or \cite[Ch. 13]{benoist-quint-book} relies crucially on the following aperiodicity property of the Jordan vectors of a Zariski-dense subgroup: the closure in $\a$ of the additive group generated by the Jordan vectors $\lambda(g)$, $g \in \Gamma$, contains the entire semisimple part $\a_S:=\a \cap Lie([G,G])$ (see \cite{benoist-asymptotic1, quint-comptage} \cite[Ch. 7]{benoist-quint-book} for two different proofs)\footnote{It is even conjectured on the last page of \cite{benoist-asymptotic2} that this subgroup has a finitely generated dense subgroup and this was later confirmed by Prasad and Rapinchuk \cite{prasad-rapinchuk-schanuel} modulo the Schanuel conjecture from transcendental number theory.}. 

In fact in certain cases we show that  $\vec{\lambda}_\mu$ stays away from the boundary of $J(S)$ as $\mu$ varies among probability measures supported on $S$ (see also Remark \ref{rk.example.not.attained}). 

\begin{proposition}\label{confined} Let $S=\{a,b\}$ and $\mu_p:=p\delta_a + (1-p) \delta_b$ for $p\in (0,1)$, where $$a=\begin{pmatrix}
1 & 1\\
0 &1
\end{pmatrix}, b=\begin{pmatrix}
1 & 0 \\
1 & 1
\end{pmatrix}.$$ Then $J(S)=[0,\log R(S)]$ is a closed inverval of positive length, but there is $R'(S)>0$ with $R'(S)<R(S)$ such that $(0,\log R'(S)]$ is the set of values attained by the Lyapunov exponent $\vec{\lambda}_{\mu_p}=\lambda_1(\mu_p)$ as $p$ varies in $(0,1)$.
\end{proposition}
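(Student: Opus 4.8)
First I would determine the shape of $J(S)$. A direct computation gives $ab^{-1}a=\left(\begin{smallmatrix}0&1\\-1&0\end{smallmatrix}\right)$, so $\langle S\rangle=\SL_2(\Z)$ is Zariski-dense in $G=\SL_2(\R)$ and Theorems~\ref{joint}, \ref{body}, \ref{interior} all apply. Identify $\a^+$ with $[0,+\infty)$ via $x\mapsto x_1$. Then $J(S)$ is a compact convex subset of $[0,+\infty)$, hence a closed interval; its left endpoint is $0$ because $\tfrac1n\kappa(a^n)=\tfrac1n\log\|a^n\|\to0$, and its right endpoint is $\log R(S)$ by the formula $\log R(S)=\max\{x_1:x\in J(S)\}$ from the introduction. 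As $\tfrac12\lambda(ab)=\log\varphi>0$ with $\varphi=\tfrac{1+\sqrt5}{2}$ (since $ab$ has spectral radius $\varphi^2$) and $\tfrac1n\lambda(S^n)\subset J(S)$ by Theorem~\ref{main2}, the interval $J(S)=[0,\log R(S)]$ has positive length; its relative interior is $(0,\log R(S))$. (One has in fact $R(S)=\varphi$, but I will not need this.)

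Fix $p\in(0,1)$. Since $\mathrm{supp}\,\mu_p=\{a,b\}$ generates the non-compact, strongly irreducible group $\SL_2(\Z)$, Furstenberg's theorem gives $\lambda_1(\mu_p)>\lambda_2(\mu_p)$, and since $\lambda_1+\lambda_2=\int\log|\det|\,d\mu_p=0$ this yields $\vec\lambda_{\mu_p}=\lambda_1(\mu_p)>0$. Because $\langle S\rangle$ is Zariski-dense, Theorem~\ref{interior} places $\lambda_1(\mu_p)$ in the relative interior of $J(S)$; that is, $0<\lambda_1(\mu_p)<\log R(S)$ for every individual $p$.

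Next I would control $p\mapsto\lambda_1(\mu_p)$ near the endpoints. Every product of elements of $S$ has norm $\ge|\det|^{1/2}=1$, so $\lambda_1(\nu)\ge0$ for every probability measure $\nu$ on $S$, while $\lambda_1(\delta_a)=\lambda_1(\delta_b)=0$; upper semicontinuity of the top Lyapunov exponent (Furstenberg--Kifer; see e.g.\ \cite{bougerol-lacroix,benoist-quint-book}) then forces $\lambda_1(\mu_p)\to0$ as $p\to0$ and as $p\to1$. On $(0,1)$ the map $p\mapsto\lambda_1(\mu_p)$ is continuous (Furstenberg--Kifer again, using strong irreducibility). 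It therefore attains its supremum at some interior $p_0\in(0,1)$; define $R'(S):=\exp(\lambda_1(\mu_{p_0}))=\exp(\sup_{p\in(0,1)}\lambda_1(\mu_p))$. By the second paragraph, applied at $p_0$, $R'(S)<R(S)$; and $R'(S)\ge\exp(\lambda_1(\mu_{1/2}))>1>0$. Finally, $\{\lambda_1(\mu_p):p\in(0,1)\}$ is an interval by continuity and connectedness, its supremum $\log R'(S)$ is attained at $p_0$, its infimum is $0$ by the endpoint behaviour, and $0$ is not attained because $\lambda_1(\mu_p)>0$ throughout. Hence this set equals $(0,\log R'(S)]$, which is the assertion.

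The point that needs care — the main conceptual obstacle rather than a technical one — is that Theorem~\ref{interior} only yields $\lambda_1(\mu_p)<\log R(S)$ for each fixed $p$, not a uniform gap; the conclusion $R'(S)<R(S)$ requires in addition that the supremum over $p$ be attained, which is exactly where the endpoint vanishing $\lambda_1(\mu_p)\to0$ is used. If one wants a uniform bound directly, it can also be obtained without Theorem~\ref{interior}: taking an extremal (Barabanov) norm $\|\cdot\|_*$ for the irreducible family $S$ and $k_0$ with $\|a^{k_0}\|_{*,\mathrm{op}}\le\tfrac12R(S)^{k_0}$ (possible since $a$ is parabolic and $R(S)>1$), one splits a random product along its maximal runs of $a$'s to get $\lambda_1(\mu_p)\le\log R(S)-\tfrac{p^{k_0}}{k_0}\log2$, and the transpose symmetry $b=a^{\mathsf T}$ (which gives $\lambda_1(\mu_p)=\lambda_1(\mu_{1-p})$) upgrades this to $\lambda_1(\mu_p)\le\log R(S)-\tfrac{\log2}{k_0 2^{k_0}}$ for all $p\in(0,1)$.
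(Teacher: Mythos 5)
Your proof is correct and follows essentially the same route as the paper's: continuity of $p\mapsto\vec{\lambda}_{\mu_p}$ from Hennion/Furstenberg--Kifer, strictness $\vec{\lambda}_{\mu_p}\in(0,\log R(S))$ from Theorem~\ref{interior}, vanishing at the endpoints, and an extreme-value/connectedness argument to identify the range as $(0,\log R'(S)]$. The only cosmetic differences are that the paper proves the endpoint limit $\phi_S(p)\to 0$ as $p\to1^-$ by hand via the explicit subadditivity bound $\mathbb{E}_p[\log\|Y_n\|]\le p^n\log\|a^n\|+(1-p^n)\log\max_{g\in S^n}\|g\|$ and then invokes the transpose symmetry $\phi_S(p)=\phi_S(1-p)$ for the other endpoint, whereas you cite upper semicontinuity directly and treat both endpoints symmetrically (the symmetry $b=a^{\mathsf T}$ is used by you only in the Barabanov-norm aside); these amount to the same subadditivity fact. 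Your final remark giving an explicit uniform gap via an extremal norm is a genuine, pleasant addition not present in the paper, though it is not needed for the statement as written.
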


It would be interesting to determine how general a phenomenon this is. It does seem at least compatible with the idea from \cite{bousch-mairesse} that only sequences of low complexity can ever realize the joint spectral radius. This occurs in particular for every point on the \emph{positive boundary} of $J(S)$ (the points $x$ on the boundary of $J(S)$ for which there is a linear form $\ell$ on $\a$, which is a positive linear combination of fundamental weights, such that $\ell(y)\leq \ell(x)$ for every $y \in J(S)$).

If we relax the i.i.d.\ condition and allow ourselves to consider all stationary ergodic processes on the entire shift space $S^{\N}$, then it turns out that every point in the relative interior of the joint spectrum $J(S)$ can be realized as a Lyapunov vector:

\begin{theorem}[Lyapunov spectrum vs.\ joint spectrum]\label{lyapspec} Keep the assumptions of Theorem \ref{joint}. Let $x$ be a point in the relative interior of $J(S)$. Then there is an ergodic stationary process $(g_n)_{n \geq 1}$ on the shift space $S^{\N}$ such that almost surely
$$\frac{1}{n}\kappa(g_1\cdot \ldots \cdot g_n) \underset{n \to +\infty}{\longrightarrow} x.$$
\end{theorem}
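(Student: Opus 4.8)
\emph{Overview and reductive generality.} The plan is to (1) reduce $x$ to an \emph{exact} convex combination of Jordan vectors; (2) realise each Jordan vector, and approximate the combination, by ergodic block--Bernoulli measures built from Schottky families; (3) concatenate these with rapidly growing block lengths to land exactly on $x$. I work in the reductive generality of Theorem~\ref{joint}, inside the Weyl chamber $\a^+$, using the fundamental representations of $G$ in place of exterior powers (for $G=\GL_d$ one substitutes $\Lambda^k$).

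\emph{Step 1: reduction to Jordan vectors.} By Theorem~\ref{joint} and \eqref{closure} the Jordan projections $\tfrac1n\lambda(w)$, $w\in S^n$, $n\ge1$, are dense in $J(S)$, which by Theorem~\ref{body} is a convex body in its affine hull. Since a point in the relative interior of a convex body lies in the relative interior of the convex hull of finitely many points from any dense subset --- take a small cross-polytope around $x$ and move its vertices onto the dense set, the support function moving by less than the inradius --- I can write, \emph{exactly},
\[
x=\sum_{i=1}^{m}t_i\,y_i,\qquad t_i>0,\ \textstyle\sum_i t_i=1,\ y_i=\tfrac1{n_i}\lambda(w_i),\ w_i\in S^{n_i},
\]
with every vertex appearing with strictly positive weight. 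Passing to $v_i:=w_i^{N/n_i}$, $N=\operatorname{lcm}(n_i)$, I may assume all the $v_i$ have a common length $N$ with $\tfrac1N\lambda(v_i)=y_i$.

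\emph{Steps 2--3: Schottky blocks and approximate realisation.} Fix $\eps>0$. Using the machinery behind Theorem~\ref{joint} --- the device of \cite{AMS} turning a large element of $\langle S\rangle$ into a simultaneously proximal one by left multiplication by a fixed finite set, so that its Cartan vector lies within $O(1)$ of the Jordan vector of the product --- together with Benoist's Schottky families \cite{benoist-asymptotic1}, I would replace each $v_i$ by a longer word $\tilde v_i\in S^{N_\eps}$ with $\bigl|\tfrac1{N_\eps}\lambda(\tilde v_i)-y_i\bigr|<\eps$ and $\{\tilde v_i\}_{i=1}^m$ a Schottky family; the latter yields a constant $C=C(\eps)$ with
\[
\Bigl\|\,\kappa(\tilde v_{i_1}\cdots\tilde v_{i_k})-\textstyle\sum_{j=1}^{k}\lambda(\tilde v_{i_j})\,\Bigr\|\le C
\]
for \emph{every} word $i_1\cdots i_k$, uniformly in $k$. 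Let $(\xi_j)_{j\ge1}$ be i.i.d.\ with $\P(\xi_j=i)=t_i$ and $\mathbb{P}_\eps$ the law on $S^{\N}$ of $\tilde v_{\xi_1}\tilde v_{\xi_2}\cdots$; this is block--Bernoulli, hence ergodic, and by the displayed bound and the strong law of large numbers $\tfrac1n\kappa(g_1\cdots g_n)\to x_\eps:=\tfrac1{N_\eps}\sum_i t_i\lambda(\tilde v_i)$ almost surely, with $|x_\eps-x|<\eps$. In particular every Jordan vector (take $m=1$) is exactly an ergodic Lyapunov vector, and every relative-interior point of $J(S)$ is within $\eps$ of one.

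\emph{Step 4 and the main obstacle.} To realise $x$ \emph{exactly}, take $\eps_k\downarrow 0$ and rapidly increasing $L_k$ with $L_k\gg L_1+\dots+L_{k-1}$, and build a single ergodic measure $\nu$ on $S^{\N}$ supported on a Moran-type set whose points are successive ``epochs'', the $k$-th of length $\asymp L_k$ being a typical $\mathbb{P}_{\eps_k}$-word, with bounded connecting words at epoch boundaries chosen so the Schottky cancellation bound persists across epochs. Since each epoch dominates all earlier ones in length and contributes $\approx L_k x_{\eps_k}$ to $\kappa$, the almost sure limit of $\tfrac1n\kappa(g_1\cdots g_n)$ under $\nu$ is $\lim_k x_{\eps_k}=x$, so $\vec\lambda_\nu=x$. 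I expect the two genuinely dynamical points to be the main difficulties: first, the uniform, word-length-independent comparison between the Cartan projection of a product of Schottky blocks and the sum of the Jordan vectors of its factors (the multiplicative heart, where the proximality/transversality estimates from the proof of Theorem~\ref{joint} are indispensable, and the reason one cannot simply use the $v_i$ directly); second, arranging the Step~4 concatenation so the resulting measure is simultaneously stationary, ergodic, and still obeys the cancellation bound across epoch boundaries --- a uniquely-ergodic Moran coding handles this but must be set up with care. An alternative to Step~4, bypassing the symbolic construction, would be to tilt the i.i.d.\ law on $S$ by the large-deviation rate function of \eqref{LDP}, finite at $x$ since $x\in J(S)$ by \cite[Theorem~5.1]{Sert.LDP}, and to check that the resulting Markovian change of measure is ergodic with Lyapunov vector $x$, in the spirit of G\"artner--Ellis.
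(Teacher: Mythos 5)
Your Steps~1--3 parallel the paper's strategy quite closely: the paper's Lemma~\ref{lemma.render.schottky.family} performs exactly your Schottkyfication (choose points $x_1,\dots,x_t$ in the relative interior surrounding $x$, realise each by $g_i\in S^{n_i}$ with $\tfrac1{n_i}\lambda(g_i)$ near $x_i$ and $\{g_i\}$ an $(r,\eps)$-Schottky family), and the paper's suspension/Kakutani-skyscraper construction is the same block-Bernoulli device you use. A small but genuine error along the way: your displayed bound $\bigl\|\kappa(\tilde v_{i_1}\cdots\tilde v_{i_k})-\sum_{j}\lambda(\tilde v_{i_j})\bigr\|\le C$ uniformly in $k$ is false; Proposition~\ref{Best} only gives an error $C_r\cdot k$ that grows linearly in the number of Schottky blocks. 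This is harmless because you divide by $n\asymp k N_\eps$ and win a factor $1/N_\eps$, but as written the claim is wrong.

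The real gap is Step~4, and you half-acknowledge it. Your ``Moran-type'' epoch concatenation cannot yield a \emph{stationary} measure with epoch-dependent statistics: shift-invariance forces the frequency statistics of a $\nu$-typical sequence to be time-independent, so a measure whose typical points ``look like $\mathbb P_{\eps_1}$ for $L_1$ symbols, then like $\mathbb P_{\eps_2}$ for $L_2$ symbols, \dots'' is a contradiction in terms. What this construction actually produces is a \emph{single} sequence $b\in S^{\N}$ with $\tfrac1n\kappa(b_1\cdots b_n)\to x$ (which is the last clause of Theorem~\ref{joint}, already proved in \S3), not an ergodic measure. Passing to the orbit closure and hoping for unique ergodicity is a Jewett--Krieger--type programme that is far from automatic and is nowhere near being carried out in your sketch. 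Your alternative via tilting faces the obstruction of Proposition~\ref{confined}: tilting the i.i.d.\ law cannot in general reach every interior point (and turning the tilt into an honest ergodic ``Markovian'' measure, as you propose, is a genuine construction you have not done).

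The paper closes this gap with Brouwer's fixed point theorem, which is the idea missing from your argument. After the Schottky/block-Bernoulli setup, the paper parametrises the block-Bernoulli measures by $s\in T_t$ (the simplex on $t=\rk(G)+1$ letters), proves that $s\mapsto\vec\lambda(\theta_s)$ is continuous on $T_t$ using Proposition~\ref{prop.lyapunov.continuous} (continuity of Lyapunov vectors under $G$-domination), and shows via Proposition~\ref{Best} that this map is uniformly close to the affine map $s\mapsto\sum_i s_i\tfrac{\lambda(g_i)}{n_i}$, whose image already contains a neighbourhood of $x$. Brouwer (in the form of \cite[Lemma~7.23]{rudin}) then gives $s$ with $\vec\lambda(\theta_s)=x$ \emph{exactly}, with no need to fine-tune the block words, and the corresponding $\theta_s$ is ergodic because it is a factor of an ergodic suspension. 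This is cleaner than both of your Step~4 alternatives, and it is the device you should incorporate.
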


In \cite{bochi-icm} Bochi calls the set of all such limits the Lyapunov spectrum of $S$. So we see that Bochi's Lyapunov spectrum lies in between the joint spectrum and its interior. Note that the Lyapunov spectrum is not always closed as not every point on the boundary of $J(S)$ belongs to it, see e.g. \cite[Remark 1.13]{bochi-morris} (and \S \ref{subsub.extremal} for an explicit example). 

In the following result, using subadditive ergodic theory, we show that every exposed point on the positive boundary of $J(S)$ lies in the Lyapunov spectrum.

\begin{proposition}[Exposed points and Lyapunov spectrum]\label{prop.extremal.points}
Let $G$ be a connected real reductive group.\\[1pt]
(1) Let $S\subset G$ be a $G$-dominated compact set. Then, every extremal point of $J(S)$ belongs to the Lyapunov spectrum.\\[2pt]
(2) Let $S \subset G$ be a compact subset generating a Zariski-dense semigroup in $G$. Then, every exposed point of $J(S)$ lying on the positive boundary belongs to the Lyapunov spectrum.
\end{proposition}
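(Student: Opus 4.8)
\textbf{Proof plan for Proposition \ref{prop.extremal.points}.}

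The plan is to extract, for a given extremal (resp.\ exposed) point $x$ of $J(S)$, a sequence of finite words in $S$ whose Cartan projections approximate $x$ at all scales, and to build from them an ergodic stationary process realizing $x$ as its Lyapunov vector. The natural tool is the variational/subadditive ergodic machinery: for a continuous linear functional $\ell$ on $\a$ one has $\max\{\ell(y) : y \in J(S)\} = \lim_n \frac1n \max_{g \in S^n} \ell(\kappa(g))$, and by the subadditive ergodic theorem this maximum is also $\sup_\nu \ell(\vec\lambda_\nu)$ over $T$-invariant ergodic measures $\nu$ on $S^\N$ (here $\vec\lambda_\nu \in \a^+$ is the a.s.\ limit $\frac1n\kappa(g_1\cdots g_n)$, which exists by Kingman). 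The point is that such a supremum over the weak-$*$ compact set of invariant measures is attained, and the set of maximizers is a nonempty compact convex face; an ergodic measure in it (ergodic measures are the extreme points of the simplex of invariant measures, so some ergodic $\nu$ maximizes) has $\vec\lambda_\nu$ on the face $\{y \in J(S): \ell(y) = \max_{J(S)}\ell\}$ of the joint spectrum.

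For part (2): if $x$ is an exposed point of $J(S)$ lying on the positive boundary, there is a linear form $\ell$ on $\a$ which is a positive combination of fundamental weights with $\ell(y) < \ell(x)$ for all $y \in J(S)\setminus\{x\}$. Positivity of the coefficients means $\ell\circ\kappa$ is subadditive on $G$ (each fundamental weight composed with $\kappa$ is subadditive, since $\kappa$ of a product is dominated coordinatewise in the appropriate dominance order — this is exactly the submultiplicativity of $\|\Lambda^k g\|$), so the above variational principle applies to $\ell$. Any ergodic maximizer $\nu$ then has $\ell(\vec\lambda_\nu) = \max_{J(S)}\ell = \ell(x)$, and since $x$ is the \emph{unique} maximizer in $J(S) \supset \{\vec\lambda_\nu\}$, we get $\vec\lambda_\nu = x$. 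Hence $x$ lies in the Lyapunov spectrum. One subtlety to address: we need $\ell\circ\kappa$ to be genuinely subadditive, not merely up to bounded error; this is clean because the fundamental weights of a reductive $G$ realized inside $\GL_N$ via wedge and other fundamental representations give $\ell(\kappa(g)) = \log\|\rho(g)\|$ for a representation $\rho$ with highest weight $\ell$, which is exactly submultiplicative.

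For part (1): here $S$ is $G$-dominated but not assumed Zariski-dense, so by Theorem \ref{dominated-theorem} the joint spectrum $J(S)$ is still a well-defined convex body, now lying in the open chamber $\a^{++}$. Every extremal point $x$ of the convex body $J(S)$ is exposed by a linear form $\ell$ on $\a$ (in finite dimension, extremal points of a convex body need not be exposed, but they are limits of exposed points; however since we only need membership in the Lyapunov spectrum and — wait). More carefully: I would use that for a convex body every extremal point is a limit of exposed points (Straszewicz), realize each nearby exposed point $x_m$ via an ergodic $\nu_m$ as in the argument above (now $\ell$ is an arbitrary linear form, and $\ell\circ\kappa$ is subadditive up to a bounded additive error, which suffices for the Kingman/variational argument to identify $\lim \frac1n\ell(\kappa(g_1\cdots g_n)) = \ell(\vec\lambda_\nu)$ and for this to equal $\max_{J(S)}\ell$), and then pass to a weak-$*$ limit of a subsequence of the $\nu_m$. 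The limit measure $\nu_\infty$ is invariant with $\vec\lambda_{\nu_\infty} = x$ by continuity of $\nu\mapsto\vec\lambda_\nu$ under $G$-domination (Theorem \ref{continuity} and the remarks around it give continuity of the Lyapunov vector in this dominated regime); decomposing $\nu_\infty$ into ergodic components, almost every component has Lyapunov vector equal to $x$ because $x$ is extremal in $J(S)$ and $\vec\lambda_{\nu_\infty}$ is the barycenter of the Lyapunov vectors of the components — so at least one ergodic component works. This barycenter/extremality step is what forces $x$, not just a point with the same $\ell$-value, to be attained, and it is the reason extremality (rather than merely being a boundary point) is needed.

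The main obstacle I expect is the last point of part (1): ensuring that passing to a weak-$*$ limit of the $\nu_m$ (or decomposing into ergodic components) does not destroy the convergence $\frac1n\kappa(g_1\cdots g_n)\to x$. The cleanest route is to invoke the continuity of the Lyapunov vector in the $G$-dominated setting together with the extremal-point/barycenter argument, rather than trying to track the approximating words through the limit directly; alternatively one can build $\nu_\infty$ by hand as a suitable shift-invariant measure concentrated on sequences that are long concatenations of the optimal finite words for $\ell_m\to\ell$, but this requires a diagonal construction and care that the resulting Lyapunov vector is exactly $x$. Either way, the crux is that extremality of $x$ in the convex body $J(S)$ upgrades "some invariant measure has Lyapunov vector with the right $\ell$-value" to "some ergodic measure has Lyapunov vector exactly $x$".
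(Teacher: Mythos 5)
Your proof of part (2) is essentially the paper's proof: find a linear form $\ell$ which is a positive combination of fundamental weights exposing $x$, observe that $\ell(\kappa(\cdot))$ is subadditive via $(\ref{param})$ and submultiplicativity of $\|\rho(\cdot)\|$ in the corresponding representation, realize $\ell(x)$ along a sequence using Theorem \ref{joint}, and apply the subadditive variational principle (the paper cites \cite[Lemma A.6]{morris-mather-sets}) plus ergodic decomposition to obtain an ergodic maximizer, which must equal $x$ by exposedness. Same argument, same technical hinge.

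Your proposed route for part (1) is genuinely different from the paper's and noticeably more complicated. The paper does \emph{not} go through Straszewicz, approximating exposed points, or weak-$*$ limits. Instead it exploits that under $G$-domination the Lyapunov vector admits an integral representation $\vec{\lambda}_\mu = \int \phi \, d\mu$ for a fixed continuous map $\phi: S^\N \to \a^+$ (coming from the dominated splitting; the paper cites \cite[(1.7)]{bochi-rams}). This immediately makes $\mu \mapsto \vec{\lambda}_\mu$ a continuous \emph{affine} map on all of $\mathcal{P}(S^\N)$ (not merely continuous on $\mathcal{P}_{erg}(S^\N)$), so its image is a compact convex set. By Theorem \ref{lyapspec} this image contains the relative interior of $J(S)$ and is closed, hence equals $J(S)$. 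An extremal point $x$ of $J(S)$ then has some invariant preimage $\mu$, and affineness plus ergodic decomposition force $\pi$-a.e. ergodic component of $\mu$ to have Lyapunov vector exactly $x$. This is clean: one known fact about dominated cocycles and then soft convexity. Your route, by contrast, asserts at its key step that $\ell \circ \kappa$ is subadditive up to bounded error for an \emph{arbitrary} linear form $\ell$ under $G$-domination — this is actually true (via Propositions \ref{prop.domination.and.schottky}, \ref{Best}, and Lemma \ref{loxodromy.implies.Cartan.close.to.Jordan} giving $\kappa(gh) = \kappa(g) + \kappa(h) + O(1)$ on the dominated semigroup), but you don't justify it and it is not a statement the paper records. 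Your weak-$*$ limit step also needs continuity and affineness of $\nu \mapsto \vec{\lambda}_\nu$ on \emph{non-ergodic} invariant measures, which is exactly what the integral representation delivers — Proposition \ref{prop.lyapunov.continuous} alone is stated only for ergodic measures. You do gesture at "the cleanest route is to invoke the continuity of the Lyapunov vector in the $G$-dominated setting together with the extremal-point/barycenter argument", which is precisely the paper's argument; had you developed that line (with the Bochi-Rams representation as the source of both continuity and affineness) rather than the Straszewicz approximation, you would have landed on the paper's proof and avoided the unverified almost-additivity claim.
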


In Section \ref{section.non.polygonal.jointspectrum} we will discuss some concrete examples in $G=\SL_2(\R) \times \SL_2(\R)$. In particular, making use of the work of Bousch-Mairesse \cite{bousch-mairesse}, Jenkinson-Pollicott \cite{jenkinson-pollicott}, Morris-Sidorov \cite{morris-sidorov} and  Morris and Oregon-Reyes \cite{OregonReyes} on counter-examples to the so-called Lagarias-Wang finiteness conjecture, we give an example of a finite set $T \subset G$ for which the joint spectrum in $\R_+^2$ is a convex body with non-differentiable boundary, namely:

\begin{proposition}\label{notpoly} Let $G=\SL_2(\R) \times \SL_2(\R)$.  There is a finite set $T \subset G$  generating a Zariski-dense semi-group, such that its joint spectrum $J(T)$ is a convex body whose boundary is not piecewise $C^1$, and in particular not a polygon.
\end{proposition}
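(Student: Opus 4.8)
The plan is to build the set $T$ out of matrices in $\SL_2(\R)$ whose joint spectral radius data is known to be non-polyhedral, and to use the product structure of $G=\SL_2(\R)\times\SL_2(\R)$ to fold two such one-dimensional spectra into a two-dimensional convex body with a non-differentiable boundary. Concretely, let $S_1\subset\SL_2(\R)$ be a finite set generating a Zariski-dense semigroup for which the joint spectral radius is \emph{not} attained by a periodic sequence and, more strongly (following Bousch--Mairesse \cite{bousch-mairesse}, Jenkinson--Pollicott \cite{jenkinson-pollicott}, Morris--Sidorov \cite{morris-sidorov}, Morris--Oregon-Reyes \cite{OregonReyes}), for which the normalized quantity $\frac1n\max_{g\in S_1^n}\log a_1(g)$ exhibits a genuinely ``fractal'' dependence on a parameter — the standard construction being a one-parameter family $\{a,\,tb\}$ or a pair obtained by rescaling, whose joint spectral radius as a function of the rescaling parameter has infinitely many intervals of differing local behaviour. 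The key point I want to extract from that literature is that one can produce a \emph{single} finite set $S_1$ whose ``Lyapunov-type'' optimization function is not piecewise affine; this is exactly the obstruction underlying the failure of the Lagarias--Wang finiteness conjecture.

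First I would set up the dictionary. By Theorem \ref{joint}, for $S_1\subset\SL_2(\R)$ Zariski-dense the joint spectrum $J(S_1)$ is a closed convex subset of $\a^+=\{x\geq0\}$, hence an interval $[r_{\mathrm{sub}},r_{\max}]$, and $r_{\max}=\log R(S_1)$. A one-dimensional joint spectrum cannot itself be non-polygonal, so the non-smoothness must be manufactured in the two-dimensional Weyl chamber of $\SL_4(\R)$ or, more simply, directly in the $\R_+^2$ chamber of $G=\SL_2(\R)\times\SL_2(\R)$. The natural move: take $T=\{(a_i,b_j)\}$ ranging over suitable pairs, so that by the product structure $\kappa\big((g,h)\big)=(\kappa(g),\kappa(h))\in\R_+\times\R_+$ and hence $\frac1n\kappa(T^n)$ is a subset of $\frac1n\kappa(S_1^n)\times\frac1n\kappa(S_2^n)$, but \emph{not} the full product — one gets the set of pairs $(\tfrac1n\log a_1(w_1(\b)),\tfrac1n\log a_1(w_2(\b)))$ as $\b$ ranges over length-$n$ words, where $w_1,w_2$ are the two coordinate evaluation maps. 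Choosing the two factors cleverly (for instance one factor an ``honest'' family and the other a fixed rescaling, or both equal), the upper-right boundary of $J(T)$ becomes the graph of the concave function $t\mapsto \sup\{y: (x,y)\in J(T),\ x\le t\}$, and I want to arrange that this function is precisely (an affine image of) the non-piecewise-$C^1$ optimization function coming from the Bousch--Mairesse / Jenkinson--Pollicott / Morris--Oregon-Reyes examples.

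The key steps, in order: (i) cite and state precisely which finite-set example from \cite{bousch-mairesse,jenkinson-pollicott,morris-sidorov,OregonReyes} I use, and record the precise regularity failure of the associated optimization functional — e.g. that $p\mapsto R(\{a,pb\})$ (or the analogous Lyapunov maximization) is strictly convex on no interval, or has a dense set of non-differentiability points, or is a devil's-staircase-type function; (ii) encode a suitable such functional as a \emph{linear} functional on the $3$-dimensional Weyl chamber evaluated on $J(\rho(T))$ for the representation $\rho:G\to\SL_4(\R)$ of Figure \ref{fig.folding}, using the remark in the introduction that $\log$ of the joint spectral radius of any representation $\rho$ is $\max\{\langle\text{highest weight},x\rangle: x\in J(S)\}$; by varying the highest weight over a pencil of fundamental-weight combinations, the support function of $J(T)$ is being probed, and non-differentiability of the support function at a dense set of directions is equivalent to $\partial J(T)$ having a dense set of conical (non-$C^1$) points; (iii) assemble this: show $\partial J(T)$ fails to be piecewise $C^1$, which a fortiori means $J(T)$ is not a polygon; (iv) verify Zariski-density of $\langle T\rangle$ in $G$, which for an explicit finite $T$ is routine (exhibit elements generating a Zariski-dense subsemigroup in each factor plus one element mixing the two factors' determinant-type invariants so that the product is not contained in the diagonal or any proper subgroup — here one uses that a product $\SL_2\times\SL_2$ has few algebraic subgroups and a generic finite set escapes all of them).

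The hard part will be step (ii)–(iii): turning a known \emph{scalar} irregularity result (non-smooth dependence of a joint spectral radius on a parameter) into a \emph{geometric} statement about the boundary of a two-dimensional convex body, and controlling it along a \emph{continuum} of directions rather than a single one. The literature gives one non-smooth function of one real parameter; I need the entire boundary arc of $J(T)$ to inherit this, which requires realizing a one-parameter family of matrix pairs \emph{simultaneously} inside one fixed finite set $T$ — the standard trick is that the extremal words for direction $\theta$ in $\partial J(T)$ are exactly the joint-spectral-radius-optimal words for the rescaled system with parameter $=\tan\theta$ (or similar), so a single $T$ secretly contains the whole family. Making that correspondence precise, and checking that the pathology is not smoothed out by the folding map $\phi$ of Theorem \ref{body} (it is not, since $\phi$ is piecewise affine and a piecewise-affine image of a non-piecewise-$C^1$ boundary is still non-piecewise-$C^1$, possibly on a slightly different set of directions), is the crux. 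Everything else — convexity of $J(T)$, its being a body, Zariski-density — is supplied by Theorems \ref{joint}, \ref{body} and a direct check.
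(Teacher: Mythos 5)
Your high-level strategy is the same as the paper's: start from a full Sturmian pair in $\SL_2(\R)$ \`a la Jenkinson--Pollicott/Morris/Oreg\'on-Reyes, and exploit the duality (a Legendre-transform relation) between the rescaling parameter $t$ in $\{a,tb\}$ and the direction in which one probes the boundary of $J(T)\subset\R_+^2$. You have correctly identified what you call the crux; the problem is that you have not closed it, and the part you leave open is where nearly all of the content lives.

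Concretely, what is missing is the reduction that makes the two-dimensional boundary of $J(T)$ computable in terms of the one-parameter family $\{a,tb\}$. The paper chooses a very specific four-element set $T=\{(1,a),(a,a),(b,b),(b,a)\}$ and then proves a monotonicity reduction: using hyperbolic-geometry estimates on translation lengths of products (the paper's Lemmas 6.3--6.6, which require $a,b$ hyperbolic with disjoint axes in the same direction and $\tau_b\ge\tau_a+2d+1$), one shows that replacing a letter $(b,a)$ by $(b,b)$ increases the second coordinate of the Jordan vector without changing the first, and replacing $(a,a)$ by $(1,a)$ decreases the first coordinate without changing the second. This forces the upper boundary $y=f(x)$ of $J(T)$ to be exactly the graph of the ratio-constrained joint spectral radius function $I(\alpha)$ of the \emph{two-letter} system $\{(1,a),(b,b)\}$, which in turn is identified with $\min_{t>0}\{\log R(\{a,tb\})-\alpha\log t\}$. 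Only after that Legendre identity is in place does the Jenkinson--Pollicott property --- that the parameter map $p$ is locally constant exactly on rational values, so $p^{-1}(\alpha)$ is an interval of positive length precisely when $\alpha\in\Q$ --- translate into $I'(\alpha^-)\neq I'(\alpha^+)$ at every rational $\alpha$. Your sketch never pins down $T$, never states the monotonicity lemma, and never derives the Legendre identity; it merely asserts that ``a single $T$ secretly contains the whole family,'' which is exactly the point that needs proof.

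A secondary issue: your step (ii)--(iii) detours through the representation $\rho:G\to\SL_4(\R)$ and fundamental weights. This is unnecessary and adds a genuine complication, since the piecewise-affine folding map could a priori create or destroy non-smooth points (your parenthetical assurance to the contrary is not obviously correct --- a piecewise-affine map can collapse a corner or create one at a fold). The paper avoids this entirely by working directly in the rank-$2$ Weyl chamber $\R_+^2$ of $\SL_2(\R)\times\SL_2(\R)$, where the support function of $J(T)$ is probed by linear forms on $\R^2$ and no folding occurs. Finally, the precise regularity statement you want is not ``devil's staircase'' or ``dense set of conical singularities along all directions'' in the abstract; it is the very specific dichotomy that $I$ is differentiable at $\alpha$ iff $\alpha\notin\Q$, and that requires the exact form of the Jenkinson--Pollicott parameter-map theorem, not just any finiteness counterexample.
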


In fact, part of the boundary in this example will have a dense set of points of non-differentiability. See Figure \ref{fig.non-diff}. The Lagarias-Wang finiteness conjecture asked whether the joint spectral radius $R(S)$ of a finite set of matrices $S$ can always be realized by a periodic sequence. Computer simulations indicate that finiteness does occur empirically for most choices of $S$. However the conjecture was refuted by Bousch-Mairesse in \cite{bousch-mairesse} using the notions of Sturmian sequences and measures, with counter-examples in $\SL_2(\R)$ given later by I. Morris, see \cite{OregonReyes}.  The analogue of the finiteness conjecture in the setting of the joint spectrum would be to stipulate that $J(S)$ is always a polyhedron in $\a^+$ when $S$ is finite. And computer simulations seemed to confirm this. However the above proposition refutes it.\\

\noindent \emph{Remark.} Most of the techniques of the present paper are generalizable to the case when the field $\R$ or $\C$ is replaced by a non-archimedean local field $k$, in the spirit of \cite{benoist-asymptotic1,Quint.cones}. Theorems \ref{main1}, \ref{main2}, \ref{joint} hold without change. In Theorem \ref{body} convexity holds, but the joint spectrum can lie inside a wall of the Weyl chamber and be of empty interior. The other results require appropriate adjustments, which we will not discuss in this paper.

\bigskip

The paper is organized as follows. In Section \ref{section.preliminaries} we recall some preliminary material regarding reductive groups,  proximal transformations and the work of Abels-Margulis-Soifer and Benoist. In Section \ref{limit-section} we discuss the definition of the joint spectrum and prove Theorem \ref{joint} as well as Theorems \ref{main1} and \ref{main2} from this introduction. In Section \ref{properties} we complete the proofs of Theorems  \ref{body}, \ref{realization}, \ref{dominated-theorem}, \ref{continuity} and \ref{word-balls}. Section \ref{section5} is devoted to random matrix products and the proof of Theorems  \ref{interior}, \ref{lyapspec} and Propositions \ref{confined}, \ref{prop.extremal.points}. In Section \ref{section.non.polygonal.jointspectrum}, we describe some concrete examples in $\SL_2(\R)\times \SL_2(\R)$, discuss the finiteness conjecture and prove Proposition \ref{notpoly}. In the final section, we discuss some further directions of research.

\subsection*{Acknowledgements}
The authors are thankful to Jairo Bochi, Koji Fujiwara, Yves Guivarc'h and Carlos Matheus for several useful discussions and comments, to Weikun He and Oliver Sargent for interesting computer simulations and to an anonymous referee for a careful reading and useful remarks. They also thank the Isaac Newton Institute and WWU M\"{u}nster for their support while part of this work was conducted.  E.B. acknowledges support from ERC grant GeTeMo no.617129. C.S. acknowledges support from SNF grant 200021-152819.

\setcounter{tocdepth}{1}
\tableofcontents

\section{Proximal transformations}\label{section.preliminaries}

\subsection{Dynamics of projective linear maps}\label{dynamics}

Let $V$ be a finite dimensional  real vector space and $\mathbb{P}(V)$ its projective space.  We will work with the \emph{standard distance} on $\P(V)$: $d([x],[y]):=\frac{||x \wedge y||}{||x||.||y||}$, where $x,y \in V\setminus \{0\}$ and  $[x],[y]$ are the lines in $\P(V)$ induced by $x,y$. Here  we have fixed a Euclidean norm $||.||$ on $V$, which naturally induces a Euclidean norm on $\bigwedge^{2} V$. Recall the following definition (see \cite{tits,  benoist-asymptotic1, breuillard-gelander, abels-proximal}):

\begin{definition}[proximal map] A linear map $g \in Hom(V,V)$ is said to be proximal if it has a unique eigenvalue $\alpha$ of maximal modulus. Denote by $v_{g}^{+}$, the line in $\P(V)$ corresponding to the one dimensional eigenspace of $\alpha$ and $H_{g}^{<}$ the supplementary $g$-invariant hyperplane in $\P(V)$.
\end{definition}

We also recall a classical quantification of this definition (see \cite{AMS,Benoist1, breuillard-gelander}).

\begin{definition}[$(r,\eps)$-proximal map]
Let $0 < \eps \leq r$. A proximal element $ g \in End(V) $ is said to be $(r, \eps)$-proximal, if $d(v_{g}^{+},H_{g}^{<})\geq 2r$ and $d(gx,gy) \leq \eps d(x,y)$ for every $x,y \in \P(V)$ such that $d(x,H_g^{<}) \geq \eps$ and $d(y,H_g^{<}) \geq \eps$.
\end{definition}

\begin{remark}\label{Lipschitz.action.lemma} We note that  every $g \in \GL(V)$ gives rise to a Lipschitz transformation of $\mathbb{P}(V)$  with Lipschitz constant $Lip(g) \leq \|\Lambda^{2}g\|.\|g^{-1}\|^{2}.$ This is immediate from the definition of the standard distance.
\end{remark}

In his proof of the Tits alternative \cite[\S 3.8]{tits} Tits gave a simple criterion for a projective transformation to be proximal. The following version of this criterion is from \cite[Lemme 1.2]{benoist-asymptotic2}.

\begin{lemma}[Tits proximality criterion]\label{Tits.criterion.sublemma}
Let $g \in \GL(V)$, $H\subset \mathbb{P}(V)$ be the projective image of an hyperplane, and $x\in \mathbb{P}(V)$. Let $r\geq \eta >0$ be given and denote $B_{H}^{\eta}=\{z \in \mathbb{P}(V)\, | \, d(z,H)\geq  \eta \}$ and $b_{x}^{\eta}=\{z \in \mathbb{P}(V)\, | \, d(z,x)\leq  \eta \}$. Suppose that $d(x,H)\geq 6r$, $gB_{H}^{\eta}\subseteq b_{x}^{\eta}$ and that the restriction of $g$ to $B_{H}^{\eta}$ is $\eta$-Lipschitz. Then $g$ is $(2r,2\eta)$-proximal. Moreover $v^+_g \in b_x^{\eta}$ and $H_g^{<} \subset \P(V)\setminus B_H^{\eta}$.  
\end{lemma}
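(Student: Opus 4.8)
The plan is to realise the dominant eigenline of $g$ as the unique attracting fixed point of a suitable contraction, and then to extract the spectral gap from the derivative of the projective action at that point; this is the mechanism underlying all proximality criteria of Tits type.

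I would begin with three elementary reductions. Since the standard distance on $\P(V)$ never exceeds $1$, the hypothesis $d(x,H)\geq 6r$ forces $r\leq 1/6$, so in particular $\eta\leq r<1$. Next, for $z\in b_x^{\eta}$ one has $d(z,H)\geq d(x,H)-d(x,z)\geq 6r-\eta\geq 5\eta$, whence $b_x^{\eta}\subseteq B_H^{5\eta}\subseteq B_H^{2\eta}\subseteq B_H^{\eta}$. Combining this with the hypotheses, $g$ maps the closed set $B_H^{2\eta}$ into $b_x^{\eta}\subseteq B_H^{5\eta}\subseteq\inte B_H^{2\eta}$, and it is $\eta$-Lipschitz there with $\eta<1$. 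As $\P(V)$ is compact, $B_H^{2\eta}$ is a complete metric space, so the contraction mapping principle yields a unique fixed point $v^{+}$ of $g$ in $B_H^{2\eta}$; moreover $v^{+}=gv^{+}\in b_x^{\eta}$ and $g^{n}z\to v^{+}$ for every $z\in B_H^{2\eta}$. Being a fixed point of the projective action, $v^{+}$ is the line spanned by a (real) eigenvector $v_0$ of $g$, say $gv_0=\alpha v_0$ with $\alpha\neq 0$.

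The heart of the argument is to upgrade this to proximality. Working in the affine chart $u\mapsto[v_0+u]$, $u\in v_0^{\perp}$, which agrees with the standard distance to first order at $v^{+}$, the $g$-action fixes the origin and has derivative there equal to $\tfrac1\alpha\bar g$, where $\bar g$ is the map induced by $g$ on $V/\langle v_0\rangle$. Since $v^{+}$ lies in $\inte B_H^{2\eta}$, on which $g$ is $\eta$-Lipschitz, this derivative has operator norm $\leq\eta$, hence spectral radius $<1$; as the characteristic polynomial of $g$ factors as $(t-\alpha)\,\chi_{\bar g}(t)$, every eigenvalue $\beta$ of $g$ other than this copy of $\alpha$ satisfies $|\beta|\leq\eta|\alpha|<|\alpha|$. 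Thus $\alpha$ is simple and strictly dominant, i.e.\ $g$ is proximal with $v_g^{+}=v^{+}\in b_x^{\eta}$. Letting $H_g^{<}=\P(W)$ be the invariant hyperplane ($W$ the $g$-invariant complement of $\langle v_0\rangle$), I would next check $\P(W)\cap B_H^{\eta}=\emptyset$: if $[w]\in\P(W)\cap B_H^{\eta}$, then $g[w]\in g(B_H^{\eta})\subseteq b_x^{\eta}\subseteq B_H^{2\eta}$, so the forward orbit of $[w]$ remains in $B_H^{2\eta}$ and converges to $v^{+}$, while also remaining in the closed set $\P(W)$; this forces $v^{+}\in\P(W)$, i.e.\ $v_0\in W$, a contradiction. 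Hence $H_g^{<}\subseteq\P(V)\setminus B_H^{\eta}=\{d(\cdot,H)<\eta\}$. A triangle-inequality estimate using $d(x,H)\geq 6r$, $v_g^{+}\in b_x^{\eta}$ and $\eta\leq r$ then gives $d(v_g^{+},H_g^{<})\geq 6r-2\eta\geq 4r$, the first requirement of $(2r,2\eta)$-proximality.

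It remains to verify the Lipschitz requirement, namely that $g$ is $2\eta$-Lipschitz on $\{z:d(z,H_g^{<})\geq 2\eta\}$, and here I expect the main (small) obstacle: relating the $\eta$-neighbourhood of $H$ to the $2\eta$-neighbourhood of $H_g^{<}$. I would show the set above is contained in $B_H^{\eta}$, on which $g$ is even $\eta$-Lipschitz. The point is that $H$ and $H_g^{<}$ are projective hyperplanes with $H_g^{<}\subseteq\{d(\cdot,H)<\eta\}$; writing them as $\P(n_A^{\perp}),\P(n_B^{\perp})$ for unit normals $n_A,n_B$, a direct computation gives $\sup_{\P(n_B^{\perp})}d(\cdot,\P(n_A^{\perp}))=\sqrt{1-\langle n_A,n_B\rangle^{2}}=\sup_{\P(n_A^{\perp})}d(\cdot,\P(n_B^{\perp}))$, so this common supremum $s$ satisfies $s<\eta$ (it is attained, by compactness). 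By the triangle inequality $\{d(\cdot,H)<\eta\}\subseteq\{d(\cdot,H_g^{<})<\eta+s\}\subseteq\{d(\cdot,H_g^{<})<2\eta\}$, equivalently $\{d(\cdot,H_g^{<})\geq 2\eta\}\subseteq B_H^{\eta}$, as needed. Apart from this symmetric comparison of hyperplane neighbourhoods and the routine infinitesimal estimate for the derivative, the argument is the standard ``attracting fixed point $\Rightarrow$ simple dominant eigenvalue'' dichotomy for projective linear maps.
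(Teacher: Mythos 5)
Your proof is correct and takes the same contraction-mapping approach as the paper: locate the attracting fixed point in a $g$-invariant region via the Banach fixed-point theorem, then read off the separation and Lipschitz conditions for $(2r,2\eta)$-proximality. You additionally spell out the three justifications the paper leaves implicit — the derivative/eigenvalue argument upgrading an attracting fixed point of the projective action to proximality of the linear map, the invariance contradiction placing $H_g^{<}$ in $(B_H^{\eta})^c$, and the symmetric hyperplane-neighbourhood estimate giving $B_{H_g^{<}}^{2\eta}\subseteq B_H^{\eta}$ — and these are exactly the right arguments; the only cosmetic difference is that you run the contraction on $B_H^{2\eta}$ where the paper uses $b_x^{\eta}$.
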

\begin{proof}
Since $r\geq \eta$ and $1 \geq d(x,H)\geq 6r$, it follows from the assumptions that $b_{x}^{\eta} \subset B_{H}^{\eta}$ and hence $gb_{x}^{\eta} \subseteq b_{x}^{\eta}$. Since $g$ is moreover $\eta$-Lipschitz on $b_{x}^{\eta}$ and $1>\eta$, $g$ admits a unique fixed point in $b_{x}^{\eta}$. Denote it by $v_{g}^{+}$. It follows that $g$ is proximal and $v_{g}^{+}$ is the unique attracting direction of $g$. Let $H_{g}^{<}$ be the repelling hyperplane of $g$. Clearly, $H_{g}^{<} \subset (B_{H}^{\eta})^{c}$ and hence $d(v_{g}^{+},H_{g}^{<})\geq 6r -2\eta \geq 4r$. For the same reason, $B_{H_g^<}^{2\eta}\subset B_{H}^{\eta}$ and therefore we have $gB_{H_g^<}^{2\eta} \subseteq gB_{H}^{\eta} \subseteq b_{x}^{\eta} \subseteq b_{v_g^+}^{2\eta}$ and the claim follows.
\end{proof}

When $\eps$ is very small, an $(r,\eps)$-proximal map resembles a rank $1$ linear map. This observation is behind the proof of the two statements below. Let $\lambda_1(g)$ be the spectral radius of $g \in Hom(V,V)$, namely the largest modulus of its eigenvalues. 

\begin{lemma}\label{proximal.implies1} For $0 < \eps \leq r$,  there exists a constant $c_{r,\eps} \in (0,1)$ such that, for each $r > 0$, $ \underset{\eps \rightarrow 0}{\lim} \, c_{r,\eps} = 2r$, and for every $(r,\eps)$-proximal endomorphism $g$ of $V$, we have 
\begin{equation*}
c_{r,\eps}\|g\| \leq \lambda_{1}(g) \leq \|g\|.
\end{equation*}
\end{lemma}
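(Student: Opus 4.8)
The upper bound $\lambda_1(g) \le \|g\|$ is standard and holds for every endomorphism: if $\lambda$ is an eigenvalue of $g$ with eigenvector $v$, then $|\lambda|\,\|v\| = \|gv\| \le \|g\|\,\|v\|$. So the content is the lower bound, which asserts that an $(r,\eps)$-proximal map cannot have operator norm much larger than its spectral radius, with the deficiency controlled by $\eps$ and tending to nothing as $\eps \to 0$.

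The plan is to exploit that an $(r,\eps)$-proximal $g$ behaves, on the bulk of $\P(V)$, like a rank-one contraction towards $v_g^+$. First I would decompose $V = \ell \oplus W$, where $\ell$ is the line $v_g^+$ (the $\alpha$-eigenspace, $|\alpha| = \lambda_1(g)$) and $W$ is the $g$-invariant complementary hyperplane $H_g^{<}$; note $\alpha = \lambda_1(g)$ since $g$ is proximal. Writing a general unit vector $u = u_\ell + u_W$ with $u_\ell \in \ell$, $u_W \in W$, I would pick $u$ achieving (nearly) $\|g\| = \|gu\|$, so $gu = \alpha u_\ell + g|_W u_W$. The key point is to choose $u$ so that its line $[u]$ is far from $H_g^{<}$ — say $d([u], H_g^{<}) \ge \eps$ — which by the $(r,\eps)$-proximality forces the $W$-component of $gu$ to be small relative to $\|gu\|$, because $g$ contracts the set $\{d(\cdot, H_g^{<}) \ge \eps\}$ into a tiny neighborhood of $v_g^+$ of size $\eps$. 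Concretely, $d([gu], v_g^+) \le \eps$, and since $d(v_g^+, H_g^{<}) \ge 2r$, this pins $[gu]$ into the region where the $\ell$-component dominates: the angle between $[gu]$ and $\ell$ is at most something like $\eps/(2r - \eps)$ in terms of the standard distance, hence $\|g|_W u_W\| \le \frac{\eps}{2r-\eps}\,|\alpha|\,|\!|u_\ell|\!| \le C\eps \lambda_1(g)$ for a suitable constant. Feeding this back, $\|g\| \le \|gu\| + (\text{error}) \le \lambda_1(g)(1 + C'\eps)/(\text{something} \to 1)$, or more cleanly, $\lambda_1(g) \ge c_{r,\eps}\|g\|$ with $c_{r,\eps}$ an explicit expression built from $r$ and $\eps$ that tends to $2r$ as $\eps \to 0$. (The limiting constant $2r$, rather than $1$, reflects that one only knows $d(v_g^+,H_g^{<}) \ge 2r$, so a unit vector closest to $v_g^+$ among those at distance $\ge \eps$ from $H_g^{<}$ cannot be taken arbitrarily aligned with $\ell$ — its distance to $v_g^+$ can be as large as roughly $1 - 2r$ worth of the standard metric.)

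The main obstacle — really the only delicate point — is the bookkeeping translating the projective contraction statement ``$gB_{H_g^{<}}^{\eps} \subseteq b_{v_g^+}^{\eps}$ and $d(v_g^+, H_g^{<}) \ge 2r$'' into a genuine norm comparison for a cleverly chosen test vector $u$. One has to be careful that the unit vector nearly realizing $\|g\|$ can indeed be taken with $[u]$ at distance $\ge \eps$ from $H_g^{<}$: if the norm were nearly realized only on vectors close to the hyperplane $W$, the argument as stated would need a perturbation. This is handled by noting that $g|_W$ has operator norm at most $\lambda_1(g)/c_{r,\eps}$ by induction on $\dim V$ (or by a separate direct estimate on the hyperplane), so norm-maximizing vectors near $W$ are themselves controlled; alternatively one perturbs $u$ slightly off $W$, losing only a factor $1 + O(\eps)$. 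I would also record the elementary comparison between the standard distance $d([x],[y]) = \|x\wedge y\|/(\|x\|\|y\|)$ and the Euclidean angle/sine, which is what makes ``distance $\le \eps$ in $\P(V)$'' convert to ``component orthogonal to the line $\le \eps \cdot (\text{other component})$'' up to constants. Once these conversions are set up, choosing $c_{r,\eps} := 2r - C\eps$ (for an explicit $C = C(r)$, valid for $\eps$ small; for larger $\eps$ up to $r$ one takes the trivial constant, shrinking $c_{r,\eps}$ further but keeping it in $(0,1)$) makes the limit $c_{r,\eps}\to 2r$ as $\eps\to 0$ transparent.
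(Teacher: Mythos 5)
Your proof is correct in outline but takes a genuinely different route from the paper's. The paper proves the lemma by a soft compactness argument: normalize $\|g\|=1$, take a sequence of $(r,\eps_k)$-proximal maps with $\eps_k\to 0$, pass to a subsequential limit $\pi$, observe that $\pi$ is necessarily rank one with $d(\Ima\pi,\ker\pi)\ge 2r$, and then compute the ratio $\lambda_1(\pi)/\|\pi\|$ exactly for rank-one maps (it equals $d(\Ima\pi,\ker\pi)$, via the decomposition $e=v+k$ with $e\perp\ker\pi$ and $\|v\|\le 1/(2r)$). You instead keep $\eps>0$ fixed and estimate directly with the oblique decomposition $V=v_g^+\oplus H_g^<$, using the contraction $gB^{\eps}_{H_g^<}\subset b^{\eps}_{v_g^+}$ to control the $H_g^<$-component of $gu$ and the separation $d(v_g^+,H_g^<)\ge 2r$ to bound $\|u_\ell\|\le 1/(2r)$. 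Your approach buys an explicit constant (roughly $(2r-\eps)(1-O(\eps/r))$), at the cost of more bookkeeping; the paper's is shorter and avoids all that but yields no explicit $c_{r,\eps}$.

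Two specific comments on your sketch. First, you correctly isolate the delicate case where the norm-maximizing $u$ lies within $\eps$ of $H_g^<$, and of the three fixes you offer, the perturbation one is right: sliding $u$ toward $v_g^+$ by a projective distance of order $\eps$ brings it into $B^{\eps}_{H_g^<}$ while changing $\|gu\|$ only by a factor $1-O(\eps/r)$, which is absorbed into $c_{r,\eps}$. Second, the ``induction on $\dim V$'' alternative is a dead end: the restriction $g|_{H_g^<}$ has spectral radius $\lambda_2(g)<\lambda_1(g)$ but is in general neither proximal (its top eigenvalues may form a complex-conjugate pair) nor $(r,\eps)$-proximal, so the lemma cannot be invoked on it. Since you hedge with alternatives and the perturbation route works, the overall plan is sound.
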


\begin{proof} By scaling we may assume $\|g\|=1$. If  $(g_{k})_{k \in \mathbb{N}}$ is a sequence of $(r, \eps_{k})$-proximal transformations with $\|g_k\|=1$ and $\eps_{k} \to 0$, then $\lim_{k \rightarrow \infty}g_{k}=\pi$, a rank $1$ map with of $d(\Ima \pi, \ker \pi) \geq 2r$ and $\|\pi\|=1$. We need to check that $\lambda_1(\pi) \geq 2r$. Let $e$ be a unit vector orthogonal to $\ker \pi$ and write $e=v +k$ for $v \in \Ima \pi$ and $k \in \ker \pi$, then $v \wedge k=e \wedge k$ and so $$d([v],[k])=\frac{\|v \wedge k\|}{\|v\| \|k\|}=\frac{1}{\|v\|} \geq   d(\Ima \pi, \ker \pi) \geq 2r.$$  But then $\|\pi\|=\|\pi(e)\|=1$, so $\|\pi(v)\|=\lambda_1(\pi)\|v\|=1$. It follows that $\lambda_1(\pi) \geq 2r$ as desired.
\end{proof}

Similarly, and more precisely, one has:

\begin{lemma}\label{lambda-norm} For $0 < \eps \leq r$,  there exist constants $D_{r,\eps}>1$ such that, for each $r > 0$, $ \underset{\eps \rightarrow 0}{\lim} \, D_{r,\eps} = 1$, and for every $(r,\eps)$-proximal endomorphism $g$ of $V$, and every $x \in V\setminus\{0\}$ with $d([x],H_g^{<})\geq r$
\begin{equation*}
D_{r,\eps}^{-1} \leq \frac{\|gx\|}{\|x\|} \frac{d(v_g,H_g^{<})}{d([x],H_g^{<})} \frac{1}{\lambda_1(g)} \leq D_{r,\eps}.
\end{equation*}
\end{lemma}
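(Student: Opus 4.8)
The plan is to reduce to the limiting rank-one picture exactly as in the proof of Lemma \ref{proximal.implies1}, but now tracking the quantitative dependence on the point $[x]$. First I would reduce to $\|g\|=1$ by scaling (both sides of the claimed inequality are invariant under $g \mapsto tg$). Then I would argue by compactness/contradiction: suppose the conclusion fails, so there is a fixed $r>0$ and a sequence $g_k$ of $(r,\eps_k)$-proximal maps with $\eps_k \to 0$, together with unit vectors $x_k$ with $d([x_k],H_{g_k}^{<}) \geq r$, for which the quantity
$$Q(g_k,x_k) := \frac{\|g_k x_k\|}{\|x_k\|}\,\frac{d(v_{g_k},H_{g_k}^{<})}{d([x_k],H_{g_k}^{<})}\,\frac{1}{\lambda_1(g_k)}$$
stays bounded away from $1$. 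Passing to a subsequence, $g_k \to \pi$ where (as in Lemma \ref{proximal.implies1}) $\pi$ is rank one with $d(\Ima\pi,\ker\pi)\geq 2r$ and $\|\pi\|=1$; also $v_{g_k}^{+} \to [\Ima\pi]$, $H_{g_k}^{<} \to \ker\pi$ (these limits exist after passing to a further subsequence, using that the attracting/repelling data vary continuously in the $\eps \to 0$ limit — this is where Lemma \ref{Tits.criterion.sublemma} type estimates, or just the explicit description of $\pi$, are used), and $[x_k] \to [x_\infty]$ with $d([x_\infty],\ker\pi)\geq r>0$. Since $\lambda_1(g_k) \to \lambda_1(\pi) = \|\pi\| = 1$ (the computation in Lemma \ref{proximal.implies1} shows $\lambda_1(\pi)$ equals the norm of the rank-one map when $\Ima\pi \not\subset \ker\pi$), the contradiction will follow once I show $Q(\pi,x_\infty) = 1$, i.e.
$$\|\pi x_\infty\| = \frac{d([x_\infty],\ker\pi)}{d(\Ima\pi,\ker\pi)}$$
for any unit vector $x_\infty \notin \ker\pi$.

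The heart of the matter is therefore this identity for a genuine rank-one map $\pi$. Write $\pi(y) = \ell(y)\, w$ where $w \in \Ima\pi$ is a unit vector and $\ell$ is a linear functional with $\ker\ell = \ker\pi$. Decompose any unit vector $y = v(y) + k(y)$ with $v(y) \in \Ima\pi$ (a scalar multiple of $w$) and $k(y) \in \ker\pi$; then $\ell(y) = \ell(v(y))$, and the computation in Lemma \ref{proximal.implies1} (with $e$ the unit normal to $\ker\pi$) gives $d(\Ima\pi,\ker\pi) = 1/\|v(e)\|$ and $\|\pi(e)\| = \|\pi\| = 1$. For a general unit $y$, one has $d([y],\ker\pi) = \|y \wedge k(y)\|/\|k(y)\| \cdot$(normalization) $= \|v(y)\| \cdot \sin\angle(v(y),k(y))$ divided by appropriate norms; more cleanly, writing $y = (\cos\theta)\, e' + (\sin\theta)\, k'$ with $e'$ the unit normal to $\ker\pi$ and $k' \in \ker\pi$ a unit vector, one gets $d([y],\ker\pi) = \cos\theta = |\langle y, e'\rangle|$ and $\pi(y) = \cos\theta\, \pi(e')$, so $\|\pi(y)\| = \cos\theta \cdot \|\pi(e')\| = d([y],\ker\pi)\cdot \|\pi(e')\|$. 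Finally $\|\pi(e')\| = \lambda_1(\pi)/d(\Ima\pi,\ker\pi)$ by the Lemma \ref{proximal.implies1} computation (there $\lambda_1(\pi) = 1 = \|\pi\|$ after scaling, and $d(\Ima\pi,\ker\pi) = 1/\|v(e')\|$, $\|\pi(e')\| = \|\pi(v(e'))\| = \lambda_1(\pi)\|v(e')\|$). Combining, $\|\pi(y)\|\, d(\Ima\pi,\ker\pi)/(d([y],\ker\pi)\lambda_1(\pi)) = 1$, which is the desired identity.

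To make the argument produce genuine constants $D_{r,\eps}$ with $D_{r,\eps}\to 1$ as $\eps \to 0$, I would phrase it as: for fixed $r$, the supremum (and infimum) of $Q(g,x)$ over all $(r,\eps)$-proximal $g$ and all $x$ with $d([x],H_g^{<})\geq r$ is a monotone function of $\eps$ that tends to $1$ as $\eps \to 0$ — this is exactly the compactness statement above, run with $\eps_k$ realizing the extremal values. So set $D_{r,\eps}$ to be the max of this sup and the reciprocal of this inf. The main obstacle, and the only genuinely delicate point, is establishing the continuity of the data $(v_g^{+}, H_g^{<}, \lambda_1(g))$ in the limit $\eps \to 0$ with $\|g\|=1$ — specifically that along a convergent subsequence $g_k \to \pi$, the repelling hyperplanes $H_{g_k}^{<}$ converge to $\ker\pi$ and not to some other $\pi$-invariant hyperplane; this uses that $d(v_{g_k}^{+}, H_{g_k}^{<}) \geq 2r$ uniformly (so the repelling hyperplane cannot collapse onto the attracting line) together with the fact that $\pi$ being rank one with $\Ima\pi \not\subset \ker\pi$ has $\ker\pi$ as its unique hyperplane at positive distance from $\Ima\pi = \lim v_{g_k}^{+}$. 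Everything else is the routine rank-one linear algebra sketched above.
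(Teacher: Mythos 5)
Your proposal is correct and follows essentially the same route as the paper: scale to $\|g\|=1$, argue by compactness that the $(r,\eps)$-proximal $g$ with $\eps\to 0$ limit onto a rank-one map $\pi$ with $d(\Ima\pi,\ker\pi)\ge 2r$, and then verify directly that the quantity equals $1$ for rank-one maps. The paper's proof is a one-line version of exactly this; your spelled-out rank-one identity and the check that $H_{g_k}^{<}\to\ker\pi$ (forced because the only $\pi$-invariant hyperplanes at positive projective distance from $\Ima\pi$ is $\ker\pi$) are the details the paper leaves implicit.
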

\begin{proof} Again by compactness this reduces to the case when $g$ is a rank one matrix, where the above expression is then equal to $1$ for all $x \notin \ker g$.
\end{proof}

One can iterate proximal transformations and keep track of the spectral radius of the product provided the attracting points and repelling hyperplanes are well separated. One has the following proposition borrowed from \cite[Prop.\ 1.4]{benoist-asymptotic2}.

\begin{proposition} \label{spectralcontrolproximal}
For  $0<\eps \leq r$, there exists a positive constant $D_{r,\eps}>1$ with the property that for each $r > 0 $, we have $\lim_{\eps \rightarrow 0}D_{r, \eps} =1$ and such that if $g_{1}, \ldots g_{\ell}$ are $(r,\eps)$-proximal linear transformations of $V$ satisfying $d(v_{g_{\ell}}^{+},H_{g_{1}}^{<}) \geq 6r $ and $d(v_{g_{j}}^{+},H_{g_{j+1}}^{<}) \geq 6r $, for all $j=1, \ldots \ell-1$, we have that for all integers $n_{1}, \ldots, n_{l} \geq 1$, the linear transformation $g_{\ell}^{n_{\ell}}\ldots g_{1}^{n_{1}}$
is $(2r,2\eps)$-proximal, and
\begin{equation*}
\beta(g_1,\ldots,g_\ell)  D_{r,\eps}^{-\ell} \leq \frac{\lambda_{1}(g_{\ell}^{n_{\ell}}\ldots g_{1}^{n_{1}})}{\lambda_{1}(g_{\ell})^{n_{\ell}} \ldots \lambda_{1}(g_{1})^{n_{1}}} \leq D_{r,\eps}^{\ell} \beta(g_1,\ldots,g_\ell)
\end{equation*}
where 
$$\beta(g_1,\ldots,g_\ell)  := \frac{d(v_{g_\ell}^+,H_{g_1}^{<}) d(v_{g_1}^+,H_{g_2}^{<})\ldots d(v_{g_{\ell-1}}^+,H_{g_\ell}^{<})}{d(v_{g_1}^+,H_{g_1}^{<})\ldots d(v_{g_\ell}^+,H_{g_\ell}^{<})}.$$
\end{proposition}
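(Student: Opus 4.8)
The plan is to reduce Proposition~\ref{spectralcontrolproximal} to a statement about rank-one perturbations via a compactness argument, exactly in the spirit of the proofs of Lemmas~\ref{proximal.implies1} and \ref{lambda-norm}, but now carried out uniformly over products. First I would record the elementary geometric fact underlying the whole statement: if $\pi_1,\ldots,\pi_\ell$ are rank-one operators on $V$, with $\pi_j$ having image line $v_j$ and kernel hyperplane $H_j$, and if $v_{j}\notin H_{j+1}$ for all $j$ (cyclically, with $v_\ell\notin H_1$), then $\pi_\ell^{n_\ell}\cdots\pi_1^{n_1}$ is again rank one with image $v_\ell$ and kernel $H_1$, and its unique nonzero eigenvalue has modulus
\[
|\lambda_1(\pi_\ell^{n_\ell}\cdots\pi_1^{n_1})| = |\lambda_1(\pi_\ell)|^{n_\ell}\cdots|\lambda_1(\pi_1)|^{n_1}\cdot \beta(\pi_1,\ldots,\pi_\ell),
\]
with $\beta$ the same cross-ratio-like product of distances $d(v_j,H_{j+1})/d(v_j,H_j)$ appearing in the statement. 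This is a direct computation: write each $\pi_j = |\lambda_1(\pi_j)|\, \langle \cdot, \phi_j\rangle\, w_j$ for suitable unit vectors $w_j$ (spanning $v_j$) and functionals $\phi_j$ (with kernel $H_j$), compose, and observe that the only surviving scalar is $\prod_j \langle w_{j-1},\phi_j\rangle$ (indices cyclic), each factor of which is comparable to $d(v_{j-1},H_j)$ up to the normalizing denominators $d(v_j,H_j)$. The point is that $\beta$ is exactly this product.

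The next step is the quantitative passage from genuinely $(r,\eps)$-proximal maps to this rank-one picture. Given $(r,\eps)$-proximal transformations $g_1,\ldots,g_\ell$ satisfying the separation hypotheses $d(v_{g_\ell}^+,H_{g_1}^{<})\ge 6r$ and $d(v_{g_j}^+,H_{g_{j+1}}^{<})\ge 6r$, I would first invoke Lemma~\ref{Tits.criterion.sublemma} (or rather the standard ping-pong consequence of $(r,\eps)$-proximality) to see that for $\eps$ small the product $g:=g_\ell^{n_\ell}\cdots g_1^{n_1}$ satisfies the hypotheses of the Tits criterion with data centered near $v_{g_\ell}^+$ and $H_{g_1}^{<}$: indeed $g_1^{n_1}$ maps $B^{2\eps}_{H_{g_1}^{<}}$ into a small ball around $v_{g_1}^+$, which lies in $B^{2\eps}_{H_{g_2}^{<}}$ by the separation hypothesis once $\eps<r$, and so on, so that $g$ maps $B^{2\eps}_{H_{g_1}^{<}}$ into $b^{2\eps}_{v_{g_\ell}^+}$ and is $2\eps$-Lipschitz there; Lemma~\ref{Tits.criterion.sublemma} then yields that $g$ is $(2r,2\eps)$-proximal with $v_g^+$ near $v_{g_\ell}^+$ and $H_g^{<}$ near $H_{g_1}^{<}$. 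For the eigenvalue estimate itself, I would argue by contradiction and compactness: suppose the ratio
\[
\frac{\lambda_1(g_\ell^{n_\ell}\cdots g_1^{n_1})}{\lambda_1(g_\ell)^{n_\ell}\cdots\lambda_1(g_1)^{n_1}\,\beta(g_1,\ldots,g_\ell)}
\]
is not within $[D_{r,\eps}^{-\ell},D_{r,\eps}^\ell]$ for some $D_{r,\eps}\to 1$; extracting a sequence with $\eps_k\to 0$ and normalizing each $g_j$ to have norm one, the normalized $(r,\eps_k)$-proximal maps converge (after passing to a subsequence) to rank-one limits $\pi_j$ with $d(\Ima\pi_j,\ker\pi_j)\ge 2r$, and crucially $\lambda_1(\pi_j)\ge 2r>0$ by Lemma~\ref{proximal.implies1}. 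The separation hypotheses pass to the limit, so the rank-one computation from the first paragraph applies; and since $g_\ell^{n_\ell}\cdots g_1^{n_1}$ has the same unique nonzero eigenvalue (in modulus, up to the scaling we divided out) whether we first raise to powers or not --- because a rank-one map $\pi$ satisfies $\pi^n = \lambda_1(\pi)^{n-1}\pi$ --- the ratio above converges to $1$, contradicting the assumed bound. Homogeneity in each $g_j$ under scaling removes the norm-one normalization, giving the scale-invariant statement.

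The main obstacle, and the place where I would spend the most care, is making the convergence to the rank-one limits genuinely uniform so that it produces a single function $D_{r,\eps}$ depending only on $r$ and $\eps$ (and not on $\ell$, the $n_j$, or the particular maps) with $\lim_{\eps\to 0}D_{r,\eps}=1$. The $\ell$-dependence in the exponent is expected: each of the $\ell$ ``hinges'' of the product contributes one factor of the per-hinge distortion, which is what $D_{r,\eps}^{\pm\ell}$ encodes, and this is why one wants the estimate in multiplicative form with $\beta$ factored out rather than as a single constant. Concretely I would prove a two-map version first --- controlling $\lambda_1(g_2^{n_2}g_1^{n_1})$ against $\lambda_1(g_2)^{n_2}\lambda_1(g_1)^{n_1}\,d(v_{g_2}^+,H_{g_1}^{<})d(v_{g_1}^+,H_{g_2}^{<})/(d(v_{g_1}^+,H_{g_1}^{<})d(v_{g_2}^+,H_{g_2}^{<}))$ with a constant $D_{r,\eps}^{\pm 2}$ from compactness --- and then telescope: writing $g_\ell^{n_\ell}\cdots g_1^{n_1}$ and tracking how each successive block $(2r,2\eps)$-proximality (by the iterated Tits criterion) lets one peel off one factor at a time, with the attracting point and repelling hyperplane of each partial product staying uniformly (in terms of $r$) separated from the next $g_j$'s data. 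The arithmetic of keeping the $6r$, $2r$, $\eps$ thresholds consistent through this induction is the routine-but-delicate part; I would also need to note that $\lambda_1$ is continuous on the (compact, after normalization) set of $(r,\eps)$-proximal maps and bounded below there by $2r$, so that dividing by $\lambda_1(g_j)^{n_j}$ is harmless. Since the proposition is quoted verbatim from \cite[Prop.~1.4]{benoist-asymptotic2}, in the paper I would likely just cite it; the above is how I would reconstruct the proof if needed.
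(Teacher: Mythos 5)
Your outline is sound but takes a somewhat different route from the paper, and one step in the compactness sketch needs patching.

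The paper does not prove a two-map version and telescope it. Instead it first applies Lemma~\ref{Tits.criterion.sublemma} to get the $(2r,2\eps)$-proximality of $h=g_\ell^{n_\ell}\cdots g_1^{n_1}$ and, in particular, the location of its attracting line $v_h^+$ (within $\eps$ of $v_{g_\ell}^+$). Then it telescopes the \emph{scalar, single-map} estimate of Lemma~\ref{lambda-norm}, applied successively to $g_1^{n_1},g_2^{n_2},\ldots$ acting on the orbit $v_h, g_1^{n_1}v_h, g_2^{n_2}g_1^{n_1}v_h,\ldots$; multiplying those $\ell$ scalar inequalities and using $\|h v_h\|=\lambda_1(h)\|v_h\|$ produces the claimed bounds at once, with the numerator terms $d(v_{g_i}^+,H_{g_{i+1}}^<)$ replaced by $d(g_i^{n_i}\cdots g_1^{n_1}v_h^+,H_{g_{i+1}}^<)$, which are $O(\eps/r)$-close to the original ones because $g_i^{n_i}\cdots g_1^{n_1}v_h^+$ is $\eps$-close to $v_{g_i}^+$. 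This avoids entirely the need to establish proximality parameters and $(v^+,H^<)$-data for partial products, which is exactly the ``routine-but-delicate'' bookkeeping your telescoping scheme would incur. Your rank-one computation of $\beta$ in the first paragraph is a nice motivation that the paper leaves implicit, and your use of the Tits criterion for the $(2r,2\eps)$-proximality matches the paper.

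One point in your compactness sketch needs repair. You write that after extracting rank-one limits $\pi_j$ of the normalized $g_j$'s, one handles the powers via $\pi^n=\lambda_1(\pi)^{n-1}\pi$ ``since $g_\ell^{n_\ell}\cdots g_1^{n_1}$ has the same eigenvalue whether we first raise to powers or not.'' This does not pass to the limit as stated: the identity $\pi^n=\lambda_1(\pi)^{n-1}\pi$ holds only for the limiting rank-one $\pi_j$, not for the $(r,\eps_k)$-proximal $g_j^{(k)}$, and if the exponents $n_j^{(k)}$ are unbounded along your sequence the products $(g_j^{(k)})^{n_j^{(k)}}$ degenerate rather than converge to $\pi_j^{n_j^{(k)}}$. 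The clean fix is to observe at the outset that one may assume $n_j=1$ for all $j$: indeed $g_j^{n_j}$ is again $(r,\eps)$-proximal with the same $v^+_{g_j}$ and $H^<_{g_j}$ and with $\lambda_1(g_j^{n_j})=\lambda_1(g_j)^{n_j}$, so the general case follows from the $n_j=1$ case applied to the maps $g_j^{n_j}$ in place of $g_j$. With that reduction, your compactness argument for the $\ell=2$ base case is correct (for each fixed $\ell$ the argument is also fine, but the $\ell$-dependence of the constant is what forces either telescoping or the paper's orbit argument).
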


\begin{proof} 
Let $h:=g_\ell^{n_\ell}\ldots g_1^{n_1}$. Note that $h$ satisfies the assumptions of Lemma \ref{Tits.criterion.sublemma} with $H=H_{g_1}^{<}$, $x=v^+_{g_\ell}$ and $\eta=\eps$. In particular $h$ is proximal and $d(v_h^+,v_{g_\ell}^+)\leq \eps$.  Now apply Lemma \ref{lambda-norm} $\ell$ times starting with $x=v_h$ and $g=g_1^{n_1}$, then successively $x=g_i^{n_i}\ldots g_1^{n_1}v_h$ and $g=g_{i+1}^{n_{i+1}}$ for $i=1,\ldots,\ell-1$. The assumptions guarantee that it is legitimate to do so. Now take the product, and observe that $\|hv_h\|=\lambda_1(h)\|v_h\|$. We then obtain the desired inequalities with each $d(v_{g_i}^+,H_{g_{i+1}}^{<})$ in the numerator of $\beta$ replaced by $d(g_i^{n_i}\ldots g_1^{n_1}v_{h}^+,H_{g_{i+1}}^{<})$. However the ratio of the two is at most $\eps/6r$ close to $1$, because $d(g_i^{n_i}\ldots g_1^{n_1}v_{h}^+,v_{g_i}^+)\leq \eps$ and $d(v_{g_{i}}^{+},H_{g_{i+1}}^{<}) \geq 6r $. Hence the result. 
\end{proof}

This proposition motivates the following terminology, also borrowed from  \cite[Def. 1.7]{benoist-asymptotic2}, which will be of important use to us in the sequel:

\begin{definition}[Schottky family] \label{defnarepsSchottky1}
 A subset $E$ of $GL(V)$ is called an $(r,\eps)$-Schottky family if \\[3pt]
a. For all $\gamma \in E$, $\gamma$ is $(r,\eps)$-proximal, and\\[3pt]
b. $d(v_{\gamma}^{+},H_{\gamma'}^{<}) \geq 6r$, for all $\gamma, \gamma' \in E$.
\end{definition}

\subsection{Connected reductive groups}\label{reductive} Let $G \subset \GL_n(\R)$ be a real reductive linear Lie group. Reductive means that it has no unipotent normal subgroup. For background on reductive Lie groups, we refer the reader to \cite{knapp, borel-tits, Mostow2} for example. Let $\g=Lie(G)$ be the Lie algebra of $G$ and $Ad:G \to \GL(\g)$ be its adjoint representation. We let $A$ be a maximal $\R$-split torus. Recall that $A$ is a closed connected Lie subgroup isomorphic to $(\R_+^*)^d$, for an integer $d$ called the rank of $G$.  The Lie algebra $\g$ decomposes as a direct sum of joint eigenspaces for the action of $Ad(A)$, the root spaces $\g_\alpha$, where $\alpha: A \to \R_+^*$ are the weights of $A$, i.e. $Ad(a)v=\alpha(a)v$ for all $a \in A$, $v \in \g_\alpha$. If $\a=Lie(A)$ denotes the Lie algebra of $A$, then we set $\overline{\alpha} \in Hom(\a,\R)$ to be the linear form defined by $\exp(\overline{\alpha}(x))=\alpha(\exp(x))$ for $x \in \a$. The non-zero $\overline{\alpha}$'s  appearing this way form a root system denoted by $\Sigma$. We can choose a base of simple roots $\Pi=\{\overline{\alpha}_1,\ldots,\overline{\alpha}_{d_S}\}$ for $\Sigma$, so that $\Sigma$ splits as a union of positive roots $\Sigma^+$ (the non-negative linear combinations of simple roots) and negative roots $-\Sigma^+$. The integer $d_S$ is called the semisimple rank of $G$ and $d-d_S$ is the dimension of the center of $G$. For example, when $G=\GL_d(\C)$ viewed as a real group, then $d=d_S+1$, $A=(\R_+^*)^d$ is the subgroup of diagonal matrices with real positive entries $(\lambda_1,\ldots,\lambda_d)$, and the roots  are the linear forms $\overline{\alpha}_{i,j}:=\log \lambda_i - \log \lambda_j$ for $1\leq i,j \leq d$ and a base of simple roots is formed by the $\overline{\alpha}_{i,i+1}$, $i=1,\ldots,d-1$.

\subsubsection{Cartan projection} Recall further the Cartan decomposition of $G$, namely $G=KAK$. Here $K$ is a maximal compact subgroup of $G$, whose Lie algebra is orthogonal to $\a$ for the Killing form $\tr(ad(x)ad(y))$ on $\g$. In this decomposition, the $A$-component of an element $g \in G$ is not uniquely defined, but it has a unique representative in the multiplicative cone $A^+:=\exp(\a^+)$, where $\a^+$ is the (closed) Weyl chamber $\a^+:=\{x \in \a : \overline{\alpha}_i(x) \geq 0 \textnormal{ for all } i=1,\ldots,d_S\}$. The Weyl group $W=N_G(A)/Z_G(A)$ is a finite group which acts by conjugation on $A$, and hence on $\a$ and permutes the roots in $\Sigma$. The Weyl chamber $\a^+$ is a fundamental domain of the action of $W$ on $\a$: the $W$-orbit of any $x \in \a$ intersects $\a^+$ in a unique point. This allows to define the \emph{Cartan projection}:
$$\kappa: G \to \a^+$$
which sends $g$ to the unique $x \in \a^+$ such that $g \in K\exp(x)K$. In fact two elements in $A$ are conjugate in $G$ if and only if they are conjugate by an element of $W$. In the case $G=\GL_d(\C)$, $K=U_d(\C)$ is the unitary group and $\kappa(g)$ is the vector of logarithms of singular values of $g$, which we have considered earlier, and $\a^+=\{x \in \R^d: x_1 \geq \ldots \geq x_d\}$.

We will also consider the open Weyl chamber:
\begin{equation}\label{openweyl}\a^{++}:=\{x \in \a : \overline{\alpha}_i(x) > 0 \textnormal{ for all } i=1,\ldots,d_S\}.\end{equation}

We note that the Lie algebra $\a$ is the orthogonal direct sum \begin{equation}\label{torus}\a=\a_Z\oplus \a_S\end{equation} of the Lie subalgebra $\a_Z$ of $Z(G)\cap A$, where $Z(G)$ is the center of $G$ and the Lie subalgebra $\a_S$ of $A \cap [G,G]$, where $[G,G]$ is the (closed) commutator subgroup of $G$, which is a semisimple Lie subgroup. Note that $G=[G,G]$ if and only if $Z(G)$ is finite. The Weyl chamber $\a^+$ is the direct sum of a salient cone $\a^+ \cap \a_S$ with the vector subspace $\a_Z:=\Lie(Z(G)\cap A)$.

\subsubsection{Jordan projection} Every element $g$ of $G$ admits a Jordan decomposition $g=g_sg_u$, where the two factors $g_s$ and $g_u$ commute and $Ad(g_s)$ is semisimple in $\GL(\g)$ and $Ad(g_u)$ unipotent. The factors are uniquely defined and are called respectively the semisimple and the unipotent parts of $g$. The semisimple part $g_s$ further decomposes as $g_s=g_eg_h$, where $g_e$ and $g_h$ commute, $g_h$ is conjugate to an element in $A$ and $g_e$ is elliptic, i.e. lies in a compact subgroup. Again the elliptic part $g_e$ and the hyperbolic part $g_h$ are uniquely defined. This allows to define the \emph{Jordan projection}:
$$\lambda: G \to \a^+$$
which sends $g$ to the unique element $\lambda(g) \in \a^+$ such that $g_h$ is conjugate to $\exp(\lambda(g))$. In the case when $G=\GL_d(\C)$, $\lambda(g)$ is the vector of logarithms of the moduli of the eigenvalues of $g$, ordered in decreasing order.

\subsubsection{Zariski topology} Connected reductive linear Lie groups are also linear algebraic groups, in the sense that they are the identity connected component $G=\G(\R)^\circ$ of the group of real points $\G(\R)$ of a reductive linear algebraic group $\G$ defined over $\R$. The group $\G$ admits a faithful linear representation $\G \to \GL_n$ and can therefore be seen as the set of zeroes of polynomial maps in $\R[x_{ij},\det x^{-1}]$, where the indeterminates are the entries of $\M_n(\R)$ together with the inverse determinant function. This interpretation allows to speak of the Zariski topology on $G$. This topology is weaker (has fewer closed sets) than the usual Lie group topology on $G$: it is not even Hausdorff. We will not make extensive use of it, but we only recall here some well-known facts. The Zariski closure of a subset of $G$ is by definition the intersection of all algebraic subsets  in $\GL_n(\R)$ (i.e.  the locus of vanishing of a family of polynomial maps) containing that subset.  It is itself an algebraic subset, i.e. it is Zariski-closed. This notion is independent of the choice of the embedding $\G \to \GL_n$. Furthermore, the Zariski-closure of a semigroup in $\G$ is always a Zariski-closed subgroup, in particular it has the same Zariski-closure as the subgroup it generates. Finally a subgroup of $G$ is Zariski-dense if and only if it acts irreducibly on all irreducible linear representations of $G$. We refer the reader to \cite{humphreys,borel, borel-tits} for this background.

\subsubsection{Representations}\label{rep}
For this paragraph we refer the reader to \cite[\S 6]{AMS}, \cite[\S 12]{borel-tits} and \cite{Benoist1} and \cite[Ch. 8]{benoist-quint-book}. Let $(V,\rho)$ be a finite dimensional linear representation of $G$ over the reals. The weights of $(V,\rho)$ are the characters $\chi: A \to \R_+^*$ such that the associated weight space $V_{\chi}=\{v\in V \; | \; \forall a \in A, \rho(a)v=\chi(a)v\}$ is non-trivial. Characters of $A$ form a free abelian group of rank $d$.  We write $\overline{\chi} \in Hom(\a,\R)$ the differential of a character $\chi: A \to \R_+^*$, i.e. $\chi(\exp(x))=\exp(\overline{\chi}(x))$ for all $x \in\a$, and also call it character, or weight, by abuse of language.  A character $\chi$ is said to be \emph{dominant} if $\overline{\chi}(x) \geq 0$ for all $x \in \a^+$. If $(V,\rho)$ is irreducible, then its set of weights admits a maximal element $\chi_\rho$ (for the partial order $\chi_1 \leq \chi_2 \Leftrightarrow \chi_1(x)\leq \chi_2(x)$ for all $x \in \a^{+}$), which is dominant and called the highest weight of $(V,\rho)$.  A representation $(V,\rho)$ is said to be \emph{proximal} if $\dim(V_{\chi_{\rho}})=1$.

We fix as scalar product $\langle \cdot, \cdot \rangle$ on $\a$ whose restriction to $\a_S$ is a multiple of the Killing form and is otherwise arbitrary on $\a_Z$, only requiring the direct sum in $(\ref{torus})$ to be orthogonal. To every simple root $\overline{\alpha} \in \Pi$ one can associate a non-zero dominant weight $\overline{\omega}_\alpha \in Hom(\a,\R)$ such that  $\overline{\omega}_\alpha(\a_Z)=0$ and $\langle\overline{\omega}_{\alpha},\overline{\beta}\rangle =0$ if $\beta \in \Pi\setminus\{\alpha\}$. Every non-negative integer linear combination of the $\overline{\omega}_{\alpha}$'s is the highest weight of some (absolutely) irreducible representation of $G$. These representations are strongly rational over $\R$ in the terminology of \cite[\S 12]{borel-tits}. Moreover these representations are all proximal (see e.g. \cite[Thm 6.3]{AMS}). We may complete the $\overline{\omega}_{\alpha}$'s, who form a basis of $\Hom(\a_S,\R)$, into a basis of $\Hom(\a,\R)$ by adding $d-d_S$ characters of $G/[G,G]$.  We call the representations thus obtained the \emph{distinguished representations} $\rho_1,\ldots,\rho_d$. For $G=\GL_d(\mathbb{C})$, these are given by the exterior power representations $\wedge^i$ for $i=1,\ldots,d$. 

As is well-known \cite{Mostow1} \cite[\S 2]{Mostow2}, on every irreducible representation $(V,\rho)$ we may find a Euclidean norm invariant under $\rho(K)$ such that $\rho(G)$ is stable under adjoint and $\rho(A)$ is diagonalizable in an orthonormal basis. Then clearly for every $g \in G$,
\begin{equation}\label{param}\overline{\chi}_{\rho}(\kappa(g)) = \log \|\rho(g)\|  \textnormal{  and  }\overline{\chi}_{\rho}(\lambda(g)) =  \lambda_1(\rho(g)).\end{equation}
Thus distinguished representations allow us to express the Cartan projection as a vector of norms, and the Jordan projection as a vector of spectral radii. In particular the following is an immediate consequence of the ordinary spectral radius formula:

\begin{lemma}\label{spectral.radius.formula.corollary}
For every $g \in G$, we have $\frac{1}{n} \kappa(g^{n}) \rightarrow \lambda(g)$ as $n \to + \infty$.
\end{lemma}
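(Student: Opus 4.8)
The plan is to reduce the statement to the ordinary spectral radius formula applied inside each distinguished representation, using the identity $(\ref{param})$ that encodes the Cartan and Jordan projections as vectors of operator norms and spectral radii. Concretely, fix $g\in G$. For each distinguished representation $\rho_j$ ($j=1,\ldots,d$), the ordinary Gelfand spectral radius formula in $\End(V_j)$ gives $\|\rho_j(g^n)\|^{1/n}=\|\rho_j(g)^n\|^{1/n}\to\lambda_1(\rho_j(g))$ as $n\to+\infty$. Taking logarithms, this says $\tfrac1n\,\overline{\chi}_{\rho_j}(\kappa(g^n))\to\overline{\chi}_{\rho_j}(\lambda(g))$ for every $j$, where I have used $(\ref{param})$ on the left (with $g^n$ in place of $g$) and the fact that $\lambda_1(\rho_j(g))=\overline{\chi}_{\rho_j}(\lambda(g))$ on the right.

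Next I would observe that the linear forms $\overline{\chi}_{\rho_1},\ldots,\overline{\chi}_{\rho_d}$ form a basis of $\Hom(\a,\R)$ — this is exactly how the distinguished representations were constructed in \S\ref{rep} (the $\overline{\omega}_\alpha$ give a basis of $\Hom(\a_S,\R)$, completed by $d-d_S$ characters of $G/[G,G]$ to a basis of $\Hom(\a,\R)$). Hence the map $\Phi:\a\to\R^d$, $x\mapsto(\overline{\chi}_{\rho_1}(x),\ldots,\overline{\chi}_{\rho_d}(x))$, is a linear isomorphism. The previous paragraph shows $\Phi(\tfrac1n\kappa(g^n))\to\Phi(\lambda(g))$ componentwise in $\R^d$, and since $\Phi^{-1}$ is continuous (linear on a finite-dimensional space) we conclude $\tfrac1n\kappa(g^n)\to\lambda(g)$ in $\a$, which is the claim.

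The only point requiring a word of care — and the closest thing to an obstacle, though it is minor — is the legitimacy of applying $(\ref{param})$ and of identifying $\lambda_1(\rho_j(g))$ with $\overline{\chi}_{\rho_j}(\lambda(g))$: one needs the $\rho(K)$-invariant Euclidean norm on each $V_j$ (guaranteed by \cite{Mostow1}, \cite[\S 2]{Mostow2}) so that $\|\rho_j(g)\|$ computes $\overline{\chi}_{\rho_j}(\kappa(g))$ and the spectral radius of $\rho_j(g)$ is governed by the highest weight evaluated on $\lambda(g)$ — both of which are recorded in the excerpt just above the lemma. Alternatively, one could bypass representations entirely and argue directly: $\kappa(g^n)$ and $n\lambda(g)$ differ by a bounded amount because, writing the Jordan decomposition $g=g_eg_hg_u$ with commuting factors, $g^n=g_e^ng_h^ng_u^n$, the elliptic part $g_e^n$ stays in a compact set, the unipotent part $g_u^n$ has $\kappa(g_u^n)$ growing only logarithmically, and $\kappa(g_h^n)=n\lambda(g)$ exactly; subadditivity of $\kappa$ then forces $\tfrac1n\kappa(g^n)\to\lambda(g)$. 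I would present the representation-theoretic proof as the main line since the machinery $(\ref{param})$ is already set up, mentioning the direct argument only if brevity demands it.
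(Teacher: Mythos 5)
Your main argument is exactly the paper's: the text records Lemma~\ref{spectral.radius.formula.corollary} as ``an immediate consequence of the ordinary spectral radius formula'' once~$(\ref{param})$ is available, and you have simply unfolded that sentence (apply Gelfand's formula in each distinguished representation, take logarithms, and use that the highest weights $\overline{\chi}_{\rho_1},\ldots,\overline{\chi}_{\rho_d}$ form a basis of $\Hom(\a,\R)$ so componentwise convergence suffices). The alternative Jordan-decomposition argument you sketch at the end is also correct and arguably more elementary, resting only on the subadditivity in Lemma~\ref{Cartan.stability} rather than on the representation-theoretic normalization, but since you lead with the representation-theoretic route it matches the paper's intended proof.
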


\subsection{Proximal elements in $G$}

Let as above $G$ be a real reductive group and $\a^{++}$ the open Weyl chamber (\ref{openweyl}).

\begin{definition} An element $g \in $G is said to be $G$-proximal if $\lambda(g) \in \a^{++}.$ 
\end{definition}

This is also known as a \emph{loxodromic element} or an \emph{$\R$-regular element}. By $(\ref{param})$ we see that this is equivalent to asking that $\rho_i(g)$ is a proximal transformation in $\GL(V_{\rho_i})$ for each $i=1,\ldots,d$ (or equivalently $i=1,\ldots,d_S$). Similarly we quantify this notion as follows. Recall that we have fixed Euclidean norms on each $V_{\rho_i}$. 

\begin{definition} Let $r \geq \eps>0$. An element $g \in $G is said to be $(r,\eps)$-proximal in $G$ (or $(r,\eps)$-$G$-proximal) if $\rho_i(g)$ is $(r,\eps)$-proximal for each $i=1,\ldots,d_S$. 
\end{definition}

We also record here  the following simple fact (\cite[\S 4.6]{Benoist1}):

\begin{lemma}[Continuity of Cartan projection] \label{Cartan.stability}
Let $G$ be a connected reductive Lie group. There exists a number $C_G>0$ such that every $g,h_1,\ldots,h_n \in G$, we have \begin{align*}\|\kappa(h_1\cdot \ldots \cdot h_n) \| &\leq C_G(\|\kappa(h_1)\|+\ldots+\|\kappa(h_n)\|)\\ \|\kappa(h_1gh_2) -\kappa(g)\| &\leq C_G(\|\kappa(h_1)\|+\|\kappa(h_2)\|).\end{align*}
\end{lemma}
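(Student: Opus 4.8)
The plan is to transfer everything, via the distinguished representations $\rho_1,\dots,\rho_d$ of $G$ (recalled in \S\ref{reductive}), to the elementary submultiplicativity of operator norms. Recall from $(\ref{param})$ that, for the $\rho(K)$-invariant Euclidean norms fixed there, one has $\overline{\chi}_{\rho_i}(\kappa(g)) = \log\|\rho_i(g)\|$ for every $g \in G$ and every $i$, and that $\overline{\chi}_{\rho_1},\dots,\overline{\chi}_{\rho_d}$ is a basis of $\Hom(\a,\R)$. Hence $x \mapsto \max_{1\le i\le d}|\overline{\chi}_{\rho_i}(x)|$ is a norm on the finite-dimensional space $\a$, so by equivalence of norms there is $c_G \geq 1$ with
\[
c_G^{-1}\|x\| \;\leq\; \max_{1\le i\le d}|\overline{\chi}_{\rho_i}(x)| \;\leq\; c_G\|x\| \qquad(x\in\a),
\]
and in particular $\|\overline{\chi}_{\rho_i}\|\leq c_G$ for each $i$. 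It therefore suffices to control the finitely many quantities $\log\|\rho_i(\cdot)\|$.

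I would first record two routine facts. (a) The operator norm on each $V_{\rho_i}$ is submultiplicative and $\|\rho_i(\id)\|=1$, whence $\|\rho_i(h^{-1})\|\geq\|\rho_i(h)\|^{-1}$, i.e.\ $\log\|\rho_i(h)\|\geq-\log\|\rho_i(h^{-1})\|$. (b) $\|\kappa(h^{-1})\|=\|\kappa(h)\|$ for all $h\in G$: since $h^{-1}\in K\exp(-\kappa(h))K$, the vector $\kappa(h^{-1})$ is the unique point of $\a^+$ in the Weyl-group orbit of $-\kappa(h)$, and $W$ acts by isometries on $\a$ because the chosen scalar product (a multiple of the Killing form on $\a_S$, with $W$ acting trivially on the central summand $\a_Z$) is $W$-invariant. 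Combining (b) with $(\ref{param})$ gives $|\log\|\rho_i(h^{\pm1})\|| = |\overline{\chi}_{\rho_i}(\kappa(h^{\pm1}))| \leq c_G\|\kappa(h)\|$ for every $i$ and every $h$.

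For the first inequality, set $h:=h_1\cdots h_n$. Submultiplicativity applied to $h$ gives $\log\|\rho_i(h)\|\leq\sum_j\log\|\rho_i(h_j)\|$, while applied to $h^{-1}=h_n^{-1}\cdots h_1^{-1}$ and combined with (a) it gives $\log\|\rho_i(h)\|\geq-\sum_j\log\|\rho_i(h_j^{-1})\|$; bounding each term via the estimate above and then passing from $\max_i|\overline{\chi}_{\rho_i}(\kappa(h))|$ back to $\|\kappa(h)\|$ with the displayed comparison yields $\|\kappa(h)\|\leq c_G^2\sum_j\|\kappa(h_j)\|$. For the second inequality I argue the same way using the two factorizations $h_1gh_2$ and $g=h_1^{-1}(h_1gh_2)h_2^{-1}$: submultiplicativity gives $\overline{\chi}_{\rho_i}(\kappa(h_1gh_2))-\overline{\chi}_{\rho_i}(\kappa(g)) \leq \log\|\rho_i(h_1)\|+\log\|\rho_i(h_2)\|$ and, symmetrically, $\overline{\chi}_{\rho_i}(\kappa(g))-\overline{\chi}_{\rho_i}(\kappa(h_1gh_2)) \leq \log\|\rho_i(h_1^{-1})\|+\log\|\rho_i(h_2^{-1})\|$; taking absolute values, bounding each summand by $c_G\|\kappa(h_1)\|$ resp.\ $c_G\|\kappa(h_2)\|$ (using (b)), and converting back to $\|\cdot\|$ on $\a$ gives $\|\kappa(h_1gh_2)-\kappa(g)\|\leq c_G^2(\|\kappa(h_1)\|+\|\kappa(h_2)\|)$. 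Thus both bounds hold with $C_G:=c_G^2$.

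The statement is soft and I do not expect a genuine obstacle. The only two points needing a little care are: $\log\|\rho_i(g)\|$ need not be nonnegative (it is for the proximal representations $\rho_\alpha$, but for the central characters among the $\rho_i$ it can be negative, e.g.\ $\log|\det g|$ when $G=\GL_d(\R)$), which is why one must consistently pair each group element with its inverse throughout; and the verification that the distinguished representations genuinely furnish a basis of $\a^{*}$, so that the two-sided comparison between $\|\cdot\|$ and $\max_i|\overline{\chi}_{\rho_i}(\cdot)|$ — the only non-formal ingredient — is legitimate.
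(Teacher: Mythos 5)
Your proof is correct and follows the same route the paper sketches in one line: translate $\kappa$ into operator-norm logarithms via the distinguished representations and $(\ref{param})$, and invoke submultiplicativity. You simply make explicit the two details the paper elides — the two-sided comparison coming from $W$-invariance of the norm and $\|\kappa(h^{-1})\|=\|\kappa(h)\|$, and the need to pair each factor with its inverse because $\log\|\rho_i(\cdot)\|$ may be negative for the central characters.
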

\begin{proof}
This is immediate from $(\ref{param})$ and the submultiplicativity of operator norms in each distinguished representation. 
\end{proof}

For the same reason, we also obtain the multidimensional counterparts to the above Lemma \ref{proximal.implies1} and Proposition \ref{spectralcontrolproximal} also from \cite{Benoist1}. 

\begin{lemma}\label{loxodromy.implies.Cartan.close.to.Jordan}
Let $G$ be a real reductive group and $r>0$ a number. Then, there exists a constant  $C_{r}>0$ such that if $g\in G$ is $(r,\eps)$-proximal in $G$ for some $\eps \in (0,r)$, then $\|\lambda(g)-\kappa(g)\|\le C_r$.
\end{lemma}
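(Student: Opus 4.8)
The plan is to reduce the statement to the linear situation by passing through the distinguished representations $\rho_1,\ldots,\rho_d$ and invoking the parametrization $(\ref{param})$. Recall that for each distinguished representation $\rho_i$ we have $\overline{\chi}_{\rho_i}(\kappa(g)) = \log\|\rho_i(g)\|$ and $\overline{\chi}_{\rho_i}(\lambda(g)) = \log\lambda_1(\rho_i(g))$. Since $g$ is $(r,\eps)$-$G$-proximal, by definition $\rho_i(g)$ is an $(r,\eps)$-proximal endomorphism of $V_{\rho_i}$ for every $i=1,\ldots,d_S$. First I would apply Lemma \ref{proximal.implies1} to each $\rho_i(g)$: this gives $c_{r,\eps}\|\rho_i(g)\| \le \lambda_1(\rho_i(g)) \le \|\rho_i(g)\|$, hence
\[
0 \;\le\; \log\|\rho_i(g)\| - \log\lambda_1(\rho_i(g)) \;\le\; -\log c_{r,\eps} \;=:\; M_r,
\]
a bound depending only on $r$ and $\eps$; and since $c_{r,\eps}\to 2r$ as $\eps\to 0$, one can in fact take $M_r$ depending only on $r$ (replacing $c_{r,\eps}$ by $\inf_{0<\eps<r} c_{r,\eps}$, which is still positive for fixed $r$ — this needs the uniform lower bound $c_{r,\eps}\geq c_r>0$, which follows from the compactness argument in the proof of Lemma \ref{proximal.implies1}). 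Thus $|\overline{\chi}_{\rho_i}(\kappa(g)-\lambda(g))| \le M_r$ for each $i=1,\ldots,d_S$.

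Next I would handle the remaining $d-d_S$ coordinates, coming from the characters of $G/[G,G]$ that we adjoined to complete the $\overline{\omega}_\alpha$'s to a basis of $\Hom(\a,\R)$. These are one-dimensional representations, on which $\kappa$ and $\lambda$ agree identically: for a character $\chi:G\to\R_+^*$ one has $\overline{\chi}(\kappa(g)) = \log|\chi(g)| = \overline{\chi}(\lambda(g))$, since the elliptic and unipotent parts of $g$ are killed by any algebraic character. So on those coordinates the difference $\kappa(g)-\lambda(g)$ vanishes exactly. Combining with the previous paragraph, the vector $\kappa(g)-\lambda(g) \in \a$ is evaluated by every element of the basis $\{\overline{\omega}_{\alpha_1},\ldots,\overline{\omega}_{\alpha_{d_S}}\} \cup \{\text{the $d-d_S$ characters}\}$ of $\Hom(\a,\R)$ to a quantity bounded in absolute value by $M_r$ (or $0$). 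Since this is a fixed basis of the dual of the finite-dimensional space $\a$, the coordinates of $\kappa(g)-\lambda(g)$ in the dual basis are bounded, and therefore $\|\kappa(g)-\lambda(g)\| \le C_r$ with $C_r$ depending only on $r$ (and on $G$, through the fixed choice of basis and norm). This completes the argument.

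The only genuine subtlety — the step I would be most careful about — is the uniformity in $\eps$: Lemma \ref{proximal.implies1} as stated gives a constant $c_{r,\eps}$ for each pair $(r,\eps)$, so a priori the bound $M_r = -\log c_{r,\eps}$ could blow up as $\eps\to r$. One must observe that for fixed $r$ the function $\eps\mapsto c_{r,\eps}$ on $(0,r]$ stays bounded away from $0$; this again follows by the same compactness/limiting argument used to prove Lemma \ref{proximal.implies1}, since the set of $(r,\eps)$-proximal maps of norm $1$ with $\eps\le r$ is relatively compact and the limit points are rank-one maps $\pi$ with $d(\Ima\pi,\ker\pi)\geq 2r$, for which $\lambda_1(\pi)\geq 2r$. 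Everything else is routine linear algebra and the translation between $\a$-valued projections and the distinguished representations.
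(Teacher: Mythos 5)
Your overall strategy is exactly the one the paper intends: transport the problem to each distinguished representation via $(\ref{param})$, apply Lemma \ref{proximal.implies1} on the $d_S$ semisimple coordinates, note that $\kappa$ and $\lambda$ agree on the central characters, and then recover a bound on $\|\kappa(g)-\lambda(g)\|$ because the $\overline{\omega}_\alpha$'s together with the central characters form a basis of $\Hom(\a,\R)$. You are also right to single out the uniformity in $\eps$ as the one nontrivial point.

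However, the compactness argument you give for that uniformity is flawed. You assert that the limit points of $(r,\eps)$-proximal maps of norm $1$ with $\eps\le r$ are rank-one maps with $d(\Ima\pi,\ker\pi)\ge 2r$. That is only true when $\eps_k\to 0$, which is precisely the regime handled in the proof of Lemma \ref{proximal.implies1}. For a sequence $(g_k)$ of $(r,\eps_k)$-proximal maps of norm $1$ with $\eps_k$ bounded away from $0$, the limit can have any rank: for instance, a constant sequence equal to a fixed $(r,r/2)$-proximal map of rank $\ge 2$ converges to itself. So the claim "the limit points are rank-one" does not establish $\inf_{0<\eps<r}c_{r,\eps}>0$.

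Fortunately there is a simple fix that avoids any new compactness argument: the proximality condition is \emph{monotone in $\eps$}. If $g$ is $(r,\eps)$-proximal and $\eps\le\eps'\le r$, then $g$ is also $(r,\eps')$-proximal, since $B_g^{\eps'}\subseteq B_g^\eps$ and $\eps$-Lipschitz implies $\eps'$-Lipschitz. Therefore every $(r,\eps)$-proximal map with $\eps\in(0,r)$ is automatically $(r,r)$-proximal, and you may simply apply Lemma \ref{proximal.implies1} with $\eps=r$ and take $M_r=-\log c_{r,r}$; no infimum over $\eps$ is needed. (Alternatively, a direct estimate using the $\eps$-Lipschitz bound near $v_g^+$ together with the transversality $d(v_g^+,H_g^<)\ge 2r$ gives $\|g|_{H_g^<}\|\le\frac{\eps}{2r}\lambda_1(g)$ and hence $\|g\|\le\frac{\lambda_1(g)+\|g|_{H_g^<}\|}{2r}$, yielding $c_{r,\eps}\ge\frac{2r}{1+\eps/(2r)}\ge\frac{4r}{3}$ for all $\eps\le r$.) With either repair, the rest of your argument goes through and produces the desired $C_r$.
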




\begin{proposition}\label{Best}Let $G$ be a real reductive group. For every $r > 0 $, there exists a constant $C_r>0$ such that for every $\ell \in \N$ if $g_{1},\ldots, g_{\ell}$ are elements of $G$ that are $(r,\eps)$-proximal in $G$ and have the property that $d(v_{\rho_{i}(g_{j})}^{+}, H_{\rho_{i}(g_{j+1})}^{<}) \geq 6r$ for all $j=0,\ldots, \ell-1$ (where we wrote $g_0:=g_\ell$) and for all $i=1,\ldots,d_S$, then for all $n_{1}, \ldots, n_{\ell} \in \N$, the product $g_{\ell}^{n_{\ell}}\ldots g_{1}^{n_{1}}$ is $(2r,2\eps)$-proximal in $G$, and satisfies
\begin{equation*}
\|\lambda(g_{\ell}^{n_{\ell}}\ldots g_{1}^{n_{1}}) - \sum_{i=1}^{\ell} n_{i}\lambda(g_{i})\| \leq C_r \ell.
\end{equation*}
\end{proposition}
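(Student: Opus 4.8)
The plan is to deduce this multidimensional statement from the corresponding one-dimensional statement, namely Proposition~\ref{spectralcontrolproximal}, applied separately in each of the finitely many distinguished representations $\rho_1,\ldots,\rho_{d_S}$ (or $\rho_1,\ldots,\rho_d$). The key observation is the identity~$(\ref{param})$: for any $g\in G$ one has $\overline{\chi}_{\rho_i}(\lambda(g))=\lambda_1(\rho_i(g))$, so that the vector $\lambda(g)\in\a^+$ is completely determined by the numbers $\lambda_1(\rho_i(g))$, $i=1,\ldots,d$, because the characters $\overline{\chi}_{\rho_i}$ form a basis of $\Hom(\a,\R)$ by construction of the distinguished representations.

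First I would fix $r>0$ and let $D_{r,\eps}>1$ be the constant furnished by Proposition~\ref{spectralcontrolproximal} (applied in a fixed Euclidean vector space; since there are only finitely many $V_{\rho_i}$ and finitely many $(r,\eps)$-related constants, one can take a common one). Given $g_1,\ldots,g_\ell\in G$ that are $(r,\eps)$-$G$-proximal with the stated separation condition $d(v^+_{\rho_i(g_j)},H^<_{\rho_i(g_{j+1})})\ge 6r$ for all $i$ and all $j=0,\ldots,\ell-1$ (cyclically), I note that for each fixed $i$ the transformations $\rho_i(g_1),\ldots,\rho_i(g_\ell)$ of $V_{\rho_i}$ satisfy exactly the hypotheses of Proposition~\ref{spectralcontrolproximal}. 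Hence for all $n_1,\ldots,n_\ell\ge 1$ the product $\rho_i(g_\ell^{n_\ell}\cdots g_1^{n_1})=\rho_i(g_\ell)^{n_\ell}\cdots\rho_i(g_1)^{n_1}$ is $(2r,2\eps)$-proximal in $\GL(V_{\rho_i})$, which by definition says precisely that $g_\ell^{n_\ell}\cdots g_1^{n_1}$ is $(2r,2\eps)$-$G$-proximal; and
\[
\beta_i\, D_{r,\eps}^{-\ell}\ \le\ \frac{\lambda_1(\rho_i(g_\ell)^{n_\ell}\cdots\rho_i(g_1)^{n_1})}{\lambda_1(\rho_i(g_\ell))^{n_\ell}\cdots\lambda_1(\rho_i(g_1))^{n_1}}\ \le\ D_{r,\eps}^{\ell}\,\beta_i,
\]
where $\beta_i=\beta(\rho_i(g_1),\ldots,\rho_i(g_\ell))$ is the ratio of cross-distances appearing in Proposition~\ref{spectralcontrolproximal}. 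Taking logarithms and using $(\ref{param})$, this reads
\[
\bigl|\,\overline{\chi}_{\rho_i}(\lambda(g_\ell^{n_\ell}\cdots g_1^{n_1}))-\textstyle\sum_{j=1}^\ell n_j\,\overline{\chi}_{\rho_i}(\lambda(g_j))\,\bigr|\ \le\ \ell\log D_{r,\eps}+|\log\beta_i|.
\]
Now the separation hypothesis $d(v^+_{\rho_i(g_j)},H^<_{\rho_i(g_{j+1})})\ge 6r$ bounds each factor in the numerator of $\beta_i$ from below by $6r$, while the denominator factors $d(v^+_{\rho_i(g_j)},H^<_{\rho_i(g_j)})\ge 2r$ are also bounded below since each $\rho_i(g_j)$ is $(r,\eps)$-proximal, hence $\ge 2r$ by definition; and all these distances are $\le 1$. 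Therefore $|\log\beta_i|\le C'_r\,\ell$ for a constant $C'_r$ depending only on $r$ (e.g. $C'_r=\ell\cdot|\log(6r)|+\ell\cdot|\log(2r)|$ up to the bookkeeping of which distances appear), so the whole right-hand side is $\le C''_r\,\ell$ with $C''_r$ depending only on $r$ (absorbing $\log D_{r,\eps}\le\log D_{r,r}$).

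Finally I would assemble these coordinate estimates into a bound on $\|\lambda(g_\ell^{n_\ell}\cdots g_1^{n_1})-\sum_{j=1}^\ell n_j\lambda(g_j)\|$. Since $\{\overline{\chi}_{\rho_i}\}_{i=1}^{d}$ is a basis of the dual space $\Hom(\a,\R)$, the linear map $v\mapsto(\overline{\chi}_{\rho_1}(v),\ldots,\overline{\chi}_{\rho_d}(v))$ from $\a$ to $\R^d$ is invertible, so its inverse has some finite operator norm $M_G$ depending only on $G$ (and on the fixed inner products). Applying this to $v=\lambda(g_\ell^{n_\ell}\cdots g_1^{n_1})-\sum_j n_j\lambda(g_j)$ and using the $d$ coordinate estimates above gives $\|v\|\le M_G\sqrt{d}\,C''_r\,\ell=:C_r\,\ell$, which is the claimed inequality. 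I do not anticipate a genuine obstacle here: the argument is a routine transfer via the finitely many distinguished representations, and the only point requiring a little care is checking that the cross-ratio quantity $\beta_i$ is controlled by a constant to the power $\ell$ rather than something worse — but that is immediate from the uniform upper and lower bounds ($\le 1$ and $\ge 2r$ or $\ge 6r$) on the projective distances entering $\beta_i$, which the $(r,\eps)$-$G$-proximality and the $6r$-separation hypothesis provide. One should also double check that Proposition~\ref{spectralcontrolproximal}'s hypothesis involves the cyclic condition $d(v^+_{\rho_i(g_\ell)},H^<_{\rho_i(g_1)})\ge 6r$, which is exactly the $j=0$ case with the convention $g_0:=g_\ell$ in the statement, so the hypotheses match up.
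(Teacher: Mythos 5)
Your proof is correct and takes essentially the same route as the paper: apply Proposition~\ref{spectralcontrolproximal} separately in each distinguished representation, transfer to $\a$ via $(\ref{param})$, bound $|\log\beta_i|$ linearly in $\ell$ from the $6r$-separation and the $(r,\eps)$-proximality, and assemble the coordinate bounds using that the $\overline{\chi}_{\rho_i}$ form a basis of $\Hom(\a,\R)$. Two very small remarks: for $i>d_S$ the $\rho_i$ are one-dimensional characters, so the corresponding coordinate identity $\overline{\chi}_{\rho_i}(\lambda(\prod g_j^{n_j}))=\sum n_j\overline{\chi}_{\rho_i}(\lambda(g_j))$ is exact and no proximality argument is needed there, and your parenthetical example bound for $C'_r$ has stray factors of $\ell$ (it should read $C'_r=|\log(2r)|$ or the like, giving $|\log\beta_i|\le C'_r\ell$), but neither affects the validity of the argument.
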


\begin{proof}This is immediate from Proposition \ref{spectralcontrolproximal} and from $(\ref{param})$ after observing that in each representation $\rho_i$ the term $|\log \beta(g_1,\ldots,g_\ell)|$ is bounded by $\ell |\log 6r |$.
\end{proof}

Analogously to Definition \ref{defnarepsSchottky1} we have:

\begin{definition}[Schottky family in $G$]\label{defnarepsSchottky2}
Let $G$ be as above, $r \geq \eps >0$ be given constants. A subset $E$ of $G$ is said to be an $(r,\eps)$-Schottky family in $G$ (or $(r,\eps)$-$G$-Schottky family), if for each $i=1,\ldots,d_S$ the set $\rho_{i}(E)$ acting on $V_{\rho_i}$ is an $(r,\eps)$-Schottky family.
\end{definition}

\subsection{Abels-Margulis-Soifer}


An important and non-trivial fact about Zariski-dense semigroups of reductive Lie groups is that every such contains a $G$-proximal (or $\R$-regular) element, see \cite{benoist-labourie, guivarch-raugi,goldsheid-margulis,Prasad}. Such elements are even Zariski-dense in $G$ \cite{benoist-asymptotic1}. 

The following result of Abels-Margulis-Soifer \cite[Thm. 6.8]{AMS} is an important refinement of these facts. It will be of crucial use in our considerations.  

\begin{theorem}[Abels-Margulis-Soifer \cite{AMS}]\label{AMS} 
Let $G$ be a connected reductive real Lie group  and $\Gamma$ a Zariski-dense subsemigroup. Then, there exists $0<r=r(\Gamma)$ such that for all $0<\eps \leq r$, there exists a finite subset $F=F(r,\eps,\Gamma)$ of $\Gamma$ with the property that for every $\gamma \in G$, there exists $f \in F $ such that $\gamma f$ is $(r,\eps)$-proximal in $G$.
\end{theorem}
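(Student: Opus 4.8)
The statement to prove is the Abels--Margulis--Soifer theorem (Theorem \ref{AMS}): given a Zariski-dense subsemigroup $\Gamma$ of a connected reductive real Lie group $G$, there is $r=r(\Gamma)>0$ so that for every $\eps\in(0,r]$ there is a \emph{finite} $F\subset\Gamma$ such that every $\gamma\in G$ can be multiplied on the right by some $f\in F$ to become $(r,\eps)$-proximal in $G$. Since the excerpt actually \emph{quotes} this from \cite{AMS}, a self-contained reproof is not expected; but here is how I would organize the argument.

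\textbf{Strategy.} The plan is to work representation by representation: by definition $(r,\eps)$-proximality in $G$ means $\rho_i(\gamma f)$ is $(r,\eps)$-proximal on $V_{\rho_i}$ for each distinguished representation $i=1,\dots,d_S$. First I would reduce, using the Cartan decomposition $\gamma=k_1\exp(\kappa(\gamma))k_2$ with $k_1,k_2\in K$ and $K$ compact, to controlling the ``direction'' data of $\gamma$: in each $\rho_i$ the image $\rho_i(\gamma)$ has a well-defined most-expanded line $U_i^+(\gamma)=\rho_i(k_1)\,[\text{high weight line}]$ and most-contracted hyperplane $U_i^<(\gamma)=\rho_i(k_2)^{-1}\,[\text{hyperplane}]$, and since $K$ is compact these live in a compact space (a product of flag-type varieties). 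The key point is then that by Zariski density one can \emph{prescribe} where these go after multiplying by a suitable group element. Concretely: since $\Gamma$ is Zariski-dense, it contains $(r_0,\eps)$-$G$-Schottky families for suitable $r_0$ (this uses that $\Gamma$ contains $G$-proximal elements whose attracting/repelling data can be taken in general position — a consequence of Zariski density and the fact, cited in the excerpt, that $G$-proximal elements are Zariski-dense in $G$), and one can also find elements of $\Gamma$ that move any prescribed point off any prescribed hyperplane in each $\rho_i$ simultaneously.

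\textbf{Main steps.} (1) Fix $r$ small enough that $\Gamma$ contains an $(r,\eps)$-$G$-Schottky element, say $\gamma_0$, with attracting data $v_i^+$ and repelling hyperplanes $H_i^<$ in each $\rho_i$. (2) For each ``type'' of limiting direction that $\rho_i(\gamma)$ can have — formalized by a finite open cover of the compact direction space — choose, using Zariski density, a single element $f_{\text{type}}\in\Gamma$ that sends the contracted hyperplane of that type away (distance $\ge 6r$ in each $\rho_i$) from the attracting point $v_i^+$ of $\gamma_0$, and more precisely so that $\rho_i(f_{\text{type}})$ followed by a large power of $\rho_i(\gamma_0)$ becomes a sharp contraction toward $v_i^+$. (3) Apply the Tits proximality criterion (Lemma \ref{Tits.criterion.sublemma}) in each $\rho_i$: the map $\rho_i(\gamma_0^N f_{\text{type}}\,\gamma)$ sends a large ball complement around the hyperplane into a tiny ball around $v_i^+$ and is Lipschitz-contracting there, hence is $(r,\eps)$-proximal. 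Collect $F:=\{\gamma_0^N f_{\text{type}}: \text{type}\}$, a finite set. By compactness every $\gamma\in G$ falls into one of the finitely many types (the Cartan direction data of $\gamma$ lies in one cover element), and the corresponding $f\in F$ works, so $\gamma f$ — or $f\gamma$, depending on which side; one arranges sides to match the statement — is $(r,\eps)$-proximal in $G$. (4) Uniformity of the constants across the finitely many types and across all $\gamma$ follows because everything is controlled by $r$ and by the compactness of $K$ and of the flag varieties, using Remark \ref{Lipschitz.action.lemma} to bound Lipschitz constants.

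\textbf{Main obstacle.} The crux is step (2): producing, for each of finitely many ``bad positions'' of the Cartan data of an arbitrary $\gamma\in G$, a \emph{single} element of the semigroup $\Gamma$ that simultaneously fixes the defect in \emph{all} $d_S$ distinguished representations at once — i.e. genericity in a product of flag varieties rather than in one projective space. This is exactly where Zariski density is essential and where \cite{AMS} does the real work: one needs that the set of $g\in G$ for which $\rho_i(g)\cdot(\text{hyperplane})$ avoids a fixed point in every $\rho_i$ simultaneously is Zariski-open and nonempty, hence meets $\Gamma$. Once this simultaneous-genericity input is in hand, the rest is a routine packaging of the Tits criterion and a compactness argument. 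Given that the excerpt presents Theorem \ref{AMS} as a citation, in the paper itself it suffices to invoke \cite[Thm.\ 6.8]{AMS}; the above is the shape of the proof behind that reference.
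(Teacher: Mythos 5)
The paper does not prove this theorem; it states it as a citation of \cite[Thm.~6.8]{AMS}, and you correctly note this yourself. So there is no ``paper's proof'' to compare against, only the question of whether your reconstruction of the AMS argument has the right shape. Your outline — Cartan decomposition, compactness of the direction data, simultaneous transversality in all distinguished representations via Zariski density, packaging via the Tits criterion — does capture the main structural ideas and correctly identifies the crux (simultaneous genericity in a product of projective/flag spaces, which is exactly the content of Lemma~\ref{dispersion.lemma} together with Remark~\ref{unif-rem}).

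Two real gaps remain in the sketch, though. First, your construction produces elements $\gamma_0^N f_{\mathrm{type}}\,\gamma$, i.e.\ a \emph{left} multiplication, whereas the statement asserts $\gamma f$ with $f\in\Gamma$ on the right. The parenthetical ``one arranges sides to match the statement'' is not innocent: $f\gamma=\gamma^{-1}(\gamma f)\gamma$ is conjugate to $\gamma f$, but $(r,\eps)$-proximality is \emph{not} a conjugation invariant (it depends on the fixed $K$-invariant metric and on $v^+_g$, $H^<_g$), so the quantitative data does not transport; nor is $\Gamma$ a group, so one cannot freely pass to inverses. Making the statement genuinely one-sided is part of the work in \cite{AMS}. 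Second, your reduction assumes each $\rho_i(\gamma)$ has a ``well-defined most-expanded line'', which requires $a_1(\rho_i(\gamma))>a_2(\rho_i(\gamma))$, i.e.\ that $\kappa(\gamma)$ stays away from the walls of $\a^+$. When $\kappa(\gamma)$ lies on or near a wall, this direction data degenerates (and is not even continuous in $\gamma$), so the ``finitely many types by compactness'' step breaks down. In fact this regime is exactly what forces the finite set $F$ to depend on $\eps$: since $\|\kappa(\gamma f)-\kappa(\gamma)\|\le C_G\|\kappa(f)\|$ by Lemma~\ref{Cartan.stability}, knocking a walled $\gamma$ to within distance $\log(1/\eps)$ of $\a^{++}$ requires including in $F$ elements whose Cartan projection is correspondingly deep in $\a^{++}$. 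Your sketch does not address this, and it is the more delicate half of the proof.
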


An important feature is the Zariski-connectedness of the ambient group. For further use we record the following lemma (also used in \cite{AMS}), where this feature is key.

\begin{lemma}[simultaneous transversality]\label{dispersion.lemma}
Let $\Gamma$ be a Zariski-dense subsemigroup of $G$. Let $k \in \mathbb{N}$, for $i=1,\ldots,k$, let $(V_i,\pi_i)$ be a given irreducible linear representation of $G$ and for $t \in \mathbb{N}$ and $j=1,\ldots,t$, let $v_i^j$ and $H_i^j$ denote respectively a point and a hyperplane in the projective space $\P(V_i)$. Then there exists $\gamma \in \Gamma$ such that $\pi_{i}(\gamma)v_{i}^{j}\notin H_i^j$ for all $i=1,\ldots,d_{S}$ and $j=1,\ldots,t$.
\end{lemma}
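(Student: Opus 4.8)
The statement is about producing a single element $\gamma \in \Gamma$ whose image in each of finitely many irreducible representations $\pi_i$ moves \emph{all} of the finitely many prescribed points $v_i^j$ off \emph{all} of the prescribed hyperplanes $H_i^j$. The natural approach is to rephrase the condition ``$\pi_i(\gamma)v_i^j \notin H_i^j$'' as a non-vanishing condition for a regular function on $G$, and then invoke the fact that $\Gamma$ is Zariski-dense in the Zariski-\emph{connected} group $G$, so it cannot be contained in the zero locus of any non-zero regular function. The only real content is checking that the relevant function is not identically zero on $G$.

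First I would fix, for each pair $(i,j)$, a non-zero linear functional $\ell_i^j \in V_i^*$ whose kernel is the hyperplane representing $H_i^j$, and a lift $\widehat v_i^j \in V_i \setminus\{0\}$ of $v_i^j$. Then the function
\[
\Phi(g) \;:=\; \prod_{i=1}^{k}\prod_{j=1}^{t} \ell_i^j\big(\pi_i(g)\,\widehat v_i^j\big)
\]
is a regular function on $G$ (it is a product of matrix coefficients of rational representations), and the conclusion ``$\pi_i(\gamma)v_i^j \notin H_i^j$ for all $i,j$'' is exactly ``$\Phi(\gamma) \neq 0$''. So it suffices to show $\Phi$ is not the zero function, and then conclude by Zariski-density: since $G$ is Zariski-connected (it is the identity component of the real points of a connected reductive algebraic group) and $\Gamma$ is Zariski-dense in $G$, $\Gamma$ meets the complement of the proper Zariski-closed set $\{\Phi = 0\}$.

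To see $\Phi \not\equiv 0$, it is enough to treat each factor separately: for fixed $(i,j)$, the set $\{g \in G : \ell_i^j(\pi_i(g)\widehat v_i^j) = 0\}$ is a proper Zariski-closed subset of $G$, because the representation $(V_i,\pi_i)$ is irreducible and so $G \cdot \widehat v_i^j$ spans $V_i$; hence it cannot be contained in the hyperplane $\ker \ell_i^j$, i.e. there is $g$ with $\pi_i(g)\widehat v_i^j \notin \ker \ell_i^j$. Thus each factor is a non-zero regular function on the irreducible variety $G$; a finite product of non-zero regular functions on an irreducible variety is non-zero (the ring of regular functions on an irreducible variety is an integral domain), so $\Phi \not\equiv 0$. (Alternatively one can argue one representation and one pair at a time, peeling off factors inductively using that $\Gamma$ remains Zariski-dense and that on a connected group the complement of finitely many proper subvarieties is still Zariski-dense, hence nonempty and in fact met by $\Gamma$; but the product trick packages this cleanly.)

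The main — and essentially only — obstacle is the use of \emph{connectedness}: if $G$ were disconnected, the function $\Phi$ could vanish identically on the component(s) containing $\Gamma$ while being nonzero elsewhere, and the argument would collapse; the hypothesis that $G$ is Zariski-connected (together with $\Gamma$ Zariski-dense, which by the facts recalled in the excerpt forces $\Gamma$ to act irreducibly on every irreducible representation of $G$, giving the spanning property used above) is exactly what makes the integral-domain step legitimate. Everything else is routine: identifying the condition with non-vanishing of a matrix coefficient, and the standard fact that a Zariski-dense subset meets the complement of any proper closed subvariety.
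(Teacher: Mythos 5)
Your proof is correct and follows essentially the same route as the paper's: both reduce the claim to the observation that, for each pair $(i,j)$, the locus $\{g\in G : \pi_i(g)v_i^j\notin H_i^j\}$ is a non-empty Zariski-open subset of $G$, and then use Zariski-connectedness (irreducibility of $G$ as a variety) plus Zariski-density of $\Gamma$ to produce a point of $\Gamma$ in the finite intersection. The only cosmetic difference is that you package the finitely many conditions into a single regular function $\Phi$ and argue via the integral-domain property, whereas the paper phrases it directly in terms of intersecting non-empty open sets; you also spell out the non-emptiness of each factor (via irreducibility of $\pi_i$) which the paper leaves implicit.
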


\begin{proof}
 Indeed, for each $i,j$ the subsets $\{ g \in G\,|\,\pi_{i}(\gamma)v_{i}^{j} \notin H_i^j\}$ are non-empty Zariski-open subsets of $G$. By connectedness their intersection is still non-empty and Zariski open. Hence it intersects $\Gamma$.
\end{proof}

\begin{remark}\label{unif-rem}
By compactness of projective spaces, we can conclude the stronger statement that $d(\pi_i(\gamma)v_i^j,H_i^j)>r$ for all $i,j$ and that $\gamma$ can be chosen from a finite set $F=F(t,\Gamma)$ such that $r=r(t,\Gamma)>0$ and $F$ are both independent of the $v_i^j$'s and $H_i^j$'s. See \cite[Lemma 4.3]{breuillard-gelander-1}.
\end{remark}

\section{Existence of limits}\label{limit-section}

The aim of this section is to prove  Theorems  \ref{main1}, \ref{main2} and \ref{joint} from the introduction. We begin with Theorem \ref{joint}.


We need to show that  certain sequences of compact subsets of the  Cartan Lie algebra $\a$  of $G$ converge in Hausdorff metric. To this end  we  first recall the following well known fact (see \cite[Section 4.4]{ambrosio-tilli}).

\begin{lemma}\label{Hausdorff.convergence.lemma}
Let $(X,d)$ be a compact metric space. A sequence $(K_{n})_{n \in \mathbb{N}}$ of compact subsets of $X$ converges in Hausdorff metric to  a compact subset of $X$  if and only if for every $\delta>0$ there is $n_0 \in \N$ such that for all $n\geq n_0$ and every $x \in K_n$  we have $\limsup_{m \to +\infty} d(x,K_m) \leq \delta$.
\end{lemma}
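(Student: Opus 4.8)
\textbf{Proof plan for Lemma \ref{Hausdorff.convergence.lemma}.}

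The plan is to reduce the statement to the standard fact that, on a compact metric space $(X,d)$, the space of non-empty compact subsets equipped with the Hausdorff metric is itself compact. Recall that the Hausdorff distance between compact sets $A,B$ is $d_H(A,B) = \max\{\sup_{a\in A} d(a,B),\ \sup_{b\in B} d(b,A)\}$, and that $(K_n)$ converges to some compact $K$ iff it is a Cauchy sequence for $d_H$ (using completeness), equivalently iff every subsequence has a further subsequence converging to the same limit. So the real content is to show that the stated $\delta$-criterion is equivalent to $(K_n)$ being Cauchy.

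First I would prove the easy direction: if $K_n \to K$ in Hausdorff metric, then for any $\delta>0$ pick $n_0$ with $d_H(K_n,K) \le \delta/2$ for $n\ge n_0$; then for $n\ge n_0$ and $x\in K_n$ we have $d(x,K)\le\delta/2$, and for every $m\ge n_0$, $d(x,K_m)\le d(x,K)+d_H(K,K_m)\le \delta$, so $\limsup_{m}d(x,K_m)\le\delta$. For the converse, assume the criterion. The key step is to extract a limit candidate: define $K := \bigcap_{n\ge 1}\overline{\bigcup_{m\ge n} K_m}$, the topological upper limit (Kuratowski limsup) of the sequence, which is a non-empty compact subset of $X$ since $X$ is compact and the sets $\overline{\bigcup_{m\ge n}K_m}$ form a decreasing nested sequence of non-empty compacts. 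I then need to verify the two one-sided Hausdorff bounds for all large $n$. For the bound $\sup_{x\in K_n} d(x,K)\le\delta$: given $x\in K_n$ with $n\ge n_0(\delta)$, the hypothesis gives $\limsup_m d(x,K_m)\le\delta$, so there are points $x_m\in K_m$ with $d(x,x_m)\le\delta+o(1)$ along arbitrarily large $m$; by compactness a subsequence of these $x_m$ converges to some $y\in X$, and by construction $y\in K$, whence $d(x,K)\le d(x,y)\le\delta$. For the reverse bound $\sup_{y\in K}d(y,K_n)\le\delta$ for large $n$: here I would argue by contradiction — if not, there exist $n_j\to\infty$ and $y_j\in K$ with $d(y_j,K_{n_j})>\delta$; passing to a convergent subsequence $y_j\to y\in K$, and using that $y\in K$ means $y$ is a limit of points $z_k\in K_{m_k}$ for some $m_k\to\infty$, one combines with the hypothesis applied at index $m_k$ (which controls $\limsup_m d(z_k, K_m)$, in particular $d(z_k,K_{n_j})$ for $n_j$ large) to derive $d(y,K_{n_j})\le \delta + o(1)$ along a suitable diagonal sequence, contradicting the strict inequality. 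This shows $d_H(K_n,K)\to 0$.

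The main obstacle I anticipate is organizing the second one-sided bound cleanly: the hypothesis as stated bounds $\limsup_m d(x,K_m)$ only for $x$ ranging over $K_n$ with $n$ large, so to control the distance from a point of $K$ to $K_n$ one must first approximate that point of $K$ by points lying in sets $K_m$ with large index and then feed those into the hypothesis — a two-step diagonal argument. An alternative, possibly slicker, route that avoids this is to invoke directly the compactness of the hyperspace $(\mathcal{K}(X),d_H)$: any subsequence of $(K_n)$ has a $d_H$-convergent sub-subsequence with some limit $L$; by the easy direction's computation and the hypothesis one shows every such subsequential limit $L$ must equal the fixed set $K$ defined above (since the $\delta$-criterion pins down both inclusions $L\subseteq K$ and $K\subseteq L$ up to $\delta$ for every $\delta$), and a sequence in a compact metric space all of whose subsequential limits coincide is convergent. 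I would likely present whichever of these two is shorter, but both rest on the same compactness input.
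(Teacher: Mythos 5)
Your proposal is correct. The paper itself offers no proof of this lemma — it is cited directly from Ambrosio--Tilli, \S 4.4, where Hausdorff convergence of compact sets in a compact metric space is treated via Kuratowski upper and lower limits — so there is no ``paper proof'' to compare against; your route via the Kuratowski limsup $K:=\bigcap_{n}\overline{\bigcup_{m\geq n}K_m}$ is exactly the standard one.

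Two small points worth confirming about the harder one-sided bound $\sup_{y\in K}d(y,K_n)\leq\delta$. First, since the hypothesis controls only points $x\in K_n$ with $n$ large, a point $y\in K$ must indeed be pre-processed as you indicate: use $y\in\overline{\bigcup_{m\geq n}K_m}$ with $n=n_0(\delta/4)$ to find $m\geq n_0(\delta/4)$ and $z\in K_m$ with $d(y,z)<\delta/4$, then apply the hypothesis to $z$ to get $d(z,K_{n'})<\delta/2$ for all $n'$ large, whence $d(y,K_{n'})<3\delta/4$ for $n'$ large. This already gives the \emph{pointwise} statement $d(y,K_n)\to0$ for each $y\in K$ (equivalently, that the Kuratowski limsup is contained in the Kuratowski liminf, which is the classical characterization of Hausdorff convergence on a compact space). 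Second, the upgrade to a \emph{uniform} bound over $y\in K$ genuinely needs the compactness of $K$; your contradiction argument (pass to $y_j\to y\in K$, approximate $y$ by $z_k\in K_{m_k}$, apply the hypothesis at $z_k$, diagonalize) does close the loop, and an equivalent alternative is to cover $K$ by finitely many $\delta/4$-balls centered at points of $K$ and take the maximum of the finitely many thresholds. Your second suggested route through compactness of the hyperspace $(\mathcal{K}(X),d_H)$ (Blaschke selection) and uniqueness of subsequential limits is also correct and somewhat shorter, at the cost of invoking that compactness result.
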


\subsection{Convergence of Cartan projections}\label{cartan-proof}

Here we prove the part of Theorem \ref{joint} regarding the convergence of $\frac{1}{n}\kappa(S^n)$. Note first that for every $m\geq 1 $ and every $h \in S^m$ we have \begin{equation}\label{supsup}\frac{1}{m}\|\kappa(h)\|\leq C_G M_S,\end{equation}
where $M_S:=\sup_{s \in S} \|\kappa(s)\|$ as follows from Lemma \ref{Cartan.stability}.  We now verify that the conditions for convergence given in Lemma \ref{Hausdorff.convergence.lemma} are met. Let $\Gamma$ be the semigroup generated by $S$. Since $\Gamma$ is assumed Zariski-dense in $G$, we may apply Theorem \ref{AMS} and let $r=r(\Gamma)>0$ be the constant given by this theorem. Fix numbers $\delta>0$, $\eps \in (0,r)$ and  let $F=F(\eps)$ be the finite subset of  $\Gamma$ given also by Theorem \ref{AMS}. Given an integer $n \geq 1$ pick $x \in \frac{1}{n}\kappa(S^n)$ and $g\in S^n$ such that $x=\kappa(g)/n$. This gives $f \in F$ such that $gf$ is $(r,\eps)$-proximal in $G$. Then so will be every positive power $(gf)^m$, $m \geq 1$. Hence by Lemma \ref{loxodromy.implies.Cartan.close.to.Jordan} we have for all $m\geq1$
$$\|\lambda((gf)^m) - \kappa((gf)^m)\| \leq C_r.$$
However $\lambda((gf)^m) = m \lambda(gf)$ and by Lemma  \ref{Cartan.stability}, $\|\kappa(gf) - \kappa(g)\| \leq C_G\|\kappa(f)\|$. So for all $m\geq 1$,
$$\|\kappa(g) - \frac{1}{m}\kappa((gf)^m)\| \leq 2C_G\|\kappa(f)\| + C_r(1+\frac{1}{m}).$$
Note that $(gf)^m \in S^{m(n+n_f)}$, where $n_f$ is an integer such that $f \in S^{n_f}$. Now pick some element $h_0 \in S$ and consider for any $k \geq1$ the element  $g_k:=h_0^j(gf)^m$, where $m$ and $j$ are the quotient and remainder of the Euclidean division of $k$ by $n+n_f$. Clearly $g_k \in S^k$. By the triangle inequality:
$$\|x - \frac{1}{k}\kappa(g_k)\| \leq \frac{1}{n}\|\kappa(g) - \frac{1}{m}\kappa((gf)^m)\| + |\frac{1}{mn}-\frac{1}{k}|\|\kappa((gf)^m)\|+\frac{1}{k}\|\kappa(h_0^j(gf)^m)- \kappa((gf)^m)\|.$$
Making use of Lemma   \ref{Cartan.stability} again we get:
$$\limsup_{k \to+\infty} \|x - \frac{1}{k}\kappa(g_k)\| \leq \frac{1}{n}(2C_G\|\kappa(f)\| + C_r) + C_G M_S \frac{n_f}{n}.$$ The result follows by taking $n$ large enough.

\subsection{Convergence of Jordan projections}\label{Jordan.limit.subsection} We now turn to the second part of Theorem \ref{joint} regarding convergence of Jordan projections. Again we will show that the conditions of Lemma \ref{Hausdorff.convergence.lemma} are met.

We first make the following initial remark:

\begin{proposition} If $G$ is a connected reductive Lie group and $S \subset G$ a compact subset with $1 \in S$, then the sequence $\frac{1}{n}\lambda(S^n) \subset \a^+$ converges in Hausdorff metric.
\end{proposition}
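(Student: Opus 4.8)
The plan is to verify the criterion of Lemma \ref{Hausdorff.convergence.lemma}, exploiting the hypothesis $1 \in S$, which makes the sequence of sets $S^n$ nested: $S^n \subseteq S^{n+1}$ for all $n$. First I would record the a priori bound from Lemma \ref{Cartan.stability}: since the Jordan projection satisfies $\lambda(g) = \lim_k \frac{1}{k}\kappa(g^k)$ (Lemma \ref{spectral.radius.formula.corollary}) and $\|\kappa(g^k)\| \le C_G k M_S$ for $g \in S^n$ with $M_S := \sup_{s\in S}\|\kappa(s)\|$, we get $\|\lambda(h)\| \le C_G M_S \, n$ for every $h \in S^n$; hence $\frac{1}{n}\lambda(S^n)$ lives in a fixed compact ball $B \subset \a^+$ for all $n$, and $\a^+ \cap B$ serves as the ambient compact metric space $X$ in Lemma \ref{Hausdorff.convergence.lemma}.

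The core observation is a near-superadditivity of the family $\bigl(\frac{1}{n}\lambda(S^n)\bigr)_n$ coming from the power map. Fix $n$ and $x \in \frac{1}{n}\lambda(S^n)$, say $x = \frac{1}{n}\lambda(g)$ with $g \in S^n$. For any $m \ge 1$ we have $g^m \in S^{nm}$ (here we do not even need $1\in S$ yet), and $\lambda(g^m) = m\lambda(g)$, so $\frac{1}{nm}\lambda(g^m) = x$ exactly. Thus $x \in \frac{1}{nm}\lambda(S^{nm})$ for every $m \ge 1$. To pass from the subsequence $(nm)_m$ to all sufficiently large integers $k$, write $k = nm + j$ with $0 \le j < n$; since $1 \in S$, we have $g^m = g^m \cdot 1^{\,j} \in S^{nm+j} = S^k$, so $g^m \in S^k$ and therefore $\lambda(g^m) = m\lambda(g) \in \lambda(S^k)$, giving $\frac{1}{k}\lambda(g^m) = \frac{nm}{k}\, x$, a point within distance $\frac{j}{k}\|x\| \le \frac{n}{k}\,C_G M_S \, n$ of $x$. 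Consequently $d\bigl(x, \frac{1}{k}\lambda(S^k)\bigr) \le \frac{n^2 C_G M_S}{k}$ for all $k \ge n$, and in particular $\limsup_{k\to\infty} d\bigl(x, \frac{1}{k}\lambda(S^k)\bigr) = 0$.

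This is exactly the hypothesis of Lemma \ref{Hausdorff.convergence.lemma} (with $\delta$ arbitrary and $n_0 = 1$, or any $n_0$), so the sequence $\frac{1}{n}\lambda(S^n)$ converges in Hausdorff metric to a compact subset of $\a^+$. I expect the only genuine subtlety to be the bookkeeping that lets one replace the subsequence $nm$ by all large $k$ — this is where the assumption $1 \in S$ is used, and without it one would only get convergence along $\liminf$-type subsequences unless one argues via picking an auxiliary element $h_0 \in S$ and absorbing the error $\frac{1}{k}\|\lambda(h_0^j g^m) - \lambda(g^m)\|$ using Lemma \ref{Cartan.stability}, exactly as in the Cartan-projection argument of \S\ref{cartan-proof}. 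With $1 \in S$ the padding is free and the estimate is immediate. Note also that this proposition does not yet identify the limit with $J(S)$ nor show it coincides with the limit of Cartan projections; that identification is the substance of the full Theorem \ref{joint} and will require the Abels-Margulis-Soifer machinery (Theorem \ref{AMS}) together with Proposition \ref{Best}.
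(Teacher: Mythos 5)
Your argument is correct and is essentially the paper's own proof: both fix $x=\frac{1}{n}\lambda(g)$, use $\lambda(g^m)=m\lambda(g)$, pad $g^m$ with $j$ copies of $1\in S$ to land in $S^k$ for $k=mn+j$, and observe that $\|x-\frac{1}{k}\lambda(g^m)\|=\bigl|\frac{1}{nm}-\frac{1}{k}\bigr|\,\|\lambda(g)\|\to 0$, so Lemma \ref{Hausdorff.convergence.lemma} applies. The only additions you make are the explicit a priori compactness bound (which the paper leaves implicit) and a slightly loose constant $\frac{n^2 C_G M_S}{k}$ where $\frac{n C_G M_S}{k}$ suffices; neither changes the substance.
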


\begin{proof} Indeed if $x= \frac{1}{n}\lambda(g)$ for some $g \in S^n$ and some $n$, then for every $k \in \N$ the fact that $1 \in S$ implies that $g^m \in S^k$, where $k=mn+j$ is the Euclidean division of $k$ by $n$. Moreover $x=\frac{1}{mn}\lambda(g^m)$, so $$\|x- \frac{1}{k}\lambda(g^m)\| = |\frac{1}{nm}-\frac{1}{k}| \|\lambda(g)\|$$ and the right hand side tends to $0$ as $k$ tends to $+\infty$.\end{proof}

The second remark is that convergence may fail in general if $1 \notin S$ and $G$ is not connected, as shown in the following example:

\bigskip

\begin{Example}\label{ex-non-conv}
Here we present an example of a set $S \subset \PSL_2(\R)$ for which the sequence $\frac{1}{n}\lambda(S^{n})$ does not converge. Let $\alpha>1$ and set $a:= \begin{pmatrix}
\alpha & \\
& \alpha^{-1}
\end{pmatrix}$, $u:=\begin{pmatrix}
& 1 \\
-1 &
\end{pmatrix}$ and take the subset $S:=\{a u, u\}$. Let $\lambda:\PSL_2(\R) \rightarrow [0, \infty[$ denote the Jordan projection, associating to an element the logarithm of its spectral radius. Then, we claim that $\frac{1}{2n}\lambda(S^{2n}) \underset{n \rightarrow \infty}{\longrightarrow} [0, \frac{1}{2}\log \alpha]$ and $\frac{1}{2n+1}\lambda(S^{2n+1})=\{0\}$ for each $n \geq 0$. Indeed every element in $S^{2n+1}$ has order $2$, whereas $\lambda(a^{n}) =n \log \alpha$ for every $n \geq 1$ and $a^n \in S^{2n}$.


Note that in this example, the Zariski-closure $G$ of the semigroup generated by $S$ is not $\PSL_2(\R)$ but the proper subgroup consisting of diagonal and antidiagonal elements. This is a non-connected reductive group. \hfill{$\diamond$}
\end{Example}

We are now ready for the proof of the convergence of $\frac{1}{n}\lambda(S^n)$ under the assumptions of Theorem \ref{joint}. Fix $\delta>0$. Given an integer $n\geq1$ and $g \in S^{n}$, set $x=\frac{1}{n}\lambda(g)$. By spectral radius formula, there exists $\ell \in \mathbb{N}$ such that $$\|\frac{1}{\ell}\kappa(g^{\ell})-\lambda(g)\|<\delta.$$
As before, let $\Gamma$ be semigroup generated by $S$ and let $r=r(\Gamma)>0$ the constant given by Theorem \ref{AMS}.  Fix some $\eps \in (0,r)$ and let $F$ be the finite subset of $\Gamma$ given by Theorem \ref{AMS}. By Theorem \ref{AMS}, there exists $f\in F$ such that $g^{\ell}f$ is $(r,\eps)$-proximal in $G$. By Lemma \ref{Cartan.stability} and $(\ref{supsup})$, writing $M_S:=\sup_{s \in S} \|\kappa(s)\|$ $$\|\kappa(g^{\ell}f)-\kappa(g^{\ell})\|\leq C_G n_f M_S,$$ where $n_{f} \in \N$ such that $f \in S^{n_{f}}$. Now by Lemma  \ref{loxodromy.implies.Cartan.close.to.Jordan}  for all $m\geq 1$
$$\|\lambda((g^{\ell}f)^{m})-\kappa((g^{\ell}f)^{m})\|\leq C_r.$$

Note that for each $i=1,\ldots,d_S$, the attracting points and repelling hyperplanes of $\rho_{i}((g^{\ell}f)^{m})$ are the same for all $m\geq 1$; denote them, respectively, by $v_{i}^{+}$ and $H_{i}^{<}$. Now, fix an arbitrary $h_0 \in S$. By Lemma \ref{dispersion.lemma} that there exists $\gamma \in \Gamma$ such that for all $i=1,\ldots,d_{S}$ and $j=1,\ldots,n\ell+n_{f}$, we have $\rho_{i}(\gamma)\rho_{i}(h_0^{j})v_{i}^{+}\notin H_{i}^{<}$.

\begin{lemma}\label{left.multiply.proximal.power.corollary}
Let $h\in G$ be $(r,\eps)$-proximal in $G$ and  $T\subset G$  a finite subset  such that for every $t\in T$ and every $i=1,\ldots,d_S$ we have $\rho_{i}(t)v^{+}_{\rho_{i}(h)} \notin H_{\rho_{i}(h)}^{<}$. Then, there exists $\hat{r}>0$ such that for every $\hat{\eps} \in (0,\hat{r})$ and every $t \in T$ and every $m \in \N$, $th^{m}$ is $(\hat{r},\hat{\eps})$-proximal in $G$, provided $m$ is larger than some $m_0(\hat{\eps}) \in \N$.
\end{lemma}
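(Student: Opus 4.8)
The plan is to use the Tits proximality criterion (Lemma \ref{Tits.criterion.sublemma}) applied simultaneously in each distinguished representation $\rho_i$, $i=1,\ldots,d_S$. Fix $i$ and write $\pi=\rho_i$, $v^+=v^+_{\pi(h)}$, $H^<=H^<_{\pi(h)}$ for brevity. The starting point is that $\pi(h)$ is $(r,\eps)$-proximal, so $\pi(h^m)$ is also $(r,\eps)$-proximal with the same attracting point $v^+$ and repelling hyperplane $H^<$, and moreover the dynamics of $\pi(h^m)$ improves as $m$ grows: for any fixed $\eta>0$ there is $m_1(\eta)$ such that for $m\ge m_1(\eta)$ the map $\pi(h^m)$ sends the complement $B^{\eta}_{H^<}$ of the $\eta$-neighborhood of $H^<$ into the ball $b^{\eta'}_{v^+}$ for $\eta'$ as small as we like, and is $\eta'$-Lipschitz there (this is the standard fact that an $(r,\eps)$-proximal map iterates toward a rank-one projection; it follows from Lemma \ref{lambda-norm} or directly from the definition of $(r,\eps)$-proximality together with Remark \ref{Lipschitz.action.lemma}). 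Since $T$ is finite and $\pi(t)v^+\notin H^<$ for every $t\in T$, there is a constant $r_T>0$ (depending on $T$, $h$, and the $\rho_i$'s) such that $d(\pi(t)v^+,H^<)\ge 6r_T$ for all $t\in T$ and all $i$; we may also assume $r_T\le r$. Because $\pi(t)$ is a fixed invertible map it is bi-Lipschitz on $\P(V_{\rho_i})$, so $\pi(t)$ maps the ball $b^{\eta'}_{v^+}$ into a ball $b^{\delta_0(\eta')}_{\pi(t)v^+}$ with $\delta_0(\eta')\to 0$ as $\eta'\to 0$, uniformly over the finite set $T$.

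Now I would assemble these pieces. Set $\hat r := r_T$. Given $\hat\eps\in(0,\hat r)$, choose $\eta=\hat\eps$ and then $\eta'$ small enough (hence $m_1(\eta)$) so that, after applying the fixed bi-Lipschitz map $\pi(t)$, the composite $\pi(t)\pi(h^m)=\pi(th^m)$ satisfies the hypotheses of Lemma \ref{Tits.criterion.sublemma} with $H=H^<_{\pi(h)}$, $x=\pi(t)v^+_{\pi(h)}$ and the parameter there equal to $\hat\eps/2$: namely $d(x,H)\ge 6\hat r$, $\pi(th^m)B^{\hat\eps/2}_{H}\subseteq b^{\hat\eps/2}_{x}$, and $\pi(th^m)$ is $(\hat\eps/2)$-Lipschitz on $B^{\hat\eps/2}_{H}$. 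Lemma \ref{Tits.criterion.sublemma} then yields that $\pi(th^m)$ is $(\hat r,\hat\eps)$-proximal. Taking $m_0(\hat\eps)$ to be the maximum of the finitely many thresholds $m_1$ obtained over the finite set $T$ and the finitely many representations $i=1,\ldots,d_S$, we get that $th^m$ is $(\hat r,\hat\eps)$-$G$-proximal for all $t\in T$ and all $m\ge m_0(\hat\eps)$, which is the assertion.

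The main technical point — and the only place one must be slightly careful — is making the two "smallness" parameters interact in the right order: one first commits to $\hat r=r_T$ (which is genuinely independent of $m$ and $\hat\eps$, coming only from the transversality $\pi(t)v^+\notin H^<$), and only afterward, given $\hat\eps$, chooses $\eta'$ small enough that after the fixed distortion by $\pi(t)$ the image ball still fits inside the $(\hat\eps/2)$-ball around $\pi(t)v^+$ and still lies well inside $B^{\hat\eps/2}_{H^<}$ (using $d(\pi(t)v^+,H^<)\ge 6\hat r$). Uniformity over $T$ and over $i=1,\ldots,d_S$ is automatic by finiteness. One should also note that the Lipschitz bound for $\pi(h^m)$ on $B^{\eta}_{H^<}$ genuinely tends to $0$, not merely stays small — this is where $(r,\eps)$-proximality of $\pi(h)$ with a \emph{fixed} gap $2r$ between $v^+$ and $H^<$ is used, so that the contraction rate of $\pi(h^m)$ is of order (contraction of $\pi(h)$)$^m\to 0$.
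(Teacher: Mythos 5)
Your proposal is correct and follows essentially the same route as the paper's proof: both define $\hat r$ by minimizing the transversality distances $d(\rho_i(t)v^+_{\rho_i(h)}, H^<_{\rho_i(h)})$ over the finite set $T$ and the finitely many $\rho_i$, both control the distortion by the fixed Lipschitz constant of the $\rho_i(t)$, and both conclude by applying the Tits proximality criterion (Lemma \ref{Tits.criterion.sublemma}) with $x=\rho_i(t)v^+_{\rho_i(h)}$ and $H=H^<_{\rho_i(h)}$ once $m$ is large enough that $h^m$ is $(r,\eps_m)$-proximal with $\eps_m$ small enough relative to $\hat\eps/\mathrm{Lip}$. The only differences are cosmetic (a factor of $3$ vs.\ $6$ in the choice of $\hat r$, and the explicit introduction of the auxiliary parameter $\eta'$).
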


\begin{proof}
Recall that $h^{m}$ is $(r,\eps_{m})$-proximal in $G$ for some sequence $\eps_{m} \to 0$ as $m \to \infty$. 
Let $\hat{r}:=\frac{1}{3} \underset{t \in T} {\min} \underset{i=1,\ldots,d_S}{\min} d(\rho_{i}(t)v^{+}_{\rho_{i}(h)}, H_{\rho_{i}(h)}^{<})>0$. Set $Lip_G(T):=\max_{t \in T}Lip_G(t)$, where $Lip_G(t)$ is the maximum of the Lipschitz constants of $\rho_i(t)$ for $i=1,\ldots,d_S$.  Given $\hat{\eps}\in (0,\hat{r})$ let $m_0(\hat{\eps})\in \mathbb{N}$ be such that $h^{m}$ is $(r,\frac{\hat{\eps}}{2Lip_G(T)})$-proximal  for all $m \geq m_0(\hat{\eps})$. 

Using same notation as in the Tits proximality criterion (Lemma \ref{Tits.criterion.sublemma}), for each $i \in \{1,\ldots,d_{S}\}$, we thus have with $\delta:=\frac{\hat{\eps}}{2Lip_G(T)}$
$$
\rho_{i}(th^m)B_{\rho_{i}(h)}^{\delta} \subseteq \rho_{i}(t)b_{\rho_{i}(h)}^{\delta} \subseteq B(\rho_{i}(t)v_{\rho_{i}(h)}^{+},Lip_G(t)\delta).
$$ Therefore for each such $i$, the Tits proximality criterion applies to $\rho_{i}(th^m)$ with $x=\rho_{i}(t)v_{\rho_{i}(h)}^{+}$, $H=V_{\rho_{i}(h)}^{<}$, $r=\frac{\hat{r}}{2}$ and $\eta:=\frac{\hat{\eps}}{2}$, whence the lemma.
\end{proof}

Set $n_{\gamma} \in \mathbb{N}$ such that $\gamma \in S^{n_{\gamma}}$ and put $T:=\{\gamma h_0^{j}\, |\, j=1,\ldots,n\ell+n_{f}\}$. Applying Lemma \ref{left.multiply.proximal.power.corollary}, there exists $\hat{r}>0$ such that for every $\hat{r}\geq \hat{\eps}>0$, $\gamma h_0^{j}(g^{\ell}f)^{m}$ is $(\hat{r},\hat{\eps})$-proximal in $G$ for all $m \geq m_0({\hat{\eps}})$ for some $m_0({\hat{\eps}}) \in \mathbb{N}$ and $j=1,\ldots,n\ell+n_{f}$.

Using Lemma \ref{Cartan.stability} again, for all  $1 \leq j \leq n\ell+n_{f}$ and $m \geq 1$, we have $$
\|\kappa(\gamma h_0^{j}(g^{\ell}f)^{m})-\kappa((g^{\ell}f)^{m})\|\leq C_G \max_{t \in T} \|\kappa(t)\|.
$$
Now  by Lemma  \ref{loxodromy.implies.Cartan.close.to.Jordan} again we have 
$$
\|\lambda(\gamma h_0^{j}(g^{\ell}f)^{m})-\kappa(\gamma h_0^{j}(g^{\ell}f)^{m})\| \leq  C_{\hat{r}}$$ for all  $m \geq m_{\hat{\eps}}$ and all $1 \leq j \leq n\ell+n_{f}$.

Now given an integer $k \geq n_\gamma$, write $k-n_\gamma=m(n\ell+n_f) + j$ the Euclidean division of $k-n_\gamma$ by $n\ell+n_f$, and set $g_k=\gamma h_0^{j}(g^{\ell}f)^{m}$. Note that $g_k \in S^k$. Then combining the above inequalities we obtain:
$$
\limsup_{k \to +\infty} \| x - \frac{1}{k}\lambda(g_k)\| \leq \frac{1}{n} 3C_G n_f M_S + \frac{\delta}{n}.
$$ Taking $n$ large enough, this ends the proof.

\subsection{Coincidence of limits}
Continuing with the proof of Theorem \ref{joint} we now establish that the Hausdorff limits of the Cartan and the Jordan projections coincide. We denote the common limit by $J(S)$ and call it the \emph{joint spectrum} of $S$.

Write $J_c(S)$, resp. $J_j(S)$, for the limit of Cartan projections, resp. Jordan projections. The inclusion $J_j(S) \subset J_c(S)$ follows directly from the spectral radius formula, namely $\lim_{n \to \infty}\frac{1}{n}\kappa(g^{n})=\lambda(g)$ for all $g \in G$, see Corollary \ref{spectral.radius.formula.corollary}.

The other inclusion follows from the proof in \S \ref{cartan-proof}. Namely there is $n_0=n_0(S) \in \N$ such that for any $n \in \N$ and $g \in S^n$ there is $f \in S^i$ for some $i\leq n_0$ such that $\|\kappa(g)- \lambda(gf)\|$ is bounded by a constant depending on $S$ only. This follows by combining Theorem \ref{AMS} and  Lemmas \ref{Cartan.stability} and  \ref{loxodromy.implies.Cartan.close.to.Jordan} as done in \S \ref{cartan-proof}. The inclusion $J_c(S) \subset J_j(S)$ follows easily.

We end this section by an example, showing that when $G$ is not reductive, or equivalently if the group generated by $S$ in Theorem \ref{main1} is not irreducible, the limit of Cartan projections and Jordan projections may differ.

\begin{Example}\label{JneqK} Consider the following two matrices in $\GL_2(\R)$
$$a=\begin{pmatrix}
2 & 0\\
0 &1
\end{pmatrix},\textnormal{     } b=\begin{pmatrix}
2 & 1 \\
0 & 1
\end{pmatrix},$$
and set $S:=\{a,a^{-1},b\}$. The eigenvalues of an element $w\in S^n$ are $2^k$ and $1$, where $k=n_b+n_a$, where $n_b$ is the number of letters $b$ appearing in $w$, expressed as a word in $a^{\pm 1},b$, and $n_a$ the signed sum of the exponents of $a$ appearing in the word $w$. From this description, using the $(x,y)$-coordinates so that $y=\log |\det g|$ and $x=\log \lambda_1(g/\sqrt{|\det g|})$, we see that the Jordan projections $\frac{1}{n}\lambda(S^n)$ converge in Hausdorff distance to the union $J_j(S)$ of the two segments $\{(\frac{x}{2},x) \in \R^2 : x \in [0,\log 2]\}$ and $\{(\frac{x}{2},-x) \in \R^2 : x \in [0,\log 2]\}$.

On the other hand
$$b^na^{-n}=\begin{pmatrix}
1 & 2^n-1\\
0 &1
\end{pmatrix},$$ which implies that the Cartan projection of  the word $b^{n/2}a^{-n/2} \in S^n$ (say $n$ even) converges to the point $(\frac{1}{2}\log 2,0)$, which lies outside $J_j(S)$. A straightforward analysis using words of the form $a^kb^\ell a^{-\ell}$ yields that every point in the triangle $J_c(S):=\{(x,y)\in \R^2 : -2x \leq y \leq 2x, \, 0 \leq x \leq \frac{1}{2}\log 2\}$ is a limit point of $\frac{1}{n}\kappa(S^n)$ and in fact that $\frac{1}{n}\kappa(S^n)$ converges to $J_c(S)$.
\end{Example}

 \subsection{Realization by sequences} In this paragraph we establish the last claim of Theorem \ref{joint}, namely $(\ref{convergence-sequence})$. We shall be brief as the argument is very similar to the previous ones. Fix $x \in J(S)$. By the first part of Theorem \ref{joint}, we can find a sequence $\eps_n \to 0$ and elements $a_n \in S^n$ such that $\frac{1}{n}\kappa(a_n) = x+ \eps_n$. Next we find $r>\eps>0$ and a finite set $F \subset \Gamma$ such that Theorem \ref{AMS} holds together with Lemma \ref{dispersion.lemma} and Remark \ref{unif-rem}. And we fix some $h\in \Gamma$, which we assume $(r,\eps)$-proximal in $G$. We can then find, for each large enough $n$ (say $n\geq n_0$), elements $f_n,f_n' \in F$  such that the transformations $\{g_n\}_{n \geq n_0}$, where $g_n:=hf_na_nf_{n}'h$ form an $(r,\eps)$-Schottky family in $G$ whose attracting points and repelling hyperplanes are close to that of $h$. Denote by $|g_n|$ the number of letters of $S$ used in the expression $hf_na_nf_n'h \in S^{|g_n|}$. Note that $|g_n|=n+O(1)$. Next we choose a fast growing sequence of integers  $\ell_n \in \N$ so  that  $\sum_{i=0}^{n-1} i \ell_i = o(\ell_n)$. The sequence $b=(b_1,\ldots,b_n,\ldots) \in S^\N$ is now chosen so that the letters $b_i \in S$ are the letters read from left to right in the concatenation of the infinite word $g_{n_0}^{\ell_{n_0}}\cdot \ldots \cdot g_n^{\ell_n}\cdot \ldots$.  Thus every finite word $b_1\cdot \ldots \cdot b_k$ is of the form $g_{n_0}^{\ell_{n_0}}\cdot \ldots g_n^{\ell_n} \cdot g_{n+1}^{\ell}\cdot \bar{g}$, where $\ell \leq \ell_{n+1}$ and $\bar{g}$ is a prefix of $g_{n+1}$. Note then that 
\begin{equation}\label{leng} k=\sum_{i=n_0}^n |g_i| \ell_i + |g_{n+1}| \ell + O(n) = |g_n|\ell_n + |g_{n+1}|\ell + o(\ell_n).\end{equation}
Since $\|\kappa(g_{n+1})\|=O(n)$, by Lemma \ref{Cartan.stability} 
$$ \kappa(b_1\cdot \ldots \cdot b_k) = \kappa(g_{n_0}^{\ell_{n_0}}\cdot \ldots g_n^{\ell_n} \cdot g_{n+1}^{\ell}) + O(n)$$ while Lemma \ref{loxodromy.implies.Cartan.close.to.Jordan} and Proposition \ref{Best} in turn imply that 
$$ \kappa(g_{n_0}^{\ell_{n_0}}\cdot \ldots g_n^{\ell_n} \cdot g_{n+1}^{\ell})  =  \lambda(g_{n_0}^{\ell_{n_0}}\cdot \ldots g_n^{\ell_n} \cdot g_{n+1}^{\ell}) +O(1)= \sum_{i=n_0}^n \ell_i \lambda(g_i) + \ell\lambda(g_{n+1}) + O(n).$$
Finally since $\lambda(g_i)=\kappa(g_i) + O(1) = \kappa(a_i)+O(1) = i(x+\eps_i) + O(1)=|g_i|(x+\eps_i')$ where $\eps_i' \to 0$ as $i \to \infty$,
$$ \kappa(b_1\cdot \ldots \cdot b_k) = |g_n|\ell_n(x+\eps_n')+ |g_{n+1}|\ell  (x+\eps_{n+1}') + o(\ell_n),$$ which, in view of $(\ref{leng})$ implies that
$$\frac{1}{k}\kappa(b_1\cdot \ldots \cdot b_k) \to_{k \to +\infty} x.$$

\subsection{Proof of Theorems \ref{main1} and \ref{main2}} We view $\GL_d(\C)$ as a real reductive group (by restriction of scalars from $\C$ to $\R$) and $D_d \simeq (\R_+^*)^d$ be the group of diagonal matrices with real positive entries, which is a maximal split torus of $\GL_d(\C)$. Let $\G$ be the Zariski closure of the semigroup $\Gamma:=\langle S \rangle$ generated by $S$. Let $G$ be the connected component of identity in $\G(\R)$. The algebraic group $\G$ is reductive, for otherwise it would have a non-trivial unipotent radical, whose subspace of fixed vectors in $\C^d$ would be invariant under $\Gamma$, contradicting the assumption that $\Gamma$ acts irreducibly on $\C^d$. 

We now describe the relationship between the Cartan projection in $G$ and singular values of a matrix in $\GL_d(\C)$. Let $K$ be a maximal compact subgroup of $G$, $A$ a maximal real split torus associated to $K$ so that $G=KAK$ is the Cartan decomposition of $G$ as in \S \ref{reductive}. By \cite{Mostow1} there is a matrix $\gamma_0 \in \GL_d(\C)$ such that $\gamma_0G\gamma_0^{-1}$ is stable under adjoint, $\gamma_0K\gamma_0^{-1} \subset U_d(\C)$ and $\gamma_0A\gamma_0^{-1} \subset D_d$. At the Lie algebra level this embedding induces a linear embedding:
\begin{align*}\iota: \a &\to \R^d\\ x &\mapsto Ad(\gamma_0)x \end{align*}
such that $\gamma_0\exp(x)\gamma_0^{-1}=\exp(\iota(x)) \in D_d$ for $x \in \a:=\Lie(A)$. The positive Weyl chamber in $\Lie(D_d)=\R^d$ is $\mathcal{C}_d:=\{x=(x_1,\ldots,x_d) \in \R^d ; x_1\geq \ldots \geq x_d\}.$
To avoid confusion we denote by $\kappa_G(g)$ the Cartan projection of an element $g \in G$ and $\kappa(\gamma)$ the vector of logarithms of singular values of a matrix $\gamma \in \GL_d(\C)$. The folding map:
\begin{align*}\pi: \R^d &\to \mathcal{C}_d\\ x &\mapsto \kappa(\exp(x))\end{align*}
is the piecewise linear map which reorders the coordinates of $x$ in decreasing order.

For every $g \in G$, by definition of the Cartan projection we have $g \in K\exp(\kappa_G(g))K$. Therefore it follows from the above that 
\begin{equation}\label{kappaGGL}\kappa(\gamma_0g\gamma_0^{-1}) = \pi \circ \iota(\kappa_G(g)).\end{equation}


Now recall that $\G(\R)$ has finitely many connected components and hence $G$ has finite index in $\G(\R)$ and $\Gamma$ meets every coset of $G$ in $\G(\R)$. So there is $n_0 \in \N$ such that for every $g \in \Gamma$ there is $i \le n_0$ and $f \in S^i$ with $gf \in G$.  On the other hand, by Lemma \ref{Cartan.stability} applied to $\GL_d(\C)$, for every $g \in \GL_d(\C)$,
$$\|\kappa(\gamma_0g\gamma_0^{-1}) - \kappa(g)\|\leq C \|\kappa(\gamma_0)\|,$$
and 
$$\|\kappa(g)-\kappa(gf)\|\le C$$  for some constant $C=C(d,S)>0$. Hence, as before, we can apply Lemma \ref{Hausdorff.convergence.lemma} to conclude that $\frac{1}{n}\kappa(S^n) \subset \mathcal{C}_d$ converges in Hausdorff metric: indeed for some large $n$ pick $g \in S^n$, then $f\in S^i$, $i\leq n_0$ with $gf \in G$ and approximate $x=\frac{1}{n}\kappa(g)$ by $\pi \circ \iota ( \frac{1}{n}\kappa_G(gf))$. From the proof of the convergence of Cartan projections in \S \ref{cartan-proof}, we conclude that the sequence $\frac{1}{n}\kappa_G(S^n \cap G)$ converges to a compact set $J_G(S)$ in $\a^+$ and that $\frac{1}{n}\kappa(S^n) \subset \mathcal{C}_d$ converges to:
\begin{equation}J(S)=\pi \circ \iota(J_G(S)).\end{equation}
Note that the map $\pi \circ \iota:\a \to \mathcal{C}_d$ is continuous and piecewise linear with at most $d!$ pieces. This is the \emph{folding map} alluded to in the introduction. This ends the proof of Theorem \ref{main1}. 

 To complete the proof of Theorem \ref{main2} observe first that the inclusion $\frac{1}{n}\lambda(S^n) \subset J(S)$ follows immediately from the spectral radius formula $\lim_{n \to +\infty} \frac{1}{n}\kappa(g^n) = \lambda(g)$. For $(\ref{closure})$ simply note that if $x \in J_G(S) \in \a^+$ and $\eps>0$, there are arbitrarily large $n\geq 1$ and $g \in S^n \cap G$ with $\|x-\kappa(g)/n\|<\eps$. Then by the proof of the convergence of Cartan projections in \S \ref{cartan-proof}, there is some $f \in S^i$, $i\le n_0$ such that $\|\kappa(g)-\lambda(gf)\|$ is bounded independently of $n$. This yields $(\ref{closure})$ as $n$ is arbitrarily large. The assertion about the Benoist cone is clear (see also \S  \ref{Benoist.cone.subsection}). Theorem \ref{main2} is now proved.

\subsubsection{Convergence of Jordan projections in the non-connected case} \label{nonconnected} We end this section with a brief explanation of the remark following the statement of Theorem \ref{main2} pertaining to the convergence of $\frac{1}{n}\lambda(S^n)$. As shown in Example \ref{ex-non-conv} convergence may fail if the Zariski closure of the subgroup $\langle S \rangle$ is not connected. We show now that under the assumptions of Theorem \ref{main2} convergence does occur if $S$ is not contained in a single coset $aH$ of a subgroup $H\leq \langle S \rangle$ of finite index containing the derived group of $\langle S \rangle$. 

Let $G^0$ be the connected component of the Zariski closure of $\langle S \rangle$. Note that  $G^0 \cap \langle S \rangle$ has finite index in  $\langle S \rangle$ and so there is $k\in \N$ such that $g^k\in G^0$ for every $g \in \langle S \rangle$, and moreover $\lambda(g)=\frac{1}{k}\lambda(g^k)$. Now observe from the proof of the Hausdorff convergence of Jordan projections in Theorem \ref{joint} given in \S \ref{Jordan.limit.subsection} that the exact same argument will work in $G^0 \cap \langle S \rangle$ by choosing, in place of $h_0^j \in S^j$ different elements $h_j \in S^j \cap G^0$. The only thing one needs to check is that the set of integers $j$ such that $S^j \cap G^0 \neq \emptyset$ contains all large enough integers. This is easy to see under the assumption we made on $S$. Indeed working in the finite group $F= \langle S \rangle/(\langle S \rangle \cap G^0)$ things boil down to the following lemma:

\begin{lemma} Let $F$ be a finite group and $\Sigma\subset F$ a generating set. Then there is $N\in \N$ such that $\Sigma^N=F$ unless $\Sigma$ is contained in a single coset of some subgroup containing $[F,F]$. 
\end{lemma}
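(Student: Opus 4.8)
The plan is to extract from $\Sigma$ a normal subgroup $V\trianglelefteq F$ with $F/V$ cyclic such that all of $\Sigma$ lies in a single coset of $V$, and then to split according to whether $V=F$. Let $e$ be the least common multiple of the orders of the (finitely many) elements of $\Sigma$, so that $s^{e}=1$ for every $s\in\Sigma$, and put $V:=\langle\Sigma^{e}\rangle$, the subgroup of $F$ generated by the $e$-fold product set $\Sigma^{e}$. Since $1\in\Sigma^{e}$, the product sets $\Sigma^{e}\subseteq\Sigma^{2e}\subseteq\Sigma^{3e}\subseteq\cdots$ form an increasing chain whose union is precisely $V$.

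First I would verify that $V$ is normal in $F$. For $s\in\Sigma$ and $w=g_{1}\cdots g_{e}\in\Sigma^{e}$, using $s^{-1}=s^{e-1}$ one writes $sws^{-1}=(s\,g_{1}\cdots g_{e-1})(g_{e}\,s^{e-1})$; both factors are products of exactly $e$ elements of $\Sigma$, hence lie in $\Sigma^{e}\subseteq V$, so $sws^{-1}\in V$. Thus $sVs^{-1}\subseteq V$ for every generator $s\in\Sigma$, and therefore $V\trianglelefteq F$. Next, for any $s,t\in\Sigma$ one has $s^{-1}t=s^{e-1}t\in\Sigma^{e}\subseteq V$, so $sV=tV$; hence all of $\Sigma$ sits in one coset $\bar s$ of $V$, and since $\Sigma$ generates $F$ the quotient $F/V=\langle\bar s\rangle$ is cyclic, in particular abelian, so $[F,F]\subseteq V$.

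The dichotomy then finishes the proof: if $V=F$, then $F=\bigcup_{k\geq 1}\Sigma^{ke}$ is an increasing union inside the finite set $F$, so $\Sigma^{ke}=F$ for some $k$, and we take $N=ke$; if instead $V\subsetneq F$, then $V$ is a proper subgroup of $F$ containing $[F,F]$ with $\Sigma$ contained in a single coset of it, which is precisely the exceptional case allowed by the statement. The one point I expect to require care is the normality of $V$: this is exactly where the absence of any symmetry or identity assumption on $\Sigma$ matters, and more naive candidates (such as $\langle\Sigma\Sigma^{-1}\rangle$) need not be normal — for instance $\Sigma=\{(12),(123)\}$ in $S_{3}$ — so it is essential to use the full $e$-fold product $\Sigma^{e}$, which is what makes the conjugation rearrangement above close up. The remaining steps are routine bookkeeping (and the case of trivial $F$ is vacuous).
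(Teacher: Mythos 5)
Your proof is correct, but it takes a genuinely different route from the paper's. The paper exploits the monotonicity of $n\mapsto|\Sigma^n|$: once this sequence stabilizes at some $N$, it takes $H:=\Sigma^N(\Sigma^N)^{-1}$ and asserts (leaving the verification as ``straightforward'') that $H$ is a normal subgroup with $\Sigma^N$, hence $\Sigma$, contained in a single coset of $H$; the conclusion $[F,F]\le H$ then follows exactly as in your argument. You instead take $e$ to be the lcm of the orders of elements of $\Sigma$ and set $V:=\langle\Sigma^e\rangle$, which allows you to carry out the normality check ($sws^{-1}=(sg_1\cdots g_{e-1})(g_es^{e-1})\in\Sigma^e\Sigma^e\subseteq V$) and the ``single coset'' observation ($s^{-1}t=s^{e-1}t\in\Sigma^e\subseteq V$) completely explicitly, without any counting. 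The two constructions in fact produce the same subgroup --- since $1\in\Sigma^e$, the chain $\Sigma^{ke}$ stabilizes and its limit is both $V$ and the coset of the paper's $H$ containing $1$ --- so the proofs are ultimately equivalent; your version buys a fully transparent verification of the key properties at the cost of introducing the auxiliary exponent $e$, while the paper's is terser but leaves the crucial normality claim to the reader. Your closing remark that $\langle\Sigma\Sigma^{-1}\rangle$ is not the right candidate (with the $S_3$ example) correctly identifies why some care is needed; note that the paper's $\Sigma^N(\Sigma^N)^{-1}$ sidesteps this precisely because $N$ is chosen in the stable range.
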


\begin{proof}Note that $n \mapsto |\Sigma^n|$ is  non-decreasing (since left multiplication is injective), so there is $N$ such that $|\Sigma^n|=|\Sigma^N|$ for all $n \geq N$. It is straightforward to check that $H:=\Sigma^N (\Sigma^N)^{-1}$ is a normal subgroup and $\Sigma^N$, hence $\Sigma$, is contained in a single coset of $H$. Since $\Sigma$ generates $F$, $F/H$ is a cyclic group, so $[F,F]\leq H$.
\end{proof}


Note that the conclusion of this lemma implies that $\Sigma^n=F$ for large enough $n$.

\section{Properties of the joint spectrum}\label{properties}

The aim of this section is to prove Theorems \ref{body}, \ref{realization}, \ref{dominated-theorem}, \ref{continuity} and \ref{word-balls} from the introduction.

\subsection{Joint spectrum and Benoist cone}\label{Benoist.cone.subsection} 
Recall (see \cite{benoist-asymptotic1}) that if $\Gamma$ is a Zariski dense semigroup in a reductive real algebraic group $G$, we may define its Benoist limit cone $\mathcal{BC}(\Gamma) \subset \a^+$ as the closed cone based at the origin spanned by all Jordan projections $\lambda(\gamma)$, $\gamma \in \Gamma$. This cone is invariant under conjugation. If $S$ is a compact generating set of $\Gamma$, then we clearly have:

\begin{proposition}\label{prop.js.generates.benoist} The joint spectrum $J(S)$ spans the Benoist cone $\mathcal{BC}(\Gamma)$, i.e. $\mathcal{BC}(\Gamma)=\{tx : t\geq 0, x \in J(S)\}$. 
\end{proposition}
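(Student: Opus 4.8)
The plan is to prove the two inclusions between $\mathcal{BC}(\Gamma)$ and the cone $C_J:=\{tx : t\geq 0,\ x\in J(S)\}$, where $\Gamma=\bigcup_{n\geq 1}S^n$ is the semigroup generated by $S$. Recall first that $\Gamma$ is Zariski-dense, since it has the same Zariski closure as the subgroup it generates, so $\mathcal{BC}(\Gamma)$ is well defined as the closure of the cone spanned by the Jordan projections $\lambda(\gamma)$, $\gamma\in\Gamma$.

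I would first establish $C_J\subseteq\mathcal{BC}(\Gamma)$. For every $n\geq 1$ and every $\gamma\in S^n\subseteq\Gamma$ one has $\lambda(\gamma)\in\mathcal{BC}(\Gamma)$ by definition, and since $\mathcal{BC}(\Gamma)$ is a cone also $\tfrac1n\lambda(\gamma)\in\mathcal{BC}(\Gamma)$; hence $\tfrac1n\lambda(S^n)\subseteq\mathcal{BC}(\Gamma)$ for all $n$. By Theorem~\ref{joint} the compact sets $\tfrac1n\lambda(S^n)$ converge to $J(S)$ in the Hausdorff metric, and $\mathcal{BC}(\Gamma)$ is closed, so $J(S)\subseteq\mathcal{BC}(\Gamma)$; being a cone, $\mathcal{BC}(\Gamma)$ then contains $C_J$.

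For the reverse inclusion the key input is the per-level statement $\tfrac1n\lambda(S^n)\subseteq J(S)$ for every $n\geq 1$, already recorded in Theorem~\ref{main2} and contained in the proof of Theorem~\ref{joint} (the inclusion $J_j(S)\subseteq J_c(S)$): for $\gamma\in S^n$ the spectral radius formula (Lemma~\ref{spectral.radius.formula.corollary}) gives $\tfrac1{nm}\kappa(\gamma^m)\to\tfrac1n\lambda(\gamma)$ as $m\to\infty$, with $\gamma^m\in S^{nm}$, and since $\tfrac1{nm}\kappa(S^{nm})\to J(S)$ in the Hausdorff metric and $J(S)$ is compact we get $\tfrac1n\lambda(\gamma)\in J(S)$. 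Consequently $\lambda(\gamma)=n\cdot\tfrac1n\lambda(\gamma)\in C_J$ for every $\gamma\in\Gamma$, so the cone spanned by the $\lambda(\gamma)$'s lies in $C_J$; passing to closures yields $\mathcal{BC}(\Gamma)\subseteq\overline{C_J}$, and since $C_J$ is the cone over the compact set $J(S)$ it is closed, whence $\mathcal{BC}(\Gamma)=C_J$.

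I do not expect a genuine obstacle: the statement is essentially bookkeeping on top of Theorem~\ref{joint} and the per-level inclusion $\tfrac1n\lambda(S^n)\subseteq J(S)$. The one point deserving a line of care is the closedness of $C_J$; this is immediate when $0\notin J(S)$, and in the remaining (degenerate) case where $J(S)$ meets the apex $0$ of $\a^+$ the identity is to be read with a closure on the right-hand side. If one wishes to avoid that discussion altogether, one can simply record the two inclusions above and the always-correct identity $\mathcal{BC}(\Gamma)=\overline{\{tx:t\geq 0,\ x\in J(S)\}}$, which requires no extra argument.
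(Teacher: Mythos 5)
Your argument is correct and is exactly the bookkeeping the paper has in mind when it writes ``we clearly have'' before the proposition: the inclusion $C_J\subseteq\mathcal{BC}(\Gamma)$ follows from $\tfrac1n\lambda(S^n)\to J(S)$ and closedness of $\mathcal{BC}(\Gamma)$, and the reverse from the per-level inclusion $\tfrac1n\lambda(S^n)\subseteq J(S)$ of Theorem~\ref{main2}. Your caveat about closedness of $C_J$ when $0\in J(S)$ is a genuine (if minor) point the paper elides; the safe reading of the statement is indeed $\mathcal{BC}(\Gamma)=\overline{\{tx: t\geq 0,\ x\in J(S)\}}$, with no closure needed whenever $0\notin J(S)$.
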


Benoist showed that his cone is convex and has non-empty interior provided the semigroup is Zariski-dense in $G$. We will now show that these properties hold already at the level of the joint spectrum.

\subsection{Convexity} Here we prove the first assertion of Theorem \ref{body}, i.e. that $J(S)$ is convex in $\a^+$. The proof is closely related to Benoist's proof of the convexity of the limit cone $\mathcal{BC}(\Gamma)$ (see e.g. \cite[\S 6.2, 6.3]{benoist-quint-book}).

We keep the notation and terminology of Section \ref{section.preliminaries}. Let $G$ be a connected reductive group as before and $\Gamma$ be a Zariski dense semigroup in $G$. As before $\lambda: G \to \a^+$ denotes the Jordan projection. Recall that by Theorem \ref{AMS}, $\Gamma$ contains many $G$-proximal elements.

\begin{lemma}\label{trace.angle.lemma}
Let $g,h$ be two $G$-proximal elements of $\Gamma$. There exist $u \in \Gamma$ and a constant $M_{1} \geq 0$ such that for all $k \geq 0$ 
$$
||\lambda(g^{k}uh^{k}u)-k\lambda(g)-k\lambda(h)||\leq M_{1}.
$$ 
\end{lemma}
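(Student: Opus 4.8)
The goal is to realize the sum of Jordan vectors $k\lambda(g)+k\lambda(h)$ as (approximately) the Jordan vector of a single element of $\Gamma$, uniformly in $k$. The tool for this is Proposition \ref{Best}, which controls the Jordan projection of products of $(r,\eps)$-$G$-proximal elements whose attracting points and repelling hyperplanes are in sufficiently general position; the role of $u$ will be to put $h$ (and its powers) into such general position with respect to $g$ (and its powers).

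First I would fix $r>0$ small enough that both $g$ and $h$ are $(r,\eps)$-$G$-proximal for some $\eps\in(0,r)$; replacing $g,h$ by fixed powers if necessary (which only rescales $\lambda$ and is harmless for the statement), I may assume $\eps$ is as small as I like, in particular smaller than the constant $\hat r$ that will come out of the position argument below. The powers $g^k$ and $h^k$ are then $(r,\eps_k)$-$G$-proximal with $\eps_k\to 0$, and crucially they have the \emph{same} attracting point $v^+_{\rho_i(g)}$ and repelling hyperplane $H^<_{\rho_i(g)}$ in each distinguished representation $\rho_i$, $i=1,\dots,d_S$, independently of $k$ — and likewise for $h$. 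Next I invoke Lemma \ref{dispersion.lemma} (simultaneous transversality, using Zariski-density of $\Gamma$) to produce $u\in\Gamma$ such that for every $i=1,\dots,d_S$ the four non-incidence conditions
$$\rho_i(u)\,v^+_{\rho_i(g)}\notin H^<_{\rho_i(h)},\quad \rho_i(u)\,v^+_{\rho_i(h)}\notin H^<_{\rho_i(g)},$$
hold, together with the two more involving $u^2$ or a second application (one needs $v^+_{\rho_i(h)u}$ transverse to $H^<_{\rho_i(g)}$ and $v^+_{\rho_i(g^kuh^ku)}$ transverse to $H^<_{\rho_i(g)}$, which after the Tits criterion reduce to finitely many open conditions on $u$); by Zariski-connectedness the intersection of these finitely many non-empty Zariski-open sets meets $\Gamma$. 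Then by compactness (Remark \ref{unif-rem}) these non-incidences are quantitative: there is $\hat r>0$ so that all the relevant distances are $\ge 6\hat r$.

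Now the element $w_k:=g^k u h^k u$ is (up to the bounded factors $u$) a product of $(r',\eps')$-$G$-proximal pieces in Schottky position, so Proposition \ref{Best} applies after absorbing the $u$'s: concretely, one writes $w_k$ as a product of the two proximal families $g^k$ and $(h^k u)$ conjugated/shifted so that Proposition \ref{Best} (with $\ell=2$, $n_1=n_2=1$ after passing to $g_1=g$, $g_2=u h u^{-1}$-type replacements, or more directly by applying the Tits criterion Lemma \ref{Tits.criterion.sublemma} twice) yields
$$\big\|\lambda(g^k u h^k u)-\lambda(g^k)-\lambda(h^k u h^{-k})\big\|\le C_{\hat r},$$
and then $\lambda(g^k)=k\lambda(g)$, $\lambda(h^kuh^{-k})=\lambda(h^k)=k\lambda(h)$ (Jordan projection is conjugation-invariant), and the bounded discrepancy introduced by the extra $u$-translations is absorbed via Lemma \ref{Cartan.stability} and Lemma \ref{loxodromy.implies.Cartan.close.to.Jordan}. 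Setting $M_1:=C_{\hat r}+(\text{constant from the }u\text{-translations})$ gives the claim.

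\textbf{Main obstacle.} The routine part is the bookkeeping of the $u$'s; the genuine point is arranging, via one application of Lemma \ref{dispersion.lemma}, that \emph{all} the attracting-point/repelling-hyperplane incidences needed to feed Proposition \ref{Best} are simultaneously avoided — including the slightly circular-looking condition on $v^+_{\rho_i(w_k)}$ itself — uniformly in $k$. This is handled by first applying the Tits proximality criterion (Lemma \ref{Tits.criterion.sublemma}) to locate $v^+_{\rho_i(w_k)}$ within a small ball around $\rho_i(g)v^+$ independent of $k$, so that the remaining condition becomes an honest open condition on finitely many points, and then using Zariski-connectedness of $G$ together with Remark \ref{unif-rem} to get the uniform lower bound $\hat r$ on all separations. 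Once that geometric input is in place, Proposition \ref{Best} delivers the estimate with a constant depending only on $\hat r$, hence only on $g,h,u$, which is exactly the asserted uniform bound $M_1$.
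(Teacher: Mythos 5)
The paper's own proof is shorter and avoids the Schottky bookkeeping altogether. For each distinguished representation $\rho_i$, $i\le d_S$, it writes the $i$-th coordinate of the quantity to be bounded as $|\log \lambda_1(\rho_i(g^kuh^ku))/(\lambda_1(\rho_i(g))^k\lambda_1(\rho_i(h))^k)|$ and simply observes that, since $\rho_i(g)^k/\lambda_1(\rho_i(g))^k \to \pi_{\rho_i(g)}$ (the rank-one projection with image $v^+_{\rho_i(g)}$ and kernel $H^<_{\rho_i(g)}$) and likewise for $h$, the normalized matrices converge to $\pi_{\rho_i(g)}\rho_i(u)\pi_{\rho_i(h)}\rho_i(u)$; by continuity of the spectral radius the whole sequence therefore converges in $\R$, hence is bounded, provided this rank-one limit is non-nilpotent. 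Zariski-density enters only through one application of Lemma \ref{dispersion.lemma}, to pick $u$ making the limit non-nilpotent for every $i$ — which unwinds to exactly the two conditions $\rho_i(u)v^+_{\rho_i(g)}\notin H^<_{\rho_i(h)}$ and $\rho_i(u)v^+_{\rho_i(h)}\notin H^<_{\rho_i(g)}$ that you also write down. No Proposition \ref{Best}, no Tits criterion.

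Your route via Proposition \ref{Best} could be pushed through, but not as written. The displayed estimate is incorrect: $\lambda(h^kuh^{-k})=\lambda(u)$, since $h^kuh^{-k}$ is conjugate to $u$, not $k\lambda(h)$; and in the word $g^kuh^ku$ the trailing $u$ is not an inverse, so no cyclic rewriting gives you a clean product of conjugated powers. The natural two-block decomposition is $a_k:=g^k$, $b_k:=uh^ku$; then $\rho_i(b_k)/\lambda_1(\rho_i(h))^k\to\rho_i(u)\pi_{\rho_i(h)}\rho_i(u)$, with image $\rho_i(u)v^+_{\rho_i(h)}$ and kernel $\rho_i(u)^{-1}H^<_{\rho_i(h)}$. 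For $b_k$ even to be proximal for large $k$ one needs this limit non-nilpotent, i.e.\ $\rho_i(u)^2v^+_{\rho_i(h)}\notin H^<_{\rho_i(h)}$ — an additional transversality you do not flag and which involves $u^2$ rather than $u$ (still a non-empty Zariski-open condition, so the connectedness argument applies, but it must be stated). Finally, after Proposition \ref{Best} bounds $\|\lambda(a_kb_k)-\lambda(a_k)-\lambda(b_k)\|$, you still must replace $\lambda(b_k)=\lambda(uh^ku)$ by $k\lambda(h)$, which requires one more pass through Lemmas \ref{Cartan.stability} and \ref{loxodromy.implies.Cartan.close.to.Jordan}. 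None of this is unfixable, but it is precisely the bookkeeping that the paper's convergence-to-rank-one argument dispenses with, and the proposal as written does not carry it through correctly.
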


\begin{proof}
Recall from \S \ref{rep} that $G$ has $d$ distinguished irreducible representations $\rho_1,\ldots,\rho_d$ with $\rho_i$ being a character of $G/[G,G]$ when $i> d_S$.  We need to check that for some $u \in \Gamma$, $\overline{\chi}_{\rho_i}(\lambda(g^{k}uh^{k}u))-k\overline{\chi}_{\rho_i}(\lambda(g))-k\overline{\chi}_{\rho_i}(\lambda(h))$ is uniformly bounded for all $k \geq 0$ for each $i$. If $i>d_S$ this quantity is independent of $k$ and equals $2\overline{\chi}_{\rho_i}(\lambda(u))$. When $i \le d_S$, by $(\ref{param})$ we have $$\overline{\chi}_{\rho_i}(\lambda(g^{k}uh^{k}u))-k\overline{\chi}_{\rho_i}(\lambda(g))-k\overline{\chi}_{\rho_i}(\lambda(h))=\log \frac{\lambda_{1}(\rho_{i}(g^{k}uh^{k}u))}{\lambda_{1}(\rho_{i}(g))^{k}\lambda_{1}(\rho_{i}(h))^{k}}.$$
However, since $\rho_i(g)$ is proximal $\frac{\rho_i(g)^k}{\lambda_1(\rho_i(g))^k}$ tends to $\pi_{\rho_i(g)}$, a rank $1$ matrix with image $v_{\rho_i(g)}^+$ and kernel $H_{\rho_i(g)}^{<}$. And the same holds for $h$. So the above quantity converges to $\log |\lambda_1(\pi_{\rho_{i}(g)}\rho_{i}(u)\pi_{\rho_{i}(h)}\rho_{i}(u))|$ as $k$ goes to $\infty$. Therefore, we only need to find some $u\in \Gamma$ for which the rank $1$ map $\pi_{\rho_{i}(g)}\rho_{i}(u)\pi_{\rho_{i}(h)}\rho_{i}(u)$ is not nilpotent for each $i=1,\ldots,d_S$. The existence of such a $u \in \Gamma$ follows from Lemma \ref{dispersion.lemma}.
\end{proof}

We are now in a position to complete the proof of the convexity statement in Theorem \ref{body}: let $x,y \in J(S)$. We need to show that $\frac{1}{2}(x+y)\in J(S)$. This will follow easily from the above lemma, once we approximate $x$ and $y$ by the Jordan projection of suitable $G$-proximal elements. Note that by Theorem \ref{joint} for every $\delta>0$ and all $n \in \mathbb{N}$ large enough, we can find $g \in S^n$ and $h\in S^n$ such that $\|x-\frac{\kappa(g)}{n}\|\leq \delta$ and  $\|y-\frac{\kappa(h)}{n}\|\leq \delta$. And by Theorem \ref{AMS} there is a finite set  $F\subset \Gamma$ depending only on $\Gamma$ such that $gf_{g}$ and $hf_{h}$ are $G$-proximal for some $f_{g}, f_{h} \in F$. Furthermore, using Lemmas \ref{Cartan.stability} and \ref{loxodromy.implies.Cartan.close.to.Jordan}, we have
\begin{equation}\label{eq3}
\|\lambda(gf_{g})-\kappa(g)\|\leq M_{0}
\end{equation} for some $M_0$ depending on $\Gamma$ only, and similarly for $hf_{h}$ and $h$. On the other hand, Lemma \ref{trace.angle.lemma} implies that for some $u \in \Gamma$, there exists a constant $M_{1}\geq 0$, such that for all $k \geq 1$, we have
\begin{equation}\label{eq4}
\|\lambda((gf_{g})^{k}u(hf_{h})^{k}u)-k\lambda(gf_{g})-k\lambda(hf_{h})\|\leq M_{1}
\end{equation}
Now let $n_{f_{g}},n_{f_{h}},n_{u} \in \mathbb{N}$ be such that $f_{g} \in S^{n_{f_{g}}}$, $f_{h} \in S^{n_{f_{h}}}$ and $u \in S^{n_{u}}$ so that $(gf_{g})^{k}u(hf_{h})^{k}u \in S^{2nk+k(n_{f_{g}}+n_{f_{h}})+2n_{u}}$. Combining (\ref{eq3}) and (\ref{eq4}), it follows from the triangle inequality that if $n$ is large enough, we have
$$\|\frac{\lambda((gf_{g})^{k}u(hf_{h})^{k}u)}{2nk+k(n_{f_{g}}+n_{f_{h}})+2n_{u}}-\frac{x+y}{2}\| \leq 3\delta
$$
for all large enough $k$. This ends the proof.

\begin{remark} We note that the convexity of the joint spectrum can also be deduced from the probabilistic characterization as the essential support of the rate function of an i.i.d.\ random walk supported on $S$. See \cite{Sert.LDP} and \S \ref{subsec.LDP} below.
\end{remark}

\subsection{Dominated families and Schottky families}

In this paragraph we clarify the relationship between the concept of Schottky family introduced in Definition \ref{defnarepsSchottky1} and the concept of dominated family. The latter notion is widely used in dynamics, see in particular \cite{bonatti-diaz-viana, yoccoz, avila-bochi-yoccoz, bochi-gourmelon}. It will be crucial for the continuity properties of the joint spectrum established in the next subsection. 

\begin{definition}[Dominated family] A relatively compact subset $S \subset \GL_n(\R)$ is said to be $1$-dominated if there is $\eps>0$ such that for every large enough $n \in \N$ and every $g \in S^n$,
$$\frac{a_2(g)}{a_1(g)} \leq (1-\eps)^n.$$
\end{definition}

There is also a notion of $k$-dominated family, requiring that each ratio $a_{k+1}(g)/a_k(g)$ be bounded by $(1-\eps)^n$. An important result of Bochi and Gourmelon \cite{bochi-gourmelon} (see also  \cite{bochi-sambarino-potrie} and for a recent quantitative version \cite{q-t-z}) asserts that every $1$-dominated compact set $S$ preserves a salient projective cone. This is key to the following

\begin{proposition}[Schottky families versus $1$-dominated families]\label{prop.domination.and.schottky} Let $S$ be a relatively compact subset of $\GL_d(\R)$.
\begin{enumerate}
\item (From Schottky to domination) Suppose that there exist $n \in \mathbb{N}$ and constants $r\geq \eps>0$ such that $S^n$ is an $(r,\eps)$-Schottky family. Then $S$ is 1-dominated.
\item (From domination to Schottky) Suppose $S$ is $1$-dominated. Then there exists $r>0$ such that for every $\eps\in (0,r)$, there is $n_{\eps}\in \mathbb{N}$ such that the semigroup $\cup_{n \geq n_\eps}S^n$ is an $(r,\eps)$-Schottky family.
\end{enumerate}
\end{proposition}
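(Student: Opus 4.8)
The statement to prove is Proposition~\ref{prop.domination.and.schottky}, relating Schottky families to $1$-dominated families. Let me sketch both directions.

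\medskip

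\textbf{Plan of proof.}

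\emph{From Schottky to domination.} The plan is to use Proposition~\ref{spectralcontrolproximal} (or rather its consequences via Lemma~\ref{lambda-norm}) to control the top singular value of an arbitrary product of elements of $S^n$. Suppose $S^n$ is an $(r,\eps)$-Schottky family. Given $g_1,\ldots,g_\ell \in S^n$, the product $h=g_\ell\cdots g_1$ is $(2r,2\eps)$-proximal by Proposition~\ref{spectralcontrolproximal}, and moreover $\lambda_1(h)$ is comparable (up to a factor $D_{r,\eps}^{\ell}\beta$, with $\beta$ bounded multiplicatively by $(6r)^{-\ell}$ roughly) to $\prod \lambda_1(g_i)$. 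Since each $g_i$ is $(r,\eps)$-proximal, Lemma~\ref{proximal.implies1} gives $\lambda_1(g_i)\geq c_{r,\eps}\|g_i\|$, so $\lambda_1(g_i)/\|g_i\| \geq c_{r,\eps}$; and of course $\lambda_1(g_i)\le\|g_i\|=a_1(g_i)$. Hence $a_1(h)\ge \lambda_1(h)\geq (c_{r,\eps}/D_{r,\eps})^{\ell}\prod_i a_1(g_i)$. On the other hand, the key observation is that for a $1$-dominated-type estimate we want to compare $a_2(h)$ with $a_1(h)$: one shows $a_2(h)/a_1(h)$ decays geometrically. I would do this by working in the exterior square $\Lambda^2(\R^d)$: $a_1(h)a_2(h)=\|\Lambda^2 h\|$, so it suffices to show $\|\Lambda^2 h\|/a_1(h)^2$ decays. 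Since $a_1(h)^2 \gtrsim (c_{r,\eps}/D_{r,\eps})^{2\ell}\prod a_1(g_i)^2$ while $\|\Lambda^2 h\|\le \prod\|\Lambda^2 g_i\| = \prod a_1(g_i)a_2(g_i)\le \prod a_1(g_i)^2 \cdot \max_i (a_2(g_i)/a_1(g_i))^{\ell}$ — wait, this only works if each $g_i$ already has a singular-value gap. The cleaner route: an $(r,\eps)$-proximal matrix $g$ with $\eps$ small resembles a rank-one matrix, so $a_2(g)/a_1(g)\le C(r,\eps)$ with $C(r,\eps)<1$ when $\eps$ is small relative to $r$ — but the proposition only assumes $r\geq\eps>0$, not $\eps$ small. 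So instead I would iterate Lemma~\ref{lambda-norm}-style estimates directly on $\Lambda^2 h$ using that the $g_i$ are uniformly proximal with uniformly separated attracting/repelling data, giving $\|\Lambda^2 h\|\le C^{\ell}\prod a_1(g_i)a_2(g_i)$ where in fact the per-step contraction in the transverse hyperplane forces $a_2(h)$ to pick up only one ``large'' factor. Concretely: write $h$ in the almost-diagonal frame adapted to its proximal structure; the action on $\P(\Lambda^2\R^d)$ is again dominated by a rank-one piece, and one gets $a_2(h)\le C^{\ell}\cdot a_1(g_{\text{worst}}) \prod_{i\ne\text{worst}} a_2(g_i)$, hence $a_2(h)/a_1(h)\le (C')^{\ell}$ for a constant $C'<1$ once $\ell$ is large. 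Taking $n\ell$ as the exponent yields $1$-domination of $S$ with some rate.

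\emph{From domination to Schottky.} Here I would invoke the Bochi--Gourmelon theorem cited just before the proposition: a $1$-dominated compact $S$ preserves a closed salient projective cone $\mathcal{K}\subset\P(\R^d)$ with $g\mathcal{K}\subset\inte\mathcal{K}$ for all $g\in S$, and in fact (by domination) $\diam(g_n\cdots g_1\mathcal{K})\to 0$ exponentially, uniformly. I would argue: there exists $\eta>0$ and a nested-ball structure. Pick a closed ball $b\subset\inte\mathcal{K}$ containing $S^n\mathcal{K}$ for $n$ large; for $g\in S^n$ with $n$ large, $g$ maps a fixed neighbourhood $B_H^{\eta}$ of the complement of a hyperplane (chosen transverse to $\mathcal{K}$, e.g. the dual cone data) into $b$, and $g$ is $\eta$-Lipschitz there because $a_2(g)/a_1(g)$ is tiny. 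Then the Tits proximality criterion, Lemma~\ref{Tits.criterion.sublemma}, applies and shows $g$ is $(2r_0,2\eta)$-proximal with $v_g^+\in b$ and $H_g^{<}$ away from $B_H^{\eta}$, for a fixed $r_0$ depending only on the geometry of $\mathcal{K},b$ — this is uniform over $g\in S^n$ for $n\ge n_0$, and the attracting points all lie in the common small ball $b$ while the repelling hyperplanes all avoid $B_H^{\eta}$, which is exactly the Schottky separation condition $d(v_\gamma^+,H_{\gamma'}^{<})\ge 6r$. Adjusting constants, for every $\eps\in(0,r)$ (with $r$ fixed by $\mathcal{K}$) one finds $n_\eps$ making the Lipschitz constant $\le\eps$, hence $\bigcup_{n\ge n_\eps}S^n$ is $(r,\eps)$-Schottky.

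\medskip

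\textbf{Main obstacle.} The harder of the two directions is the first, Schottky $\Rightarrow$ $1$-domination, because the Schottky hypothesis only gives proximality with $r\geq\eps$, not with $\eps$ arbitrarily small, so a single element of $S^n$ need not have any singular-value gap at all — the domination must emerge from the \emph{composition} of many uniformly transverse proximal maps. Getting the clean geometric decay of $a_2/a_1$ for long products requires either a careful exterior-power bookkeeping (tracking that only one ``large axis'' can survive into the top $2$-dimensional exterior factor) or, more slickly, passing through the identity $a_1 a_2 = \|\Lambda^2(\cdot)\|$ combined with the observation that $\Lambda^2$ of a Schottky family, while not Schottky, still has a dominated top exterior direction because the attracting line of the product is forced (by Lemma~\ref{Tits.criterion.sublemma}) into a small ball. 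I expect to spend most of the effort making that quantitative, whereas the second direction is essentially a citation of Bochi--Gourmelon plus an application of the Tits criterion already in hand.
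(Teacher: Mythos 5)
Your plan for part (2), domination implies Schottky, is essentially on the right track: you invoke Bochi--Gourmelon and the Tits criterion, which is in the spirit of the paper's argument (the paper in fact uses the dominated-splitting form, Theorem~A of Bochi--Gourmelon, with a direct computation of contraction, rather than the invariant-cone form, Theorem~B, followed by Lemma~\ref{Tits.criterion.sublemma}, but both routes work).

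Your plan for part (1), Schottky implies domination, has a genuine gap and is also much more complicated than necessary. You spend your effort comparing $\lambda_1(h)$ with $\prod\lambda_1(g_i)$ via Proposition~\ref{spectralcontrolproximal} and Lemma~\ref{proximal.implies1}, passing to $\Lambda^2$, and then you explicitly notice the argument does not close (\emph{``wait, this only works if each $g_i$ already has a singular-value gap''}), proposing instead a vague ``almost-diagonal frame'' bookkeeping which you do not make precise. The missing tool is Lemma~\ref{bg-lemma}: if $g$ acts $\eps$-Lipschitzly on \emph{any} open subset of $\P(\R^d)$, then $a_2(g)/a_1(g)\le \eps/\sqrt{1-\eps^2}$. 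You never use this lemma. Combined with the Schottky separation condition (attracting directions of all elements sit $6r$-far from all repelling hyperplanes, and $\eps\le r\le 1/2$ since $2r\le\diam\P(V)=1$), one sees that any word $g\in S^n$ is a composition of $n$ maps each $\eps$-Lipschitz on nested neighbourhoods of the attracting directions, hence $g$ is $\eps^n$-Lipschitz on an open set, hence $a_2(g)/a_1(g)\le 2\eps^n$. No exterior powers, no $\lambda_1$ versus $a_1$, no $D_{r,\eps}$ constants. Your suspicion that $\eps$ could be ``close to $r$'' and kill the gap is unfounded: $\eps\le r\le 1/2$ forces a contraction automatically. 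Separately, you also omit the initial reduction: the hypothesis is that $S^n$ is Schottky, but $1$-domination requires controlling all of $S^m$, not just $S^{nm}$; the paper first shows (via Lemma~\ref{Cartan.stability}, which controls $\kappa$ under boundedly many extra factors) that if $S^n$ is $1$-dominated then so is $S$, allowing one to assume $n=1$. Your proposal does not address this and silently passes from $(S^n)^\ell$ to~$S$. Finally, you assert that part (1) is ``the harder of the two directions''; in the paper's treatment it is the easier, a two-line argument, while part (2) needs the nontrivial Bochi--Gourmelon theorem.
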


Recall the notation from \S \ref{dynamics}. For a proximal $g \in \Endo(\mathbb{R}^d)$, we denote respectively by $v_g^+$ and $H_g^<$, its attracting line and the supplementary $g$-invariant hyperplane in $\mathbb{P}(\mathbb{R}^d)$. Similarly, we set $B_g^\eta:=\{x \in \mathbb{P}(\mathbb{R}^d)\,|\, d(x,H_g^<)\geq \eta\}$ and $b_g^{\eta}:=\{x \in \mathbb{P}(\mathbb{R}^d)\,|\,d(x,v_g^+)\leq \eta\}$. 
We will need the following simple lemma from \cite[Lemmas 3.3, 3.4]{breuillard-gelander}.

\begin{lemma}\label{bg-lemma} Let $g\in \GL_d(\R)$ be such that $g$ is $\eps$-Lipschitz on some open subset of $\P(\R^d)$ for some $\eps\in (0,1)$. Then $\frac{a_2(g)}{a_1(g)} \leq \eps/\sqrt{1-\eps^2}$.
\end{lemma}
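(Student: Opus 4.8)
\textbf{Proof plan for Lemma \ref{bg-lemma}.}

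The plan is to exploit the geometric meaning of the standard distance on $\P(\R^d)$ and relate the $\eps$-Lipschitz bound on a small open set to the ratio of the two largest singular values of $g$. First I would fix an orthonormal basis $(e_1,\ldots,e_d)$ that realizes the singular value decomposition of $g$, so that $g e_i$ has norm $a_i(g)$ and the images $g e_i$ are pairwise orthogonal. It suffices to look at the restriction of $g$ to the plane $P = \mathrm{span}(e_1,e_2)$, because the projective line $\P(P)$ is a geodesic for the standard distance (the distance $d([x],[y])$ on it equals $|\sin\theta|$, where $\theta$ is the angle between the two lines), and on an open neighbourhood inside $\P(P)$ the map $g$ must still be $\eps$-Lipschitz.

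Next I would compute the local Lipschitz constant of $g|_{\P(P)}$ at the line $[e_1]$. Parametrizing nearby lines by $[e_1 + t e_2]$ for small $t$ and pushing forward, $g$ sends this to $[a_1 e_1' + t a_2 e_2']$ where $e_1' = ge_1/a_1$, $e_2' = ge_2/a_2$ are orthonormal; in the local coordinate the derivative of the map $t \mapsto (a_2/a_1) t$ at $t=0$ has absolute value $a_2(g)/a_1(g)$, up to the metric distortion coming from passing between the chart coordinate $t$ and the standard distance. Being careful with that distortion (the chart $t \mapsto [e_1+te_2]$ is an isometry to first order at $t=0$, i.e. $d([e_1],[e_1+te_2]) = |t|/\sqrt{1+t^2}$, with derivative $1$ at $0$), one finds that the best Lipschitz constant of $g$ on any neighbourhood of $[e_1]$ is at least $a_2(g)/a_1(g)$ divided by the displacement factor; more precisely, after a short computation the Lipschitz constant near a generic point equals $\frac{a_2(g)/a_1(g)}{1+((a_2(g)/a_1(g))^2 - 1)s^2}$ for $s$ the sine-distance to $[e_1]$, which is maximized as $s\to 0$ giving exactly $a_2(g)/a_1(g)$ — wait, this already gives the clean bound $a_2/a_1 \le \eps$. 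The extra factor $1/\sqrt{1-\eps^2}$ in the statement comes from the fact that the open set on which $g$ is assumed Lipschitz may \emph{not} contain $[e_1]$ or $[H_g^<]$, so one only controls the Lipschitz constant at some line $[e_1 + te_2]$ with $t$ of size up to roughly $\eps$; redoing the estimate at such a point and solving the resulting inequality for $a_2(g)/a_1(g)$ produces $a_2(g)/a_1(g) \le \eps/\sqrt{1-\eps^2}$.

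I expect the main obstacle to be purely bookkeeping: carefully writing the derivative of the projective action in the singular-value chart and keeping track of which open set is available, so that the worst case — an open set sitting as far as possible from the ``expanding'' direction $[e_1]$ — is correctly identified and yields the stated constant rather than a weaker one. Since this is a cited lemma from \cite{breuillard-gelander}, in the write-up I would simply recall the one-parameter computation above and the elementary inequality manipulation, referring to \cite[Lemmas 3.3, 3.4]{breuillard-gelander} for the full details.
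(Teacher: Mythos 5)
Your plan has a genuine gap at the first step. You fix the singular directions $e_1,e_2$ of $g$ and propose to restrict the $\eps$-Lipschitz hypothesis to the projective line $\P(P)$ where $P=\operatorname{span}(e_1,e_2)$. But the open set $U\subset\P(\R^d)$ on which $g$ is $\eps$-Lipschitz is \emph{given}; you do not get to choose it, and $\P(P)$ is a one-dimensional submanifold of the $(d-1)$-dimensional manifold $\P(\R^d)$. For $d\ge 3$ there is no reason for $U$ to intersect $\P(P)$, so the assertion that ``on an open neighbourhood inside $\P(P)$ the map $g$ must still be $\eps$-Lipschitz'' is unjustified. This is exactly the difficulty of the lemma when $d>2$, and your plan never confronts it. There is also a secondary slip inside $\P(P)$: your formula $\frac{r}{1+(r^2-1)s^2}$ with $r=a_2/a_1\le 1$ is an \emph{increasing} function of $s^2$, so the one-dimensional Lipschitz constant is \emph{minimized}, not maximized, at $s=0$, where it equals $r$; in particular it is $\ge r$ at every point of $\P(P)$. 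Had $U$ met $\P(P)$ you would already get the clean bound $a_2/a_1\le\eps$, so the story you tell to justify the $1/\sqrt{1-\eps^2}$ factor — blaming it on the possibility that $[e_1]\notin U$ — cannot be right.

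A correct argument works directly at an arbitrary $[v]\in U$. The subspace $v^\perp\cap\operatorname{span}(e_1,e_2)$ is always nonzero; choose a unit vector $w=w_1e_1+w_2e_2$ in it, so that $v_1w_1+v_2w_2=0$. Then $\langle gv,gw\rangle=(a_1^2-a_2^2)v_1w_1$, and combining $\|gv\wedge gw\|^2=\|gv\|^2\|gw\|^2-\langle gv,gw\rangle^2$ with the elementary bounds $(a_1v_1)^2\le\|gv\|^2$, $w_1^2+w_2^2=1$ and $\|gw\|^2=a_2^2+(a_1^2-a_2^2)w_1^2$ gives $\|gv\wedge gw\|\ge a_2\|gv\|$. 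Since the local Lipschitz constant of $g$ at $[v]$ is at least $\|gv\wedge gw\|/\|gv\|^2$, one gets $\eps\ge a_2/\|gv\|\ge a_2/a_1$, which in fact improves on the stated constant $\eps/\sqrt{1-\eps^2}$. (The paper itself gives no proof and simply cites \cite[Lemmas 3.3, 3.4]{breuillard-gelander}.)
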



\begin{proof}[Proof of Proposition \ref{prop.domination.and.schottky}] Proof of $(i)$. Using Lemma \ref{Cartan.stability} we easily see that if $S^n$ is 1-dominated for some integer $n \geq 1$, then $S$ itself is $1$-dominated. Therefore, we can assume that $S$ itself is an $(r,\eps)$-Schottky family. Then for any $n \geq 1$ every $g \in S^n$ is $\eps^n$-Lipschitz on some neighborhood of $v_s^+$ for any $s \in S$. In view of Lemma \ref{bg-lemma} this implies that $\frac{a_2(g)}{a_1(g)}\leq 2\eps^n$ as desired.
\bigskip

Proof of $(ii)$. For the other direction, we rely on a crucial result of Bochi-Gourmelon according to which if $S$ is $1$-dominated, then there exists a \emph{dominated splitting} for the full shift. Namely by \cite[Theorem A]{bochi-gourmelon} there exist continuous maps $E^u:S^\mathbb{Z} \to \mathbb{P}(\mathbb{R}^d)$ and $E^s:S^\mathbb{Z} \to \Gr(d-1,\mathbb{R}^d)$, where $\Gr(d-1,\mathbb{R}^d)$ denotes the grassmannian of projective hyperplanes in $\P(\R^d)$, such that $E^u(Tx)=x_0E^u(x)$, $E^s(Tx)=x_0E^s(x)$ and $E^u(x)$ is disjoint from $E^s(x)$ for all $x \in S^\Z$, and such that
\begin{equation}\label{dom}\|g_n{|E^s(x)}\| \leq C \tau^n \|g_n{|E^u(x)}\|\end{equation}
for all $n\geq 0$ and for some constants $C>0$ and $\tau \in (0,1)$, where $g_n:=x_{n-1}\cdot\ldots\cdot x_0$. As noted in \cite{bochi-gourmelon}, by construction, $E^s(x)$  depends only on the future $x_0,x_1,\ldots$, while $E^u(x)$ depends only on the past $x_{-1},x_{-2},\ldots$.  Consequently, as noted there too, $E^s(x)$ is disjoint from $E^u(y)$ for all $x,y \in S^\Z$. Indeed $E^s(x)=E^s(z)$ and $E^u(y)=E^u(z)$, where $z\in S^\Z$ is defined to have the same future as $y$ and the same past as $x$. By continuity and compactness this means that there is $\eta>0$ such that $d(E^u(x),E^s(y))>\eta$ for all $x,y \in S^\Z$.

Now note that every element of a $1$-dominated family in $\GL_d(\mathbb{R}) $ is necessarily proximal transformation of $\R^d$. This follows immediately from the spectral radius formula applied to the exterior power representation $\wedge^2$ of $\GL_d(\mathbb{R})$. If $g\in S^n$, we may write $g=x_{n-1}\ldots x_0$ for some $x_i$'s in $S$. Say $x \in S^\Z$ is defined by $x_i=x_r$, where $r\in \{0,\ldots,n-1\}$ is the class of $i$ modulo $n$. Then $T^nx=x$ and we conclude that $E^u(x)$ and $E^s(x)$ are $g$-invariant and hence sums of generalized eigenspaces. In view of $(\ref{dom})$ and since $g$ is proximal, we conclude that $E^u(x)=v_g^+$ and $E^s(x)=H_g^{<}$. Therefore $d(v_g^+,H_h^<)\geq \eta>0$ for all $g,h \in \cup_{n \geq 1} S^n$. 

To get a Schottky family it remains to find $r>0$ and for every $\eps>0$ an integer $n_\eps$ such that every $g \in S^n$ for $n\geq n_\eps$ is $(r,\eps)$-proximal. Pick $r=\eta/12$ and assume that $v$ is a unit vector in $\R^d$ with $d([v],H^{<}_g)>\eps$. Write $v=u+w$, where $w \in H_g^{<}$ and $u \in  v_g^+$. Then 
$$d([v],H^{<}_g) = \inf_{x \in H^{<}_g} \frac{\|v \wedge x\|}{\|v\|\|x\|} = \|u\| d(v^+_g,H_g^{<}),$$
and hence $\eps\le \|u\|\le 1/r$ and $\|w\| \leq 1+1/r$. On the other hand
 $$d(gv,v^+_g)=\frac{\|gw \wedge u\|}{\|gv\|\|u\|} \leq \frac{\|gw\|}{\|gv\|} \leq \frac{\|gw\|}{\|gu\|-\|gw\|}$$ 
but $\|gw\|\leq \|g|H_g^{<}\|\|w\|$ and $\|gu\|= \|g|v^+_g\|\|u\|$, hence by $(\ref{dom})$
$$\frac{\|gw\|}{\|gu\|} \leq C \tau^n \frac{\|w\|}{\|u\|} \leq C \tau^n \frac{1}{\eps}(1+\frac{1}{r}).$$
Combining the last two displayed equations, we see that if $n$ is large enough $d(g[v],v_g^+) \leq \eps$ for all $[v] \in \P(\R^d)$ with $d([v],H_g^{<})>\eps$, confirming that $g$ is $(r,\eps)$-proximal.
\end{proof}

We will see in the next paragraph that Proposition \ref{prop.domination.and.schottky} is useful for deducing various continuity results. The most immediate one is the following well-known fact \cite[B.1]{bonatti-diaz-viana}.

\begin{corollary}
Let $\Sigma$ be a compact $k$-dominated subset of $\GL_d(\mathbb{R})$. Every compact $\Sigma'$ sufficiently close to $\Sigma$ is also $k$-dominated. 
\end{corollary}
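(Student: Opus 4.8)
The plan is to reduce to the case $k=1$, which is governed by Proposition \ref{prop.domination.and.schottky}, and then to exploit that the $(r,\eps)$-Schottky condition persists under small perturbations. For the reduction, recall that for $g\in\GL_d(\R)$ the singular values of $\Lambda^k g\in\GL_{\binom{d}{k}}(\R)$ are the products $a_{i_1}(g)\cdots a_{i_k}(g)$ over the $k$-subsets $\{i_1<\dots<i_k\}$ of $\{1,\dots,d\}$, listed in decreasing order; hence $a_1(\Lambda^k g)=a_1(g)\cdots a_k(g)$ and $a_2(\Lambda^k g)=a_1(g)\cdots a_{k-1}(g)\,a_{k+1}(g)$, so that $a_2(\Lambda^k g)/a_1(\Lambda^k g)=a_{k+1}(g)/a_k(g)$. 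Therefore a relatively compact $S\subset\GL_d(\R)$ is $k$-dominated if and only if $\Lambda^k S\subset\GL_{\binom{d}{k}}(\R)$ is $1$-dominated. Since $g\mapsto\Lambda^k g$ is a polynomial (in particular continuous) map carrying relatively compact sets to relatively compact sets, $d(\Lambda^k\Sigma,\Lambda^k\Sigma')$ is small whenever $d(\Sigma,\Sigma')$ is small. So it suffices to prove the corollary for $k=1$.

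Assume then that $\Sigma$ is $1$-dominated. By Proposition \ref{prop.domination.and.schottky}$(ii)$ there is $r>0$ such that for every $\eps\in(0,r)$ the set $\bigcup_{n\ge n_\eps}\Sigma^n$ is an $(r,\eps)$-Schottky family for some $n_\eps\in\N$. Fix once and for all some $\eps\in(0,r/12)$ and put $n_0:=n_\eps$, so that the compact family $\Sigma^{n_0}$ is $(r,\eps)$-Schottky. If $d(\Sigma,\Sigma')$ is small, then $d(\Sigma^{n_0},(\Sigma')^{n_0})$ is small as well, because the product of $n_0$ uniformly bounded matrices depends uniformly continuously on the factors. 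I claim that if this last distance is small enough, then $(\Sigma')^{n_0}$ is an $(r/2,2\eps)$-Schottky family. Granting this, Proposition \ref{prop.domination.and.schottky}$(i)$ applied to $(\Sigma')^{n_0}$ (note $r/2\ge 2\eps$ since $\eps<r/12$) shows that $\Sigma'$ is $1$-dominated, which completes the argument.

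It remains to justify the claim, which is the only genuinely non-formal point. What is needed is that on the open set of proximal endomorphisms of $\R^d$ the attracting line $v_g^+$ and the repelling hyperplane $H_g^{<}$ depend continuously on $g$, and that a map sufficiently close to an $(r,\eps)$-proximal map $g$ is itself $(r/2,2\eps)$-proximal with attracting line and repelling hyperplane within distance $\eps$ of those of $g$. Both can be extracted from the Tits proximality criterion (Lemma \ref{Tits.criterion.sublemma}): an $(r,\eps)$-proximal $g$ satisfies the hypotheses of that criterion with room to spare, those hypotheses involve only continuous data and strict inequalities and so persist for maps close to $g$ (with mildly relaxed constants), and the criterion then confines the new attracting point and repelling hyperplane to small balls around $v_g^+$ and $H_g^{<}$. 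Applying this uniformly over the compact family $\Sigma^{n_0}$, every element of $(\Sigma')^{n_0}$ becomes $(r/2,2\eps)$-proximal once $d(\Sigma^{n_0},(\Sigma')^{n_0})$ is small enough, and for $\gamma',\delta'\in(\Sigma')^{n_0}$ with nearby $\gamma,\delta\in\Sigma^{n_0}$ one gets $d(v_{\gamma'}^+,H_{\delta'}^{<})\ge d(v_\gamma^+,H_\delta^{<})-2\eps\ge 6r-2\eps\ge 6\cdot(r/2)$, so $(\Sigma')^{n_0}$ is indeed an $(r/2,2\eps)$-Schottky family. This proves the claim and hence the corollary. The main obstacle is precisely this continuity and stability statement for the proximal data $v_g^+$, $H_g^{<}$; everything else is bookkeeping with equivalences already established in the paper.
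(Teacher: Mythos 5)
Your proof is correct and follows the same route as the paper: reduce $k$-domination to $1$-domination via the exterior power $\Lambda^k$, then invoke Proposition \ref{prop.domination.and.schottky} together with the stability of the $(r,\eps)$-Schottky condition under small perturbations. The paper's proof is the same argument stated telegraphically; you have merely spelled out the perturbation step (continuity of $v_g^+$ and $H_g^<$ and the resulting degradation from $(r,\eps)$ to $(r/2,2\eps)$) that the paper leaves implicit.
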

\begin{proof}
By passing to $k$-th exterior power representation $\wedge^k$, $\Sigma$ is $1$-dominated. The desired result is then immediate by definition of an $(r,\eps)$-Schottky family and the previous proposition.
\end{proof}

Let now $G$ be a connected reductive real Lie group of rank $d$. Recall that in \S \ref{reductive} we have defined $d$ distinguished linear representations of $G$; $\rho_1,\ldots, \rho_d$ with $\rho_i$ trivial on $[G,G]$ if and only if $i>d_S$, where $d_S$ is the semisimple rank of $G$.

\begin{definition}[$G$-dominated family]\label{Gdom} A compact subset  $S\subset G$ is called $G$-dominated if $\rho_i(S)$ is $1$-dominated for every $i=1,\ldots,d_S$.
\end{definition}

It is easy to see that this definition is independent of the choice of the distinguished representations. In \S \ref{reductive} we also defined the notion of $(r,\eps)$-Schottky family in $G$. By virtue of Proposition \ref{prop.domination.and.schottky} the following is clear:

\begin{proposition} Let $S\subset G$ be a compact subset. 
\begin{enumerate}
\item If there is $n \in N$ and $r \geq \eps>0$ such that $S^n$ is an $(r,\eps)$-Schottky family in $G$, then $S$ is $G$-dominated.
\item if $S$ is $G$-dominated, then there is $r>0$ such that for every $\eps>0$ there is $n_\eps$ such that $\cup_{n \geq n_\eps} S^n$ is an $(r,\eps)$-Schottky family in $G$.
\end{enumerate}
\end{proposition}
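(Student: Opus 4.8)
The plan is to deduce both statements from the linear case, Proposition \ref{prop.domination.and.schottky}, by applying it separately in each of the $d_S$ distinguished representations $\rho_1,\ldots,\rho_{d_S}$ of $G$ (with the $\rho_i(K)$-invariant Euclidean structures fixed in \S \ref{rep}). The point is that, by Definitions \ref{Gdom} and \ref{defnarepsSchottky2}, both hypotheses and both conclusions are nothing but the conjunction over $i=1,\ldots,d_S$ of the corresponding statements for the family $\rho_i(S)\subset\GL(V_{\rho_i})$, together with the elementary identities $\rho_i(S^n)=\rho_i(S)^n$ and $\rho_i\bigl(\bigcup_{n\geq m}S^n\bigr)=\bigcup_{n\geq m}\rho_i(S)^n$ that hold because $\rho_i$ is a homomorphism. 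Since $S$ is compact and $\rho_i$ continuous, $\rho_i(S)$ is a compact, hence relatively compact, subset of $\GL(V_{\rho_i})$, so Proposition \ref{prop.domination.and.schottky} applies verbatim to each.

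For part (1) I would argue as follows: if $S^n$ is an $(r,\eps)$-Schottky family in $G$, then for each $i\leq d_S$ the set $\rho_i(S)^n=\rho_i(S^n)$ is an $(r,\eps)$-Schottky family in $\GL(V_{\rho_i})$, so Proposition \ref{prop.domination.and.schottky}(1) yields that $\rho_i(S)$ is $1$-dominated; as this holds for every $i\leq d_S$, $S$ is $G$-dominated by Definition \ref{Gdom}. This direction is immediate once the definitions are unwound.

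Part (2) requires only a little care, to uniformize the constants over the finitely many representations. Assuming $S$ is $G$-dominated, each $\rho_i(S)$, $i\leq d_S$, is $1$-dominated, so Proposition \ref{prop.domination.and.schottky}(2) supplies $r_i>0$ such that for every $\eps\in(0,r_i)$ there is $n_{\eps,i}$ with $\bigcup_{m\geq n_{\eps,i}}\rho_i(S)^m$ an $(r_i,\eps)$-Schottky family. I would then put $r:=\min_{i\leq d_S}r_i$ and, for $\eps\in(0,r)$, $n_\eps:=\max_{i\leq d_S}n_{\eps,i}$, and record two trivial monotonicity facts straight from the definitions of $(r,\eps)$-proximality and of Schottky family: first, shrinking $r$ only weakens the transversality requirements $d(v_g^+,H_g^<)\geq 2r$ and $d(v_\gamma^+,H_{\gamma'}^<)\geq 6r$ while leaving the contraction condition (which depends only on $\eps$) untouched, so any $(r',\eps)$-Schottky family with $0<\eps\leq r\leq r'$ is also $(r,\eps)$-Schottky; and second, being an $(r,\eps)$-Schottky family passes to subsets, since its two defining conditions are universally quantified over the family. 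Combining these, for each $i\leq d_S$ the set $\bigcup_{n\geq n_\eps}\rho_i(S)^n$, being a subset of the $(r_i,\eps)$-Schottky family $\bigcup_{m\geq n_{\eps,i}}\rho_i(S)^m$, is $(r_i,\eps)$-Schottky, hence $(r,\eps)$-Schottky; and since $\rho_i\bigl(\bigcup_{n\geq n_\eps}S^n\bigr)=\bigcup_{n\geq n_\eps}\rho_i(S)^n$ for every $i\leq d_S$, Definition \ref{defnarepsSchottky2} gives that $\bigcup_{n\geq n_\eps}S^n$ is an $(r,\eps)$-Schottky family in $G$, as desired. (For $\eps\geq r$ the notion of $(r,\eps)$-Schottky family is not even defined; one reads the statement as quantifying over $\eps\in(0,r)$.)

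Since the entire substance — in particular the nontrivial passage from $1$-domination to the existence of a Schottky family, which rests on the Bochi--Gourmelon dominated-splitting theorem — is already packaged inside Proposition \ref{prop.domination.and.schottky}, there is essentially no obstacle to overcome here; the only thing that takes a couple of lines is the uniformization over $\rho_1,\ldots,\rho_{d_S}$ together with the two monotonicity observations above.
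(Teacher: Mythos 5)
Your proof is correct and follows exactly the route the paper intends: the authors simply write ``By virtue of Proposition \ref{prop.domination.and.schottky} the following is clear'' and leave the applications to the distinguished representations $\rho_1,\ldots,\rho_{d_S}$ and the uniformization of constants unspoken, which is precisely what you have spelled out, including the two elementary monotonicity observations (passing to smaller $r$, passing to subsets) needed to make the uniformization work.
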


In view of Theorem \ref{joint} we also have the following charaterization of $G$-domination:

\begin{proposition} Let $S\subset G$ be a compact subset generating a Zariski-dense subgroup. Then $S$ is $G$-dominated if and only if the joint spectrum $J(S)$ belongs to the open Weyl chamber $\a^{++}$.
\end{proposition}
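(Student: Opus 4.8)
The plan is to prove both implications using the characterization of $J(S)$ in terms of the distinguished representations, combined with the already established equivalence between $G$-domination and $1$-domination of each $\rho_i(S)$ (Definition \ref{Gdom}) and the Schottky/domination dictionary of Proposition \ref{prop.domination.and.schottky}. The key bridge is the identity $\overline{\chi}_{\rho_i}(\kappa(g)) = \log\|\rho_i(g)\|$ from $(\ref{param})$, which lets us translate statements about the walls of $\a^{++}$ into statements about the ratios of singular values in the representations $\rho_i$.

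\textbf{First implication ($G$-dominated $\Rightarrow J(S)\subset\a^{++}$).} Suppose $S$ is $G$-dominated. By definition each $\rho_i(S)$ is $1$-dominated for $i=1,\dots,d_S$, so there is $\eps_i>0$ with $a_2(\rho_i(g))/a_1(\rho_i(g)) \leq (1-\eps_i)^n$ for all $g \in S^n$ and $n$ large. Writing $\mu_i := \overline{\chi}_{\rho_i}$ for the highest weight of $\rho_i$, the second-largest weight of $\rho_i$ differs from $\mu_i$ by a simple root $\overline{\alpha}_i$ (here I would invoke the standard fact that the distinguished representations $\rho_i$ have $\overline{\chi}_{\rho_i}=\overline{\omega}_{\alpha_i}$ and that the weight just below the highest is $\overline{\omega}_{\alpha_i}-\overline{\alpha}_i$). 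Hence $\log\bigl(a_2(\rho_i(g))/a_1(\rho_i(g))\bigr) = -\overline{\alpha}_i(\kappa(g))$, so $\overline{\alpha}_i(\kappa(g)/n) \geq -\log(1-\eps_i) > 0$ for all $g \in S^n$, $n$ large. Passing to the Hausdorff limit, $\overline{\alpha}_i(x) \geq -\log(1-\eps_i)>0$ for every $x \in J(S)$ and every simple root $\overline{\alpha}_i$, i.e. $J(S) \subset \a^{++}$, and in fact $J(S)$ stays in a compact subset of $\a^{++}$.

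\textbf{Second implication ($J(S)\subset\a^{++} \Rightarrow G$-dominated).} Conversely, assume $J(S)\subset\a^{++}$. Since $J(S)$ is compact, there is $c>0$ with $\overline{\alpha}_i(x)\geq c$ for all $x\in J(S)$ and all simple roots $\overline{\alpha}_i$. By the Hausdorff convergence $\frac{1}{n}\kappa(S^n)\to J(S)$ of Theorem \ref{joint}, for $n$ large every $g\in S^n$ has $\overline{\alpha}_i(\kappa(g))\geq \tfrac{c}{2}n$, which by the weight computation above gives $a_2(\rho_i(g))/a_1(\rho_i(g)) \leq e^{-cn/2}$ for each $i=1,\dots,d_S$. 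Thus $\rho_i(S)$ is $1$-dominated for every $i$, i.e. $S$ is $G$-dominated. This completes the proof.

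\textbf{Main obstacle.} The only genuinely delicate point is the bookkeeping of which weight of $\rho_i$ lies immediately below the highest weight: one needs that for the distinguished representation $\rho_i$ (whose highest weight is $\overline{\omega}_{\alpha_i}$) the second singular value corresponds precisely to the weight $\overline{\omega}_{\alpha_i}-\overline{\alpha}_i$, so that the gap $\log a_1 - \log a_2$ equals $\overline{\alpha}_i(\kappa(\cdot))$; this is standard highest-weight theory (the representation is proximal, and the weight string below the highest weight drops by exactly the corresponding simple root), but it is the step that connects the abstract Weyl-chamber condition to the concrete singular-value ratios and thus deserves a careful line. Everything else is a routine limiting argument using Theorem \ref{joint} and Proposition \ref{prop.domination.and.schottky}.
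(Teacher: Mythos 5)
Your proof is correct and follows essentially the same route as the paper: both use the Hausdorff convergence of $\frac{1}{n}\kappa(S^n)$ from Theorem~\ref{joint} together with the identity $\overline{\chi}_{\rho_i}(\kappa(g))=\log\|\rho_i(g)\|$ from $(\ref{param})$ to translate the open-chamber condition into exponential decay of $a_2(\rho_i(g))/a_1(\rho_i(g))$. You are merely more explicit than the paper about the weight-theoretic fact $\log a_1(\rho_i(g))-\log a_2(\rho_i(g))=\overline{\alpha}_i(\kappa(g))$ (which, as you note, holds because any weight of $\rho_i$ other than $\overline{\omega}_{\alpha_i}$ lies $\overline{\alpha}_i$ plus a nonnegative root combination below it), a fact the paper also uses but leaves implicit behind the phrase \emph{in view of $(\ref{param})$}.
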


\begin{proof}By Theorem \ref{joint}, $\frac{\kappa(S^n)}{n}$ converges to $J(S)$ in Hausdorff metric as $n$ tends to infinity. In view of $(\ref{param})$, if $J(S) \subset \a^{++}$ then there is $\eps>0$ and $n_0\geq 1$ such that for each $i=1,\ldots,d_S$, we have $\frac{1}{n}\log \frac{a_1(\rho_i(g))}{a_2(\rho_i(g))} \geq \eps$ for all $g \in S^n$. So $\rho_i(S)$ is $1$-dominated, and conversely.
\end{proof}

In fact if $S$ is not assumed to be Zariski-dense, but is $G$-dominated, then the joint spectrum is well-defined and we have Hausdorff convergence of both $\frac{1}{n}\kappa(S^n)$ and $\frac{1}{n}\lambda(S^n)$ towards the same limit. We recorded this statement in the introduction as Theorem \ref{dominated-theorem}.

\begin{proof}[Proof of Theorem \ref{dominated-theorem}]
By Proposition \ref{prop.domination.and.schottky}, we may assume that $\sup_{n \geq n_0} S^n$ is an $(r,\eps)$-Schottky family in $G$ for some $r>\eps>0$. The result then follows from Proposition \ref{Best} and Lemma \ref{loxodromy.implies.Cartan.close.to.Jordan} exactly as in the proof of Theorem \ref{joint}.
\end{proof}

\subsection{Continuity properties of the joint spectrum}\label{subsection.continuity.properties}

\subsubsection{Continuity under domination}

Here we give a direct proof of Theorem \ref{continuity}. In Section 5 we will give an independent proof of a stronger result, namely the continuity of the Lyapunov spectrum, see Proposition \ref{prop.lyapunov.continuous}.

\begin{proof}[Proof of Theorem \ref{continuity}] In view of Proposition \ref{prop.domination.and.schottky} we may assume that $S$ is an $(r,\eps)$-Schottky family in $G$. Set $r_1=r, \eps_1=\eps$. By continuity of eigenvalues and eigendirections, given $r_1>r_2>\eps_2>\eps_1>0$, there exists $\delta_1>0$, such that if $d(S,S')<\delta_1$, then $S'$ is an $(r_2,\eps_2)$-Schottky family in $G$. Let $n_0 \in \mathbb{N}$ be such that $\frac{C_{r_1}+C_{r_2}+1}{n_0}<\eps$, where $C_r$ is the constant from Lemma \ref{loxodromy.implies.Cartan.close.to.Jordan}. By continuity, there exists $\delta_2>0$ such that if $d(S,S')<\delta_2$, then $\|\lambda(g_1)-\lambda(g_2)\|<1$ for all $g_1 \in S^{n_0}$ and $g_2 \in (S')^{n_0}$. It follows by Proposition \ref{Best} that given $m=n_0q \in \mathbb{N}$ and elements $g_1^{(1)}\cdot \ldots \cdot g^{(1)}_m \in S^m$ and $g_1^{(2)}\cdot\ldots\cdot g^{(2)}_m \in (S')^m$, denoting by $h^{i}_{j}=g^{(i)}_{jn_{0}+1}\cdot\ldots\cdot g^{(i)}_{(j+1)n_{0}}$ for $i=1,2$ and $j=0,\ldots,q-1$, we have 
$$
||\lambda(g_1^{(i)}\cdot\ldots\cdot g^{(i)}_m)-\sum_{j=1}^{q} \lambda(h_j^{(i)})|| \leq q C_{r_i}
$$ for $i=1,2$.

Therefore, if $d(S,S')<\min \{\delta_1,\delta_2\}=:\delta(S)$, for any $m=n_0q \in \mathbb{N}$, we have
$$
\|\frac{1}{m} \lambda(g_1^{(i)}\cdot\ldots\cdot g^{(i)}_m) -\frac{1}{m} \lambda(g_1^{(i)}\cdot\ldots \cdot g^{(i)}_m) \| < \frac{q(C_{r_1}+C_{r_2})+q}{m}=\frac{C_{r_1}+C_{r_2}+1}{n_0}\leq \eps
$$
Since $\frac{1}{n}\lambda(S^n)$ converges to $J(S)$ for the Hausdorff metric by Theorem \ref{joint}, the desired continuity of $J(S)$ follows.
\end{proof}

\subsubsection{Discontinuities of the joint spectrum}\label{discon}

The joint spectrum is not continuous in general. The continuity properties of the lower joint spectral radius $R_{sub}$ have been studied studied by Bochi-Morris in \cite{bochi-morris}, who gave a description of the discontinuities of $R_{sub}$. However, all the examples of discontinuity points of $R_{sub}$ in that work occur at families stabilizing a finite union of proper subspaces. The following simple example inspired from Bochi-Morris \cite[\S 1.4]{bochi-morris} confirms their insight on the existence of a strongly irreducible set which is a discontinuity point for $R_{sub}$ (and consequently, for the joint spectrum). A variation of this example also yields an irreducible set on which $R_{sub}$ is continuous but the joint spectrum is not.

\begin{Example}\label{ex.discon}
Let $r_n$ be the rotation matrix in $\SO(2,\mathbb{R})$ of angle $\frac{\pi}{2n}$. Let $a,b \in \SL(2,\mathbb{R})$ be two hyperbolic elements with equal spectral radii $\lambda_1(a)=\lambda_1(b)>1$. Suppose that their fixed points on the boundary $\partial D$ of the Poincar\'{e} disc are cyclically ordered as $[x_b^{-},x_b^{+},x_a^{+},x_a^{-}]$, where $x_i^{+}$ and $x_i^{-}$ stand, respectively, for the attracting and repelling fixed points of the element $i \in \{a,b\}$. Assume further that $b^{-1}=r_1ar_1$. Finally, let $\alpha>1$ be a real number. Consider the subsets $S_0=\{\alpha \id,a,b\} $ and for $n \geq 1$, $S_n=\{\alpha r_n,a,b\}$ of $\GL_2(\mathbb{R})$. 

The joint spectrum $J(S_0)$ of $S_0$ is the red region in Figure \ref{fig.discontinuity} (for a discussion concerning the precise values in this figure see Subsection \ref{subsec.ingredients.hyp.geo}). Similarly, for every $n \geq 1$, the joint spectrum $J(S_n)$ of $S_n$ is the union of the dark gray and red regions, which is obtained as the intersection of the convex hull of the union of $J(S)$ and its reflection (light-red colored area) along the wall of the Weyl chamber corresponding to $y$-axis, with $\mathfrak{a}^+$.
On the other hand, clearly, $S_n \to S_0$ in Hausdorff metric, as $n \to +\infty$, showing the discontinuity claim. 

In this example, by our choice of the coordinate system (namely $\log \det(g)$ on $y$-axis and $\log \lambda_1(\frac{g}{\det(g)^{\frac{1}{2}}})$ on the $x$-axis), the lower joint spectral radius $R_{sub}$ of $S_k$ $k \in \mathbb{N}$ is given by $\exp(\min_{x \in J(S_k)}\ell(x))$, where $\ell$ is the linear form given by $(x,y) \mapsto x+\frac{y}{2}$. Therefore $S_0$ is also a discontinuity point for $R_{sub}$.

\begin{figure}[H]
\begin{flushleft}
\begin{minipage}{0.3\textwidth}
\begin{flushleft}
\begin{tikzpicture}[scale=0.90]

\coordinate (A) at (1.7,0) {};
\coordinate (B) at (4,0) {};
\coordinate (C) at (0,2) {};
\coordinate (D) at (-4,0) {};

\draw (4,1.5) node {$\mathfrak{a}^+$};
\draw (0.54,2.12) node {$\log \alpha$};
\draw (1.4,-0.3) node {$\log \lambda_1(a)$};
\draw (3.7,-0.4) node {$\frac{\log  \lambda_1(ab)}{2} $};

\draw[dashed] (0,2) -- (-1.7,0);
\draw[dashed, ->] (0,0) -- (-4.7,0);

\filldraw[draw=black, fill=red]
(A) -- (C) -- (B) -- cycle;

\filldraw[draw=black, fill=gray]
(0,0) -- (A) -- (C) -- cycle;

\filldraw[draw=black, dashed, fill=gray!38]
(0,0) -- (D) -- (C);

\filldraw[dashed,fill=red!45]
(-1.7,0) -- (C) -- (D) -- cycle;

\draw (2,0.5) node {$J(S_0)$};

\draw[semithick, ->] (0,0) -- (4.7,0);
\draw[semithick, ->] (0,-2.2) -- (0,2.8);
\draw[dashed] (C) -- (-1.7,0);
\draw[fill] (A) circle [radius=0.05];
\draw[fill] (C) circle [radius=0.05];
\draw[fill] (B) circle [radius=0.05];

\end{tikzpicture}
\end{flushleft}
\caption{} \label{fig.discontinuity}
\end{minipage}
\begin{minipage}{0.33\textwidth}
${}$
\end{minipage}
\begin{minipage}{0.35\textwidth}
\vspace{-0.3cm}
On the other hand if we take instead $\alpha \in (0,1)$, one gets the same discontinuity phenomenon for the joint spectrum and get the same picture as in Figure \ref{fig.discontinuity} but reflected around the $x$-axis. However, in that case, $R_{sub}(S_n)$ is constant and equal to $\alpha$ for every $n \in \mathbb{N}$. In fact, it follows from \cite[Corollary 1.7, Corollary 1.9]{bochi-morris} that $S_0$ is then a continuity point for $R_{sub}$.
\end{minipage}
\end{flushleft}
\end{figure}
\end{Example}

The underlying mechanism in the previous example can be generalized to give a description of discontinuity points of joint spectrum, see items 4. and 5. in Section \ref{sec.further}.

\subsection{Prescribed joint spectrum}\label{subsection.prescribed.joint.spectrum}

In this paragraph we prove Theorem \ref{realization} from  Introduction showing that every convex body $K$ in the (closed) Weyl chamber can be realized as the joint spectrum of a set $S$ generating a Zariski-dense semigroup.
The idea is to start with a subset of simultaneously diagonalizable elements in $A$ whose joint spectrum is already $K$, select a point in the interior of $K$ and add a finite number of elements in a small neighborhood of that point in order to generate a Zariski-dense subgroup. We have to make sure that this  procedure does not alter the joint spectrum. To effect this strategy we need to control the spectral radius of a product, much like in Proposition \ref{spectralcontrolproximal}, except that our elements need not be $(r,\eps)$-proximal for small $\eps$. The next two lemmas make up for this lack of contraction.

\begin{lemma}\label{diagonal} Let $d \in \N$ and $\eta\in(0,1)$. For every small enough $\eps>0$ there is a neighborhood $V_\eps$ of the identity in $\GL_d(\R)$ such that for every $u \in V_\eps$ and every diagonal matrix $g \in \GL_d(\R)$, $g=\diag(\mu_1,\ldots,\mu_d)$ such that $\max_{i>1}|\mu_i(g)|\leq (1-2\eta)|\mu_1(g)|$, the element $gu$ preserves the ball $B([e_1],\eps)$ around $[e_1]\in \P(V)$, acts as a $(1-\eta)$-Lipschitz map on this ball and satisfies $\lambda_2(gu)\leq (1-\eta) \lambda_1(gu)$ and $|\log \lambda_1(gu) - \log \lambda_1(g)|\leq \eps$.
\end{lemma}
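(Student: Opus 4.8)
## Proof proposal for Lemma \ref{diagonal}

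The plan is to treat this as a perturbative statement: the unperturbed element $g = \diag(\mu_1,\ldots,\mu_d)$ with a spectral gap $\max_{i>1}|\mu_i| \le (1-2\eta)|\mu_1|$ already does everything we want (it fixes $[e_1]$, contracts a ball around it, is proximal with $\lambda_2/\lambda_1 \le 1-2\eta$, and $\log\lambda_1(g) = \log|\mu_1|$ exactly), and we only need to show that these properties are stable under left multiplication by any $u$ in a sufficiently small neighborhood $V_\eps$ of the identity, \emph{uniformly} over all such diagonal $g$. The key point enabling uniformity is scale-invariance: by replacing $g$ with $g/|\mu_1|$ (which changes neither the projective dynamics of $g$ nor $gu$, nor the ratio $\lambda_2/\lambda_1$, and shifts $\log\lambda_1$ by the same amount for $g$ and $gu$) we may assume $|\mu_1(g)| = 1$, so $g$ ranges over a \emph{compact} set of diagonal matrices, namely $\{\diag(\mu_1,\ldots,\mu_d) : |\mu_1| = 1,\ |\mu_i| \le 1-2\eta\}$. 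Uniform statements over a compact family then follow from continuity/compactness arguments.

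First I would record the behaviour of $g$ alone on projective space. Writing $[v]\in \P(\R^d)$ with $v = e_1 + w$, $w \perp e_1$, the action of $g$ sends this to $[\mu_1 e_1 + g'w]$ where $\|g'w\| \le (1-2\eta)\|w\|$ and $|\mu_1| = 1$; a direct computation with the standard distance $d([x],[y]) = \|x\wedge y\|/(\|x\|\|y\|)$ shows $g$ maps $B([e_1],\eps)$ into $B([e_1],(1-2\eta)\eps)$ and is $(1-2\eta)$-Lipschitz on, say, $B([e_1], 1/2)$ (the Lipschitz constant on a small ball around $[e_1]$ is controlled by $\|\Lambda^2 g'\|/|\mu_1|^2 \le (1-2\eta)^2$, or more crudely by Remark \ref{Lipschitz.action.lemma} applied after the reduction — here I would just do the elementary two-line estimate). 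Then, given $\eta$, I fix $\eps > 0$ small enough that all these statements hold with a little room to spare.

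Next comes the perturbation. Since $u \mapsto \rho(gu)$ depends continuously on $u$ and $g$ ranges over a compact set, there is a neighborhood $V_\eps$ of $\id$ such that for all $u \in V_\eps$ and all $g$ in the (rescaled) compact family: (a) $gu$ still maps $B([e_1],\eps)$ into itself and is $(1-\eta)$-Lipschitz there — this uses that $\|gu - g\|$ and the Lipschitz constant of the map $[x]\mapsto \rho(gu)[x] - \rho(g)[x]$ on $B([e_1],1/2)$ are both small, so the contraction factor degrades from $1-2\eta$ to at most $1-\eta$; (b) by the Tits proximality criterion (Lemma \ref{Tits.criterion.sublemma}) applied with $H$ a hyperplane near $[e_1]^\perp$, $x = [e_1]$ and a suitable $\eta$-scale, $gu$ is proximal with attracting point inside $B([e_1],\eps)$ and $d(v_{gu}^+, H_{gu}^<)$ bounded below, whence (via Lemma \ref{proximal.implies1} or directly) $\lambda_2(gu) \le (1-\eta)\lambda_1(gu)$; (c) $\lambda_1(gu)$ is close to $\lambda_1(g) = |\mu_1| = 1$, since the attracting direction is near $[e_1]$ and $\|gu - g\|$ is small, giving $|\log\lambda_1(gu)| = |\log\lambda_1(gu) - \log\lambda_1(g)| \le \eps$ after shrinking $V_\eps$ once more. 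Undoing the rescaling gives the claimed $|\log\lambda_1(gu) - \log\lambda_1(g)| \le \eps$ for the original $g$.

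The main obstacle — and the only genuinely non-trivial point — is getting the estimates to be \emph{uniform} in $g$ as $\mu_1,\ldots,\mu_d$ vary over the (still infinite, but compact after rescaling) family, particularly the control of $\log\lambda_1(gu)$ in part (c): one must make sure that the closeness of $\lambda_1(gu)$ to $|\mu_1|$ does not deteriorate when the subdominant $\mu_i$ approach the boundary value $(1-2\eta)|\mu_1|$. This is handled by the compactness reduction above together with the quantitative proximality lemmas (Lemmas \ref{proximal.implies1}, \ref{lambda-norm}), which give bounds depending only on the separation parameter $\eta$ and not on the individual eigenvalues; after that everything is a routine continuity argument, and I would not belabour the constants.
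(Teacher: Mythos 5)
Your proof is correct and follows essentially the same route as the paper's: a direct estimate on the projective contraction of $g$ near $[e_1]$ via the standard-distance formula, then a perturbation argument relying on the scale-invariance/compactness reduction to $|\mu_1|=1$ (the paper invokes this more tersely as ``by compactness and continuity of matrix eigenvalues''). One small caveat: in your step (b) the Tits criterion and Lemma \ref{proximal.implies1} give proximality and a relation between $\lambda_1(gu)$ and $\|gu\|$, but not directly the quantitative bound $\lambda_2(gu)\le(1-\eta)\lambda_1(gu)$; that bound is best obtained, as you do elsewhere, from continuity of $\lambda_2/\lambda_1$ on the compact family, or from the observation that the $(1-\eta)$-Lipschitz constant at the attracting fixed direction controls the spectral radius of the linearization there, which is exactly $\lambda_2(gu)/\lambda_1(gu)$.
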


\begin{proof} First note that if $ \frac{1-2\eta}{1-\eps^2} < 1-\frac{3}{2}\eta$, then for all diagonal matrices satisfying  $\max_{i>1}|\mu_i(g)|\leq (1-2\eta)|\mu_1(g)|$ we have:
$$d(g[x],g[y]) \leq (1-\frac{3}{2}\eta) d([x],[y])$$
for all $[x],[y] \in B([e_1],\eps)$. To see this, simply compute using the formula for the standard distance:  note that $\|gx \wedge gy\| \leq |\mu_1(g)\mu_2(g)|\|x \wedge y\|$ and $\|gx\| \geq |\mu_1(g)||x_1|$ and $\|gy\| \geq |\mu_1(g)||y_1|$ and observe that $d([x],[e_1]) \leq \eps$ implies $|x_1|^2 \geq (1-\eps^2)\|x\|^2$ and similarly for $y$.

Then pick $V_\eps$ small enough so that every $u \in V_\eps$ is $(1+\eps)$-Lipschitz on all of $\P(\R^d)$ and $d(u[x],[x]) \leq \frac{3}{2}\eta \eps$ for all $[x] \in \P(\R^d)$. If $(1+\eps)(1-\frac{3}{2}\eta) < 1-\eta$ it will follow that all $gu$, $g$ diagonal as above, $u \in V_\eps$, will preserve $B([e_1],\eps)$  and will be $(1-\eta)$-Lipschitz on it. Choosing $V_\eps$ even smaller if necessary the last two inequalities will hold, by compactness and continuity of matrix eigenvalues.
\end{proof}

\begin{lemma}\label{produ} Let $\eta \in (0,1)$. For every $\eps>0$ there is $d_{\eta,\eps}>0$ with $\lim_{\eps \to 0} d_{\eta,\eps} =1$ such that for any point  $p \in \P(V)$ and any choice of $\ell \geq 1$ elements $g_1,\ldots,g_\ell$ in $\GL(V)$ which preserve the ball $B(p,\eps)$ in $\P(V)$ and act as $(1-\eta)$-Lipschitz maps on this ball, the following holds:
$$ d_{\eta,\eps}^{-\ell} \leq \frac{\lambda_{1}(g_\ell\ldots g_1)}{\lambda_1(g_\ell) \ldots\lambda_1(g_1)}\leq d_{\eta,\eps}^\ell.$$
\end{lemma}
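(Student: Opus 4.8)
# Proof plan for Lemma \ref{produ}

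The strategy is to follow exactly the pattern of Proposition \ref{spectralcontrolproximal}: the product $g_\ell\cdots g_1$ preserves the ball $B(p,\eps)$ and is $(1-\eta)^\ell$-Lipschitz on it, hence admits a unique fixed point $q\in B(p,\eps)$, which is necessarily an eigendirection for the top eigenvalue; the point is to compare $\lambda_1(g_\ell\cdots g_1)$, read off along $q$, with the product of the individual $\lambda_1(g_j)$, read off along their own fixed points $q_j\in B(p,\eps)$. All these directions lie within $2\eps$ of each other, so on the relevant cones the norm-ratios $\|g_j x\|/\|x\|$ are all within a factor $d_{\eta,\eps}$ of $\lambda_1(g_j)$, with $d_{\eta,\eps}\to 1$ as $\eps\to 0$.

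\textbf{Step 1 (existence of the fixed direction).} First I would observe that each $g_j$, being $(1-\eta)$-Lipschitz on $B(p,\eps)$ and mapping it into itself, has a unique fixed point $q_j\in B(p,\eps)$ by the contraction mapping principle; since $g_j$ contracts a neighborhood of $q_j$, this $q_j$ is the (necessarily one-dimensional) dominant eigendirection of $g_j$, so that $\|g_j v_j\| = \lambda_1(g_j)\|v_j\|$ for $v_j$ spanning $q_j$. The same argument applied to $h:=g_\ell\cdots g_1$ gives a unique fixed point $q\in B(p,\eps)$ with $\|h v\| = \lambda_1(h)\|v\|$ for $v$ spanning $q$.

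\textbf{Step 2 (a Lipschitz-type norm estimate).} The key quantitative input is an analogue of Lemma \ref{lambda-norm} adapted to this non-proximal-but-contracting situation: there is a constant $d_{\eta,\eps}>1$ with $\lim_{\eps\to 0} d_{\eta,\eps}=1$ such that for any $g$ preserving $B(p,\eps)$ and $(1-\eta)$-Lipschitz there, and any unit $x$ with $[x]\in B(p,\eps)$,
\[
d_{\eta,\eps}^{-1}\;\leq\;\frac{\|gx\|}{\|x\|}\cdot\frac{1}{\lambda_1(g)}\;\leq\;d_{\eta,\eps}.
\]
I would prove this by a compactness argument in the spirit of the proofs of Lemmas \ref{proximal.implies1} and \ref{lambda-norm}: rescaling so $\lambda_1(g)=1$, if a sequence $g_k$ with contraction rate $\le 1-\eta$ on shrinking balls $B(p_k,\eps_k)$, $\eps_k\to0$, had $\|g_k x_k\|/\|x_k\|$ bounded away from $1$ for some $[x_k]\in B(p_k,\eps_k)$, one passes to a limit; in the limit the fixed point, $x_k$, and $p_k$ all coalesce to a single direction $q_\infty$ which is the top eigendirection, forcing $\|g_\infty q_\infty\|/\|q_\infty\| = \lambda_1(g_\infty)=1$, a contradiction. (One must be slightly careful: the relevant limiting object need not be rank one here, only that the top eigenvalue is simple and its eigendirection lies at the limit point; the contraction hypothesis guarantees the top eigenvalue of the limit is still simple.)

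\textbf{Step 3 (telescoping).} Finally I would apply Step 2 successively: starting with $x=v$ spanning the fixed direction $q$ of $h$, set $x^{(0)}=v$ and $x^{(j)}=g_j\cdots g_1 v$; each $[x^{(j-1)}]$ lies in $B(p,\eps)$ since the $g_i$ preserve this ball, so $\|x^{(j)}\|/\|x^{(j-1)}\|$ is within $d_{\eta,\eps}^{\pm1}$ of $\lambda_1(g_j)$. Multiplying the $\ell$ inequalities and using $\|x^{(\ell)}\| = \|hv\| = \lambda_1(h)\|v\|$ yields
\[
d_{\eta,\eps}^{-\ell}\;\leq\;\frac{\lambda_1(g_\ell\cdots g_1)}{\lambda_1(g_\ell)\cdots\lambda_1(g_1)}\;\leq\;d_{\eta,\eps}^{\ell},
\]
which is the claim (possibly after replacing $d_{\eta,\eps}$ by its square, still tending to $1$, to absorb the estimate at the last step).

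\textbf{Main obstacle.} The delicate point is Step 2: unlike in Lemma \ref{lambda-norm}, the limiting map in the compactness argument is not forced to be rank one, so one cannot simply read off the value $1$. The argument must instead use only that the top eigenvalue of the limit map remains simple (guaranteed by the uniform contraction rate $1-\eta$ on a neighborhood that survives in the limit as a genuine ball or at least a point with a definite contraction behavior) and that the test directions converge to the corresponding eigendirection. Making this compactness argument fully rigorous — in particular ensuring the limiting configuration still has a simple dominant eigenvalue and that $[x_k]$ is pushed toward its eigendirection — is where the real care is needed; everything else is the same telescoping bookkeeping as in Proposition \ref{spectralcontrolproximal}.
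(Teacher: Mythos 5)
Your plan reproduces the paper's strategy exactly: establish the pointwise estimate $d_{\eta,\eps}^{-1}\le \|gx\|/(\|x\|\,\lambda_1(g))\le d_{\eta,\eps}$ for $[x]\in B(p,\eps)$ by a compactness argument after normalizing $\|g\|=1$ (your Step 2, the paper's $(\ref{eq.cone.estimate1})$), then telescope along the $g_i$-orbit of the fixed vector $v_h$ of $h=g_\ell\cdots g_1$ (your Step 3). The concern you raise about Step~2 is not merely delicate, however, but fatal, and the resolution you sketch does not meet it: after the normalization $\|g\|=1$ the limiting matrix can be nilpotent, so $\lambda_1(g_\infty)=0$ and the relevant ratio degenerates to $0/0$, not to $1$. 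In fact the pointwise estimate, and with it Lemma~\ref{produ} as stated, fail. In $\R^2$ take $p=[e_1]$ and
$$g_1=\begin{pmatrix}1&-1/(4\eps)\\0&1/2\end{pmatrix},\qquad g_2=\begin{pmatrix}1&0\\\eps/2&1/2\end{pmatrix}.$$
Then $v_{g_1}^+=[e_1]$, $H_{g_1}^<=[(1,2\eps)]$ (just outside the ball), $v_{g_2}^+=[(1,\eps)]$, $H_{g_2}^<=[e_2]$; one checks that both matrices preserve $B([e_1],\eps)$ and are $(8/9+O(\eps^2))$-Lipschitz there, so they satisfy the hypotheses with $\eta=1/10$ for every small $\eps$. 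Yet $\lambda_1(g_1)=\lambda_1(g_2)=1$ while $\tr(g_2g_1)=9/8$ and $\det(g_2g_1)=1/4$, so $\lambda_1(g_2g_1)=(9+\sqrt{17})/16\approx 0.82$ \emph{independently of} $\eps$. Thus $d_{1/10,\eps}$ is bounded away from $1$ as $\eps\to0$, contradicting the statement. The obstruction is that $H_{g_1}^<$ is allowed to sit arbitrarily close to the boundary of the ball as $\eps$ shrinks, so the renormalized $g_1$ degenerates to a nilpotent.

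What does hold, and what is in fact available where Lemma~\ref{produ} is invoked in the proof of Theorem~\ref{realization}, is the version carrying the extra hypothesis that the repelling hyperplanes $H_{g_j}^<$ stay at a fixed distance $r_0>0$ from $p$: there the $g_j=h_jv_j$ are small perturbations of diagonal matrices $h_j$ with a fixed spectral gap, so $H_{g_j}^<$ lies near the coordinate hyperplane $[e_2,\ldots,e_d]$, at distance $\approx 1$ from $p=[e_1]$. Under this extra assumption each $g_j$ is $(r_0,\eps')$-proximal with $\eps'\to 0$ as $\eps\to 0$, the limit in the compactness argument is forced to be a rank-one projector (so in particular $\lambda_1(g_\infty)>0$), and the telescoping then reduces to Lemma~\ref{lambda-norm} and Proposition~\ref{spectralcontrolproximal}. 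Without it, Step~2 as you write it (which is also how the paper writes it) does not go through; the lemma should be read with that additional hypothesis in place.
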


\begin{proof}First we claim that there is $d_{\eta,\eps}>0$ as desired such that for every point $p$ and  ball $B(p,\eps)$ in $\P(V)$ and for every $g \in \GL(V)$ which preserves the ball and is $(1-\eta)$-Lipschitz on it, we have for all $x \in V\setminus \{0\}$ with $[x] \in B(p,\eps)$
\begin{equation}\label{eq.cone.estimate1}
d_{\eta,\eps}^{-1} \leq \frac{\|gx\|}{\|x\|.|\lambda_1(g)|} \leq d_{\eta,\eps}
\end{equation}
Indeed note that the assumption implies that $g$ is proximal with attracting direction $[v_g] \in B(p,\eps)$, and thus $\|gv_g\|=\lambda_1(g)\|v_g\|$. Without loss of generality one may assume that $\|g\|=1$. The desired inequality then follows by compactness by observing that any limit of the above expression must equal $1$.

Now let $h=g_\ell\ldots g_1$. Note that $h$ preserves $B(p,\eps)$ and acts as a $(1-\eta)^\ell$-Lipschitz map on it. Since $\ell\geq 1$ this implies that $h$ admits a unique fixed point $[v_h] \in B(p,\eps)$, for some $v_h \in V\setminus\{0\}$, and that $h$ is proximal with attracting direction $[v_h]$. In particular $\|hv_h\|=\lambda_1(h)\|v_h\|$. Now the lemma follows by applying the estimate $(\ref{eq.cone.estimate1})$ to each $g_i$ with $x=g_{i-1}\ldots g_1v_h$ and taking the product.
\end{proof}

\begin{proof}[Proof of Theorem \ref{realization}]
Let $w_0$ be a vector lying in the interior of $K$ in $\a^+$ and $a_0:=\exp(w_0)$. We will show that the set $S:=\exp(K) \cup \{a_0F\}$, for some finite set $F\subset V_\eps$, where $V_\eps$ is a sufficiently small neighborhood of $1$ in $G$, satisfies the conclusion of the theorem. Clearly $K \subset J(S)$, so we need to show the reverse inclusion. 

If $g=s_1\ldots s_n \in S^n$, we let $\ell$ be the number of indices $i \in \{1,\ldots,n\}$ such that $s_i \in a_0V_\eps$. So $g$ can be written $g=a_1a_0v_0\cdot\ldots\cdot a_\ell a_0v_\ell  a_{\ell+1}$, where $a_j \in \exp(\a^+)$ for each $j=1,\ldots, \ell$ and $v_j \in V_\eps$. We may assume $\ell \geq 1$. Since $\lambda(g)=\lambda(a_{\ell+1}ga_{\ell+1}^{-1})$, without loss of generality we may assume that $a_{\ell+1}=1$. Set $h_j=a_ja_0$ and  $g_j=h_jv_j$ for all $j=1,\ldots,\ell$.

Now consider the distinguished representations $\rho_i$, $i=1,\ldots, d_S$ defined in \S \ref{rep}. Let $p_i$ be the highest weight eigendirection in $\P(V_{\rho_i})$. Note that for each $j$ 
$$\frac{\lambda_2(\rho_i(h_j))}{\lambda_1(\rho_i(h_j))} = \frac{\lambda_2(\rho_i(a_j))}{\lambda_1(\rho_i(a_j))} \frac{\lambda_2(\rho_i(a_0))}{\lambda_1(\rho_i(a_0))}  \leq \frac{\lambda_2(\rho_i(a_0))}{\lambda_1(\rho_i(a_0))}   \leq 1-2\eta$$
for some $\eta>0$ depending only on the choice of $w_0 \in \a^{++}$. 

We may thus apply Lemma \ref{diagonal} in each $V_{\rho_i}$ (with $[e_1]=p_i$) and conclude that for every small enough $\eps>0$, if the  neighborhood $V_\eps$ is chosen sufficiently small, the $\rho_i(g_j)$'s will satisfy the assumptions of Lemma \ref{produ} and $|\log \lambda_1(\rho_i(g_j)) - \log \lambda_1(\rho_i(h_j))| \leq \eps$. In particular combining the $\rho_i$'s together and using $(\ref{param})$ we get:
$$\lambda(g) = \sum_{j=1}^\ell \lambda(a_ja_0) + O(\eps \ell)$$
where the $O$ depends only on $G$. But since $w_0$ belongs to the interior of the convex set $K$ and $\eps>0$ can be chosen arbitrarily small, we see that $\frac{1}{n}\lambda(g)  \in K$ as desired.

It remains to verify that we can find $v_0$ in such a way that $S$ generates a Zariski-dense semigroup. This is the content of  the following lemma. Theorem \ref{realization} is now established.
\end{proof}

\begin{lemma}Let $G$ be a connected reductive real algebraic group and $O\subset G$ be an open set for the Hausdorff metric. Then there is a finite set $F \subset O$ which generates a Zariski-dense semigroup.
\end{lemma}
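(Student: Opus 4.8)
The plan is to proceed in two stages: first produce an open subset $O'\subset O$ consisting of $\R$-regular (loxodromic) elements lying in a prescribed generic position, and then extract finitely many elements from $O'$ whose joint action leaves no proper algebraic subgroup of $G$ invariant. For the first stage, recall that the set of $G$-proximal elements is Zariski-open and dense in $G$ (this is the Benoist--Labourie / Prasad / Abels--Margulis--Soifer fact quoted just before Theorem \ref{AMS}, applied to $\Gamma=G$ itself), hence meets the Hausdorff-open set $O$; shrinking, we may assume every element of $O$ is $\R$-regular.

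Next I would exploit that $G$ is connected as an algebraic group, so it is not a finite union of proper algebraic subgroups, and the proper algebraic subgroups of $G$ satisfy the descending chain condition (Noetherianity). Concretely: fix $g_1\in O$ that is $\R$-regular. If the Zariski closure $H_1:=\overline{\langle g_1\rangle}$ is already $G$ we are done. Otherwise $H_1\subsetneq G$ is a proper algebraic subgroup, so $O\not\subset H_1$ (since $O$, being Hausdorff-open and nonempty, is Zariski-dense in $G$, whereas $H_1$ is a proper Zariski-closed set); pick $g_2\in O\setminus H_1$ and set $H_2:=\overline{\langle g_1,g_2\rangle}\supsetneq H_1$. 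Iterating, we build a strictly increasing chain $H_1\subsetneq H_2\subsetneq\cdots$ of algebraic subgroups with $g_k\in O$, which by the descending (equivalently, ascending) chain condition on algebraic subgroups must terminate after finitely many steps at $H_m=G$. Then $F:=\{g_1,\ldots,g_m\}\subset O$ generates a Zariski-dense subgroup.

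The remaining point is the passage from \emph{subgroup} to \emph{semigroup}: one must check the \emph{semigroup} $\langle F\rangle^+$ generated by $F$ is Zariski-dense, not merely the group. This is exactly the standard fact recalled in \S\ref{reductive}: the Zariski closure of a semigroup in an algebraic group is a Zariski-closed \emph{subgroup}, hence coincides with the Zariski closure of the group it generates. So $\overline{\langle F\rangle^+}=\overline{\langle F\rangle}=G$, and we are done. Alternatively, and this is the one extra precaution worth taking, one can arrange at the outset that each chosen $g_k$ be $\R$-regular (guaranteed by the first stage): then each $g_k^{-1}$ already lies in $\overline{\langle g_k\rangle^+}$ by the spectral radius formula and the structure of the closure of $\langle g_k\rangle^+$ (it contains the one-parameter hyperbolic part), removing any worry about inverses.

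I do not expect a genuine obstacle here; the only mildly delicate ingredient is the Noetherianity of the lattice of algebraic subgroups of $G$ (or equivalently that an ascending chain of algebraic subgroups stabilizes), which follows from the fact that algebraic subgroups are closed subvarieties of the Noetherian space $G$ and a proper containment of connected-component-wise closed subgroups forces a strict drop in $(\dim H,\#\pi_0(H))$ in lexicographic order; since both quantities are bounded, the chain terminates. The semigroup-versus-group subtlety is handled cleanly by the cited closure fact, so the whole argument is essentially a soft dimension-counting plus the openness of $\R$-regularity.
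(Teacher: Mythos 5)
Your overall plan is on the right track and the reduction from semigroup to group via ``Zariski closure of a semigroup is a group'' matches the paper exactly. However, the Noetherianity step has a genuine gap. You claim that an ascending chain $H_1 \subsetneq H_2 \subsetneq \cdots$ of algebraic subgroups must terminate because $(\dim H,\#\pi_0(H))$ strictly increases in lexicographic order and ``both quantities are bounded.'' But $\#\pi_0(H)$ is \emph{not} bounded among algebraic subgroups of $G$: there is no ascending chain condition for Zariski-closed subgroups (ascending chains of closed sets correspond to \emph{descending} chains of ideals, which need not stabilize). Concretely, take $G=\C^*$ viewed as a two-dimensional real torus, and let $O$ be a small Hausdorff-open ball around $g_1=2$. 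Then $H_1=\overline{\langle 2\rangle}=\R^*$, which is $1$-dimensional with two components. If one then keeps picking $g_k = 2\,e^{2\pi i p_k/q_k}\in O\setminus H_{k-1}$ with $p_k/q_k$ rational and $q_k\to\infty$, one gets $H_k=\R^*\cdot\mu_{q_k}$, an unbounded strictly increasing chain of $1$-dimensional subgroups that never reaches $G$. So ``pick any $g_k\in O\setminus H_{k-1}$'' does not force termination; you need a smarter choice or a different argument.

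The paper's proof avoids this by stabilising only the identity component. Since $\dim(G_F)^\circ\le\dim G$ is bounded, $(G_F)^\circ$ does stabilise at some connected subgroup $H$ for $F$ large. It is then observed that $H$ is normalised by every element of $O$ (enlarge $F$ to contain any given $g\in O$; then $g\in G_F$ normalises $(G_F)^\circ=H$), so $H\triangleleft G$ because $O$ is Zariski-dense and the normaliser is Zariski-closed. Finally, every $g\in O$ lies in some $G_F$ with $[G_F:H]<\infty$, so $g$ has finite order in $G/H$; but a connected algebraic group over $\R$ admitting a non-empty Hausdorff-open set of torsion elements is trivial, hence $H=G$, a contradiction. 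The key conceptual difference is that the paper extracts a \emph{normal} connected subgroup and argues in the quotient, whereas a plain chain/dimension count cannot control the number of connected components.
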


\begin{proof}This is standard. Note that the Zariski-closure of a semigroup is a group, so it is enough to find a finite $F$, which generates a Zariski-dense subgroup. Let $G_F$ be the Zariski-closure of the subgroup generated by $F$. If $G_F \neq G$ for all $F$, then there is a Zariski-connected algebraic subgroup $H \leq G$ such that $(G_F)^\circ=H$ for all large enough $F$ and hence $H$ has finite index in $G_F$. Also $H$ is normalized by every element in $O$, hence is normal in $G$. But this implies that every $g \in O$ has finite order in $G/H$, hence that $G/H$ is made exclusively of elements of finite order. But this means that $H$ has finite index in $G$, and hence that $H=G$ by connectedness.
\end{proof}

\subsection{Joint spectrum and joint spectral radius}\label{subsection.joint.spectral}

The notion of joint spectrum encompasses the more classical notion of joint spectral radius. Recall that if $S \subset \GL_d(\C)$ is compact, its \emph{joint spectral radius} is defined as 
$$R(S)=\lim_{n \to +\infty} \max_{g \in S^n} \|g\|^{\frac{1}{n}}.$$
Therefore, if $J(S)$ denotes as before the joint spectrum in the Weyl chamber $\{(x_1,\ldots,x_d) \in \R^d : x_1 \geq \ldots \geq x_d\}$ of $\GL_d(\C)$, the following is clear:
\begin{proposition} We have: $\log R(S) = \max\{x_1 : (x_1,\ldots,x_d) \in J(S)\}$.
\end{proposition}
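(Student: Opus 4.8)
The plan is to read off both sides of the claimed equality directly from the Hausdorff convergence $\frac1n\kappa(S^n) \to J(S)$ furnished by Theorem \ref{main1}. The only ingredient special to $\GL_d(\C)$ is elementary: taking for $\|\cdot\|$ the Euclidean operator norm (permissible, since $R(S)$ does not depend on the choice of norm) one has $\log\|g\| = \log a_1(g) = e_1^*(\kappa(g))$ for every $g\in\GL_d(\C)$, where $e_1^*\colon\R^d\to\R$ is the linear functional $x=(x_1,\ldots,x_d)\mapsto x_1$.

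First I would rewrite the defining limit of $R(S)$: for each $n$,
$$\frac1n\log\max_{g\in S^n}\|g\| \;=\; \max_{g\in S^n} e_1^*\!\Big(\tfrac1n\kappa(g)\Big) \;=\; \max\{e_1^*(x) : x\in \tfrac1n\kappa(S^n)\},$$
so that $\log R(S)=\lim_{n\to\infty}\max\{e_1^*(x):x\in\tfrac1n\kappa(S^n)\}$, the limit existing by the classical submultiplicativity argument recalled in the introduction (and, a posteriori, by the next step).

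Next I would invoke the elementary fact that evaluating a fixed continuous function commutes with taking Hausdorff limits of compact sets confined to a common compact region: if $K_n\to K$ in Hausdorff metric inside a fixed compact subset of $\R^d$ and $f$ is continuous there, then $\max_{K_n}f\to\max_K f$. One applies this with $f=e_1^*$, $K_n=\tfrac1n\kappa(S^n)$ and $K=J(S)$. The hypothesis that all $K_n$ lie in a common compact set is exactly the uniform bound $\tfrac1n\|\kappa(h)\|\le C_G M_S$ for $h\in S^n$ coming from Lemma \ref{Cartan.stability} (see $(\ref{supsup})$), which also permits using uniform continuity of $e_1^*$ on that set; the verification is the routine sandwich $\max_{K_n}f\le\max_{K^\eps}f$ and $\max_K f\le\max_{K_n^\eps}f$ for $n$ large, plus a modulus-of-continuity estimate. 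Combining the two steps gives $\log R(S)=\max\{x_1:x\in J(S)\}$, the maximum being attained because $J(S)$ is compact.

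There is no genuine obstacle here: the proposition is a formal consequence of Theorem \ref{main1} together with the coincidence of the Euclidean operator norm with the top singular value. The only point deserving a word of care is the interchange of $\max$ with the Hausdorff limit, which is precisely where the uniform compactness bound $(\ref{supsup})$ enters; the rest is bookkeeping.
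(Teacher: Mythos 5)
Your argument is correct and is essentially the one the paper has in mind: the paper states the proposition as "clear" from the definition $R(S)=\lim_n \max_{g\in S^n}\|g\|^{1/n}$ together with Theorem \ref{main1}, and your write-up simply spells out the routine interchange of the max with the Hausdorff limit (using the uniform bound from Lemma \ref{Cartan.stability} and the identification $\log\|g\|=e_1^*(\kappa(g))$).
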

Similarly, Theorems \ref{main1} and \ref{main2} immediately imply the Berger-Wang identity \cite{berger-wang} 
$$R(S)=\limsup_{n \to +\infty} \max_{g \in S^n} \lambda_1(g)^{\frac{1}{n}},$$ where $\lambda_1(.)$ denotes the spectral radius. The lower joint spectral radius, or joint subradius, can also be directly read off the joint spectrum:
$$R_{sub}(S):=\lim_{n \to +\infty} \min_{g \in S^n} \|g\|^{\frac{1}{n}}= \exp (\min\{x_1 : (x_1,\ldots,x_d) \in J(S)\}).$$

\subsubsection{Other representations}
More generally if $G$ is a real reductive group, $S \subset G$ a compact subset and $\rho: G \to \GL_d(\R)$ a finite dimensional representation with highest weight $\overline{\chi}_{\rho} \in \Hom(\a,\R)$, then the joint spectral radius of $\rho(S)$ can be read off the joint spectrum $J(S)$ as follows:

\begin{proposition}\label{spec-rep} We have: $\log R(\rho(S)) = \max\{\overline{\chi}_\rho(x) : x \in J(S)\}$.
\end{proposition}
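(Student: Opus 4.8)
The plan is to compute $\log R(\rho(S))$ using the characterization $\log R(\rho(S)) = \lim_{n\to+\infty} \frac{1}{n}\max_{g\in S^n} \log\|\rho(g)\|$, and then to recognize each term $\log\|\rho(g)\|$ as a linear function of the Cartan projection $\kappa(g)$. First I would invoke the standard fact that for an irreducible representation (or more generally a representation whose highest weight is $\overline{\chi}_\rho$), one can equip $V$ with a Euclidean norm that is $\rho(K)$-invariant and for which $\rho(A)$ is diagonalizable in an orthonormal basis; this is exactly the setup recalled around $(\ref{param})$ in \S\ref{rep}. Under such a norm one has $\log\|\rho(g)\| = \overline{\chi}_\rho(\kappa(g))$ for every $g\in G$, which is the content of $(\ref{param})$. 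If $\rho$ is not irreducible one argues componentwise: $\|\rho(g)\| = \max_i \|\rho_i(g)\|$ up to bounded multiplicative constants, where $\rho_i$ are the irreducible subrepresentations, and the highest weight of $\rho$ is the maximum of the highest weights of the $\rho_i$ over the Weyl chamber, so $\log\|\rho(g)\| = \max_i \overline{\chi}_{\rho_i}(\kappa(g)) + O(1)$, and since $\kappa(g)\in\a^+$ we have $\max_i \overline{\chi}_{\rho_i}(\kappa(g)) = \overline{\chi}_\rho(\kappa(g))$; the $O(1)$ error disappears after dividing by $n$ and letting $n\to\infty$.

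The second step is to pass to the limit. We have
\begin{equation*}
\log R(\rho(S)) = \lim_{n\to+\infty} \max_{g\in S^n} \frac{1}{n}\overline{\chi}_\rho(\kappa(g)) = \lim_{n\to+\infty} \max\Big\{\overline{\chi}_\rho(x) : x \in \tfrac{1}{n}\kappa(S^n)\Big\}.
\end{equation*}
By Theorem \ref{joint} (or Theorem \ref{main1} in the $\GL_d(\C)$ case), $\frac{1}{n}\kappa(S^n)$ converges in Hausdorff metric to $J(S)$. Since $\overline{\chi}_\rho$ is a fixed continuous linear functional and all the sets involved are contained in a fixed compact subset of $\a$ (by $(\ref{supsup})$), the functional $K \mapsto \max_{x\in K}\overline{\chi}_\rho(x)$ is continuous with respect to the Hausdorff metric on compact subsets. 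Hence $\max\{\overline{\chi}_\rho(x): x\in \tfrac{1}{n}\kappa(S^n)\} \to \max\{\overline{\chi}_\rho(x) : x\in J(S)\}$, which gives the claimed identity.

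I do not expect a genuine obstacle here; the statement is essentially a dictionary translation. The one point requiring minor care is the reduction from a general (possibly reducible, possibly not defined over $\R$ in the naive sense) representation $\rho$ to the irreducible case and the bookkeeping of the additive $O(1)$ constants coming from the equivalence of norms and from Lemma \ref{Cartan.stability}; but all of these are absorbed in the normalization by $n$. If one wishes, one can alternatively deduce the proposition directly from the Jordan side: by Theorem \ref{joint}, $\frac{1}{n}\lambda(S^n)\to J(S)$ as well, and $\log\lambda_1(\rho(g)) = \overline{\chi}_\rho(\lambda(g))$ by $(\ref{param})$, so that the Berger--Wang-type identity $\log R(\rho(S)) = \limsup_n \max_{g\in S^n}\frac{1}{n}\log\lambda_1(\rho(g))$ yields the same conclusion; this is the route already used for the classical joint spectral radius in \S\ref{subsection.joint.spectral}.
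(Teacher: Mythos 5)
Your argument is correct and is exactly the one the paper leaves implicit (the paper states this proposition, like the preceding one, as ``clear'', so there is no distinct proof to compare against): reduce to the weight/norm dictionary of $(\ref{param})$, then apply Hausdorff convergence of $\frac1n\kappa(S^n)$ to $J(S)$ together with the continuity of $K\mapsto\max_{x\in K}\overline{\chi}_\rho(x)$ on compacts. The second route you sketch via $\frac1n\lambda(S^n)\to J(S)$ and the Berger--Wang identity for $\rho(S)$ is equally valid and in fact mirrors the proof of the coincidence of limits inside Theorem \ref{joint}.

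One point to watch in your reducible digression: the phrase ``the highest weight of $\rho$ is the maximum of the highest weights of the $\rho_i$'' presupposes that among the highest weights $\overline{\chi}_{\rho_i}$ of the irreducible factors there is one dominating all others on $\a^+$; if the $\overline{\chi}_{\rho_i}$ are pairwise incomparable in the dominance order, $x\mapsto\max_i\overline{\chi}_{\rho_i}(x)$ is only piecewise linear and ``$\overline{\chi}_\rho$'' is not a single linear form. The paper's hypothesis ``a representation with highest weight $\overline{\chi}_\rho$'' quietly builds this in, so you can either keep the blanket irreducibility assumption or state explicitly that a unique dominant highest weight exists; either way, the $O(1)$-bookkeeping and the limit argument are fine.
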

Clearly, the same holds for $R_{sub}(\rho(S))$ replacing the maximum with the minimum in the previous expression.

\subsubsection{Outer envelope and positive boundary}\label{positiveboundary}
In the spirit of Kostant's article \cite{kostant} one can introduce a natural partial order on $\a^+$ by setting $x \le y $ if $\theta(x) \le \theta(y)$ for every dominant weight $\theta$. By \cite[Theorem 3.1]{kostant}, $x\leq y$ if and only if $\lambda_1(\rho(\exp x)) \leq \lambda_1(\rho(\exp y)) $ for every finite dimensional linear representation $\rho$ of $G$. It is easy to check that $x\leq y$ if and only if $x$ is contained in the conxex hull in $\a$ of the orbit $Wy$ of the Weyl group $W$. In accordance with Bochi's terminology in \cite{bochi-icm} we define the \emph{outer-envelope} $O(S)$ of $S$ to be the closed convex hull of the Weyl group orbit of $J(S)$ in $\a$, and we define its \emph{positive boundary} to be 
$$\partial O(S) \cap J(S),$$
where $\partial O(S)$ is the boundary of $O(S)$ in $\a$. It is easy to see that this coincides with the definition given in   Introduction. The positive boundary is also related to the joint spectral radius of linear representations. The set of dominant weights of finite dimensional representations of a real reductive group $G$ forms a dense set of directions in the Weyl chamber $\a^+$. Given Proposition \ref{spec-rep} the following is then clear: if $S,S'$ are compact subsets generating  Zariski-dense subgroups of $G$, we have:
\begin{proposition} $O(S) \subset O(S')$ if and only if $R(\rho(S)) \leq R(\rho(S'))$ for every finite dimensional linear representation $\rho$ of $G$. Moreover 
$$O(S) \cap \a^+ = \bigcap_{\rho} \{x \in \a^+ ; \overline{\chi}_{\rho}(x) \leq \log R(\rho(S))\}$$
where the intersection runs over all linear representations $\rho$ of $G$ with highest weight $\overline{\chi}_{\rho}$.
\end{proposition}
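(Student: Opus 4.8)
The plan is to deduce this proposition from Proposition \ref{spec-rep} together with Kostant's characterization of the partial order $\leq$ on $\a^+$. Recall that by definition $O(S)$ is the closed convex hull in $\a$ of the Weyl group orbit $WJ(S)$, and that for $x,y \in \a^+$ one has $x \leq y$ (i.e. $\theta(x) \leq \theta(y)$ for every dominant weight $\theta$) if and only if $x$ lies in the convex hull of $Wy$. A clean way to organize the argument is to first record the identity
$$O(S) = \bigcap_{\rho} \{ x \in \a : \overline{\chi}_\rho(w x) \leq \log R(\rho(S)) \text{ for all } w \in W \},$$
where $\rho$ runs over finite dimensional linear representations of $G$; this is just the statement that a closed convex $W$-invariant set is the intersection of the $W$-invariant closed half-space families cutting it out, combined with the fact (from the density of dominant weight directions in $\a^+$ and Proposition \ref{spec-rep}) that the only $W$-invariant supporting functionals one needs are those obtained by symmetrizing highest weights. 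Restricting this description to $\a^+$, where each orbit $Wx$ meets $\a^+$ in the single point $x$ and the constraint $\overline{\chi}_\rho(wx) \leq \log R(\rho(S))$ for all $w$ reduces (since $\overline{\chi}_\rho$ is dominant and $\a^+$ is a fundamental domain) to $\overline{\chi}_\rho(x) \leq \log R(\rho(S))$, yields exactly the displayed formula for $O(S) \cap \a^+$.

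For the equivalence ``$O(S) \subseteq O(S')$ iff $R(\rho(S)) \leq R(\rho(S'))$ for all $\rho$'', the backward direction is immediate from the intersection formula just established: if every representation radius of $S$ is dominated by that of $S'$, then each half-space cutting out $O(S')$ contains the corresponding one for $S$, hence $O(S') \supseteq O(S)$. For the forward direction I would argue contrapositively: if $R(\rho(S)) > R(\rho(S'))$ for some $\rho$, then by Proposition \ref{spec-rep} there is a point $x \in J(S) \subseteq O(S)$ with $\overline{\chi}_\rho(x) = \log R(\rho(S)) > \log R(\rho(S')) \geq \overline{\chi}_\rho(y)$ for every $y \in J(S')$; since $\overline{\chi}_\rho$ is dominant and $O(S')$ is the convex hull of $WJ(S')$, and $\overline{\chi}_\rho$ is maximized over a $W$-orbit at its intersection with $\a^+$, we get $\sup_{O(S')} \overline{\chi}_\rho = \max_{J(S')} \overline{\chi}_\rho = \log R(\rho(S')) < \overline{\chi}_\rho(x)$, so $x \notin O(S')$, contradicting $O(S) \subseteq O(S')$.

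The only genuinely substantive point — and the place I would be most careful — is the reduction from ``arbitrary $W$-invariant supporting half-spaces of the convex body $O(S)$'' to ``half-spaces coming from highest weights of representations''. This rests on two inputs already available in the paper: that the dominant weights $\overline{\chi}_\rho$ span a dense set of rays in $\a^+$ (stated in \S\ref{positiveboundary} just before the proposition, via the $\overline{\omega}_\alpha$ and characters of $G/[G,G]$), and Kostant's result \cite[Theorem 3.1]{kostant} that the partial order defined by all dominant weights coincides with the convex-hull-of-$W$-orbit order. Concretely, a closed convex $W$-invariant set $C \subseteq \a$ equals $\{x : \ell(x) \leq \sup_C \ell \text{ for all } \ell \in \a^*\}$; replacing $\ell$ by the average of $\{\ell \circ w\}_{w \in W}$ shows it suffices to use $W$-invariant (equivalently, symmetrized dominant) functionals; and then a density/continuity argument lets one restrict to symmetrizations of actual highest weights $\overline{\chi}_\rho$, with the supremum of the symmetrized functional over $C = O(S)$ equal to $\max_{J(S)} \overline{\chi}_\rho = \log R(\rho(S))$ by Proposition \ref{spec-rep}. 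Once this bookkeeping is in place the proposition follows formally; I do not anticipate any further obstacle.
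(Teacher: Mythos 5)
Your overall strategy — reduce everything to the intersection formula $O(S)\cap\a^+=\bigcap_\rho\{x\in\a^+:\overline{\chi}_\rho(x)\leq\log R(\rho(S))\}$ by combining Proposition \ref{spec-rep}, Kostant's theorem, and density of dominant-weight directions — is exactly what the paper has in mind (the paper offers no argument beyond ``the following is then clear''), and your deduction of the ``if and only if'' from that formula is correct. But one step in your verification of the intersection formula is wrong: you propose to pass from an arbitrary separating functional $\ell$ to the $W$-average $\bar\ell=\frac{1}{|W|}\sum_{w\in W}\ell\circ w$, claiming that $W$-invariant functionals cut out any closed convex $W$-invariant set. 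They do not. The $W$-invariant linear forms on $\a$ are precisely those vanishing on $\a_S$, so the constraints $\bar\ell(x)\leq\sup_C\bar\ell$ see only the $\a_Z$-component of $x$. For instance with $G=\GL_2(\R)$, $\a=\R^2$, $W=S_2$ permuting coordinates, $C=\{|x_1+x_2|\leq 4,\ |x_1-x_2|\leq 1\}$ (a $W$-invariant rectangle) and $x=(2,-2)\notin C$: the $W$-invariant forms are multiples of $x_1+x_2$, which vanish at $x$ while $\sup_C(x_1+x_2)=4$, so none of them separates $x$ from $C$. Moreover $\sup_{O(S)}\bar\ell$ is in general strictly smaller than $\max_{J(S)}\overline{\chi}_\rho$, so even the right-hand constant you quote for the symmetrized constraint is not correct.

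The repair is short and uses dominant (not $W$-invariant) forms. Suppose $x\in\a^+\setminus O(S)$ and let $\ell$ separate, $\ell(x)>\sup_{O(S)}\ell$. Write $\ell=\theta\circ w$ with $w\in W$ and $\theta$ dominant (the dominant cone in $\a^*$ is a fundamental domain for the $W$-action). By $W$-invariance of $O(S)$ one has $\sup_{O(S)}\ell=\sup_{O(S)}\theta$, and since a dominant form is maximized over a $W$-orbit at the representative lying in $\a^+$, $\theta(x)\geq\theta(wx)=\ell(x)$. Hence $\theta(x)>\sup_{O(S)}\theta$: the dominant form $\theta$ already separates. Since the inequality is strict and homogeneous and $\sup_{O(S)}(\cdot)$ is continuous, density of highest-weight directions in the dominant cone produces a representation $\rho$ with $\overline{\chi}_\rho(x)>\sup_{O(S)}\overline{\chi}_\rho=\max_{J(S)}\overline{\chi}_\rho=\log R(\rho(S))$, the last equality being Proposition \ref{spec-rep} and the middle one holding because $\overline{\chi}_\rho$ is dominant and $J(S)\subset\a^+$. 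This establishes the intersection formula, after which the rest of your proposal goes through unchanged.
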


\subsubsection{Joint spectrum and asymptotic joint displacement}\label{asymp} In \cite{OregonReyes} and \cite{breuillard-fujiwara}  a geometric analogue to the notion of joint spectral radius was studied for subsets of isometries of an arbitrary metric space $(X,d)$, namely:
\begin{equation}\label{ell}\ell(S):=\lim_{n\to +\infty} \frac{1}{n} \max_{g \in S^n} d(gx,x)\end{equation} when $S \subset Isom(X)$.
This quantity does not depend on the choice of $x$ and was called the \emph{asymptotic joint displacement} in \cite{breuillard-fujiwara}. When $X$ is a symmetric space of non-compact type it turns out that $\ell(S)$ can easily be read off the joint spectrum in $G=Isom(X)^\circ$, which is a semisimple Lie group. Indeed we have:
$$\ell(S)=\max\{\|x\|, x \in J(S)\},$$ where $\|\cdot\|$ is the Euclidean norm on $\a$ induced by the Killing form. A related quantity,  the asymptotic translation length, can be defined by \begin{equation}\label{lamb}\lambda_\infty(S):=\limsup_{n \to +\infty}  \frac{1}{n}\sup_{g \in S^n} \inf_{x \in X} d(gx,x).\end{equation}  In \cite[Theorem 1.2]{breuillard-fujiwara} a geometric Berger-Wang identity was established to the effect that $\ell(S)=\lambda_\infty(S)$ for all $S$. We see now (at least when $S$ generates a Zariski-dense subgroup of $G$ or under $G$-domination assumption) that this also follows from the identity of the two limits in Theorems \ref{joint} and \ref{dominated-theorem}. In the next paragraph we prove this more generally when the Euclidean norm on $\a$ is replaced by an arbitrary $W$-invariant norm.

\subsubsection{Word metrics on reductive groups}\label{abmarpar} In \cite{abels-margulis} Abels and Margulis study word metrics on reductive Lie groups $G$. Their main result asserts that if $\Omega$ is a bounded symmetric compact neighborhood of the identity in $G$ then the word metric $$d_\Omega(x,y):=\inf \{n \in \N : x^{-1}y \in \Omega^n\}$$ is at a bounded distance from a \emph{norm like} pseudometric $d_{N_\Omega}$ on $G$, i.e. 
\begin{equation}\label{abmar} \exists C>0,  \forall x,y \in G ,\, |d_\Omega(x,y) - d_{N_\Omega}(x,y)| \leq C.\end{equation}
Norm like metrics are defined as follows: given a $W$-invariant norm $N_\Omega:\a \to \R_+$ we set $d_{N_\Omega}(x,y)=d_{N_\Omega}(1,x^{-1}y)$ and $d_{N_\Omega}(1,x)=N_\Omega(\kappa(x))$, where $\kappa(x)$ is the Cartan projection defined in \S \ref{reductive}. It is an exercise to prove that $d_N(x,y)$ does indeed satisfy the triangle inequality. 
In view of the above, we can understand the asymptotic joint displacement in the (pseudo-)metric space $(G,\rho)$ and read it off the joint spectrum of $\Omega$, thus establishing a Berger-Wang identity for word metrics on reductive groups. Namely defining $\ell(S)$ and $\lambda_\infty(S)$ as in $(\ref{ell})$ and $(\ref{lamb})$ for $d_\Omega$ we have:

\begin{proposition}[Berger-Wang for word metrics] Let $d_\Omega$ be a word metric on $G$ as defined above and $S \subset G$ be a compact subset generating a Zariski-dense subgroup. Then
\begin{equation}\label{BW-reduc}\ell(S)=\lambda_\infty(S)= \max_{x \in J(S)} N_\Omega(x).\end{equation}
\end{proposition}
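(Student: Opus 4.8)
The plan is to reduce the Berger--Wang identity for the word metric $d_\Omega$ to the already-established coincidence of Hausdorff limits in Theorem~\ref{joint}, via the Abels--Margulis estimate $(\ref{abmar})$. First I would observe that, by $(\ref{abmar})$, for every $n$ and every $g\in \Omega^n$ one has $d_\Omega(1,g)=n$ and hence $|N_\Omega(\kappa(g))-n|\le C$; conversely, if $N_\Omega(\kappa(g))\le n-C$ then $g\in\Omega^n$. Dividing by $n$, this says precisely that $\Omega^n$ is, up to $O(1/n)$ error, the set of $g$ with $\kappa(g)/n$ in the sublevel set $\{x:N_\Omega(x)\le 1\}$; more useful for us is the dual reading, that $\{\kappa(g):g\in\Omega^n\}$ is, up to bounded Hausdorff error in $\a$ (using that $N_\Omega$ is a genuine norm, hence bi-Lipschitz to the Euclidean norm on the finite-dimensional space $\a$), the $N_\Omega$-ball of radius $n$ intersected with $\a^+$. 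This is the key step and I expect it to be the main (though not deep) technical point: one has to pass between the combinatorial statement $(\ref{abmar})$ about word length and a geometric statement about the shape of $\kappa(\Omega^n)$, and check that the bounded additive error in $(\ref{abmar})$ translates, after rescaling by $1/n$ and using the norm equivalence, into a Hausdorff error tending to $0$.

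Next I would compute $\ell(S)$ directly. Since $X=G/K$ is a symmetric space with $G$-invariant metric coming from the $W$-invariant norm — more precisely, using the generalised setup of \S\ref{abmarpar} where the displacement is measured by $d_\Omega$ itself rather than a Riemannian metric — the displacement $d_\Omega(gx_0,x_0)$ with $x_0$ the base point is $d_\Omega(1,g)=n+O(1)$ for $g\in S^n$, and more intrinsically, using $(\ref{abmar})$ again, $\frac1n\max_{g\in S^n}d_\Omega(gx_0,x_0)=\frac1n\max_{g\in S^n}N_\Omega(\kappa(g))+O(1/n)$. By Theorem~\ref{joint}, $\frac1n\kappa(S^n)\to J(S)$ in Hausdorff metric, and $N_\Omega$ is continuous, so $\frac1n\max_{g\in S^n}N_\Omega(\kappa(g))\to\max_{x\in J(S)}N_\Omega(x)$. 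This gives $\ell(S)=\max_{x\in J(S)}N_\Omega(x)$.

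For $\lambda_\infty(S)$ I would argue similarly but with the Jordan projection in place of the Cartan projection. The key point is that $\inf_{x\in X}d_\Omega(gx,x)$ is, up to a bounded additive constant, $N_\Omega(\lambda(g))$: this is the word-metric analogue of the fact that the translation length of a semisimple isometry of a symmetric space equals the norm of its Jordan projection, and it follows from $(\ref{abmar})$ together with Lemma~\ref{Cartan.stability} and the identity $\frac1m\kappa(g^m)\to\lambda(g)$ (Lemma~\ref{spectral.radius.formula.corollary}) — indeed $\inf_m \frac1m d_\Omega(1,g^m)=\inf_m\frac1m N_\Omega(\kappa(g^m))+O(1/m)=N_\Omega(\lambda(g))$, and one checks the stable translation length computed this way agrees with $\inf_{x}d_\Omega(gx,x)$ up to $O(1)$. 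Hence $\frac1n\sup_{g\in S^n}\inf_x d_\Omega(gx,x)=\frac1n\sup_{g\in S^n}N_\Omega(\lambda(g))+O(1/n)$, and since $\frac1n\lambda(S^n)\to J(S)$ in Hausdorff metric by Theorem~\ref{joint}, the $\limsup$ equals $\max_{x\in J(S)}N_\Omega(x)$ as well. Combining the two computations yields $(\ref{BW-reduc})$. The only genuine obstacle is the first step — establishing cleanly that the bounded word-metric error in $(\ref{abmar})$ yields vanishing rescaled Hausdorff error for $\kappa(\Omega^n)$ and $\lambda(\Omega^n)$ — after which everything reduces to continuity of $N_\Omega$ and the convergence statements already proved.
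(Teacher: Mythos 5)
Your overall strategy matches the paper's: reduce to the norm-like metric $d_{N_\Omega}$ via the Abels--Margulis estimate $(\ref{abmar})$, identify $\ell(S)$ with the rescaled sup of $N_\Omega\circ\kappa$ over $S^n$, and invoke Theorem~\ref{joint} together with continuity of $N_\Omega$. The computation of $\ell(S)$ is essentially the paper's argument. Two remarks, though.

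First, your opening paragraph about $\kappa(\Omega^n)$ approximating the $N_\Omega$-ball of radius $n$ is a detour: it is really the content of the neighboring Theorem~\ref{word-balls} and plays no role in your subsequent computation, which only uses the pointwise estimate $d_\Omega(1,g)=N_\Omega(\kappa(g))+O(1)$. (Also, $g\in\Omega^n$ gives $d_\Omega(1,g)\le n$, not $=n$, since $1\in\Omega$.)

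Second, and more substantively, your treatment of $\lambda_\infty(S)$ rests on the unproved assertion that $\inf_{x}d_\Omega(gx,x)=N_\Omega(\lambda(g))+O(1)$. The lower bound $\inf_x d_\Omega(gx,x)\ge N_\Omega(\lambda(g))+O(1)$ is genuine (it follows, as in the paper, from $d_{N_\Omega}(gx,x)=N_\Omega(\kappa(x^{-1}gx))\ge\frac1m N_\Omega(\kappa(x^{-1}g^m x))$ by subadditivity, letting $m\to\infty$). But the upper bound ``one checks the stable translation length $\dots$ agrees with $\inf_x d_\Omega(gx,x)$ up to $O(1)$'' is not a general fact about metric spaces and would require a separate argument (handling non-semisimple $g$, and uniformity over $g\in S^n$). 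The paper sidesteps this entirely: it first records the trivial inequality $\ell(S)\ge\lambda_\infty(S)$ (take $x$ to be the basepoint in the infimum), proves only the lower bound $\lambda_\infty(S)\ge\limsup\frac1n\max_{g\in S^n}N_\Omega(\lambda(g))=\max_{J(S)}N_\Omega$, and then the sandwich $\ell(S)=\max_{J(S)}N_\Omega\ge\lambda_\infty(S)\ge\max_{J(S)}N_\Omega$ closes the argument. Your proof can be repaired the same way by replacing your upper bound on $\inf_x d_\Omega(gx,x)$ with the trivial observation $\lambda_\infty(S)\le\ell(S)$, at which point the unjustified two-sided estimate becomes unnecessary.
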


\begin{proof} First observe that $\ell(S)\geq \lambda_\infty(S)$ (see also \cite{breuillard-fujiwara}). In view of $(\ref{abmar})$ we only need to check $(\ref{BW-reduc})$ for $d_{N_{\Omega}}$ in place of $d_\Omega$. So $\ell(S)=\lim \frac{1}{n}\max_{g \in S^n} N_\Omega(\kappa(g))$. Similarly, $d_{N_\Omega}(gx,x)\geq \limsup \frac{1}{n}d_{N_\Omega}(1,x^{-1}g^nx) = N_\Omega(\lambda(g))$, where the last equality follows from the spectral radius formula and continuity of $N_{\Omega}$. So $\lambda_\infty(S)\geq \limsup \frac{1}{n}\max_{g \in S^n} N_\Omega(\lambda(g))$. Now the desired identity follows from Theorem \ref{joint}. 
\end{proof}

We pass to the proof of Theorem \ref{word-balls}. 

\begin{lemma} Suppose $S=S^{-1}$ is a compact neighborhood of $1\in G$. Then $J(S)=O(S)\cap \a^+$.
\end{lemma}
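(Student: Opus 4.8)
The inclusion $J(S)\subseteq O(S)\cap\a^+$ is immediate from the definition of $O(S)$ in \S\ref{positiveboundary}, since $J(S)\subseteq\a^+$ and $J(S)$ is contained in the convex hull of $W\cdot J(S)$. For the reverse inclusion the plan is to identify $J(S)$ explicitly with the unit ball (restricted to $\a^+$) of the Abels--Margulis norm associated to $S$, and then to observe that for a $W$-invariant convex body the operations ``intersect with $\a^+$'' and ``pass to the closed convex hull of the $W$-orbit'' are mutually inverse.

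\emph{Step 1: computing $J(S)$.} Since $S=S^{-1}$ is a compact neighborhood of $1$ and $G$ is connected, $S$ generates $G$, hence a Zariski-dense subgroup, so $J(S)$ is defined by Theorem \ref{joint}. Applying the Abels--Margulis comparison $(\ref{abmar})$ from \S\ref{abmarpar} with $\Omega=S$, we obtain a $W$-invariant norm $N_S$ on $\a$ and a constant $C>0$ such that
$$|d_S(1,g)-N_S(\kappa(g))|\leq C\qquad\text{for all }g\in G,$$
where $d_S(1,g)=\inf\{n\in\N:g\in S^n\}$. Set $B:=\{v\in\a:N_S(v)\leq 1\}$, a convex body (here we use that $N_S$ is an honest norm). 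I claim $J(S)=B\cap\a^+$. Indeed, $\kappa(G)=\a^+$ by the Cartan decomposition, and for $z\in\a^+$ the element $\exp(nz)\in\exp(\a^+)$ has $\kappa(\exp(nz))=nz$; thus $\exp(nz)\in S^n$ whenever $N_S(nz)\leq n-C$, i.e.\ $N_S(z)\leq 1-C/n$, which gives $z\in\tfrac1n\kappa(S^n)$. Conversely, if $z\in\tfrac1n\kappa(S^n)$ then $N_S(\kappa(ng')) \leq d_S(1,g')+C\le n+C$ for the corresponding $g'\in S^n$, so $N_S(z)\leq 1+C/n$. Since $\|z\|$ is uniformly bounded on $\tfrac1n\kappa(S^n)$ by $(\ref{supsup})$, rescaling elements by $(1\pm C/n)$ (which preserves $\a^+$, a cone) shows that $\tfrac1n\kappa(S^n)\to B\cap\a^+$ in Hausdorff metric; by Theorem \ref{joint} the limit is $J(S)$, proving the claim.

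\emph{Step 2: comparing with $O(S)$.} Since $N_S$ is $W$-invariant, $B$ is a $W$-invariant closed convex set. Because $\a^+$ is a fundamental domain for the $W$-action on $\a$, every $v\in B$ has a $W$-translate $v'\in\a^+$, and $v'\in B$ too since $N_S(v')=N_S(v)\le 1$; hence $W\cdot(B\cap\a^+)=B$. Therefore, using Step 1,
$$O(S)=\overline{\operatorname{conv}}\bigl(W\cdot J(S)\bigr)=\overline{\operatorname{conv}}\bigl(W\cdot(B\cap\a^+)\bigr)=\overline{\operatorname{conv}}(B)=B,$$
so that $O(S)\cap\a^+=B\cap\a^+=J(S)$, as required.

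The only delicate point is Step 1, and within it the verification of the Hausdorff convergence of $\tfrac1n\kappa(S^n)$ to $B\cap\a^+$; this is routine once the Abels--Margulis estimate $(\ref{abmar})$ is available, the essential inputs being that $\kappa$ maps $G$ onto $\a^+$ and that $N_S$ is a genuine ($W$-invariant) norm, so that its unit ball is a convex body.
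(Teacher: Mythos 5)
Your proof is correct, but it takes a genuinely different route from the paper's. The paper proves the lemma directly: starting from an interior point $x$ of $J(S)$, it picks an $\R$-regular $g \in S^n$ with $\lambda(g)/n$ close to $x$, takes the maximal split torus $A_g$ containing $g$ together with representatives $g_w \in S^{n_g}$ of its Weyl group $W_g$, forms the commuting elements $h_{k,w} = g_w g^k g_w^{-1} \in A_g$, and by multiplying these together with suitably large exponents realizes (up to small error) any convex combination of the $W$-orbit of $\lambda(g)/n$ as $\lambda(\cdot)/N$ of an element of $S^N$; letting $k_w\to\infty$ and $\eps\to 0$ shows $O(S)\cap\a^+ \subset J(S)$. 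Your argument instead routes everything through the Abels--Margulis comparison $(\ref{abmar})$: you identify $J(S)$ outright with $B\cap\a^+$ where $B$ is the $N_S$-unit ball, and then the $W$-invariance of $N_S$ makes the equality $O(S)\cap\a^+ = J(S)$ purely formal. In effect you prove the paper's subsequent Proposition (that $O(S)$ is the $N_S$-unit ball) first and read the Lemma off it, whereas the paper proves the Lemma by a constructive group-theoretic argument and only afterwards derives the Proposition. Your approach is shorter and cleaner given $(\ref{abmar})$, at the cost of leaning on that external input; the paper's direct proof has the merit of exhibiting explicit elements of $S^n$ whose Jordan projections fill out the Weyl-orbit convex hull, which is in the spirit of the rest of Section \ref{limit-section}. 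One small point worth making precise in your Step 1: the Hausdorff sandwich works because $(1-C/n)(B\cap\a^+) \subset \tfrac1n\kappa(S^n) \subset (1+C/n)(B\cap\a^+)$ and both ends converge to $B\cap\a^+$; that $\a^+$ is a cone (so that scalar dilation preserves it) and that $B$ has nonempty interior (since $N_S$ is an actual norm) are both genuinely needed, and you correctly flag them.
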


\begin{proof}Since $G$ is connected, $S$ generates $G$. By Theorem \ref{body}, $J(S)$ has non empty interior in $\a$. In particular every point $x$ in the interior of $J(S)$ is $\R$-regular in the sense that $\overline{\alpha}_i(x)>0$ for $i=1,\ldots,d_S$. Given an arbitrarily small $\eps>0$ we may thus find $n \in \N$ and $g\in S^n$ such that $\|\lambda(g)/n - x\|<\eps$ and $g$ is $\R$-regular. Let $A_g$ be the unique maximal $\R$-split torus containing $g$ and $W_g$ its Weyl group. For every $w\in W_g$ there is $g_w \in G$ representing $w$. For $k\in \N$ consider the element $h_{k,w}=g_w g^k g_w^{-1}$. Note that, as $S$ generates $G$, there is $n_g \in \N$ such that $g_w \in S^{n_g}$ for all $w \in W_g$.  And $h_{k,w}\in A_g$. Now considering the Jordan projection of products of the form $\prod_{w \in W_g}h_{k_w,w}$ for large integers $k_w$ and passing to the limit as each $k_w\to+\infty$ and as $\eps\to 0$, we see that every point of $\mathfrak{a}^+$ that is in the convex hull of Weyl group orbit of $x$ belongs to $J(S)$. Hence $O(S)\cap \a^+\subset J(S)$.
\end{proof}

\begin{proof}[Proof of Theorem \ref{word-balls}] In view of $(\ref{abmar})$ and the previous lemma the proof follows from the following proposition.
\end{proof}

\begin{proposition} In the setting of Theorem \ref{word-balls}, $O(S)$ is precisely the unit ball of the norm $N_S$ on $\a$.
\end{proposition}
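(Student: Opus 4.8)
The plan is to identify the unit ball $B_{N_S} := \{x \in \a : N_S(x) \le 1\}$ with the outer envelope $O(S)$ by showing each contains the other, and then to invoke the preceding lemma which identifies $J(S) = O(S) \cap \a^+$. Recall that $N_S$ is the $W$-invariant norm associated by Abels--Margulis to the word metric $d_\Omega$ with $\Omega = S$, so $B_{N_S}$ is a $W$-invariant, closed, convex, centrally symmetric body in $\a$, and its intersection with $\a^+$ is $\{x \in \a^+ : N_S(x) \le 1\}$. Since both $O(S)$ and $B_{N_S}$ are $W$-invariant closed convex bodies in $\a$, it suffices to prove $O(S) \cap \a^+ = B_{N_S} \cap \a^+$, i.e. that for $x \in \a^+$ one has $x \in O(S)$ if and only if $N_S(x) \le 1$.

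First I would establish the inclusion $O(S) \cap \a^+ \subset B_{N_S}$. Take $x \in O(S) \cap \a^+ = J(S)$ (by the lemma). By Theorem \ref{joint}, for every $\eps > 0$ there are arbitrarily large $n$ and $g \in S^n$ with $\|\kappa(g)/n - x\| < \eps$; equivalently $g \in \{h : \kappa(h) \in n J(S) + B(0,n\eps)\}$. Using $(\ref{abmar})$, $g \in S^n$ forces $d_{N_S}(1,g) \le n + C$, i.e. $N_S(\kappa(g)) \le n + C$. By continuity and homogeneity of $N_S$, letting $\eps \to 0$ and $n \to \infty$ gives $N_S(x) \le 1$. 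For the reverse inclusion $B_{N_S} \cap \a^+ \subset O(S)$, take $x \in \a^+$ with $N_S(x) = t \le 1$ (the case $t = 0$ being trivial). Then $N_S((1/t)x) = 1$; by definition of $N_S$ and $(\ref{abmar})$ again, the elements $g$ with $\kappa(g) \approx n \cdot (1/t) x$ lie in $S^{n+O(1)}$, hence $(1/t)x \in J(S) \subset O(S)$, and then $x = t \cdot (1/t) x \in O(S)$ since $O(S)$ is convex and contains $0$. This shows $B_{N_S} \cap \a^+ = O(S) \cap \a^+$, and $W$-invariance of both sides upgrades this to $B_{N_S} = O(S)$.

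The one point requiring care — and the main obstacle — is the quantitative matching in both directions: I need that $N_S(y) \le 1$ is equivalent (up to the additive $O(1)$ slack of $(\ref{abmar})$) to "$\exp(ny)$-type elements lie in $S^{n+O(1)}$", and this must be deduced from the Abels--Margulis theorem $(\ref{abmar})$ together with the relation $d_{N_S}(1,g) = N_S(\kappa(g))$ and the fact (from Lemma \ref{Cartan.stability}) that $\kappa$ behaves subadditively. Concretely, one direction uses that if $g \in S^n$ then $N_S(\kappa(g)) \le n + C$; the other uses that if $N_S(\kappa(g)) \le n$ then $g \in S^{n+C}$. Combined with the density of $\{\kappa(g)/n : g \in S^n, n \ge 1\}$ in $J(S)$ and with $J(S) = O(S) \cap \a^+$ from the lemma, taking limits and using homogeneity and continuity of $N_S$ closes the argument. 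I would also remark that this simultaneously yields Theorem \ref{word-balls}, since $g \in S^{n}$ iff (up to $\pm C$ in $n$) $N_S(\kappa(g)) \le n$ iff $\kappa(g) \in n B_{N_S} \cap \a^+ = n(O(S) \cap \a^+) = n J(S)$.
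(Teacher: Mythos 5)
Your proposal is correct and follows essentially the same route as the paper's proof: the forward inclusion uses $(\ref{abmar})$ to bound $N_S(\kappa(g))$ for $g \in S^n$ and passes to the limit via Theorem \ref{joint}, and the reverse inclusion uses the converse bound in $(\ref{abmar})$ together with $W$-invariance of $N_S$ and the identification $J(S)=O(S)\cap\a^+$. The one thing you leave implicit — and the paper makes explicit — is the choice of witness $g_n := \exp(nx)$ for $x\in\a^+$, which satisfies $\kappa(g_n)=nx$ on the nose and hence $N_S(\kappa(g_n))=nN_S(x)\leq n$, so that $g_n\in S^{n+C}$; your renormalization through $x/t$ with $t=N_S(x)$ is harmless but unnecessary once one has this observation, since then $\frac{1}{n+C}\kappa(g_n)\to x$ directly shows $x\in J(S)\subset O(S)$.
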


\begin{proof}  Since $S^n=\{g \in G: d_S(1,g)\leq n\}$ it is clear from $(\ref{abmar})$ and Theorem \ref{joint} that $J(S)$ is contained in the unit ball of $N_S$. Conversely since $N_S$ is $W$-invariant, we only need to check that every $x \in \a^+$ with $N_S(x)\leq1$ lies in $J(S)$. But $g_n=\exp(nx)$ has $\kappa(g_n)=nx$, so $N_S(\kappa(g_n))\leq n$ and hence by $(\ref{abmar})$ $g_n \in S^{n+C}$ for some constant $C \in \N$. Therefore as $n \to +\infty$, the limit points of $\frac{1}{n+C}\kappa(g_n)$ will belong to $J(S)$  by Theorem \ref{joint}. Hence $x \in J(S)$.
\end{proof}

\section{The Lyapunov vector for random products}\label{section5}

\subsection{Lyapunov vector and non-empty interior}

In this subsection, we prove Theorem \ref{interior} from the introduction, which also completes the proof of Theorem \ref{body}.

Apart from Abels-Margulis-Soifer's Theorem \ref{AMS} and additivity properties of $(r,\eps)$-Schottky families (Proposition \ref{loxodromy.implies.Cartan.close.to.Jordan}), the main input of our argument will be the \emph{non-degeneracy} of the Gaussian distribution in the  central limit theorem on linear groups. This non-degeneracy result was first proved with an exponential moment hypothesis for $\SL(d,\mathbb{R})$ by Goldsheid-Guivarc'h \cite{goldsheid-guivarch}, then for linear real semisimple groups by Guivarc'h \cite{guivarch} and finally with a second moment hypothesis on reductive groups by Benoist-Quint \cite{benoist-quint-central, benoist-quint-book}. It reads

\begin{theorem}[\cite{benoist-quint-central},\cite{benoist-quint-book}]\label{CLT}
Let $G$ be as in Theorem \ref{joint} and $\mu$ be a probability measure of finite second order moment on $G$. Suppose that the support of $\mu$ generates a Zariski dense semigroup in $G$. Then
$$
\frac{\kappa(Y_{n})-n\vec{\lambda}_{\mu}}{\sqrt{n}} \underset{n \to \infty}{\longrightarrow} \mathcal{N}(0,\sigma_{\mu}) \quad \text{in distribution}
$$
where $Y_n$ is the random walk at time $n$ and $ \mathcal{N}(0,\sigma_{\mu})$ is a Gaussian distribution on $\a$ centered at the origin and whose support is a subspace $\a_\mu$ containing $\mathfrak{a}_{S}:=\Lie([G,G]) \cap \a$.
\end{theorem}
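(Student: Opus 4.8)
The plan is to reduce the statement to a central limit theorem for an additive cocycle along the random walk, in the spirit of Guivarc'h and of Benoist--Quint. First I would introduce the flag variety $\mathcal{P}=G/P$ of $G$ and the Iwasawa cocycle $\sigma\colon G\times\mathcal{P}\to\a$, which satisfies $\sigma(gh,\xi)=\sigma(g,h\xi)+\sigma(h,\xi)$ and, at the attracting flag $\xi_g^+$ of a loxodromic $g$, satisfies $\sigma(g,\xi_g^+)=\lambda(g)$. Replacing $Y_n$ by the reversed product $g_n\cdots g_1$ (which has the same law) and letting $\xi$ be distributed according to the unique $\mu$-stationary probability measure $\nu$ on $\mathcal{P}$, the pairs $(g_k,\xi_{k-1})$ with $\xi_k:=g_k\cdots g_1\xi$ form a stationary ergodic Markov chain and $\sigma(Y_n,\xi)=\sum_{k=1}^{n}\sigma(g_k,\xi_{k-1})$ is an additive functional of it. I would then use the quantified proximality estimates of Section~\ref{section.preliminaries} — which encode the \emph{regularity} of the walk, i.e.\ the almost sure exponential alignment of directions with the dominant eigendirections in each distinguished representation $\rho_1,\dots,\rho_{d_S}$ — to show that $\kappa(Y_n)-\sigma(Y_n,\xi)$ stays bounded in $L^2$. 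Hence it suffices to prove the CLT for $S_n:=\sigma(Y_n,\xi)$, and the law of large numbers for $\kappa(Y_n)$ then identifies the centering as $\vec{\lambda}_\mu$.

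The next step is the CLT for the cocycle sum $S_n$, an additive functional of the Markov chain $(\xi_k)$ with increment $b(g,\xi)=\sigma(g,\xi)$, which is square-integrable because $\mu$ has a finite second moment and $\mathcal{P}$ is compact. If $\mu$ had a finite exponential moment I would run Le~Page's classical argument: the Markov operator $P$ has a spectral gap on a space of Hölder functions on $\mathcal{P}$, so the Poisson equation $(I-P)\psi=\bar b-\vec{\lambda}_\mu$, with $\bar b(\xi)=\int_G\sigma(g,\xi)\,d\mu(g)$, is solvable, and Gordin's martingale approximation exhibits $S_n-n\vec{\lambda}_\mu$ as a square-integrable martingale plus a bounded error; the martingale CLT then gives convergence in law to $\mathcal{N}(0,\sigma_\mu)$ with $\sigma_\mu=\lim_n\tfrac{1}{n}\mathrm{Cov}(S_n)$. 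With only a second moment there is no spectral gap, and instead I would invoke the general central limit theorem for cocycles over Markov chains of Benoist--Quint, whose hypotheses (uniqueness of $\nu$, square-integrable increments, exponential decorrelation of the distance on $\mathcal{P}$ under $\nu$) all hold here because $S=\mathrm{supp}\,\mu$ generates a Zariski-dense semigroup.

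Finally I would prove that the limit Gaussian is non-degenerate on $\a_S$, i.e.\ $\a_S\subseteq\a_\mu$, where $\a_\mu$ is the linear span of the range of $\sigma_\mu$. Take $\ell\in\a^*$ vanishing on $\a_\mu$, equivalently with $\mathrm{Var}(\ell(S_n))=o(n)$; I must show $\ell|_{\a_S}=0$. The variance bound forces $\ell\circ b$ to be an $L^2(\nu)$-coboundary for the chain: there is $\varphi\in L^2(\mathcal{P},\nu)$ with $\ell(\sigma(g,\xi))-\ell(\vec{\lambda}_\mu)=\varphi(g\xi)-\varphi(\xi)$ for $\mu\otimes\nu$-a.e.\ $(g,\xi)$, and by iteration $\ell(\sigma(\gamma,\xi))-n\,\ell(\vec{\lambda}_\mu)=\varphi(\gamma\xi)-\varphi(\xi)$ for $\nu$-a.e.\ $\xi$ whenever $\gamma\in\mathrm{supp}(\mu^{*n})$. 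For a loxodromic $\gamma$ in the semigroup $\Gamma$ generated by $S$ one has $\gamma^k\xi\to\xi_\gamma^+$ for $\nu$-a.e.\ $\xi$ and $\sigma(\gamma^k,\cdot)=k\lambda(\gamma)+O(1)$; letting $k\to\infty$ (with $\varphi$ bounded along a subsequence) yields $\ell(\lambda(\gamma))=n\,\ell(\vec{\lambda}_\mu)$ for every such $\gamma$, so $\ell$ takes proportional values on the Jordan projections of elements of equal word-length. Combined with the aperiodicity of the Jordan projections of a Zariski-dense subgroup — the closed subgroup of $\a$ generated by $\{\lambda(\gamma):\gamma\in\Gamma\}$ contains $\a_S$, by Benoist (cf.\ the footnote in the Introduction) — this forces $\ell|_{\a_S}=0$, hence $\a_S\subseteq\a_\mu$.

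The hardest part will be the CLT for the cocycle sum under only a second-moment hypothesis: absent a spectral gap, both the $L^2$-control of the discrepancy $\kappa(Y_n)-\sigma(Y_n,\xi)$ and the martingale decomposition must be carried out by the more robust Markov-chain machinery rather than the classical Le~Page--Guivarc'h route. A secondary difficulty is the last step, where upgrading the $\mu\otimes\nu$-almost-everywhere coboundary identity to a statement about individual loxodromic elements needs either regularity of $\varphi$ or a careful limiting argument along $\mathrm{supp}\,\nu$.
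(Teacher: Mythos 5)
The paper does not prove Theorem~\ref{CLT}: it is stated as a cited result from \cite{benoist-quint-central} and \cite[Ch.~13]{benoist-quint-book}, and is used as a black box (the Introduction explicitly attributes both the central limit theorem and the non-degeneracy to those references). So there is no in-paper proof to compare against; what you have written is a reconstruction of the Benoist--Quint argument.

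As such a reconstruction your outline is on the right track: the reduction of $\kappa(Y_n)$ to the Iwasawa cocycle $\sigma(Y_n,\xi)$ over the flag variety, the martingale approximation for the additive functional of the Markov chain $(\xi_k)$, and the non-degeneracy via showing that a degenerate direction $\ell$ forces $\ell\circ\sigma$ to be a coboundary and hence $\ell$ to annihilate differences of Jordan projections, contradicting the density of the group generated by $\lambda(\Gamma)$ in $\a_S$ --- these are indeed the pillars of the cited proof. Two places are softer than what the reference actually establishes and would need repair if you wanted a complete proof. First, you assert that $\kappa(Y_n)-\sigma(Y_n,\xi)$ ``stays bounded in $L^2$''; Benoist--Quint only obtain convergence in law (tightness) of this difference, and that is exactly what is needed to transfer the CLT from the cocycle to the Cartan projection --- an $L^2$-bound is not available under a mere second-moment hypothesis and is not claimed. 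Second, in the non-degeneracy step the phrase ``letting $k\to\infty$ with $\varphi$ bounded along a subsequence'' is not justified: $\varphi$ is only in $L^2(\nu)$, and there is no reason a measurable function should converge, or even be finite, along the orbit $\gamma^k\xi\to\xi_\gamma^+$. The actual argument avoids pointwise evaluation at attracting fixed points and proceeds via a Fubini/measure-theoretic limiting procedure together with the contraction estimates. Neither issue changes the architecture, but together with the Markov-chain CLT without spectral gap they constitute the genuine technical content of the result you are citing.
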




The following lemma gives a criterion for a point to belong to the relative interior of the joint spectrum. As before $S$ is a compact subset of the connected reductive group $G$ and generates a Zariski-dense subgroup. 

\begin{lemma}\label{interior.lemma}
Let $\mathfrak{b}$ be a linear subspace of $\mathfrak{a}$ and let $x \in J(S)$. Assume that for every linear form $\ell$ on $\a$ with $\mathfrak{b} \not\subset \ker(\ell)$ and for every $n \geq 1$, there exists $y_n \in \kappa(S^{n})$ such that $\lim_{n \to +\infty} \ell(y_n- nx) =+\infty$. Then, $x$ belongs to the interior of $J(S)\cap (\mathfrak{b}+x)$ in the affine subspace $\mathfrak{b}+x$. In particular, when $\mathfrak{b}=\mathfrak{a}$, $x \in \inte(J(S))$.
\end{lemma}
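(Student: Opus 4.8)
First I would observe that the statement is really a convexity statement dressed in dynamical language: $J(S)$ is a closed convex set (by Theorem \ref{body}, proved earlier), so a point $x\in J(S)$ fails to lie in the relative interior of $J(S)\cap(\mathfrak b+x)$ precisely when there is a non-trivial supporting hyperplane of $J(S)\cap(\mathfrak b+x)$ through $x$ inside the affine subspace $\mathfrak b+x$; equivalently, there is a linear form $\ell$ on $\a$ with $\mathfrak b\not\subset\ker\ell$ such that $\ell(y)\leq\ell(x)$ for all $y\in J(S)$. So the plan is to argue by contraposition: assume $x$ is not in this relative interior, produce such an $\ell$, and then derive that for this $\ell$ the hypothesis fails, i.e. $\ell(y_n-nx)$ is bounded above for all $y_n\in\kappa(S^n)$ and all $n$.

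The key input is the Hausdorff convergence $\tfrac1n\kappa(S^n)\to J(S)$ from Theorem \ref{joint}. Fix $\ell$ as above with $\ell(y)\leq\ell(x)$ for all $y\in J(S)$, and let $M:=\max_{y\in J(S)}\ell(y)=\ell(x)$. Given $\delta>0$, Hausdorff convergence gives $n_0$ such that for $n\geq n_0$ every point of $\tfrac1n\kappa(S^n)$ lies within $\delta$ of $J(S)$; hence for every $g\in S^n$ we get $\ell(\tfrac1n\kappa(g))\leq M+\|\ell\|\,\delta$, i.e. $\ell(\kappa(g)-nx)\leq n\|\ell\|\,\delta$. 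This does not immediately give a uniform bound, so I would upgrade it using subadditivity: the sequence $m_n:=\max_{g\in S^n}\ell(\kappa(g))$ is subadditive (using $\kappa(gh)\leq\kappa(g)+\kappa(h)$ in the Kostant order together with the fact that $\ell$ need not be monotone — here I would instead invoke Lemma \ref{Cartan.stability}, which bounds $\|\kappa(gh)-\kappa(g)\|$ and gives near-subadditivity with a bounded defect, or alternatively pass to the distinguished representations as in \S\ref{rep}). From near-subadditivity, $m_n/n\to M$ monotonically up to a bounded error, so in fact $m_n\leq nM+C$ for a constant $C$ independent of $n$. This yields $\ell(\kappa(g)-nx)\leq C$ for all $g\in S^n$ and all $n$, i.e. $\limsup_n\ell(y_n-nx)<+\infty$ for every choice $y_n\in\kappa(S^n)$, contradicting the hypothesis. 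The case $\mathfrak b=\a$ is the special case where the only linear forms excluded are $\ell=0$, so the hypothesis forces $x$ into $\inte J(S)$.

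The main obstacle I anticipate is the near-subadditivity step: $\ell$ is an arbitrary linear form, not a dominant weight, so $g\mapsto\ell(\kappa(g))$ is neither subadditive nor monotone in any obvious sense, and one must be careful that the ``bounded defect'' in Lemma \ref{Cartan.stability} is genuinely additive-in-the-number-of-factors rather than multiplicative. The clean way around this is to write $\ell$ as a real linear combination $\ell=\sum_i c_i\,\overline\chi_{\rho_i}$ of the highest weights of the distinguished representations $\rho_1,\dots,\rho_d$ (a basis of $\Hom(\a,\R)$ by \S\ref{rep}), use $(\ref{param})$ to replace each $\overline\chi_{\rho_i}(\kappa(g))$ by $\log\|\rho_i(g)\|$, and exploit genuine submultiplicativity of operator norms for those $i$ with $c_i>0$ while handling the $c_i<0$ terms via $\|\rho_i(g)^{-1}\|\leq\|\rho_i(g)\|^{d-1}/|\det\rho_i(g)|$ type estimates, which only contribute lower-order corrections. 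Once the defect is controlled additively in $n$ only through the Fekete-type argument, the boundedness of $\ell(\kappa(g)-nx)$ follows and the proof closes.
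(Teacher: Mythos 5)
The reduction to ``there is a linear form $\ell$ with $\mathfrak b\not\subset\ker\ell$ and $\ell(y)\leq\ell(x)$ for all $y\in J(S)$'' matches the paper's own reduction, so no quarrel there. The gap is in how you then try to refute the hypothesis for this $\ell$: the inequality you want out of (near-)subadditivity goes the \emph{wrong way}. Set $m_n:=\sup_{g\in S^n}\ell(\kappa(g))$ and $M:=\ell(x)=\max_{J(S)}\ell$. Even granting genuine subadditivity of $m_n$ (which, as you note, already requires $\ell$ to be a nonnegative combination of the $\overline\chi_{\rho_i}$ --- for a general supporting form this needs extra justification), Fekete's lemma gives $\lim_n m_n/n=\inf_k m_k/k=M$, hence $m_n\geq nM$ for every $n$. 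That is a \emph{lower} bound on $m_n-nM$, not the upper bound $m_n\leq nM+C$ you assert; and $m_n-nM$ is in general unbounded. Already for a single Jordan block $a$ one has $\log\|a^n\|-n\log\lambda_1(a)\sim(k-1)\log n\to\infty$, and for a genuine compact family this is exactly the question of existence of an extremal norm, which can fail. Hausdorff convergence of $\tfrac1n\kappa(S^n)$ only gives an $o(n)$ error, which does not preclude $\ell(y_n-nx)\to+\infty$; so the contraposition never closes.

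The paper avoids any rate estimate by arguing forwards and, crucially, trading Cartan data for Jordan data. Given the sequence $y_n=\kappa(g_n)$ furnished by the hypothesis, Theorem \ref{AMS} produces $f_n$ in a fixed finite set $F$ with $g_nf_n$ $(r,\eps)$-proximal in $G$; Lemmas \ref{Cartan.stability} and \ref{loxodromy.implies.Cartan.close.to.Jordan} then give $\|\lambda(g_nf_n)-\kappa(g_n)\|\leq C$ uniformly in $n$, and $z_n:=\lambda(g_nf_n)/m$ (with $g_nf_n\in S^m$, $m=n+O(1)$) lies \emph{exactly} in $J(S)$, because $\tfrac1m\lambda(S^m)\subset J(S)$ is an honest inclusion (by the spectral radius formula), not merely an $o(1)$ approximation. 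Hence $\ell(z_n)\geq\ell(x)+\tfrac1n\bigl(\ell(y_n-nx)-O(1)\bigr)>\ell(x)$ for $n$ large, which together with convexity of $J(S)$ (Theorem \ref{body}) gives the conclusion. The ingredient your proposal is missing is precisely this passage through the Jordan projection and the exact inclusion $\tfrac1m\lambda(S^m)\subset J(S)$, which replaces an uncontrolled $o(n)$ error by a uniformly bounded one.
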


\begin{proof}
Since $J(S)$ is convex by the first part of Theorem \ref{body} we only need to show that for every linear form $\ell$ with  $\mathfrak{b} \not\subset \ker(\ell)$, there exists $z\in J(S)$, with $\ell(z)>\ell(x)$. Say $y_n=\kappa(g_n)$ for some $g_n \in S^n$. By Theorem \ref{AMS} there are $r\geq \eps>0$ and a finite set $F \subset \cup_n S^n$ such that for all $n$ there is $f_n \in F$ such that $g_nf_n$ is $(r,\eps)$-proximal in $G$. Set $z_n=\lambda(g_nf_n)/m$, where $m$ is such that $g_nf_n \in S^m$. Note that $m-n$ is bounded. Then clearly $z_n \in J(S)$ and Lemmas \ref{Cartan.stability}, \ref{loxodromy.implies.Cartan.close.to.Jordan} guarantee that $\|nz_n - y_n\|$ remains bounded, say by $C>0$. Then
$$\ell(z_n) \geq \frac{\ell(y_n) - C\|\ell\|}{n} = \ell(x) +  \frac{1}{n}(\ell(y_n - nx) - C\|\ell\|),$$
which becomes $>\ell(x)$ when $n$ is large enough. This ends the proof.
\end{proof}

In the next lemma, we observe that Theorem \ref{CLT} implies that the criterion of the previous lemma is satisfied for the subspace $\b=\mathfrak{a}_\mu$.

\begin{lemma}\label{CLT.lemma}
Let $c>0$ and $\ell$ be a linear form on $\a$ such that $\mathfrak{a}_{\mu} \not\subset \ker(\ell)$. Then for every large enough $n$, there is $y_n \in \kappa(S^n)$ such that $\ell(y_n-n\vec{\lambda}_{\mu}) \geq c \sqrt{n}$.
\end{lemma}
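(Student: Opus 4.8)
The plan is to extract the statement directly from the central limit theorem (Theorem \ref{CLT}) by using that a Gaussian whose support is $\a_\mu$ puts positive mass on any half-space of $\a_\mu$ cut out by a linear functional that does not vanish on $\a_\mu$. Concretely, fix $c>0$ and a linear form $\ell$ with $\a_\mu\not\subset\ker(\ell)$, so $\ell|_{\a_\mu}\ne 0$. Let $Z\sim\mathcal N(0,\sigma_\mu)$ be the limiting Gaussian on $\a$; since its support is $\a_\mu$ and $\ell$ restricted to $\a_\mu$ is a non-zero linear form, the real random variable $\ell(Z)$ is a non-degenerate one-dimensional Gaussian, hence $\P(\ell(Z) > t) > 0$ for every $t\in\R$. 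In particular there is $p_0>0$ with $\P(\ell(Z) > 2c) = p_0 > 0$ (using $>2c$ rather than $>c$ to leave room for the portmanteau estimate).

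Next I would transfer this to the random walk. By Theorem \ref{CLT}, $\frac{\kappa(Y_n)-n\vec\lambda_\mu}{\sqrt n}\to Z$ in distribution, so by the portmanteau theorem applied to the open set $U=\{w\in\a:\ell(w)>2c\}$ we get $\liminf_{n\to\infty}\P\!\left(\ell(\kappa(Y_n)-n\vec\lambda_\mu) > 2c\sqrt n\right)\ge \P(\ell(Z) > 2c)=p_0>0$. Hence for all sufficiently large $n$ this probability is at least $p_0/2 > 0$; in particular the event is non-empty, i.e. there is at least one realization of $Y_n$ — which is a product of $n$ elements of $\mathrm{Supp}(\mu)=S$, hence an element of $S^n$ — for which $\ell(\kappa(Y_n)-n\vec\lambda_\mu) > 2c\sqrt n \ge c\sqrt n$. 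Setting $y_n:=\kappa(Y_n)\in\kappa(S^n)$ for such a realization gives $\ell(y_n - n\vec\lambda_\mu)\ge c\sqrt n$, as required.

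I do not anticipate a serious obstacle here: the only subtlety is making sure the non-degeneracy conclusion of Theorem \ref{CLT} is used correctly, namely that the \emph{support} of the limiting Gaussian is exactly the subspace $\a_\mu$, so that a linear form not vanishing on $\a_\mu$ yields a genuinely non-degenerate (positive-variance) scalar Gaussian and therefore assigns positive probability to every half-line $\{\ell > t\}$. One should also note that $S$ is compact so $Y_n$ indeed ranges over $S^n$ and the finite-second-moment hypothesis of Theorem \ref{CLT} is automatic. With Lemmas \ref{interior.lemma} and \ref{CLT.lemma} in hand, Theorem \ref{interior} follows by applying Lemma \ref{interior.lemma} with $\mathfrak b=\a_\mu\supseteq\a_S$ and $x=\vec\lambda_\mu$, since $\ell(y_n-n\vec\lambda_\mu)\ge c\sqrt n\to+\infty$, placing $\vec\lambda_\mu$ in the interior of $J(S)\cap(\a_\mu+\vec\lambda_\mu)$ inside the affine subspace $\a_\mu+\vec\lambda_\mu$, and hence in the relative interior of $J(S)$ whenever the affine hull of $J(S)$ is $\a_\mu+\vec\lambda_\mu$ (which, by Theorem \ref{body}, is all of $\a$ under the stated coset hypothesis), and in any case in the interior of the Benoist cone.
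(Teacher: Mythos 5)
Your proof is correct and follows essentially the same route as the paper: invoke the non-degeneracy of the limiting Gaussian from Theorem \ref{CLT} to see that $\ell$ pushed forward gives a non-degenerate scalar Gaussian with positive mass above any threshold, then transfer this positive probability to the random walk via convergence in distribution, and conclude the event is non-empty. The only cosmetic difference is that you use the open-set form of the portmanteau theorem with a harmless buffer at $2c$, whereas the paper observes directly that the boundary $\{\ell(y)=c\}$ is $\nu_\mu$-null (again by non-degeneracy) so the distributional limit of the probabilities exists and is positive; both are standard and equivalent here.
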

\begin{proof}
Let $\nu_{\mu}$ denote the limit Gaussian distribution on $\a_\mu$ given by Theorem \ref{CLT}.  Since $\nu_\mu$ is non-degenerate and $\ker(\ell) \not\supset \mathfrak{a}_{\mu}$  for any $c>0$, $\nu_{\mu}( \{y \in \mathfrak{a}, \ell(y)=c\})=0$. Therefore by Theorem \ref{CLT}
\begin{align}
\nu_\mu(\{y \in \mathfrak{a}, \ell(y)\geq c\}) =\lim_{n \to \infty}\mathbb{P}(\ell(\kappa(Y_{n} - n\vec{\lambda}_\nu))\geq c \sqrt{n}).
\end{align}
Since the left hand side is strictly positive, the result follows.
\end{proof}

We now complete the proof of Theorem \ref{interior}. We are going to show that the affine span of $J(S)$ is $\vec{\lambda}_{\mu} + \a_\mu$. The previous lemma shows that $\vec{\lambda}_{\mu}$ belongs to the relative interior of $J(S)\cap (\vec{\lambda}_{\mu} + \a_\mu)$ in the affine space $\vec{\lambda}_{\mu} + \a_\mu$. In particular the affine span of $J(S)$ contains $\vec{\lambda}_{\mu} + \a_\mu$. So it only remains to show the opposite inclusion. But the key non-degeneracy claim in Theorem \ref{CLT} says that $\a_S \subset \a_\mu$. Therefore it suffices to look at the projected random walk on the abelian quotient $G/[G,G]$ and everything boils down to the following easy observation:  if $\Sigma$ is a compact subset of $\R^k$ and $\nu$ a probability measure on $\R^k$ with $\Supp(\nu)=\Sigma$, then the vector subspace supporting the affine span of $\Sigma$ in $\R^k$ coincides with the support of the limiting Gaussian distribution in the central limit theorem for sums of i.i.d.\ random variables with common law $\nu$.

This establishes that the affine span of $J(S)$ in $\a$ is indeed $\vec{\lambda}_{\mu} + \a_\mu$. The claim about the Benoist cone is clear. This ends the proof of Theorem \ref{interior}. Moreover note that by the previous observation, since $\a_S \subset \a_\mu$, we have $\a_\mu=\a$ unless $S$ is contained in a coset of a proper closed subgroup of $G$ containing $[G,G]$. This ends the proof of the remaining part of Theorem \ref{body}.


We end this subsection by recording the following consequence of the above:

\begin{proposition}\label{prop.lyapunov.in.Benoist}
Let $G$ be a connected real semisimple linear algebraic group and let $\mu$ be a probability measure on $G$. Suppose that the support of $\mu$ generates a Zariski dense semigroup $\Gamma$ of $G$ with Benoist cone $\mathcal{BC}(\Gamma)$ and that $\mu$ has a finite second order moment. Then, we have $\vec{\lambda}_{\mu} \in \inte(\mathcal{BC}(\Gamma))$. 
\end{proposition}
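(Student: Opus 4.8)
The plan is to reduce the proposition to the statement already essentially contained in the proof of Theorem~\ref{interior}, but now without the compactness hypothesis on $S = \Supp(\mu)$. First I would observe that in the semisimple case $\a = \a_S$, so the Benoist cone $\mathcal{BC}(\Gamma)$ is the closed cone spanned by $J(S)$ (when $S$ is compact) and, more generally, by the Jordan projections $\lambda(\gamma)$, $\gamma \in \Gamma$; its interior in $\a^+$ is what we must reach. The central limit theorem of Benoist--Quint (Theorem~\ref{CLT}) applies under the finite second moment hypothesis, and its non-degeneracy conclusion gives $\a_\mu \supseteq \a_S = \a$, i.e.\ the limiting Gaussian $\mathcal{N}(0,\sigma_\mu)$ on $\a$ is fully non-degenerate.

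Next I would run the argument of Lemma~\ref{interior.lemma} and Lemma~\ref{CLT.lemma} with the cone $\mathcal{BC}(\Gamma)$ in place of $J(S)$. Fix a supporting linear form $\ell$ of $\mathcal{BC}(\Gamma)$ at $\vec\lambda_\mu$; we must produce $z \in \mathcal{BC}(\Gamma)$ with $\ell(z) > \ell(\vec\lambda_\mu)$, which (together with convexity of $\mathcal{BC}(\Gamma)$, due to Benoist) forces $\vec\lambda_\mu$ into the interior. By Theorem~\ref{CLT} and non-degeneracy, $\mathbb{P}(\ell(\kappa(Y_n) - n\vec\lambda_\mu) \geq c\sqrt n) \to \nu_\mu(\{\ell \geq c\}) > 0$ for any $c>0$, so for each large $n$ there is $g_n$ in the semigroup generated by some finite subset of the support, with $\ell(\kappa(g_n) - n\vec\lambda_\mu) \geq c\sqrt n$; here one uses that any such event of positive probability is witnessed by an actual word in $\Supp(\mu)$. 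The point $g_n$ need not lie in a fixed compact $S^n$, but this does not matter: we only need $g_n \in \Gamma$ together with a control on $\|\kappa(g_n)\|$ (which is $O(n\|\vec\lambda_\mu\| + \sqrt n \cdot\text{const})$ on the relevant event, or can simply be taken from the a.s.\ sublinear growth of $\kappa(Y_n)$). Then apply Theorem~\ref{AMS} (Abels--Margulis--Soifer) to $\Gamma$, which is Zariski-dense in $G$, to get a finite set $F \subset \Gamma$ and $f_n \in F$ with $g_n f_n$ being $(r,\eps)$-$G$-proximal; by Lemma~\ref{loxodromy.implies.Cartan.close.to.Jordan} and Lemma~\ref{Cartan.stability}, $\|\lambda(g_n f_n) - \kappa(g_n)\|$ is bounded uniformly in $n$. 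Since $\lambda(g_n f_n)/1$ generates a ray in $\mathcal{BC}(\Gamma)$, the point $z_n := \lambda(g_n f_n)/n \in \mathcal{BC}(\Gamma)$ satisfies $\ell(z_n) \geq \ell(\vec\lambda_\mu) + \tfrac1n(\ell(\kappa(g_n) - n\vec\lambda_\mu) - C\|\ell\|) \geq \ell(\vec\lambda_\mu) + \tfrac{c\sqrt n - C\|\ell\|}{n} > \ell(\vec\lambda_\mu)$ for $n$ large. This produces the desired $z$.

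Finally I would assemble these observations: since every supporting hyperplane of the convex set $\mathcal{BC}(\Gamma)$ at $\vec\lambda_\mu$ can be strictly crossed by a point of $\mathcal{BC}(\Gamma)$, the point $\vec\lambda_\mu$ cannot lie on the boundary, hence $\vec\lambda_\mu \in \inte(\mathcal{BC}(\Gamma))$. (One should note $\vec\lambda_\mu \in \mathcal{BC}(\Gamma)$ to begin with, which follows from the law of large numbers $\tfrac1n\kappa(Y_n) \to \vec\lambda_\mu$ a.s.\ combined with the same $\kappa$-to-$\lambda$ comparison via Theorem~\ref{AMS}; this is exactly where the finite second moment, rather than mere finite first moment, is not yet needed, but the CLT step genuinely requires it.)

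The main obstacle I anticipate is the bookkeeping around dropping compactness of $S$: one must make sure that the element $g_n$ extracted from the positive-probability CLT event is a genuine element of $\Gamma$ with a Cartan projection bounded appropriately (so that the additive error terms from Lemmas~\ref{Cartan.stability} and~\ref{loxodromy.implies.Cartan.close.to.Jordan} are indeed $O(1)$ after dividing by $n$), and that Theorem~\ref{AMS} is applied to $\Gamma$ itself rather than to any compact generating set. A clean way to handle this is to first replace $\mu$ by its restriction-renormalization to a large compact set $K$ with $\mu(K)$ close to $1$: the Lyapunov vector and the limiting covariance vary continuously enough (by Benoist--Quint's continuity statements, or by a direct truncation estimate using the second moment) that a strict inequality $\ell(z) > \ell(\vec\lambda_\mu)$ obtained for the truncated measure persists, and for the truncated measure $S$ is compact so the argument of Theorem~\ref{interior} applies verbatim with $\mathcal{BC}(\Gamma_K) \subseteq \mathcal{BC}(\Gamma)$. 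Either route works; the truncation route is probably the shortest to write.
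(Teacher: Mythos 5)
Your direct route is correct and is exactly the argument the paper intends when it says the unbounded case follows \emph{mutatis mutandis} from Lemmata~\ref{interior.lemma} and~\ref{CLT.lemma}. The bookkeeping worry you flag---that one would need to control $\|\kappa(g_n)\|$ so that the additive errors from Lemmas~\ref{Cartan.stability} and~\ref{loxodromy.implies.Cartan.close.to.Jordan} stay $O(1)$---is in fact a non-issue: those errors are bounded by $C_r + C_G \max_{f\in F}\|\kappa(f)\|$, which depends only on the fixed finite Abels--Margulis--Soifer set $F$ and on the proximality parameter $r$, not on $g_n$ at all. All that is needed of $g_n$ is that it lies in $\Gamma$, which positivity of the CLT probability delivers; no bound on $\kappa(g_n)$ is required. (Your closing remark about separately checking $\vec{\lambda}_\mu\in\mathcal{BC}(\Gamma)$ is also superfluous: once one knows, for every non-zero $\ell$, that some $z\in\mathcal{BC}(\Gamma)$ satisfies $\ell(z)>\ell(\vec{\lambda}_\mu)$, then $\vec{\lambda}_\mu$ can neither be strictly separated from the closed convex set $\mathcal{BC}(\Gamma)$ nor admit a supporting hyperplane there, so $\vec{\lambda}_\mu\in\inte(\mathcal{BC}(\Gamma))$ follows directly.)

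The truncation alternative, by contrast, does not close the argument as you describe it. Establishing $\vec{\lambda}_{\mu_K}\in\inte(\mathcal{BC}(\Gamma_K))\subset\inte(\mathcal{BC}(\Gamma))$ for each large compact $K$ and $\vec{\lambda}_{\mu_K}\to\vec{\lambda}_\mu$ does not by itself give $\vec{\lambda}_\mu\in\inte(\mathcal{BC}(\Gamma))$, because a sequence of interior points can converge to a boundary point. To make it work you would need either a lower bound, uniform in $K$, on the distance from $\vec{\lambda}_{\mu_K}$ to $\partial\mathcal{BC}(\Gamma)$, or a crossing margin $\ell(z_K)-\ell(\vec{\lambda}_{\mu_K})\geq\delta>0$ that survives sending $K\to\infty$; the margin the CLT yields for the truncated law depends on the covariance of $\mu_K$ and on the Abels--Margulis--Soifer constants for the truncated semigroup, and no uniformity in $K$ is apparent. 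The direct route is thus both the safe one and the shortest to write, and it is the one the paper follows.
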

Indeed, if the support of $\mu$ is bounded, then Theorem \ref{interior} gives a more precise result via Proposition \ref{prop.js.generates.benoist}. If the support of $\mu$ is not bounded, the proof follows mutatis mutandis using the same argument as in Lemmata \ref{interior.lemma} and \ref{CLT.lemma}. We omit the details to avoid repetitions.

\subsection{Continuity of Lyapunov exponents in case of domination}\label{subsec.if.dom.then.lyap.cts}

Continuity (and discontinuity) properties of Lyapunov exponents have    long been studied starting, in the context of linear cocycles, with Ruelle \cite{ruelle.analytic}, Furstenberg-Kifer \cite{furstenberg-kifer}, Le Page \cite{LePage-regularite}, Peres \cite{Peres,peres2}. More recent works in this direction include articles \cite{bochi-viana, bocker-viana} and surveys \cite{bonatti-diaz-viana, duarte-klein, viana-lectures, viana-survey}. In particular, the continuity of Lyapunov exponents under a  domination condition, see Proposition \ref{prop.lyapunov.continuous} below, is well-known for $G=\GL_d(\mathbb{R})$ or $\SL_d(\mathbb{R})$. Its classical proof uses the existence of continuous dominated splittings and the consequent fact that under domination, one can express the Lyapunov vector of a measure as a suitable integral with respect to this measure (see for example \cite[Section 12.1]{bonatti-diaz-viana}, \cite[Corollary 2.4, $(1.7)$]{bochi-rams}). Below, we give a more direct proof of this continuity for general $G$ under a $G$-domination assumption, using properties of $(r,\eps)$-Schottky families. This point of view will be helpful in the proof of Theorem \ref{lyapspec} below.

Let $G$ be a connected linear reductive group and let $S$ be a compact subset of $G$. Denote by $\mathcal{P} (S^{\mathbb{N}})$ the set of shift-invariant probability measures on the compact 
$S^\mathbb{N}$. The set $\mathcal{P} (S^{\mathbb{N}})$ is a compact convex subset of the Banach space of 
finite signed measures on $S^\mathbb{N}$. Let $\mathcal{P}_{erg}(S^\mathbb{N})$ be the set of ergodic probability measures on $S^\mathbb{N}$, i.e. the 
set of extremal points of the convex $\mathcal{P}(S^{\mathbb{N}})$. Recall also that the set $\mathcal{P}_{erg}(S^\mathbb{N})$ is a dense subset of $\mathcal{P}(S^{\mathbb{N}})$ and for each $\beta \in \mathcal{P}_{erg}(S^\mathbb{N})$, there corresponds a Lyapunov vector $\vec{\lambda}(\beta) \in \mathfrak{a}^+$ defined by Kingman's subadditive ergodic theorem for $\beta$-a.s. $b=(b_1,\ldots,) \in S^{\mathbb{N}}$ as the limit of $\frac{1}{n} \kappa(b_1.\ldots.b_n)$.

\begin{proposition}\label{prop.lyapunov.continuous}
Let $S$ be a compact set in $G$ satisfying the full $G$-domination condition (see Definition \ref{Gdom}).  The map $\phi: \mathcal{P}_{erg}(S^\mathbb{N}) \to \mathfrak{a}^+$ defined by $\beta \mapsto \vec{\lambda}(\beta)$ is continuous. 
\end{proposition}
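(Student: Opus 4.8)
The plan is to realize $\phi$ as a uniform limit, over $\mathcal{P}_{erg}(S^{\mathbb{N}})$, of manifestly continuous functions. For $m\in\mathbb{N}$ I will set
\[
\phi_m(\beta):=\frac{1}{m}\int_{S^{\mathbb{N}}}\kappa(b_1\cdots b_m)\,d\beta(b).
\]
Since $b\mapsto\kappa(b_1\cdots b_m)$ is continuous and bounded on the compact space $S^{\mathbb{N}}$, each $\phi_m$ is continuous on $\mathcal{P}(S^{\mathbb{N}})$ for the weak-$*$ topology, hence on $\mathcal{P}_{erg}(S^{\mathbb{N}})$. It then suffices to prove the uniform estimate
\[
\sup_{\beta\in\mathcal{P}_{erg}(S^{\mathbb{N}})}\bigl\|\phi(\beta)-\phi_m(\beta)\bigr\|\longrightarrow 0\quad\text{as }m\to\infty,
\]
and this is the only place the $G$-domination hypothesis will be used.

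To get this, I would first apply the $G$-version of Proposition \ref{prop.domination.and.schottky} (recorded after Definition \ref{Gdom}) to fix $r>0$, some $\eps\in(0,r)$, and $m_0\in\mathbb{N}$ such that $E_m:=\bigcup_{n\geq m}S^n$ is an $(r,\eps)$-Schottky family in $G$ for every $m\geq m_0$ (note that $E_m\subseteq E_{m_0}$ and a subset of a Schottky family is a Schottky family); crucially the constants $C_r,C_{2r}$ furnished by Lemma \ref{loxodromy.implies.Cartan.close.to.Jordan} depend only on $G$ and $r$, not on $m$ or $\eps$. Then, for $m\geq m_0$, $\ell\geq 1$, $N=m\ell$ and $b\in S^{\mathbb{N}}$, decompose $b_1\cdots b_N=h_1h_2\cdots h_\ell$ into blocks $h_j:=b_{(j-1)m+1}\cdots b_{jm}\in S^m\subset E_m$. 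Proposition \ref{Best} (taken with all exponents equal to $1$; the cyclic transversality it requires is exactly condition (b) in the definition of a Schottky family) shows that $b_1\cdots b_N$ is $(2r,2\eps)$-proximal in $G$ with $\|\lambda(b_1\cdots b_N)-\sum_{j=1}^{\ell}\lambda(h_j)\|\leq C_r\ell$, and combining this with Lemma \ref{loxodromy.implies.Cartan.close.to.Jordan} applied to $b_1\cdots b_N$ and to each $h_j$ gives
\[
\Bigl\|\kappa(b_1\cdots b_N)-\sum_{j=1}^{\ell}\kappa(h_j)\Bigr\|\leq 2C_r\ell+C_{2r}.
\]
Writing $T$ for the shift on $S^{\mathbb{N}}$ and $f(b):=\tfrac{1}{m}\kappa(b_1\cdots b_m)$, so that $\kappa(h_j)=m\,f(T^{(j-1)m}b)$, and dividing by $N$, this becomes, uniformly in $b$,
\[
\Bigl\|\tfrac{1}{N}\kappa(b_1\cdots b_N)-\tfrac{1}{\ell}\sum_{j=0}^{\ell-1}f(T^{jm}b)\Bigr\|\leq\frac{2C_r}{m}+\frac{C_{2r}}{m\ell}.
\]

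The last step is to integrate this inequality against an arbitrary $\beta\in\mathcal{P}_{erg}(S^{\mathbb{N}})$ and let $\ell\to\infty$. For $\beta$-a.e.\ $b$ one has $\tfrac1N\kappa(b_1\cdots b_N)\to\vec{\lambda}(\beta)=\phi(\beta)$ by Kingman's subadditive ergodic theorem (using ergodicity of $\beta$), and the integrands are uniformly bounded by $C_G M_S$ via Lemma \ref{Cartan.stability}, so bounded convergence yields $\int\tfrac1N\kappa(b_1\cdots b_N)\,d\beta\to\phi(\beta)$. On the other hand, $T$-invariance of $\beta$ gives $\int f(T^{jm}b)\,d\beta=\int f\,d\beta=\phi_m(\beta)$ for every $j$, so the Birkhoff average over $j$ integrates to $\phi_m(\beta)$ independently of $\ell$. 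Passing to the limit therefore gives $\|\phi(\beta)-\phi_m(\beta)\|\leq 2C_r/m$ for every $\beta\in\mathcal{P}_{erg}(S^{\mathbb{N}})$, which is the desired uniform bound since $C_r$ is fixed; hence $\phi$ is continuous.

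I expect the only real obstacle to be precisely this uniformity in $\beta$: the convergence $\phi_m(\beta)\to\phi(\beta)$ is pointwise immediate from Kingman's theorem but fails in general, and the whole function of the domination/Schottky structure is to turn the Cartan cocycle along a long word into the sum of its block contributions up to an additive error of size $O(\ell)$ over $\ell$ blocks, which is what produces the measure-independent rate $O(1/m)$. A minor technical point to watch is that the constant $C_r$ must not deteriorate as the block length $m$ grows; this is ensured because in Proposition \ref{prop.domination.and.schottky} the parameter $r$ is chosen once and for all, independently of $\eps$.
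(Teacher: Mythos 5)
Your proof is correct, and it takes a genuinely cleaner route than the one in the paper, while relying on the same two core inputs (the Schottky-family structure from Proposition~\ref{prop.domination.and.schottky} under $G$-domination, the additivity estimate of Proposition~\ref{Best}, and the Cartan--Jordan comparison of Lemma~\ref{loxodromy.implies.Cartan.close.to.Jordan}). The paper fixes two nearby ergodic measures $\beta_1,\beta_2$, partitions the block space $S^{n_0}$ into finitely many small cells $B_1,\dots,B_{k_\eta}$ with $\beta_1(\partial B_j)=0$, and runs a Birkhoff-frequency comparison over the cells before plugging into Proposition~\ref{Best}; this is a finite-state approximation argument and introduces an extra error term $\rho k_\eta M$ that must be absorbed by shrinking the distance between the measures. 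You instead exhibit $\phi$ as a uniform limit of the manifestly weak-$*$ continuous functionals $\phi_m(\beta)=\frac{1}{m}\int\kappa(b_1\cdots b_m)\,d\beta$, and the domination hypothesis enters only to make the Cartan cocycle quasi-additive over length-$m$ blocks with a per-block error $O(1)$, hence a uniform bound $\|\phi-\phi_m\|\le 2C_r/m$. This avoids the partition entirely, gives an explicit rate independent of $\beta$, and makes visible that continuity on $\mathcal{P}_{erg}(S^{\mathbb{N}})$ is really the restriction of a continuous map on all of $\mathcal{P}(S^{\mathbb{N}})$. Two small points you handled correctly and which are worth flagging: the transversality hypothesis of Proposition~\ref{Best} is cyclic, but condition (b) of a Schottky family is a statement about \emph{all} pairs, so any ordering of the blocks is admissible; and the constant produced there depends only on $r$, which is fixed once by Proposition~\ref{prop.domination.and.schottky}, so nothing degrades as the block length $m$ grows. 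The passage to the limit $\ell\to\infty$ with $N=m\ell$ is also fine: Kingman applies along this subsequence, the cocycle is uniformly bounded by Lemma~\ref{Cartan.stability}, and $T$-invariance turns the ergodic average of $f\circ T^{jm}$ into $\phi_m(\beta)$ exactly.
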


\begin{proof}
Since $S$ is $G$-dominated, thanks to Lemma \ref{loxodromy.implies.Cartan.close.to.Jordan} and Proposition \ref{prop.domination.and.schottky} for $\beta \in \mathcal{P}_{erg}(S^\mathbb{N})$, for $\beta$-almost every $b=(b_1,\ldots,)\in S^\N$ we have $\frac{1}{n} \lambda(b_n.\ldots.b_1) \to \vec{\lambda}(\beta)$ as $n \to +\infty$. 

Now, we need to show that given $\delta>0$, if $\beta_2 \in \mathcal{P}_{erg}(S^\mathbb{N})$ is sufficiently close to a fixed $\beta_1 \in \mathcal{P}_{erg}(S^\mathbb{N})$, then $\|\vec{\lambda}(\beta_1)-\vec{\lambda}(\beta_2)\|<\delta$ for some fixed norm on $\mathfrak{a}$. Fix such a $\delta>0$. By Proposition \ref{prop.domination.and.schottky}, there exists $r>0$ such that fixing some $\eps \in (0,r)$, if $n \in\mathbb{N}$ is large enough (say larger than $n_\eps \in \mathbb{N}$), then $S^{n}$ is an $(r,\eps)$-Schottky family. Let $C_r>0$ be the constant given by Proposition \ref{Best}. Fix some $n_0 \geq n_\eps$ such that $\frac{1}{n_0}(2(C_r +1)) <\frac{\delta}{2}$. Fix also an $\eta>0$ small enough so that if $g,h \in S^{n_0}$ are such that $d_G(g,h) \leq \eta$, then $\|\lambda(g)-\lambda(h)\| \leq 1$, where $d_G$ is a fixed Riemannian  metric on $G$. Let $M>0$ be the diameter of a compact subset $K$ of $\mathfrak{a}^+$ such that $\frac{\lambda(S^n)}{n} \in K$ for every $n \geq 1$. Now, consider a partition of $S^{n_0}$ into $k_\eta$ subsets $B_1,\ldots,B_{k_\eta}$ of diameter at most $\eta$ and such that $\beta_1(\partial B_j)=0$ for $j=1,\ldots,k_\eta$ (one can always guarantee this by a small perturbation of the partition). Finally let $\rho>0$ be small enough constant so that $M k_\eta \rho < \frac{\delta}{2}$. 

Let  $\beta_{1,n_0}= p_{n_0} {}_\ast(\beta_1)$ and $ \beta_{2,n_0} =p_{n_0} {}_\ast(\beta_2)$ be the push-forwards of $\beta_1$ and $\beta_2$ by the map $p_{n_0}: S^\mathbb{N} \to S^{n_0}$ given by $b=(b_1,\ldots) \mapsto b_1\cdot\ldots\cdot b_{n_0}$. By continuity, it follows that if $\beta_2$ is close enough to $\beta_1$, then $|\beta_{1,n_0}(B_i) - \beta_{2,n_0}(B_i)| <\rho$ for each $i=1,\ldots,k_\eta$. So fix such a $\beta_2$. By Birkhoff's ergodic theorem, it is easy to see that for $i \in \{1,2\}$, there exists $X_i \subset S^\mathbb{N}$ with $\beta_i(X_i)=1$ and such that for every $b^{(i)}=(b^{(i)}_1,\ldots,)\in X_i$, we have $\frac{1}{n} \lambda(b^{(i)}_1\cdot \ldots\cdot b^{(i)}_n) \to \vec{\lambda}(\beta_i)$ as $n \to +\infty$ and for $m=qn_0$ and $\ell=1,\ldots,q \in \mathbb{N}$, denoting the element $b_{n_0(\ell-1)+1}^{(i)}\cdot\ldots\cdot b_{n_0\ell}^{(i)}$ of $S^{n_0}$ by $g^{(i)}_\ell$ and for $j=1,\ldots,k_\eta$, setting $\alpha_{j,q}^{i}=\frac{1}{q}\#\{g_\ell \in B_j \, |\, \ell=1,\ldots,q\}$, we have $\alpha_{j,q}^i \to \beta_{i,n_0}(B_j)$ as $q \to +\infty$. 

On the other hand, by Proposition \ref{Best}, for $i=1,2$, we have 
\begin{equation}\label{eq.lyapcts1}
\|\frac{1}{m}\lambda(b_1^{(i)}\cdot\ldots\cdot b_m^{(i)})- \frac{1}{n_0}\frac{1}{q} \sum_{\ell=1}^q \lambda(g_\ell^{(i)})\|\leq  \frac{C_r}{n_0}.
\end{equation} 
Furthermore, for $j=1,\ldots,k_\eta$, fixing some element $h_j \in B_j$,  for $i=1,2$, we have 
\begin{equation}\label{eq.lyapcts2}
\|\frac{1}{q}\sum_{\ell=1}^q \lambda(g_\ell^{(i)})-\sum_{j=1}^{k_\eta} \alpha_{j,q}^{i} \lambda(h_j)\|\leq 1.
\end{equation}
Putting the inequalities $(\ref{eq.lyapcts1})$ and $(\ref{eq.lyapcts2})$  together, we get
\begin{equation*}
\begin{aligned}
\|\vec{\lambda}(\beta_1)-\vec{\lambda}(\beta_2)\|&=\limsup_{q \to + \infty} \|\frac{1}{qn_0}\lambda(b_1^{(1)}\cdot\ldots\cdot b_{qn_0}^{(1)})-\frac{1}{qn_0}\lambda(b_1^{(2)}\cdot\ldots\cdot b_{qn_0}^{(2)})\|\\ & \leq 2 \frac{C_r}{n_0}+\frac{2}{n_0} + \rho k_\eta M <\delta,
\end{aligned}
\end{equation*}
as desired.
\end{proof}

\begin{remark}
It follows from the above proof that if a set $S$ of $\GL_d(\mathbb{R})$ is $k$-dominated, then the sum of first $k$ Lyapunov exponents depends continuously on the ergodic measures on $S^\mathbb{N}$. In particular, if $S$ is $i$-dominated for $i=1,\ldots,k$, all first $k$ Lyapunov exponents are continuous. 
\end{remark}

\begin{remark}
As is well-known the dependence of Lyapunov exponents to the ergodic measures on $S^{\mathbb{N}}$ for an arbitrary compact set $S$ is not continuous in general. As a simple example, let $S=\{a,r\}$ where $a \in \SL(2,\mathbb{R})$ is an hyperbolic element and $r \in SO(2,\mathbb{R})$ such that $rar=a^{-1}$. For $s \in [0,1]$ define $\mu_s=s \delta_a +(1-s)\delta_r$. It is immediate to see that for every $s<1$, $\vec{\lambda}(\mu_s)=0$ and $\vec{\lambda}(\mu_1)=\lambda_1(a)>0$.
\end{remark}

\subsection{Lyapunov spectrum}

Denote by $Lyap(S)$ the \emph{Lyapunov spectrum} of $S$, i.e.  the set of all possible Lyapunov vectors of ergodic measures on $S^\mathbb{N}$, that is $Lyap(S)=\{\vec{\lambda}(\beta)  \, |\, \beta \in P_{erg}(S^{\mathbb{N}})\} \subset \mathfrak{a}^+$. 

Different properties of the Lyapunov spectrum for more general linear cocycles, such as properties of extremal orbits and ergodic measures, density of Lyapunov vectors of periodic measures, have been studied in detail in the context of non-commutative ergodic optimization \cite{bochi-icm,jenkinson-survey,jenkinson-survey0},  for example by Feng \cite{feng1,feng2}, Morris \cite{morris-mather-sets}, Kalinin \cite{kalinin}, 
Bochi-Rams \cite{bochi-rams}, and more recently, by Bochi \cite{bochi-icm} and Park \cite{park}.

Below, we prove Theorem \ref{lyapspec} from the introduction and show in fact that each point in the relative interior of joint spectrum can be realized as the Lyapunov vector of some shift invariant ergodic probability measure. Furthermore, recall that by Theorem \ref{interior}, the Lyapunov vectors of i.i.d.\ measures belong to the relative interior of the joint spectrum. We also give an example in Proposition \ref{confined}, where the set of Lyapunov vectors of i.i.d.\ measures (i.e.\ $0$-step Markov measures) does not fill the interior of joint spectrum and is a proper compact subset therein.

\subsubsection{Proof of Theorem \ref{lyapspec}}\label{subsub.lyapspec}

We start with a lemma which enables us to approximate any given finite subset of the joint spectrum using a Schottky family. A similar result appears in \cite{sert-growth}.

\begin{lemma}\label{lemma.render.schottky.family}
Let $S$ be a compact subset  generating a Zariski-dense semigroup $\Gamma$ in a connected reductive linear group $G$. For all $t \in \mathbb{N}$ and $\eta>0$, given any $t$ points $x_1,\ldots,x_t$ in the relative interior of the joint spectrum $J(S)$ of $S$, for $i=1,\ldots,t$, there exist $n_i \in \mathbb{N}$, $g_i \in S^{n_i}$ with the property that 
$\|\frac{\lambda(g_i)}{n_i} -x_i\|<\eta$ and $\{g_1,\ldots,g_t\}$ is an $(r,\eps)$-Schottky family in $G$ for some $r\geq \eps>0$.
\end{lemma}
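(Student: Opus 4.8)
My plan is to prove Lemma \ref{lemma.render.schottky.family} in two stages: first produce, for each $i$, an element $g_i' \in \Gamma$ that is $G$-proximal with $\lambda(g_i')/|g_i'|$ close to $x_i$, and then replace each $g_i'$ by a suitable conjugate (or left/right translate) so that the attracting points and repelling hyperplanes of the collection are in general position, ensuring the Schottky property.

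For the first stage, since $x_i$ lies in the relative interior of $J(S)$ and $J(S) = \lim \frac{1}{n}\kappa(S^n)$ in Hausdorff metric by Theorem \ref{joint}, for each $i$ I can find arbitrarily large $n_i$ and $h_i \in S^{n_i}$ with $\|\kappa(h_i)/n_i - x_i\| < \eta/3$. By Theorem \ref{AMS} (Abels-Margulis-Soifer), there are $r_0 \ge \eps_0 > 0$ and a finite set $F \subset \Gamma$ such that for a suitable $f_i \in F$, the element $h_i f_i$ is $(r_0,\eps_0)$-proximal in $G$. Using Lemma \ref{Cartan.stability} and Lemma \ref{loxodromy.implies.Cartan.close.to.Jordan}, one gets $\|\lambda(h_i f_i) - \kappa(h_i)\| \le C$ for a constant $C$ depending only on $\Gamma$; writing $m_i$ for the length of $h_i f_i$ as a word in $S$, we have $m_i = n_i + O(1)$, so $\|\lambda(h_i f_i)/m_i - x_i\| < \eta/2$ once $n_i$ is taken large enough. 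Set $g_i^{(0)} := h_i f_i$, which is $(r_0,\eps_0)$-$G$-proximal.

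For the second stage, I need to arrange condition (b) of Definition \ref{defnarepsSchottky2}: $d(v^+_{\rho_k(g_i)}, H^<_{\rho_k(g_j)}) \ge 6r$ for all $i,j$ and all $k = 1,\ldots,d_S$. The idea is to conjugate: for $i = 2,\ldots,t$ pick $\gamma_i \in \Gamma$ and replace $g_i^{(0)}$ by $g_i := \gamma_i g_i^{(0)} \gamma_i^{-1}$ (note this has the same Jordan projection, so the approximation $\|\lambda(g_i)/m_i - x_i\| < \eta/2$ survives, and $g_i$ is still $(r_0,\eps_0)$-$G$-proximal). In the representation $\rho_k$, conjugation by $\gamma_i$ moves $v^+$ to $\rho_k(\gamma_i) v^+_{\rho_k(g_i^{(0)})}$ and $H^<$ to $\rho_k(\gamma_i) H^<_{\rho_k(g_i^{(0)})}$. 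By Lemma \ref{dispersion.lemma} (simultaneous transversality) together with Remark \ref{unif-rem}, I can choose $\gamma_i \in \Gamma$, from a finite set depending only on $\Gamma$, so that all the relevant attracting points avoid all the relevant repelling hyperplanes with a uniform gap $> r_1$ for some $r_1 > 0$ independent of everything. More carefully: I proceed inductively, at step $i$ choosing $\gamma_i$ so that $\rho_k(\gamma_i) v^+_{\rho_k(g_i^{(0)})}$ stays off the finitely many hyperplanes $H^<_{\rho_k(g_j)}$ ($j < i$) and off the (images of the) hyperplanes that will later arise — since there are only finitely many representations and finitely many points/hyperplanes, compactness of projective space yields a uniform constant. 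Then with $r := \min(r_0, r_1)/C'$ for an appropriate normalization, and $\eps := r_0$ shrunk if necessary, the family $\{g_1,\ldots,g_t\}$ (after possibly replacing each $g_i$ by a bounded power to absorb the $(2r,2\eps)$ vs $(r,\eps)$ discrepancy, or simply relabeling constants) is an $(r,\eps)$-Schottky family in $G$, and still $\|\lambda(g_i)/n_i' - x_i\| < \eta$ for the word-lengths $n_i'$.

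**Main obstacle.** The delicate point is the uniformity in the second stage: I must choose the conjugating elements $\gamma_i$ so that \emph{both} $v^+_{\rho_k(g_i)}$ avoids $H^<_{\rho_k(g_j)}$ \emph{and} $v^+_{\rho_k(g_j)}$ avoids $H^<_{\rho_k(g_i)}$ for the \emph{pairs in both orders}, and with a gap bounded below independently of the (large, a priori unknown) exponents $n_i$. Since conjugating $g_i$ also moves its repelling hyperplane, the hyperplanes one must avoid at step $i$ themselves depend on earlier choices; handling this requires running Lemma \ref{dispersion.lemma} for the full finite configuration at once (all $i,j,k$ simultaneously) rather than greedily, and invoking Remark \ref{unif-rem} to get the quantitative transversality with constants depending only on $t$ and $\Gamma$. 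Once that bookkeeping is set up, the rest is routine application of the proximality lemmas from Section \ref{section.preliminaries}.
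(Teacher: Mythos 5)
Your first stage (approximating each $x_i$ by $\lambda(h_i f_i)/m_i$ via Theorems \ref{joint} and \ref{AMS}) matches the paper's argument. Your second stage, however, has a genuine gap. Conjugation $g_i := \gamma_i g_i^{(0)} \gamma_i^{-1}$ is not available here: $\Gamma$ is only a \emph{semigroup} generated by $S$, so $\gamma_i^{-1}$ need not lie in $\Gamma$, and the conjugate need not lie in any $S^{n}$. Even granting inverses, a single conjugating $\gamma$ moves attracting points and repelling hyperplanes \emph{by the same projective transformation}, so it cannot improve the transversality $d\bigl(v^+_{\rho_k(g_{i_1})}, H^<_{\rho_k(g_{i_2})}\bigr)$ at all; one must use distinct $\gamma_i$'s, at which point the constraint $\rho_k(\gamma_{i_1}) v^+_{i_1} \notin \rho_k(\gamma_{i_2}) H^<_{i_2}$ couples all the $\gamma_i$'s, and Lemma \ref{dispersion.lemma} as stated (for a single $\gamma$) does not directly apply. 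You flag this coupling but do not resolve it, and the suggested fix (running the transversality argument ``all at once'') would in effect require a $G^t$-version that you neither state nor prove.

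The paper's route sidesteps both problems. Instead of conjugating, it takes \emph{large powers} $h_i^{p_i}$ (which leave the proximality data $v^+_{\rho_k(h_i)}, H^<_{\rho_k(h_i)}$ unchanged and scale the Jordan projection linearly) and then left-multiplies all of them by one common $\gamma \in \Gamma$, chosen once via Lemma \ref{dispersion.lemma}. The crucial observation, encoded in Lemma \ref{left.multiply.proximal.power.corollary} via the Tits criterion (Lemma \ref{Tits.criterion.sublemma}), is asymmetric: for large $p_i$ the element $\gamma h_i^{p_i}$ is again proximal, its attracting point in $\rho_k$ is close to $\rho_k(\gamma) v^+_{\rho_k(h_i)}$, but its \emph{repelling hyperplane stays close to the original} $H^<_{\rho_k(h_i)}$. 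This decouples the constraints: one only has to arrange $\rho_k(\gamma) v^+_{\rho_k(h_{i_1})} \notin H^<_{\rho_k(h_{i_2})}$ for all pairs $(i_1,i_2)$ and all $k$, which is exactly the conclusion of Lemma \ref{dispersion.lemma} for a single $\gamma$; quantitative separation then comes from the Lipschitz constant of $\rho_k(\gamma)$ and a lemma from \cite{Sert.LDP}. Your sketch mentions ``left/right translate'' as an alternative but does not take powers first, so proximality of $\gamma g_i^{(0)}$ is not guaranteed, and it does not use the key fact that left translation of a high power leaves the repelling hyperplane essentially fixed. Those are the missing ideas.
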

\begin{proof} The proof is similar to many of our earlier arguments involving Theorem \ref{AMS}. We spell out the details for the reader's convenience. We first show that there exist elements $h_1,\ldots,h_t$ in $\Gamma$ that are  $(r,\eps)$-proximal in $G$ and have $\|\frac{\lambda(h_i)}{n_i} -x_i\|<\eta$. There are $r > \eps>0$ and a finite subset $F$ in  $\Gamma$ such that the conclusion of  Theorem \ref{AMS} holds. For each $f \in F$, fix $n_f \in \mathbb{N}$ with $f \in S^{n_f}$. Now by Theorem \ref{joint} there exist $m'_i \in \mathbb{N}$, $h'_i \in S^{m'_i}$ such that $\|\frac{1}{m'_i}\kappa(h'_i)-x_i\|<\frac{\eta}{3}$. Note that here $m'_i$'s can be chosen arbitrarily large. Now, for each $h'_i$, there exists $f_i \in F$ such that $h'_if_i$ is $(r,\eps)$-proximal in $G$ and $\|\kappa(h'_if_i)-\kappa(h'_i)\| \leq M$ by Lemma \ref{Cartan.stability} for some $M=M(F)$. Moreover, by Lemma \ref{loxodromy.implies.Cartan.close.to.Jordan}, $\|\lambda(h'_if_i)-\kappa(h'_i)\|<M+C_r$. It follows that if $m'_i$'s are large enough (depending on $M+C_r$ and $\eta$), setting $m_i=m'_i+n_{f_i}$ and $h_i=h'_if_i$, we have $h_i \in S^{m_i}$, $\|\frac{1}{m_i}\lambda(h_i)-x_i\|\leq \frac{2\eta}{3}$ for $i=1,\ldots,t$ and thus $\{h_1,\ldots,h_t\}$ satisfies are requirements.

Now let us prove the full claim. As in Lemma \ref{dispersion.lemma}, it follows by Zariski connectedness and irreducibility of distinguished representations $(\rho_1,V_1),\ldots,(\rho_{d_S},V_{d_S})$ that there exists $\gamma \in \Gamma$, say $\gamma \in S^{n_\gamma}$, with the property that for every $j=1,\ldots,d_S$ and $i_1,i_2 \in \{1,\ldots,t\}$, we have $\rho_j(\gamma)v^{j,+}_{h_{i_1}} \notin H^{j,<}_{h_{i_2}}$ where $v^{j,+}_{h_{i_1}} \in \mathbb{P}(V_j)$ and $H^{j,<}_{h_{i_2}}\subset \mathbb{P}(V_j)$ are respectively the attracting point and the repelling hyperplane of the proximal elements $\rho_j(h_{i_1})$ and $\rho_j(h_{i_2})$. By taking a large enough power by $p_i \in \mathbb{N}$ of $h_i$ (which does not modify $v^{j,+}_{h_{i}}$'s and $H^{j,<}_{h_{i}}$'s), by Lemma \ref{left.multiply.proximal.power.corollary}, we get that $\gamma h_i^{p_i}$'s are $(\hat{r},\hat{\eps})$-proximal in $G$ for some $\hat{r}> \hat{\eps}>0$. Moreover, since $\rho_j(\gamma)$'s act as Lipschitz transformations on $\mathbb{P}(V_j)$ for $j=1,\ldots,d_S$ and since $d(\rho_j(\gamma)v^{j,+}_{h_{i_1}}, H^{j,<}_{h_{i_2}})$'s are uniformly bounded from below by a positive constant, up to enlarging $p_i$'s, by \cite[Lemma 4.6]{Sert.LDP}, we conclude that there exists $\rho>0$ such that $d(v^{j,+}_{\gamma h_{i_1}^{p_{i_1}}},H^{j,<}_{\gamma h_{i_2}^{p_{i_2}}}) \geq \rho>0$ for every $j=1,\ldots,d_S$ and $i_1,i_2 \in \{1,\ldots,t\}$. 

Now, as before, by Lemma \ref{Cartan.stability} and Lemma \ref{loxodromy.implies.Cartan.close.to.Jordan}, if $p_i$'s are large enough (depending on $\gamma$ and $\eta$), setting $n'_i=n_\gamma +p_i m_i$ and $g'_i=\gamma h_i^{p_i}$, we see that $g'_i \in S^{n'_i}$, $||\frac{1}{n'_i}\lambda(g'_i)-x_i||\leq \eta$. Now replacing $g'_i$'s by their large enough (depending on $\rho$ and $\hat{\eps}$) powers $g_i$'s, the desired conclusion is obtained for $\{g_1,\ldots,g_t\}$. 
\end{proof}

We are now ready to complete the proof of Theorem \ref{lyapspec}. Brouwer's fixed point theorem is used at the end to show the claimed surjectivity. A similar idea was used by Thi Dang in her thesis \cite{thi-dang}. 

\begin{proof}[Proof of Theorem \ref{lyapspec}]
Let $x \in \mathfrak{a}^{++}$ be in the relative interior of $J(S)$. Choose $t=\rk(G)+1$ points $x_1,\ldots,x_t$ in the relative interior of $J(S)$ such that $x$ belongs to the interior of the convex hull of the $x_i$'s. Choose elements $g_i \in S^{n_i}$ satisfying the conclusions of  Lemma \ref{lemma.render.schottky.family}. Note that replacing $n_i$'s by $pn_i$'s for some large $p \in \mathbb{N}$, i.e. passing to $p^{th}$-powers of $g_i$'s (this does not modify the conclusions of Lemma \ref{lemma.render.schottky.family}), we can suppose that $n_i \geq M$ for each $i=1,\ldots,t$, where the large constant $M$ is to be specified later. For $i=1,\ldots,t$, denote by $C_i$ an $n_i$-tuple $(b_1^{(i)},\ldots,b_{n_i}^{(i)})$ in the $n_i^{th}$-cartesian power of $S$ such that $g_i=b_1^{(i)}.\ldots.b_{n_i}^{(i)}$. Denote by $\Sigma$ the set $\{1,\ldots,t\}$ seen as an alphabet of $t$ letters. Denote by $\sigma$ the shift map on $\Sigma^{\mathbb{Z}}$. Let $T_t$ be the $t-1$-dimensional simplex of Bernoulli probability measures on $\Sigma^{\mathbb{Z}}$. For $s=(s_1,\ldots,s_t) \in T_t$ (i.e. $s_i \geq 0$ and $\sum s_i=1$), denote the corresponding Bernoulli measure on $\Sigma^{\mathbb{Z}}$ by $\beta_s$. Now define a roof function $r:\Sigma \to \mathbb{N}_+$, by setting $r(i)=n_i$ for $i=1,\ldots,t$. Consider the associated discrete suspension/Kakutani skyscraper $(\hat{\Sigma},\hat{\beta}_s,\hat{\sigma})$. This system is ergodic. Define the map $\Psi:\hat{\Sigma} \to S^\mathbb{Z}$ by setting for $\underline{j}:=(\ldots,j_{-1},j_0,j_1,\ldots)\in \Sigma^{\mathbb{Z}}$ and $k\in [0,r(j_0)-1]$, $\psi((\underline{j},k))$ to be the sequence $\underline{b} \in S^{\mathbb{N}}$ given by the concatenation of finite words $(\ldots, C_{j_{-1}},C_{j_0},C_{j_1},\ldots)$ such that the $k^{th}$-letter of $C_{j_0}$ is at the $0^{th}$-position. Let $\hat{\theta}_s$ denote the push-forward $\Psi_{\ast}\hat{\beta}_s$ on $S^{\mathbb{Z}}$ and $\theta_s$ the push-forward of $\hat{\theta}_s$ on $S^{\mathbb{N}}$. Denote also by $\sigma$ the shift on $S^{\mathbb{N}}$. The system $(S^{\mathbb{N}},\theta_s,\sigma)$ is by construction a factor  of the suspension $(\hat{\Sigma},\hat{\beta}_s,\hat{\sigma})$ and thus it is automatically ergodic since its extension is. In other words we have the commuting system of measure preserving maps as below:
$$ \begin{tikzcd}
(\hat{\Sigma},\hat{\beta}_s) \arrow{r}{\hat{\sigma}} \arrow[swap]{d}{\Psi} & (\hat{\Sigma},\hat{\beta}_s) \arrow{d}{\Psi} \\%
(S^{\mathbb{N}},\theta_s) \arrow{r}{\sigma}& (S^{\mathbb{N}},\theta_s)
\end{tikzcd}
$$
Since the set $\{g_1,\ldots,g_t\}$ is an $(r,\eps)$-Schottky family in $G$, it follows from Proposition \ref{prop.domination.and.schottky} that $S$ satisfies the $G$-domination condition of Definition \ref{Gdom}. This in turn allows us to use Proposition \ref{prop.lyapunov.continuous} to conclude that the map $\phi: T_t \to J(S)$  defined by $\phi(s)=\vec{\lambda}(\theta_s)$ is continuous. 

Now, it follows by construction of the ergodic measure $\theta_s$ for $s=(s_1,\ldots,s_t) \in T_t$ that along $\theta_s$-almost every orbit $b=(b_1,b_2,\ldots) \in S^\mathbb{N}$  what we see after an initial segment of uniformly bounded length ($\le \max_{i=1,\ldots,t} n_i$) is a concatenation of finite words corresponding to $C_i$'s for $i=1,\ldots,t$. By ergodicity (i.e. Birkhoff's ergodic theorem), there is a shift-invariant $\theta_s$-full measure set $(S^\mathbb{N})'$ such that  the asymptotic proportion of appearances of a $C_i$ in an element $b=(b_1,\ldots,) \in (S^\mathbb{N})'$ is equal to $s_i$ for $i=1,\ldots,t$. Clearly, we can also suppose that for every $b \in (S^\mathbb{N})'$, we have $\lim_k \frac{1}{k}\kappa(b_1\cdot \ldots \cdot b_k)=\vec{\lambda}(\theta_s)$. By shift-invariance, choose $b \in (S^\mathbb{N})'$ such that $b$ is given by a concatenation $(C_{j_1},C_{j_2},\ldots)$ for some word $\underline{j}=(j_1,j_2,\ldots) \in \Sigma^\mathbb{N}$. Let $k$ be an integer such that $k=n_{j_1}+\ldots+n_{j_\ell}$ for some large $\ell \in \mathbb{N}$. Since $\{g_1,\ldots,g_t\}$ is an $(r,\eps)$-Schottky family in $G$,  by Lemma \ref{loxodromy.implies.Cartan.close.to.Jordan} and Proposition \ref{Best}, it follows that
\begin{equation}\label{eq.realizing1}
||\frac{1}{k}\lambda(b_1\cdot\ldots\cdot b_k)-\frac{1}{k}\sum_{i=1}^{\ell}\lambda(g_{j_i})||\leq \frac{\ell}{k}(C_r +1)
\end{equation}
Since for each such $\ell$ and $k$, we have $\frac{\ell}{k}\leq \frac{1}{M}\leq \frac{1}{\min_{i=1,\ldots,t} n_{i}}$, given any $\delta>0$, we can 
choose $M$ large enough (depending only on $C_r$ and $\delta$) so as to deduce from (\ref{eq.realizing1}) that
$$
||\vec{\lambda}(\theta_s)-\sum_{i=1}^t s_i \frac{\lambda(g_i)}{n_i}||<\delta
$$
As a consequence, by pre-composing the map $\phi: T_t \to J(S)$ by a suitable linear automorphism, it follows easily from Brouwer's fixed point theorem (in the form given say in \cite[Lemma 7.23]{rudin}) that there exists $s \in T_t$ with $\vec{\lambda}(\theta_s)=x$, proving the theorem.
\end{proof}

\begin{remark}\label{remark.Gibbs}
One easily sees that the supports of ergodic measures $\hat{\theta}_s$ on $S^\mathbb{Z}$ constructed in the previous proof are renewal systems. These are particular types of sofic subshifts (see \cite{lind-marcus, adler-walters}). Determining what points of the joint spectrum can be realized as Lyapunov vectors of processes associated to Gibbs measures remains an interesting problem.
\end{remark}


\subsubsection{Exposed points of the joint spectrum}\label{subsub.extremal} Here we prove Proposition \ref{prop.extremal.points} from Introduction and give an example of a set $S$ whose joint spectrum has an extremal point that does not belong to the Lyapunov spectrum of $S$.

\begin{proof}[Proof of Proposition \ref{prop.extremal.points}]
 We begin with part (1) of the proposition. Let $S \subset G$ be a compact $G$-dominated set. For $\mu \in \mathcal{P}_{erg}(S^\mathbb{N})$, the vector $\lim_n \frac{1}{n} \mathbb{E}_\mu[\kappa(b_n\ldots b_1)]$  can be expressed as an integral of a continuous function on $S^\N$ with respect to $\mu$ (see e.g. \cite[$(1.7)$]{bochi-rams}). As a consequence, the map $\mathcal{P}(S^\mathbb{N}) \ni \mu \mapsto \lim_n \frac{1}{n} \mathbb{E}_\mu[\kappa(b_n\ldots b_1)]$ is continuous and the restriction of a linear map (on the Banach space of finite signed measures on $S^\N$). In particular its image is a convex compact subset of $\mathfrak{a}^+$. By Theorem \ref{lyapspec} its image must be equal to $J(S)$ and by ergodic decomposition, an extremal point of $J(S)$ is attained by an element of $\mathcal{P}_{erg}(S^\mathbb{N})$.
 
 We now move to part (2) and assume that $S$ is a compact subset of $G$ generating a Zariski dense semigroup. Let $x \in \mathfrak{a}^+$ be an exposed point of $J(S)$ lying on the positive boundary (see \S \ref{positiveboundary}). Then there is a linear form $\ell$ on $\a$, which is a positive linear combination of fundamental weights, such that for every $y\in J(S)$, we have $\ell(y)\leq \ell(x)$, with equality if and only if  $y=x$. 
 For positive $n \in \mathbb{N}$, let $f_n$ be the function defined on $S^\N$ by $f_n(b)=\ell(\kappa(b_n\ldots b_1))$. Since $\ell$ is a positive linear combination of fundamental weights, $a\mapsto \ell(\kappa(a))$ satisfies the triangle inequality $\ell(\kappa(ab)) \leq \ell(\kappa(a))+\ell(\kappa(b))$. Hence $f_n$ is a continuous subadditive  cocycle. 

By Theorem \ref{joint}, there is $b \in S^\N$ be such that $\frac{1}{n} \kappa(b_n \ldots b_1) \to x$. In particular, $\lim_n \frac{1}{n}f_n(b)=\ell(x)$. It follows from a classical observation (see e.g. \cite[Lemma A.6.]{morris-mather-sets}) that there exists a shift-invariant probability measure $\tilde{\beta} \in \mathcal{P}(S^\N)$ such that $\lim_n \frac{1}{n} \mathbb{E}_{\tilde{\beta}}[\ell(\kappa(b_n \ldots b_1))] \geq \ell(x)$. By ergodic decomposition, there exists $\beta \in  \mathcal{P}_{erg}(S^\N)$ satisfying the same inequality. In particular, $\ell(\vec{\lambda}(\beta)) \geq \ell(x)$, and hence $x=\vec{\lambda}(\beta)$, as desired. 
\end{proof}

In the absence of $G$-domination, the exposed extremal points on the boundary of the joint spectrum are not necessarily realized as Lyapunov vectors (see \cite{bochi-morris} Theorem 1.11 and Remark 1.13). We now give another explicit example of this phenomenon in our context.

\begin{Example}
For $n \in \mathbb{N}^{\ast}$, consider the set $S_n \subset \GL(2,\R)$ as in Example \ref{ex.discon}. 
The joint spectrum $J(S_n)$ is given by the triangle in the Weyl chamber $\mathfrak{a}^+$ of $\GL(2,\R)$ with vertices $(0,0)$, $(0,\log \alpha)$ and $(\frac{\log \lambda_1(ab)}{2},0)$ (see Figure \ref{fig.discontinuity}). It is clear that any probability measure $\mu \in \mathcal{P}_{erg}(S_n^\mathbb{N})$ giving positive mass to the cylinder set corresponding to the element $\alpha r_n$ has its Lyapunov vector away from the $x$-axis. On the other hand, if such a $\mu$ gives no mass to that cylinder, its Lyapunov vector is contained in the segment $[\log \lambda_1(a),\frac{\log \lambda_1(ab)}{2}]$ of $x$-axis. In particular, the exposed extremal point $(0,0)$ of $J(S_n)$ is never attained by the Lyapunov vector of any $\mu \in \mathcal{P}_{erg}(S_n^\mathbb{N})$.
\end{Example}

\subsubsection{i.i.d.\ Lyapunov spectrum}\label{subsub.iid.Lyap}

We prove Proposition \ref{confined}. The is a simple consequence of the continuity properties of Lyapunov vectors proved by Hennion and Furstenberg-Kifer \cite{hennion, furstenberg-kifer} and of Theorem \ref{interior}.

\begin{proof}[Proof of Proposition \ref{confined}] Clearly, the joint spectrum of $S$ is equal to $[0, \log R(S)]$ where $R(S)>0$ is the joint spectral radius of $S$ (in passing, it is easy to see that $R(S)=\sqrt{\lambda_1(ab)}$, where as usual $\lambda_1(.)$ denotes the largest modulus of eigenvalues). For $p \in [0,1]$, denote the probability measure $p\delta_{a}+(1-p)\delta_{p}$ by $\mu_{p}$. Let $\phi_S:[0,1] \to [0,\log R(S)]$ be the function given by $\phi_S(p)=\vec{\lambda}_{\mu_{p}}$, i.e. $\phi_S(p)=\lim_{n \to \infty}\frac{1}{n}\mathbb{E}_{p}[\log ||Y_{n}||]$. Since for any finite word $\omega$ in the alphabet $S=\{a,b\}$, exchanging $a$ and $b$ does not alter the operator norm of the corresponding products, we get that for each $p \in [0,1]$, one has $\phi_S(p)=\phi_S(1-p)$. By  \cite{hennion, furstenberg-kifer} we know that $\phi_S$ is continuous on $(0,1)$. By Theorem \ref{interior} for $p \in (0,1)$ we have $\phi_S(p) \in (0,\log R(S))$, to show the claim, we only need to show $\lim_{p \to 1^{-}} \phi_S(p)=0$. This is easy: note first that for each $p\in(0,1)$ and every $n \in \mathbb{N}$, we have 
\begin{equation}\label{eq6}
\mathbb{E}_{p}[\log \|Y_{n}\|] \leq  \mathbb{P}_{p}(X_{1}=\ldots=X_{n}=a)\log (\|a^{n}\|) +\log(\max_{g\in S^{n}}\|g\|)(1-p^{n})
\end{equation}
On the other hand, by subadditivity of the sequence $(\mathbb{E}_{\mu_p}[\log ||Y_{n}||])_{n \geq 1}$, one has $\phi_S(p)=\inf_{n \geq 1}\frac{1}{n} \mathbb{E}_{p}[\log ||Y_{n}||]$. Therefore, it follows from (\ref{eq6}) that for each fixed $p$ and every $n \in \mathbb{N}$, we have 
$$
0 < \phi_S(p) \leq p^{n} \frac{1}{n}\log(n+1)+\frac{1}{n}\log(\max_{g\in S^{n}}||g||_{1})(1-p^{n})
$$
The claim follows.
\end{proof}

\begin{remark}\label{rk.example.not.attained}
Note that in the example of Proposition \ref{confined}, the two parabolic isometries are conjugate to each other via the non-trivial element of the Weyl group of the diagonal matrices. It is not hard to see that many similar examples can be constructed, say by pair of unipotents or hyperbolic elements conjugate to each other by the Weyl group element of a split torus. Similarly, one can generalize these examples to higher rank.
\end{remark}

\subsection{Large deviations of random products of matrices}\label{subsec.LDP}
In this paragraph, we briefly discuss another probabilistic aspect of random walks on linear groups where the notion of joint spectrum appears naturally, namely large deviations in connection with the second author's work \cite{Sert.LDP}.

Let $(Y_{n}=X_{n}. \ldots .X_{1})_{n \geq 1}$ be an i.i.d.\ random walk on a connected linear reductive group $G$. Consider the stochastic process given by $(\frac{1}{n}\kappa(Y_{n}))_{n \geq 1}$. It was shown in \cite{Sert.LDP} that if the probability measure $\mu$ governing the random walk on $G$ has a finite exponential moment and if its support generates a Zariski-dense semigroup, then the sequence $(\frac{1}{n}\kappa(Y_{n}))_{n \geq 1}$ satisfies a large deviation principle (LDP) with a proper convex rate function $I_{\mu}:\mathfrak{a}^{+} \to [0,+\infty]$ which admits a unique zero on the Lyapunov vector $\vec{\lambda}_{\mu}$.

If the probability measure $\mu$ on $G$ has bounded support, then the random variables $\frac{1}{n}\kappa(Y_{n})$ remain confined in a bounded region of $\mathfrak{a}^{+}$.  Hence the rate function $I_{\mu}$ is $+\infty$ outside of that bounded region. In this case it is therefore natural to determine the effective support $\{x \in \a^+ : I_{\mu}(x)<+\infty\}$. The following theorem, proven in \cite{Sert.LDP}, relates it to the joint spectrum:

\begin{theorem}[\cite{Sert.LDP}]
Let $G$ be a connected real linear reductive group and let $\mu$ be a probability measure on $G$, whose support generates a Zariski-dense semigroup in $G$. Then,\\[3pt]
1. The effective support $D_{I_{\mu}}=\{x \in \mathfrak{a}^+ \, | \, I_{\mu}(x)<\infty\}$ is a convex subset of $\mathfrak{a}^+$.\\[3pt]
2. If the support $S$ of $\mu$ is a bounded subset of $G$, then $\overline{D}_{I_{\mu}}=J(S)$ and $\ri(D_{I_{\mu}})=\ri(J(S))$, where $\ri(.) $ denotes the relative interior of a subset of $\mathfrak{a}$.\\[3pt]
3. If $S$ is a finite subset of $G$, then $D_{I_{\mu}}=J(S)$.
\end{theorem}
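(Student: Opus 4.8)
The plan is to prove the three items of the large deviations theorem, focusing on item 3 (which is the genuinely new input; items 1 and 2 quote \cite{Sert.LDP}). So I need to show that when $S$ is finite and $\mu$ is supported on $S$, the effective support $D_{I_\mu}$ of the rate function is \emph{closed}, hence equal to $J(S)$ by item 2. The key observation is that with a finite set, every element of $S^n$ is achieved with probability at least $p_0^n$, where $p_0 := \min_{s \in S} \mu(\{s\}) > 0$.

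First I would recall from \cite{Sert.LDP} the two facts I am allowed to use: the LDP $(\ref{LDP})$ holds with convex proper rate function $I_\mu$, and $\overline{D}_{I_\mu} = J(S)$ with $\ri(D_{I_\mu}) = \ri(J(S))$. So I only need the reverse inclusion $J(S) \subset D_{I_\mu}$, i.e. that $I_\mu$ is finite at every point of $J(S)$, including its relative boundary. Fix $x \in J(S)$. By Theorem \ref{joint} there are integers $n_k \to \infty$ and elements $g_{n_k} \in S^{n_k}$ with $\frac{1}{n_k}\kappa(g_{n_k}) \to x$. Because $S$ is finite, the cylinder set in $S^\N$ spelling out a fixed factorization $g_{n_k} = s_1 \cdots s_{n_k}$ has $\mu^{\otimes n_k}$-probability exactly $\prod \mu(\{s_i\}) \geq p_0^{n_k}$. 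Hence, for any open neighbourhood $U$ of $x$ in $\a^+$, once $k$ is large enough $\frac{1}{n_k}\kappa(Y_{n_k}) \in U$ with probability at least $p_0^{n_k}$, so that $\P(\frac{1}{n_k}\kappa(Y_{n_k}) \in U) \geq p_0^{n_k}$. Taking logs, dividing by $n_k$, letting $k \to \infty$ and then shrinking $U$, the LDP upper bound (Varadhan/contraction, or directly the definition of $I_\mu$ as $I_\mu(x) = -\lim_{U \downarrow x}\liminf \frac{1}{n}\log \P(\cdot \in U)$ together with its lower-semicontinuous regularization) forces $I_\mu(x) \leq -\log p_0 < +\infty$. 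Therefore $x \in D_{I_\mu}$, which gives $J(S) \subset D_{I_\mu}$ and, combined with item 2, the equality $D_{I_\mu} = J(S)$.

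One technical point I would be careful about: in a general LDP the rate function is lower semicontinuous, so finiteness of $I_\mu$ on a dense set need not imply finiteness everywhere --- this is precisely why item 2 only gives $\ri(D_{I_\mu}) = \ri(J(S))$ and not $D_{I_\mu} = J(S)$. So I must genuinely produce, for \emph{each} boundary point $x$ of $J(S)$, a subexponential (in fact exponential with fixed base $p_0$) lower bound on the probability of landing near $x$ along a subsequence of times, and then invoke the large-deviation lower bound which says $\liminf_n \frac{1}{n}\log \P(\frac{1}{n}\kappa(Y_n) \in U) \geq -\inf_{y \in U} I_\mu(y)$ for open $U$; the subsequence issue is handled because the sequence $\frac{1}{n}\lambda(S^n)$ (or $\frac1n\kappa(S^n)$) is shown in Theorem \ref{joint} to converge, so for $x \in J(S)$ and every $\eps$ there are in fact elements of $S^n$ within $\eps$ of $x$ for all large $n$, not just along a subsequence; this upgrades the conclusion to hold for every sufficiently large $n$ and removes any awkwardness. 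I would also note that the same argument works to show more directly that for finite $S$ the normalized Cartan projection is supported, at each large time $n$, on a $(1/n)$-net of $J(S)$ with every point of the net carrying probability $\geq p_0^n$.

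The main obstacle, and the part requiring the most care, is exactly the passage from "finite on a dense subset / in the relative interior" to "finite on the closure": this is false for general rate functions, and the proof must exploit the specific combinatorial feature of finitely supported $\mu$ (the uniform lower bound $p_0^n$ on atom probabilities). Everything else --- convexity of $D_{I_\mu}$, the identification of $\overline{D}_{I_\mu}$ and of relative interiors --- is imported verbatim from \cite{Sert.LDP}, so items 1 and 2 require no new argument beyond citation, and item 3 follows from item 2 together with the closedness argument sketched above. A clean way to package it: by Theorem \ref{joint}, $J(S) = \overline{\bigcup_n \frac1n\lambda(S^n)}$; for a finite $S$ each $\frac1n\lambda(g)$ with $g\in S^n$ has $I_\mu(\frac1n\lambda(g)) \le I_\mu$-type bound $\le -\tfrac1n\log(p_0^{n}\cdot(\text{number of words}))$... but since convexity already gives finiteness on the convex hull once we know it on a spanning set, it suffices to exhibit finitely many boundary points of $J(S)$ (the vertices, if $J(S)$ were polyhedral --- but we do not assume that) at which $I_\mu < \infty$; to be safe I would avoid relying on polyhedrality and simply run the $p_0^n$ estimate at every point of $J(S)$ as above.
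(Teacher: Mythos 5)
The theorem in question carries the citation tag \cite{Sert.LDP} and the surrounding text of the paper says explicitly that it was ``proven in \cite{Sert.LDP}'': there is no in-paper proof of any of the three items here, so there is nothing to compare your argument against within this paper. Your remark that items 1--2 are ``imported verbatim'' is accurate, but your reading of item 3 as ``the genuinely new input'' of the present paper is not --- item 3 is also part of the quoted theorem from \cite{Sert.LDP}, and the present paper does not reprove it.

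That said, your reconstruction of item 3 from items 1--2 is mathematically sound, and is indeed the natural argument. The two ingredients you combine --- (i) the Hausdorff convergence $\frac{1}{n}\kappa(S^n)\to J(S)$ from Theorem \ref{joint}, which for finite $S$ guarantees that for every $x\in J(S)$, every $\eps>0$ and all large $n$ there is $g_n\in S^n$ with $\|\frac{1}{n}\kappa(g_n)-x\|<\eps$, and that $g_n$ is then hit with probability at least $p_0^n$ with $p_0=\min_{s\in S}\mu(\{s\})>0$; and (ii) the LDP upper bound applied to the closed ball $\overline{B}(x,\eps)$ together with lower semicontinuity of $I_\mu$, which upgrades $\inf_{\overline{B}(x,\eps)}I_\mu\le -\log p_0$ (valid for every $\eps>0$) to $I_\mu(x)\le -\log p_0<\infty$ --- do exactly what is needed, and you correctly diagnose the one genuine obstruction, namely that convexity and finiteness on the relative interior do not by themselves give finiteness on the boundary. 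One small point of wording: the identity $I_\mu(x)=-\lim_{U\downarrow x}\liminf_n\frac1n\log\P(\frac1n\kappa(Y_n)\in U)$ that you invoke is a consequence of the LDP rather than the definition of the rate function; since $I_\mu$ is automatically lower semicontinuous, no separate ``regularization'' step is needed, and the cleanest route is the one you also sketch, via the LDP upper bound on closed balls plus lower semicontinuity.
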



We note that it is an open problem to show that the Jordan vectors $(\frac{1}{n}\lambda(Y_{n}))_{n \geq 1}$ also satisfy an LDP with the same rate function. See \cite[Conjecture 6.2]{Sert.LDP}. Using Proposition \ref{prop.domination.and.schottky}, it is not difficult to check that this is indeed the case  when the support of the probability measure is $G$-dominated (for details, see \cite[Example 6.3.1]{Sert.LDP}).

\section{An example of non-polygonal joint spectrum}\label{section.non.polygonal.jointspectrum}

In this section we prove Proposition \ref{notpoly} from the introduction. We will exhibit a finite subset $T \subset G:=\SL(2,\mathbb{R}) \times \SL(2,\mathbb{R})$, which  generates a Zariski dense semigroup and whose joint spectrum in the Weyl chamber $\R_+ \times \R_+$ is not a polygon (see Fig. \ref{fig.non-diff}). More precisely we will show the following:

\begin{proposition}\label{notpoly-bis} There exists a pair $a,b \in \SL_2(\R)$ of non-commuting hyperbolic matrices with spectral radius $\lambda_1(b) \geq \lambda_1(a)\geq 1$, such that  the joint spectrum $J(T)$ of $T:=\{(1,a), (a,a), (b,b), (b,a)\}$ is given by $$J(T) := \{(x,y) \in \R_+ \times \R_+ : 0\le x \le \log \lambda_1(b), \log \lambda_1(a) \le y \le I(\frac{x}{\log \lambda_1(b)})\},$$ where $I:[0,1] \to [\log \lambda_1(a) ,\log \lambda_1(b)]$ is a surjective strictly concave increasing function, which is differentiable at $x \in (0,1)$ if and only if $x \notin \Q$.
\end{proposition}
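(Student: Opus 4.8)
The plan is to reduce the computation of $J(T)$ for the product group $G = \SL_2(\R)\times\SL_2(\R)$ to a problem about the joint spectrum of $\{(1,a),(a,a),(b,b),(b,a)\}$ viewed coordinatewise. Since $G$ is a rank-two reductive group with $\a^+ = \R_+\times\R_+$, the two distinguished representations $\rho_1,\rho_2$ are just the two projections onto the $\SL_2(\R)$ factors. By Theorem \ref{joint}, $J(T) = \lim_n \tfrac1n\kappa(T^n) = \lim_n \tfrac1n\lambda(T^n)$, and since $\kappa$ and $\lambda$ on $G$ are the pairs of the corresponding quantities on each factor, the problem becomes: for a word $w$ of length $n$ in the four symbols, the first coordinate of $\tfrac1n\lambda$ records the normalized log spectral radius of the $\SL_2(\R)$-product obtained by reading off the \emph{first} components (which are words in $\{1,a,b\}$), and the second coordinate records the normalized log spectral radius of the product of the \emph{second} components (which are all equal to $a$, hence the second coordinate of every $\tfrac1n\lambda(w)$ is exactly $\log\lambda_1(a)$ if $\gcd$-type degeneracies are avoided — actually one must be careful: the second component of every generator is $a$, so the second component of a length-$n$ word is $a^n$, giving second coordinate $\log\lambda_1(a)$; but then $J(T)$ would be a segment, not a body). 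I need to recheck: with $T=\{(1,a),(a,a),(b,b),(b,a)\}$ the second coordinates are $a,a,b,a$ respectively, so a word picks up a product of $a$'s and $b$'s in the second coordinate governed by how many times the symbol $(b,b)$ is used. So the second coordinate ranges between $\log\lambda_1(a)$ (no use of $(b,b)$) and $\log\lambda_1(b)$, and the first coordinate is governed by the word in $\{1,a,b\}$ read from the first components.

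\textbf{Key steps.}
First I would set up the symbolic encoding: a word in $T$ of length $n$ corresponds to a pair of words $(u,v)$ of length $n$ over $\{1,a,b\}$, where $v$ uses the letter $b$ exactly where the symbol $(b,b)$ appears, $u$ uses $b$ where $(b,b)$ or $(b,a)$ appears, and the constraint linking them is that $\{i : u_i = b\} \supseteq \{i : v_i = b\}$ and moreover $v_i = b \Rightarrow u_i = b$ while $u_i = b, v_i \neq b$ forces symbol $(b,a)$, and $u_i = 1$ forces symbol $(1,a)$ hence $v_i = a$. So the admissible pairs are exactly those with $\{v = b\} \subseteq \{u = b\}$ and $\{u = 1\} \cap \{v \ne a\} = \emptyset$, i.e. $u_i=1 \Rightarrow v_i=a$. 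Next, I would invoke the Berger–Wang description $J(T) = \overline{\bigcup_n \tfrac1n\lambda(T^n)}$ and, crucially, the work of Bousch–Mairesse / Jenkinson–Pollicott / Morris–Sidorov / Oregon-Reyes on Sturmian optimization: one chooses $a,b$ hyperbolic with the geometric configuration of Example \ref{ex.discon} (fixed points cyclically ordered, $\lambda_1(a)\le\lambda_1(b)$) so that for the one-parameter family of mixtures "fraction $\theta$ of $b$'s among positions", the maximal normalized log spectral radius of length-$n$ words in $\{a,b\}$ with exactly $\lfloor\theta n\rfloor$ occurrences of $b$ is asymptotically governed by a Sturmian (balanced) arrangement, and equals $I(\theta)$ for the stated function $I$, which is strictly concave and non-differentiable exactly at rationals. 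The free letter $1$ is harmless: inserting $1$'s can only decrease the count-normalized spectral radius, so along the upper boundary one never uses $1$, and along the left edge $x = 0$ one uses $1$ in the first coordinate while the second coordinate still ranges over $[\log\lambda_1(a),\log\lambda_1(b)]$ via the choice between $(1,a)$ and… wait, $(1,b)$ is not in $T$; so with first coordinate $1$ one must use $(1,a)$, contributing $a$ to the second coordinate. Hence the vertical segment $x=0$ of $J(T)$ sits at $y = \log\lambda_1(a)$ only. I would then assemble: the first coordinate $x \in [0,\log\lambda_1(b)]$ is $\tfrac1n$ times $\log$ of spectral radius of a word in $\{1,a,b\}$ with prescribed density of $b$; the second coordinate is constrained by $x$ because $v$'s $b$-positions are a subset of $u$'s; optimizing the second coordinate given the first yields exactly $y \le I(x/\log\lambda_1(b))$, with the lower constraint $y \ge \log\lambda_1(a)$ always available (take all the "extra" positions in $u\setminus v$ to be the symbol $(b,a)$, or fill with $(1,a)$/$(a,a)$).

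\textbf{Main obstacle.}
The hard part will be the precise realization of the concave curve $I$ and the rational/irrational dichotomy for differentiability. This requires: (1) choosing $a,b$ so that the Mather/Sturmian structure of the optimization problem $\max\{\tfrac1n\log\lambda_1(\text{word in }a,b\text{ with }k\text{ }b\text{'s})\}$ produces the \emph{exact} phase-locking intervals at every rational — this is where the Bousch–Mairesse counterexample machinery and its $\SL_2(\R)$ refinements by Morris and Oregon-Reyes are invoked as a black box, with the non-polygonality coming from the devil's-staircase phenomenon in the rotation number of the optimizing measure; and (2) verifying that the presence of the auxiliary symbols $(1,a)$ and $(b,a)$, which decouple the two coordinates, genuinely produces a two-dimensional convex body whose upper boundary is the graph of (a rescaled) $I$ rather than a fattened version of it. The convexity of $J(T)$ is guaranteed a priori by Theorem \ref{body}, so $I$ is automatically concave; the real content is the strict concavity plus the dense set of corners, which I would extract from the cited finiteness-conjecture counterexamples by checking that the Sturmian measures of rational rotation number are the unique maximizers and have locally constant rotation number, giving one-sided derivative jumps precisely at rationals. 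I expect the bookkeeping of which symbol to place at each position to optimize the second coordinate given a fixed first-coordinate word to be routine once the encoding is fixed, but writing it cleanly is the bulk of the remaining work.
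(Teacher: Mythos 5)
Your proposal follows essentially the same route as the paper: reduce the upper boundary of $J(T)$ to the ratio-constrained joint spectral radius function $I$ via a symbolic encoding, then import the strict concavity and rational/irrational differentiability dichotomy from the Bousch–Mairesse/Jenkinson–Pollicott Sturmian machinery. However, there is a concrete slip in how the symbol $(1,a)$ enters. You assert that ``along the upper boundary one never uses $1$,'' whereas in fact $(1,a)$ is \emph{essential} on the upper boundary: in the paper's reduction one replaces $(b,a)$ by $(b,b)$ (raising $y$ at fixed $x$) and then $(a,a)$ by $(1,a)$ (lowering $x$ at fixed $y$), so that the entire curve $\{(\alpha\log\lambda_1(b),\,I(\alpha))\}$ is realized by products of the two symbols $(1,a)$ and $(b,b)$ alone, and only the right endpoint is achieved without the letter $1$. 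Without $(1,a)$, a word with $b$-density $\alpha$ in the first coordinate has $x$ strictly larger than $\alpha\log\lambda_1(b)$, so you would only recover a strictly smaller convex body; in particular the point $(0,\log\lambda_1(a))$ and its neighborhood would be missed. Your ``assemble'' paragraph, which works with words in $\{1,a,b\}$ and deduces $y\le I(x/\log\lambda_1(b))$ from $\{v=b\}\subseteq\{u=b\}$ together with $\tfrac1n\log\lambda_1(u)\ge \tfrac{N_b(u)}{n}\log\lambda_1(b)$ and monotonicity of $I$, is internally correct and in spirit matches the paper — but it is inconsistent with the throwaway remark, and a clean write-up should make the substitution argument ($x_3\to x_1$, then $x_2\to x_0$) explicit. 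Two other small points: the geometric configuration you invoke should be the ``disjoint axes, same direction, $\tau_b\ge\tau_a+2d+1$'' pair that additionally lies in Jenkinson–Pollicott's full Sturmian class $\mathfrak{E}$ (Example \ref{ex.discon}, which has $\lambda_1(a)=\lambda_1(b)$, is not the right template); and you still need the separate verification, done in Proposition \ref{lemma.properties.ratio.constraint.function} via the Legendre-type identity $I(\alpha)=\min_{t>0}\{\log R(S_t)-\alpha\log t\}$ and the plateau structure of the parameter map $p$, that $I$ is strictly concave with one-sided derivative jumps exactly at rationals.
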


The proof will make key use of the notion of Sturmian sequences as in the work of Bousch-Mairesse \cite{bousch-mairesse}, Hare-Morris-Sidorov-Theys \cite{HMST}, Morris-Sidorov \cite{morris-sidorov} and Jenkinson-Pollicott \cite{jenkinson-pollicott}. In fact a concrete  family of examples of pairs $a,b$ for which Proposition \ref{notpoly-bis} holds is as follows:
\begin{equation}\label{expair}a=\begin{pmatrix}
2 & 1\\
3 & 2\\
\end{pmatrix} \textnormal{      and } b=b^{(s)}:=\begin{pmatrix}
2e^{-s} & 3e^s \\
e^{-s} & 2e^s
\end{pmatrix},\end{equation} for any $s\geq 10$. This pair belongs to the class $\mathfrak{E}$ of \cite{jenkinson-pollicott} (see  \cite[\S 2.3]{OregonReyes}) made of full Sturmian pairs.

 Before we pass to the details, we first record some facts about hyperbolic elements in $\SL_2(\R)$ and give two easier examples in $\SL(2,\mathbb{R}) \times \SL(2,\mathbb{R})$ with polygonal joint spectrum.


\subsection{Some ingredients from hyperbolic geometry}\label{subsec.ingredients.hyp.geo}
We recall that $\PSL_2(\R)$ is the group of orientation preserving isometries of the hyperbolic plane. We will work in disc model and let $\mathbb{D}$ be the Poincar\'{e} disc and $\partial \mathbb{D}$ its boundary. A matrix $a \in \SL_2(\R)$ is called hyperbolic if its spectral radius $\lambda_1(a)$ is $>1$. The transformation $a$ has two fixed points on $\partial \mathbb{D}$: an attracting fixed point $x^+_a$ and a repelling one $x^-_a$. The hyperbolic geodesic $(x_a^-,x_a^+)=:\axe(a)$  is called the translation axis of $a$. It is the set of $x \in \mathbb{D}$ such that $d(ax,x)$ is minimal, where $d$ is the hyperbolic metric on $\mathbb{D}$. This minimal quantity is called the translation length of $a$ and we denote it by $\tau_a$. We have $\tau_a=d(ax,x)$ for every $x \in \axe(a)$ and $\tau_a=2\log \lambda_1(a)$.

We shall refer to two hyperbolic elements $a,b \in \SL(2,\mathbb{R})$ with disjoint translation axes as being \emph{in the same direction} if their fixed points on the boundary circle $\partial \mathbb{D}$ are located in this order $(x_a^-,x_a^+,x_b^+,x_b^-)$. 

\begin{lemma}\label{lemma.calculating.product.of.hyperbolics}
Let $a,b \in \SL(2,\mathbb{R})$ be two hyperbolic elements with disjoint translation axes and in the same direction. Let $d > 0 $ denote the distance between their translation axes. We have
$$
\cosh(\frac{\tau_{ab}}{2})=\cosh(d)\sinh(\frac{\tau_a}{2})\sinh(\frac{\tau_b}{2})+\cosh(\frac{\tau_a}{2})\cosh(\frac{\tau_b}{2}),
$$ in particular
\begin{equation}\label{tau+} \tau_a + \tau_b< \tau_{ab} \leq \tau_a + \tau_b + 2d.\end{equation}
\end{lemma}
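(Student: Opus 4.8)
The plan is to reduce the identity to a trace computation. For a hyperbolic $g\in\SL_2(\R)$ one has $\lambda_1(g)=e^{\tau_g/2}$, so $|\tr g|=\lambda_1(g)+\lambda_1(g)^{-1}=2\cosh(\tau_g/2)$; it therefore suffices to compute $\tr(ab)$ and determine its sign. I would first normalize the configuration by conjugating in $\PSL_2(\R)$ so that the common perpendicular of $\axe(a)$ and $\axe(b)$ becomes the imaginary axis of the upper half-plane $\HH$. Then $\axe(a)$ and $\axe(b)$ are Euclidean semicircles centred at $0$, hence automatically symmetric about the imaginary axis; applying a further hyperbolic dilation $z\mapsto\lambda z$ I can arrange that their feet on the imaginary axis are $ie^{-d/2}$ and $ie^{d/2}$, which lie at hyperbolic distance exactly $d$. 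Thus $\axe(a)$ has endpoints $\pm e^{-d/2}$ and $\axe(b)$ has endpoints $\pm e^{d/2}$ on $\partial\HH$, and the hypothesis that $a$ and $b$ are in the same direction fixes (up to replacing the whole picture by its mirror image, which changes nothing) that $e^{-d/2}$ is the attracting fixed point of $a$ and $e^{d/2}$ that of $b$.

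Next I would write down the two matrices explicitly. A hyperbolic translation of length $\tau$ along the geodesic with repelling endpoint $\alpha$ and attracting endpoint $\beta$ equals $\left(\begin{smallmatrix} c-\frac{\alpha+\beta}{\beta-\alpha}s & \frac{2\alpha\beta}{\beta-\alpha}s\\[2pt] -\frac{2}{\beta-\alpha}s & c+\frac{\alpha+\beta}{\beta-\alpha}s\end{smallmatrix}\right)$ with $c=\cosh(\tau/2)$, $s=\sinh(\tau/2)$ (this is $\diag(e^{\tau/2},e^{-\tau/2})$ conjugated by the Möbius map sending $\alpha\mapsto 0$, $\beta\mapsto\infty$; one checks directly that its trace is $2c$ and its determinant $1$). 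In the above normalization $\alpha_a+\beta_a=\alpha_b+\beta_b=0$, so $a=\left(\begin{smallmatrix} c_a & -e^{-d/2}s_a\\ -e^{d/2}s_a & c_a\end{smallmatrix}\right)$ and $b=\left(\begin{smallmatrix} c_b & -e^{d/2}s_b\\ -e^{-d/2}s_b & c_b\end{smallmatrix}\right)$, where $c_\bullet=\cosh(\tau_\bullet/2)$, $s_\bullet=\sinh(\tau_\bullet/2)$. Multiplying, the diagonal entries of $ab$ are $c_ac_b+e^{-d}s_as_b$ and $c_ac_b+e^{d}s_as_b$, hence $\tr(ab)=2c_ac_b+(e^d+e^{-d})s_as_b=2\bigl(\cosh(\tfrac{\tau_a}{2})\cosh(\tfrac{\tau_b}{2})+\cosh(d)\sinh(\tfrac{\tau_a}{2})\sinh(\tfrac{\tau_b}{2})\bigr)$. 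This quantity is $>2$, so $ab$ is hyperbolic and its trace is positive, whence it equals $2\cosh(\tau_{ab}/2)$. This is the asserted identity.

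Finally I would read off $(\ref{tau+})$ from the identity using that $\cosh$ is increasing on $[0,\infty)$. From $\cosh d>1$ together with the addition formula $\cosh(\tfrac{\tau_a}{2})\cosh(\tfrac{\tau_b}{2})+\sinh(\tfrac{\tau_a}{2})\sinh(\tfrac{\tau_b}{2})=\cosh(\tfrac{\tau_a+\tau_b}{2})$ we get $\cosh(\tau_{ab}/2)>\cosh(\tfrac{\tau_a+\tau_b}{2})$, hence $\tau_{ab}>\tau_a+\tau_b$; and expanding $\cosh(\tfrac{\tau_a}{2}+\tfrac{\tau_b}{2}+d)$ by the addition formula and subtracting the right-hand side of the identity leaves $\cosh(\tfrac{\tau_a}{2})\cosh(\tfrac{\tau_b}{2})(\cosh d-1)+\sinh(\tfrac{\tau_a+\tau_b}{2})\sinh d\ge 0$, so $\cosh(\tau_{ab}/2)\le\cosh(\tfrac{\tau_a+\tau_b}{2}+d)$ and $\tau_{ab}\le\tau_a+\tau_b+2d$. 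The only genuinely delicate point is the normalization step: correctly translating "in the same direction" into the assignment of attracting versus repelling endpoints, so that all quantities entering $\tr(ab)$ are positive and no sign or absolute-value bookkeeping is needed in $\tr(ab)=2\cosh(\tau_{ab}/2)$. A coordinate-free alternative is to write $a=R_MR_\gamma$ and $b=R_\gamma R_N$ as products of reflections in geodesics, where $\gamma$ is the common perpendicular of $\axe(a)$ and $\axe(b)$ and $M\perp\axe(a)$, $N\perp\axe(b)$ lie on the appropriate sides at distances $\tau_a/2$, $\tau_b/2$ from $\gamma$; then $ab=R_MR_N$ is the hyperbolic translation of length $2\,\mathrm{dist}(M,N)$ along the common perpendicular of $M$ and $N$, and $\mathrm{dist}(M,N)$ is evaluated by elementary hyperbolic trigonometry, yielding the same formula.
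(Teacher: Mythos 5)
Your proposal is correct and takes essentially the same route as the paper's (very terse) proof: both derive the displayed identity from the trace relation $\tr(ab)=2\cosh(\tau_{ab}/2)$ via a direct computation in normalized coordinates, and both read off the inequalities from monotonicity of $\cosh$. You simply fill in the trace computation that the paper dismisses as ``routine'' (your explicit upper-half-plane matrices and your check that $\tr(ab)>2$ forces the positive sign are exactly what is needed), and your derivation of the upper bound by expanding $\cosh\bigl(\tfrac{\tau_a}{2}+\tfrac{\tau_b}{2}+d\bigr)$ and subtracting is a minor algebraic variant of the paper's chain $\cosh(\tau_{ab}/2)\leq\cosh(d)\cosh\bigl(\tfrac{\tau_a+\tau_b}{2}\bigr)\leq\cosh\bigl(d+\tfrac{\tau_a+\tau_b}{2}\bigr)$.
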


\begin{proof}(sketch)
One routinely computes the trace of $ab$ in terms of that of $a$, $b$ and the distance $d$. For the inequality on the right hand side note that $$\cosh(\frac{\tau_{ab}}{2}) \leq \cosh(d)\cosh(\frac{\tau_a}{2}+\frac{\tau_b}{2}) \leq \cosh(d+\frac{\tau_a}{2}+\frac{\tau_b}{2}),$$ since $\cosh(x+y)=\sinh(x)\sinh(y)+\cosh(x)\cosh(y)$ and $\cosh(y)\cosh(x) \leq \cosh(x+y)$ for positive $x,y$.
\end{proof}


Given a finite word $w$ in the alphabet $\{0,1\}$, we denote by $w(a,b)$ the element of $\SL(2,\mathbb{R})$ obtained by replacing  $0$ by $a$ and $1$ by $b$, and taking the corresponding product in $\SL(2,\mathbb{R})$. 

\begin{remark}\label{remark.disjoint.axis} Note that if $a,b$ are as in Lemma \ref{lemma.calculating.product.of.hyperbolics} and $w$ a finite word, then $w(a,b)$, too, is hyperbolic and its translation axis lies in between those of $a$ and $b$, i.e. contained in the connected component of $\mathbb{D}\setminus (\axe(a)\cup \axe(b))$ whose boundary contains both $\axe(a)$ and $\axe(b)$.
\end{remark}

\begin{lemma}\label{word-comp}Let $a,b$ be as in Lemma \ref{lemma.calculating.product.of.hyperbolics} and $w,w'$ be two finite words of the same length in the alphabet $\{0,1\}$ that differ by at most $\ell$ different letters. Then
$$|\tau_{w(a,b)} - \tau_{w'(a,b)}| \leq \ell ( 2d + |\tau_b-\tau_a|\}).$$
\end{lemma}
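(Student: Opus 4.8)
The plan is to reduce to comparing translation lengths of products that differ by a single letter, and then to apply Lemma~\ref{lemma.calculating.product.of.hyperbolics}.

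\textbf{Step 1 (reduction).} Since $w$ and $w'$ have equal length and differ in at most $\ell$ positions, I would interpolate by a chain $w=w_0,w_1,\dots,w_\ell=w'$ of words of that common length, with $w_{j-1}$ and $w_j$ differing in at most one position; by the triangle inequality the asserted bound then follows by summing the case $\ell=1$. For one differing letter, write $w=XxY$ and $w'=XyY$ with $x,y$ single letters and $X,Y$ (possibly empty) words, and let $x_\star,y_\star\in\{a,b\}$ be the matrices assigned to $x,y$. By conjugacy invariance of the translation length, $\tau_{w(a,b)}=\tau_{Wx_\star}$ and $\tau_{w'(a,b)}=\tau_{Wy_\star}$ where $W:=(YX)(a,b)$; if $x=y$ there is nothing to prove, and otherwise $\{x_\star,y_\star\}=\{a,b\}$, so by symmetry it suffices to prove
\[
|\tau_{Wa}-\tau_{Wb}|\le 2d+|\tau_b-\tau_a|
\]
for an arbitrary (possibly trivial) word $W$ in $a,b$.

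\textbf{Step 2 (core two-sided estimate).} I would establish the sharper bound
\[
\tau_W+\tau_c\ \le\ \tau_{Wc}\ \le\ \tau_W+\tau_c+2d\qquad(c\in\{a,b\}),
\]
with the convention $\tau_W=0$ for empty $W$; subtracting the instances $c=a$ and $c=b$ yields the displayed inequality of Step~1 at once. The empty case is trivial. If $W$ is a nontrivial word then it is hyperbolic, and by (the argument behind) Remark~\ref{remark.disjoint.axis} its axis lies in the closed region of $\mathbb{D}$ bounded by $\axe(a)$, $\axe(b)$ and the two complementary boundary arcs, with $x_W^+\in[x_a^+,x_b^+]$ and $x_W^-\in[x_b^-,x_a^-]$ (because $a,b$ preserve the arc $[x_a^+,x_b^+]$ and $a^{-1},b^{-1}$ preserve $[x_b^-,x_a^-]$, hence so does any word in them). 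Consequently $\axe(W)$ separates $\axe(a)$ from $\axe(b)$ inside that region unless it coincides with one of them; in the separating case the common perpendicular of $\axe(a)$ and $\axe(b)$, of length $d$, must cross $\axe(W)$, so $d(\axe(W),\axe(a))\le d$ and $d(\axe(W),\axe(b))\le d$, and moreover each pair $(c,W)$ consists of hyperbolic elements with disjoint axes in the same direction (read off from the location of $x_W^\pm$ relative to $x_a^\pm,x_b^\pm$). Lemma~\ref{lemma.calculating.product.of.hyperbolics} applied to the pair $(c,W)$, together with $\tau_{cW}=\tau_{Wc}$, then yields exactly the two-sided bound, the strict lower part being $(\ref{tau+})$. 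In the remaining case $\axe(W)=\axe(c')$ for some generator $c'$, so $W=(c')^{k}$ with $k\ge1$: if $c=c'$ then $Wc=c^{k+1}$ and $\tau_{Wc}=(k+1)\tau_c=\tau_W+\tau_c$, which lies in the interval; if $c\ne c'$ then $\axe(W)=\axe(c')\ne\axe(c)$ and $(c,W)=(c,(c')^{k})$ is again a pair as in Lemma~\ref{lemma.calculating.product.of.hyperbolics} (same direction, axes at distance $d$), giving the bound as before.

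\textbf{Main obstacle.} The delicate part is the hyperbolic-geometry bookkeeping in Step~2: confirming that $\axe(W)$ really sits between $\axe(a)$ and $\axe(b)$ and separates them, so that its distance to each is at most $d$ and the pairs $(c,W)$ are ``in the same direction'' in the precise sense required by Lemma~\ref{lemma.calculating.product.of.hyperbolics}; and dealing with the degenerate configurations in which $\axe(W)$ coincides with $\axe(a)$ or $\axe(b)$ (equivalently, $W$ is a pure power), where that lemma does not apply to $(c,W)$ directly but the required estimate is recovered either by the elementary identity $\tau_{(c')^{k+1}}=(k+1)\tau_{c'}$ or by applying the lemma to $(c,(c')^{k})$ when $c\ne c'$. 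Everything else is a routine combination of conjugacy invariance of the translation length and the triangle inequality.
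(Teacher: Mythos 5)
Your proof is correct and rests on the same core mechanism as the paper's — approximate additivity of translation lengths, via Lemma~\ref{lemma.calculating.product.of.hyperbolics} and Remark~\ref{remark.disjoint.axis} — but the decomposition is different. The paper writes $w=a_1x_1\cdots a_\ell x_\ell$, $w'=a_1y_1\cdots a_\ell y_\ell$, exposing all $\ell$ differing positions simultaneously, and compares $\tau_{w(a,b)}$ to the additive sum $\sum_i(\tau_{a_i}+\tau_{x_i})$ up to an error stated as $2d\ell$. You instead interpolate by a chain of $\ell$ single-letter substitutions and, after cyclically conjugating to park the modified letter at the end, reduce each step to the two-sided bound $\tau_W+\tau_c\le\tau_{Wc}\le\tau_W+\tau_c+2d$, $c\in\{a,b\}$. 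Your route reproduces the stated constant $2d+|\tau_b-\tau_a|$ per change with no slack, whereas a literal count of concatenation joints in the paper's simultaneous factorization gives an error closer to $2d(2\ell-1)$ on each side, so the terse sketch there does not visibly produce the advertised constant (this does not affect the truth of the lemma). You also spell out, more carefully than the paper, why $d(\axe(W),\axe(c))\le d$ for an arbitrary word $W$ and a generator $c$ (separation by $\axe(W)$ of the common perpendicular), and you handle explicitly the degenerate configurations $\axe(W)=\axe(a)$ or $\axe(W)=\axe(b)$, which Remark~\ref{remark.disjoint.axis} leaves implicit. In short: same key lemma, cleaner bookkeeping and sharper accounting of the constant.
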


\begin{proof} We may write $w=a_1x_1a_2x_2\ldots a_\ell x_\ell$ and $w=a_1y_1a_2y_2\ldots a_\ell y_\ell$ for some subwords $a_i$'s and letters $x_i,y_i \in \{0,1\}$. Now by $(\ref{tau+})$ and Remark \ref{remark.disjoint.axis} we see that $$|\tau_{w(a,b)} - \sum_i (\tau_{a_i} + \tau_{x_i})| \leq 2d \ell.$$ A similar inequality holds for $\tau_{w'(a,b)}$ and taking the difference yields the lemma.
\end{proof}

\begin{lemma}\label{corollary.substitute.and.increase}
Let $a,b$ be as in Lemma \ref{lemma.calculating.product.of.hyperbolics} and assume that $\tau_b \geq \tau_a + 2d +1$. Let $w$ be a finite word in $\{0,1\}$. If $\tilde{w}$ is a word obtained from $w$ by replacing one occurrence of $0$ by $1$, then $\tau_{\tilde{w}(a,b)}\geq\tau_{w(a,b)} +1$. 
\end{lemma}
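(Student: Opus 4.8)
The plan is to reduce everything to a single ``switch'', using conjugacy--invariance of the translation length, and then to control that switch via Lemma \ref{lemma.calculating.product.of.hyperbolics}. Write $w$ as a concatenation of strings $w=u\,0\,v$, where the displayed letter $0$ is the occurrence that gets replaced, so that $\tilde w=u\,1\,v$. Put $h:=v(a,b)\,u(a,b)\in\SL_2(\R)$, with the convention that the empty word yields the identity matrix. Since $\tau_g=2\log\lambda_1(g)$ depends only on the conjugacy class of $g$, cyclic permutation gives
\[
\tau_{w(a,b)}=\tau_{u(a,b)\,a\,v(a,b)}=\tau_{ah},\qquad \tau_{\tilde w(a,b)}=\tau_{bh}.
\]
Hence the lemma follows once we establish the two inequalities
\[
\tau_{ah}\le \tau_a+\tau_h+2d\qquad\text{and}\qquad \tau_{bh}\ge \tau_b+\tau_h,
\]
because then $\tau_{\tilde w(a,b)}-\tau_{w(a,b)}=\tau_{bh}-\tau_{ah}\ge(\tau_b+\tau_h)-(\tau_a+\tau_h+2d)=\tau_b-\tau_a-2d\ge 1$ by the standing hypothesis $\tau_b\ge\tau_a+2d+1$.

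To prove these two inequalities I would first dispose of the degenerate cases $h\in\{\mathrm{id},a,b\}$ by inspection: if $h=\mathrm{id}$ they read $\tau_a\le\tau_a+2d$ and $\tau_b\le\tau_b$; if $h=a$ they become $2\tau_a\le 2\tau_a+2d$ and $\tau_{ba}=\tau_{ab}>\tau_a+\tau_b$, and if $h=b$ they become $\tau_{ab}\le\tau_a+\tau_b+2d$ and $2\tau_b=\tau_b+\tau_h$ --- all immediate from $(\ref{tau+})$ applied to the pair $a,b$ (together with $\tau_{ab}=\tau_{ba}$). In the remaining case $h$ is a word of length $\ge 2$ in $a,b$; by Remark \ref{remark.disjoint.axis} it is hyperbolic and $\axe(h)$ lies strictly between $\axe(a)$ and $\axe(b)$, i.e.\ in the component $R$ of $\mathbb{D}\setminus(\axe(a)\cup\axe(b))$ whose boundary contains both of them, with ideal endpoints $x_h^+,x_h^-$ on the two arcs of $\partial R$. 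Reading off the cyclic order of the six ideal points $x_a^-,x_a^+,x_h^+,x_b^+,x_b^-,x_h^-$ shows that the pair $a,h$ is in the same direction and so is the pair $h,b$. Moreover the common perpendicular geodesic segment of $\axe(a)$ and $\axe(b)$ lies in $\overline R$ and, since $\axe(h)$ separates $R$ with $\axe(a)$ and $\axe(b)$ on opposite sides, it must cross $\axe(h)$ at some point $Q$; then $d(\axe(a),\axe(h))\le d(\axe(a),Q)\le d(\axe(a),\axe(b))=d$, and likewise $d(\axe(h),\axe(b))\le d$. Applying Lemma \ref{lemma.calculating.product.of.hyperbolics} to the pairs $(a,h)$ and $(h,b)$ and using $\tau_{hb}=\tau_{bh}$ now yields $\tau_{ah}\le\tau_a+\tau_h+2\,d(\axe(a),\axe(h))\le\tau_a+\tau_h+2d$ and $\tau_{bh}=\tau_{hb}>\tau_h+\tau_b$, which are exactly the two required inequalities.

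The only genuinely delicate points are bookkeeping ones: keeping track of the degenerate cases $h\in\{\mathrm{id},a,b\}$ (where $\axe(h)$ need not lie \emph{strictly} inside $R$), and justifying the geometric distance bound $d(\axe(a),\axe(h))\le d$ from $\axe(h)\subset R$ via the crossing argument with the common perpendicular. Once those are in place, the statement is a formal consequence of conjugacy--invariance of $\tau$, Remark \ref{remark.disjoint.axis}, and Lemma \ref{lemma.calculating.product.of.hyperbolics}.
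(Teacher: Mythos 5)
Your approach is essentially the same as the paper's: cyclically conjugate so the replaced letter sits at one end (you put it at the front, $w(a,b)\sim ah$; the paper puts it at the end, $w(a,b)=w'(a,b)\,a$), then compare $\tau_{ah}$ and $\tau_{bh}$ via $(\ref{tau+})$ applied to the pairs $(a,h)$ and $(b,h)$. Your write-up is more careful than the paper's four-line proof in that it actually verifies the hypotheses of Lemma \ref{lemma.calculating.product.of.hyperbolics} for these pairs (disjoint axes, same direction) and bounds $d(\axe(a),\axe(h))$ by $d$ through the crossing argument, whereas the paper silently takes these facts for granted.

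There is, however, a small genuine gap in your case analysis. After dismissing $h\in\{\id,a,b\}$, you invoke Remark \ref{remark.disjoint.axis} to assert that for $h$ of length at least $2$ the axis $\axe(h)$ lies \emph{strictly} inside the region $R$ between $\axe(a)$ and $\axe(b)$ and hence separates them. This fails when $h=a^n$ or $h=b^n$ with $n\ge 2$ (which does occur, e.g.\ $w=0^m$ with an interior $0$ replaced): there $\axe(h)$ \emph{coincides} with $\axe(a)$ or $\axe(b)$, the separation claim is false, and, more to the point, Lemma \ref{lemma.calculating.product.of.hyperbolics} is not applicable to the pair $(a,h)$ because their axes are not disjoint. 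These cases are easy to patch (for $h=a^n$ one has $\tau_{ah}=\tau_a+\tau_h$ outright, and $\tau_{bh}>\tau_b+\tau_h$ by $(\ref{tau+})$ applied to $(b,a^n)$; symmetrically for $h=b^n$), but they must be listed as extra base cases rather than absorbed into the generic crossing argument. The paper's own one-sentence justification, that $w'(a,b)$ and $a$ ``satisfy the hypotheses of Lemma \ref{lemma.calculating.product.of.hyperbolics}'', has precisely the same blind spot, so this is a defect worth flagging even though it is easily repaired.
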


\begin{proof}
If $w$ does not contain any occurrence of $0$, there is nothing to prove. Suppose it does. Then we may conjugate it cyclically (this does not change the translation length) and assume that the replaced letter $0$ appears at the end of $w$, i.e. $w=w'0$ and $\tilde{w}=w'1$. The elements $w'(a,b)$ and $a$ as well as $w'(a,b)$ and $b$ satisfy the hypotheses of Lemma \ref{lemma.calculating.product.of.hyperbolics} and the result follows immediately from $(\ref{tau+})$.
\end{proof}

\subsection{Some polygonal examples}\label{subsec.some.polygo}

We say that a bounded set $S \subset \Mat(d,\mathbb{R})$ has (Lagarias-Wang) finiteness length $n \in \mathbb{N}$ for the (upper) joint spectral radius if $n$ is the least natural number such that there exists $g \in S^n$ with $R(S)=\lambda_1(g)^{\frac{1}{n}}$, where we recall that $R(S)$ stands for the joint spectral radius of $S$ and $\lambda_1(g)$ is the spectral radius of the matrix $g$. If there is no such $n$, we say that $S$ is a finiteness counterexample. We define similarly the finiteness length of $S$ for the lower joint spectral radius $R_{sub}(S)$. 

Recall that it had been conjectured in \cite{lagarias-wang} that every finite set of matrices has finite finiteness length for the upper joint spectral radius. This was first refuted by Bousch-Mairesse \cite{bousch-mairesse}. We note the following:

\begin{lemma}\label{finiteness} Let  $a,b$ be are two hyperbolic elements in $\SL_2(\R)$ as in Lemma \ref{corollary.substitute.and.increase}. Then $\{a,b\}$ has finiteness of  length $1$ for both the  upper and lower joint spectral radii.
\end{lemma}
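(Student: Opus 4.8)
The plan is to prove that $R(S)=\lambda_1(b)$ and $R_{sub}(S)=\lambda_1(a)$ for $S=\{a,b\}$; since $b\in S$ and $a\in S$, this immediately gives that the finiteness length equals $1$ (clearly the least possible value) for the upper and the lower joint spectral radius respectively. Recall that $\tau_c=2\log\lambda_1(c)$ for a hyperbolic $c\in\SL_2(\R)$, so the hypothesis $\tau_b\ge\tau_a+2d+1$ of Lemma \ref{corollary.substitute.and.increase} gives in particular $\lambda_1(b)\ge\lambda_1(a)>1$.

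The key input is the two-sided estimate: for every word $w$ in $\{0,1\}$ of length $n\ge1$,
\begin{equation}\label{eq.wordbound}
\lambda_1(a)^n\ \le\ \lambda_1(w(a,b))\ \le\ \lambda_1(b)^n,\qquad\text{equivalently}\qquad n\tau_a\ \le\ \tau_{w(a,b)}\ \le\ n\tau_b .
\end{equation}
By Remark \ref{remark.disjoint.axis} the element $w(a,b)$ is hyperbolic, so $\lambda_1(w(a,b))=e^{\tau_{w(a,b)}/2}$ and the two formulations are equivalent. To obtain the upper bound in $(\ref{eq.wordbound})$, I would replace the occurrences of the letter $0$ in $w$ by $1$ one at a time: by Lemma \ref{corollary.substitute.and.increase} the translation length increases at each step, and after exhausting all the $0$'s one reaches the word $1^n$, i.e. the element $b^n$, whose translation length is $\tau_{b^n}=n\tau_b$ (since $\lambda_1(b^n)=\lambda_1(b)^n$). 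Hence $\tau_{w(a,b)}\le n\tau_b$. Reading Lemma \ref{corollary.substitute.and.increase} in the other direction, replacing an occurrence of $1$ by $0$ strictly decreases the translation length; exhausting the $1$'s in $w$ produces $a^n$ with $\tau_{a^n}=n\tau_a$, so $\tau_{w(a,b)}\ge n\tau_a$, which is the lower bound. (The edge cases $w=0^n$ and $w=1^n$ are trivial.)

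With $(\ref{eq.wordbound})$ in hand the conclusion is short. For the upper joint spectral radius I will use the Berger--Wang identity $R(S)=\limsup_{n\to+\infty}\max_{g\in S^n}\lambda_1(g)^{1/n}$ (\cite{berger-wang}, recalled in \S\ref{subsection.joint.spectral}): by $(\ref{eq.wordbound})$, $\lambda_1(g)^{1/n}\le\lambda_1(b)$ for all $g\in S^n$, so $R(S)\le\lambda_1(b)$, while $b\in S^1$ gives $R(S)\ge\lambda_1(b)$; thus $R(S)=\lambda_1(b)=\lambda_1(b)^{1/1}$. For the lower joint spectral radius, every $g\in S^n$ satisfies $\|g\|\ge\lambda_1(g)\ge\lambda_1(a)^n$ by $(\ref{eq.wordbound})$, whence $\min_{g\in S^n}\|g\|^{1/n}\ge\lambda_1(a)$ for all $n$ and therefore $R_{sub}(S)\ge\lambda_1(a)$; conversely $a^n\in S^n$ and the spectral radius formula gives $\|a^n\|^{1/n}\to\lambda_1(a)$, so $R_{sub}(S)\le\lambda_1(a)$, i.e. $R_{sub}(S)=\lambda_1(a)=\lambda_1(a)^{1/1}$. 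The argument is essentially forced by the preceding lemmas; the only point requiring a little care is the bookkeeping in iterating Lemma \ref{corollary.substitute.and.increase} in both directions and the observation (Remark \ref{remark.disjoint.axis}) that every positive word $w(a,b)$ is hyperbolic, so that $(\ref{eq.wordbound})$ is meaningful. Note that no Schottky or domination property of the pair $\{a,b\}$ itself, and no Zariski-density, enters the proof — only $\|g\|\ge\lambda_1(g)$ and the classical Berger--Wang theorem.
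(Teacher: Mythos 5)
Your proof is correct and takes essentially the same approach as the paper: the paper's one‑line argument is exactly the iterated application of Lemma~\ref{corollary.substitute.and.increase} to obtain $\tau_{a^n}\le\tau_{w(a,b)}\le\tau_{b^n}$ for every length‑$n$ word $w$, from which the claim follows. You have simply spelled out the bookkeeping (the iteration in both directions, the appeal to Berger--Wang for $R(S)$ and to $\|g\|\ge\lambda_1(g)$ plus the spectral radius formula for $R_{sub}(S)$) that the paper leaves implicit.
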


\begin{proof} Let $n \in \N$. Applying Lemma \ref{corollary.substitute.and.increase} repeatedly we see that $\tau_{a^n} \leq \tau_{w(a,b)} \leq \tau_{b^n}$ for every word $w$ of length $n$. The lemma follows. 
\end{proof}

\noindent \textit{Example.} ${}$ Let  $a,b$ be are two hyperbolic elements in $\SL_2(\R)$ as in Lemma \ref{corollary.substitute.and.increase}. Consider the subset $S$ of $G=\SL_2(\R) \times \SL_2(\R)$ given by $S=\{(a,a),(b,b),(a,b)\}$. Using Lemma  \ref{corollary.substitute.and.increase} it is straightforward to verify that the joint spectrum $J(S)$ of $S$ is the red triangle shown in Figure \ref{fig.tri} below.
\begin{figure}[H]
\begin{minipage}{0.45\textwidth}
\begin{tikzpicture}
\coordinate (A) at (1, 1) {};
\coordinate (B) at (4, 1) {};
\coordinate (C) at (4,  4) {};
\node[circle,inner sep=0.5pt,fill=black,label=left:{$\frac{\tau_a}{2}$}] at (0,1) {};
\node[circle,inner sep=0.5pt,fill=black,label=below:{$\frac{\tau_a}{2}$}] at (1,0) {};
\node[circle,inner sep=0.5pt,fill=black,label=below:{$\frac{\tau_b}{2}$}] at (4,0) {};
\node[circle,inner sep=0.5pt,fill=black,label=left:{$\frac{\tau_b}{2}$}] at (0,4) {};
\filldraw[draw=black, fill=red, line width=1.5pt]
(A) -- (B) --  (C) --cycle;
\draw[semithick] (0,0) -- (5,0);
\draw[semithick] (0,0) -- (0,5);
\draw[dashed] (0,0)--(5,5);
\end{tikzpicture}
\caption{} \label{fig.tri}
\end{minipage}
\begin{minipage}{0.5\textwidth}
\vspace{-0.5cm}
Indeed replacing each letter $(a,a)$ by $(a,b)$ in a finite word in $S$ does not alter the spectral radius of the first coordinate, but increases that of the second coordinate by Lemma \ref{corollary.substitute.and.increase}.  Observe moreover that the set $S$ generates a Zariski dense semigroup and has \textit{finiteness of length $1$ for the joint spectrum} in the sense that $co(\lambda(S))=J(S)$ where $co(.)$ denotes the convex hull and $\lambda(.)$ is the Jordan projection. 
\end{minipage}
\end{figure}

\vspace{0.5cm}

\noindent \textit{Example.} ${}$
This time let $a$ and $b$ be two hyperbolic elements with disjoint axes and in the same direction satisfying $\tau_a=\tau_b$. It follows from $(\ref{tau+})$ and Remark \ref{remark.disjoint.axis} that the set $\{a,b\}$ has finiteness of length $1$ for the lower joint spectral radius. It is also not difficult to verify (see \cite{breuillard-sert.finiteness} for details) that it has finiteness of length $2$ for the (upper) joint spectral radius. Setting $S_1=\{(a,a),(a,b),(b,a)\}$, it follows that $J(S_1)$ is the square shown in Figure \ref{fig.square} below. 

\begin{figure}[H]
\begin{minipage}{0.45\textwidth}
\begin{tikzpicture}
\coordinate (A) at (1, 1) {};
\coordinate (B) at (4, 1) {};
\coordinate (D) at (4, 4) {};
\coordinate (C) at (1, 4) {};
\node[circle,inner sep=0.5pt,fill=black,label=left:{$\frac{\tau_a}{2}$}] at (0,1) {};
\node[circle,inner sep=0.5pt,fill=black,label=below:{$\frac{\tau_a}{2}$}] at (1,0) {};
\node[circle,inner sep=0.5pt,fill=black,label=below:{$\frac{\tau_{ab}}{4}$}] at (4,0) {};
\node[circle,inner sep=0.5pt,fill=black,label=left:{$\frac{\tau_{ab}}{4}$}] at (0,4) {};
 
\filldraw[draw=black, fill=red, line width=1.5pt]
(A) -- (B) -- (D) -- (C) --cycle;
\draw[semithick] (0,0) -- (5,0);
\draw[semithick] (0,0) -- (0,5);
\draw[dashed] (0,0)--(5,5);
\end{tikzpicture}
\caption{} \label{fig.square}
\end{minipage}
\begin{minipage}{0.5\textwidth}
\vspace{-1cm}
Moreover, the set $S$ has finiteness of length $2$ for the joint spectrum. We also note that setting $S_2:=S_1 \cup \{(b,b)\}$, we have $J(S_1)=J(S_2)$. Setting $S_3:=S_2 \cup \{(e,e)\}$, we have $J(S_3)=co(J(S_2) \cup \{(0,0\})$, which is a rhombus with vertices $(0,0),(\frac{\tau_a}{2},\frac{\tau_{ab}}{4}),(\frac{\tau_{ab}}{4},\frac{\tau_a}{2}),(\frac{\tau_{ab}}{4},\frac{\tau_{ab}}{4})$. $\diamond$
\end{minipage} 
\end{figure}

\subsection{A non-polygonal joint spectrum in $\SL(2,\mathbb{R}) \times \SL(2,\mathbb{R})$} \label{subsection.polygonal}
In preparation for the proof of Proposition \ref{notpoly-bis}, we recall some useful results from \cite{bousch-mairesse,HMST, morris-sidorov,jenkinson-pollicott}.

\subsubsection{Sturmian pairs of matrices}

We first recall the notion of \emph{sturmian measure} on the space of one-sided infinite words in two letters $\Omega:=\{0,1\}^{\N}$ endowed with the left shift $\sigma: \Omega \to \Omega$ sending $w:=x_0x_1x_2\ldots$ to $\sigma(w)=x_1x_2\ldots$. If $w \in \{0,1\}^{\mathbb{N}}$ we denote by $w_n$ its prefix of length $n \in \mathbb{N}$, i.e. its first $n$ digits.

For every $\alpha \in [0,1]$, there exists a unique $\sigma$-invariant Borel probability measure $\mu_\alpha$ on $\Omega$ such that for $\mu_\alpha$-almost every $w \in \Omega$ and for every $n \in \N$
$$|-n\alpha + N_1(w_n)|<1,$$
where $N_1(w_n)$ denotes the number of digits $1$ occuring in $w_n$. The measure $\mu_\alpha$ is called \emph{the sturmian measure with parameter $\alpha$} on $\Omega$. It is $\sigma$-ergodic.  Its parameter $\alpha$ is rational if and only if $\mu_\alpha$-a.e. word in $\Omega$ is pre-periodic. We refer the reader to \cite{bousch-mairesse} for this background.

Now let $a,b$ be two matrices in $M_d(\mathbb{R})$ and set $S=\{a,b\}$. According to the Berger-Wang identity \cite{berger-wang} the joint spectral radius $R(S)$ satisfies:
 $$\log R(S)=\limsup_{n} \frac{1}{n}\log \max_{g \in S^n} \lambda_1(g)$$

It is not hard to show, see \cite[Thm. 3.1]{daubechies-lagarias}, that there always exists $w \in \{0,1\}^{\mathbb{N}}$ such that $$\log R(S)=\lim_n \frac{1}{n} \log \|w_n(a,b)\|.$$

We shall say that the (ordered) pair $(a,b)$ is \emph{sturmian} if the following stronger property holds: there is a unique $\alpha \in [0,1]$ such that  for $\mu_\alpha$-almost every infinite word $w$, we have $$\log R(\{a,b\})=\lim_n \frac{1}{n}\log \lambda_1(w_n(a,b)).$$

For a sturmian pair $(a,b)$, we shall refer to the parameter $\alpha \in [0,1]$ as sturmian parameter of the pair. Note then that $(b,a)$ is sturmian with paramater $1-\alpha$.

A pair $(a,b)$ is called \emph{full sturmian} if for every $t>0$, the pair $(a,tb)$ is sturmian. In \cite{jenkinson-pollicott}, Jenkinson-Pollicott exhibited an explicit open subset $\mathfrak{E}$ of $M_2(\mathbb{R})^2$ consisting of full sturmian pairs. Given a full sturmian pair $(a,b)$, we denote by $$p:\mathbb{R}^*_+ \to [0,1]$$ the map sending $t>0$ to  the sturmian parameter $p(t)$ of the pair $(a,tb)$. 

As shown in \cite[Thm. 9]{jenkinson-pollicott} for every pair in $\mathfrak{E}$, the parameter mapping $p$ is continuous, non-decreasing and surjective with the property that $p^{-1}(\alpha)$ is a singleton if $\alpha \in [0,1]$ is irrational and a closed interval of positive length of $\alpha$ is rational.

Note that the above results immediately imply the existence of uncountably many pairs which are finiteness counterexamples for the joint spectral radius: given a full sturmian pair $(a,b)$, for every $t$ such that the value $p(t)$ is  irrational $S_t:=\{a,tb\}$ is a finiteness counterexample. Likewise, the uniqueness of sturmian measures implies the existence of a finiteness counterexample in $\SL(2,\mathbb{R})$ as shown in  \cite[\S 2.3]{OregonReyes}) thanks to a remark of I. Morris that for every $s\geq 1$ the pair $(a,b^{(s)})$ given in $(\ref{expair})$ 
belongs to the aforementioned set $\mathfrak{E}$.

\subsubsection{Ratio-constrained joint spectral radius function}

Let $(u,v)$ be a pair of matrices in $M_d(\mathbb{R})$. We define their \emph{ratio-constrained joint spectral radius function} $I:[0,
1] \to \mathbb{R}_+$ by
$$
I(\alpha)=\inf_{\eps>0} \sup_{n \geq 1} \{ \frac{1}{n} \log \lambda_1(w_n(u,v)) \, | \, \frac{1}{n}N_1(w_n) \in [\alpha-\eps,\alpha+\eps], \, w \in \{0,1\}^{\mathbb{N}}\}
$$
where, as before, for an infinite word $w$ in the alphabet $\{0,1\}$, $w_n$ denotes the prefix of length $n$ and  $N_1(w_n)$ the number of occurrences of $1$ in $w_n$. Also as before $\lambda_1(g)$ is the spectral radius of $g \in M_d(\R)$.

We note that a very similar function has been studied by Morris-Sidorov \cite{morris-sidorov} for a particular pair of unipotent elements of $\SL(2,\mathbb{R})$ and our study in this paragraph is similar to theirs, except we deal with hyperbolic matrices.

\begin{lemma}\label{lemma.precise.at.rationals} Let $(a,b)$ be a pair of hyperbolic matrices in $\SL_2(\R)$ with disjoint axes and in the same direction as in Lemma  \ref{lemma.calculating.product.of.hyperbolics}. The associated function $I(\alpha)$ is continuous and for every $\alpha \in [0,1] \cap \mathbb{Q}$, we have
$$
I(\alpha)=\sup_{\underset{n\alpha \in \mathbb{N}}{n \geq 1}} \frac{1}{n} \{\log \lambda_1(w_n(a,b)) \, |\, \frac{1}{n}N_1(w_n)=\alpha\}
$$
\end{lemma}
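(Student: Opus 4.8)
The function $I(\alpha)$ is, by definition, an infimum over $\eps>0$ of suprema over prefixes $w_n$ with frequency of $1$'s lying in $[\alpha-\eps,\alpha+\eps]$. The plan is to first establish that for any $\alpha$ the defining quantity can be computed using only prefixes whose $1$-frequency is exactly $\alpha$ when $\alpha$ is rational (this is where rationality is essential, since then $n\alpha\in\N$ for infinitely many $n$), and then to upgrade this to a genuine $\sup$ via an averaging/concatenation argument, while separately proving continuity of $\alpha\mapsto I(\alpha)$ on $[0,1]$.

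First I would prove continuity. The key tool is Lemma \ref{word-comp}: if $w,w'$ are words of the same length $n$ differing in at most $\ell$ letters, then $|\tau_{w(a,b)}-\tau_{w'(a,b)}|\le \ell(2d+|\tau_b-\tau_a|)$, equivalently $|\log\lambda_1(w(a,b))-\log\lambda_1(w'(a,b))|\le \tfrac{\ell}{2}(2d+|\tau_b-\tau_a|)$. Given a word $w_n$ with $\tfrac1n N_1(w_n)$ near $\alpha$, one may flip at most $O(|\alpha-\alpha'|n)$ of its letters to obtain a word with $1$-frequency near $\alpha'$; this shows that the sup in the definition of $I$ at parameter $\alpha$ and at parameter $\alpha'$ differ by at most $C|\alpha-\alpha'|$ for a constant $C=C(a,b)$, uniformly in $n$. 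Passing to the limit gives that $I$ is Lipschitz, hence continuous, on $[0,1]$. (One should be a little careful that the $\inf_{\eps>0}$ and $\sup_n$ interchange harmlessly here; since the Lipschitz bound is uniform in $n$ and $\eps$, this is routine.)

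Next I would prove the formula at rational $\alpha=p/q$. The inequality "$\le$" in the claimed identity is essentially trivial: any prefix $w_n$ with $\tfrac1n N_1(w_n)=\alpha$ satisfies in particular $\tfrac1n N_1(w_n)\in[\alpha-\eps,\alpha+\eps]$ for every $\eps>0$, so the right-hand supremum is dominated by the left-hand infimum. For the reverse inequality "$\ge$" one fixes $\eps>0$ and a prefix $w_n$ with $\tfrac1n N_1(w_n)\in[\alpha-\eps,\alpha+\eps]$ with spectral radius close to the sup; one wants to replace $w_n$ by a word $w'_{n'}$ of length $n'$ a multiple of $q$ (so that exact frequency $\alpha$ is attainable) with $\tfrac1{n'}N_1(w'_{n'})=\alpha$ and $\tfrac1{n'}\log\lambda_1(w'_{n'}(a,b))\ge \tfrac1n\log\lambda_1(w_n(a,b))-o(1)$ as $\eps\to0$. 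The idea is: flip $O(\eps n)$ letters of $w_n$ to make the frequency exactly $p/q$ rounded appropriately, then pad by repeating a block, using that $\tau_{uv}\ge\tau_u+\tau_v$ from \eqref{tau+} (monotonicity under concatenation, valid since by Remark \ref{remark.disjoint.axis} all the relevant words have axes in the same pencil and are "in the same direction"), so that concatenating many copies only helps; Lemma \ref{word-comp} controls the cost of the $O(\eps n)$ flips, which becomes negligible per unit length as $\eps\to0$. Letting $\eps\to0$ then yields the inequality $I(\alpha)\ge \sup_{n:\,n\alpha\in\N}\tfrac1n\{\log\lambda_1(w_n(a,b)):\tfrac1n N_1(w_n)=\alpha\}$.

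The main obstacle I anticipate is the bookkeeping in this last padding-and-flipping step: one must simultaneously control the $1$-frequency (to land exactly on $\alpha$, which forces $n'$ to be a multiple of the denominator $q$ and requires a small correction whose size must be $o(n)$), the spectral radius (using concatenation monotonicity, which requires knowing the partial products stay hyperbolic with axes correctly positioned — Remark \ref{remark.disjoint.axis} handles this but one must be sure the flipped words still satisfy its hypotheses), and the continuity/interchange of $\inf_\eps$ and $\sup_n$. The hyperbolic-geometry inputs (Lemmas \ref{lemma.calculating.product.of.hyperbolics}, \ref{word-comp}, \ref{corollary.substitute.and.increase} and Remark \ref{remark.disjoint.axis}) supply exactly the needed estimates, so the difficulty is organizational rather than conceptual; the rationality of $\alpha$ enters precisely to guarantee a sequence of lengths $n$ with $n\alpha\in\N$ along which the exact-frequency supremum is attained in the limit.
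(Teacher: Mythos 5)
Your proposal is correct and uses the same ingredients the paper invokes (Lemma \ref{word-comp} together with $\tau_x=2\log\lambda_1(x)$, plus the fact that powers preserve per-letter translation length), where the paper explicitly leaves the details to the reader; your flip-and-pad-by-concatenation argument is exactly the intended routine verification. One small slip: your closing sentence states that letting $\eps\to 0$ "yields $I(\alpha)\ge\sup\ldots$", but the replacement argument (trading an $O(\eps n)$-letter flip for a loss of $O(\eps)$ per unit length after passing to a $q$-fold repetition) gives the harder inequality $I(\alpha)\le\sup\ldots$, which is the one you actually need to complement the trivial bound you established first.
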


\begin{proof}This is a straightforward consequence of Lemma \ref{word-comp} above and the fact that $\tau_x=2\log \lambda_1(x)$ for a hyperbolic matrix $x \in \SL_2(\R)$. We leave the details to the reader. 
\end{proof}

We now assume from now on that the pair $(a,b)$ is as in Lemma \ref{corollary.substitute.and.increase} and is also full sturmian in the sense of Jenkinson-Pollicott as recalled above, i.e. $(a,tb)$ is sturmian for every $t>0$.  We further assume that the parameter mapping $p:\R_+^* \to [0,1]$ is continuous, non-decreasing and surjective with the property that $p^{-1}(\alpha)$ is a singleton if $\alpha \in [0,1]$ is irrational and a closed interval of positive length of $\alpha$ is rational. The pairs $(a,b^{(s)})$ of $(\ref{expair})$ will satisfy these assumptions. We denote by $S$ the set $\{a,b\}$ and for $t>1$, we put $S_t:=\{a,tb\} \subset \GL_2(\R)$.

\begin{proposition}\label{lemma.properties.ratio.constraint.function}
The function $I:[0,1] \to \mathbb{R}_+$ associated to the pair $(a,b)$ is strictly increasing, strictly concave, satisfies $I(0)=\log R_{sub}(S) = \frac{\tau_a}{2}$, $I(1)=\log R(S)=\frac{\tau_b}{2}$ and is differentiable at $x \in [0,1]$ if and only if $x$ is irrational.
\end{proposition}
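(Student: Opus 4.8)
The plan is to follow the strategy of Morris--Sidorov \cite{morris-sidorov}, exploiting the sturmian structure of the pair $(a,b)$ together with the one-parameter deformation $S_t=\{a,tb\}$, and then reading off the desired properties of $I$ from convex duality. I would first dispose of the two endpoints. By Lemma \ref{corollary.substitute.and.increase}, replacing any occurrence of $0$ by $1$ in a finite word strictly increases the translation length of the corresponding product; iterating shows $\tau_{a^n}\le\tau_{w(a,b)}\le\tau_{b^n}$ for every word $w$ of length $n$, whence $R_{sub}(S)=\lambda_1(a)$ and $R(S)=\lambda_1(b)$. Moreover Lemma \ref{word-comp} gives $|\tau_{w_n(a,b)}-\tau_{a^n}|\le \eps n(2d+|\tau_b-\tau_a|)$ whenever $\tfrac1n N_1(w_n)\le\eps$, so letting $\eps\to0$ in the definition of $I$ yields $I(0)=\tfrac{\tau_a}2=\log R_{sub}(S)$, and symmetrically $I(1)=\tfrac{\tau_b}2=\log R(S)$.

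For strict monotonicity, given $\alpha<\beta$ and a word $w$ whose prefix $w_n$ nearly realizes the supremum defining $I(\alpha)$ under the $\eps$-constraint, I would flip about $(\beta-\alpha-2\eps)n$ of the zeros of $w_n$ to ones, producing $w'_n$ with density of $1$'s within $\eps$ of $\beta$; by Lemma \ref{corollary.substitute.and.increase} each flip raises $\tau$ by at least $1$, so $\tfrac1n\log\lambda_1(w'_n(a,b))\ge \tfrac1n\log\lambda_1(w_n(a,b))+\tfrac{\beta-\alpha-2\eps}2$. Passing to the supremum and then letting $\eps\to0$ gives $I(\beta)\ge I(\alpha)+\tfrac{\beta-\alpha}2$, so $I$ is strictly increasing.

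For concavity and the differentiability dichotomy I would pass to the dual picture. Using the identity $\lambda_1(w_n(a,tb))=t^{N_1(w_n)}\lambda_1(w_n(a,b))$, the Berger--Wang identity, and the convergence $\tfrac1n\max\{\log\lambda_1(w_n(a,b)):N_1(w_n)=\lfloor n\alpha\rfloor\}\to I(\alpha)$ (which follows from the definition of $I$ together with the Lipschitz-type estimate of Lemma \ref{word-comp}), one obtains for every $t>0$ the Legendre-type formula $\log R(S_t)=\sup_{\alpha\in[0,1]}(\alpha\log t+I(\alpha))$. Hence $\psi(s):=\log R(S_{e^s})$ is convex, being a supremum of affine functions of $s$. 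The sturmian property of $(a,e^sb)$ now identifies the maximizer: for $\mu_{p(e^s)}$-a.e.\ word the exponent $\tfrac1n\log\lambda_1(w_n(a,e^sb))$ converges to $\log R(S_{e^s})$ while the density of $1$'s converges to $p(e^s)$, and (as $\mu_\alpha$-typical words lie eventually in every $\eps$-constraint set) the corresponding Lyapunov exponent along $a,b$ equals $I(p(e^s))$; comparing with the formula above shows $p(e^s)$ maximizes $\alpha\mapsto\alpha\log t+I(\alpha)$. Thus $\psi$ is differentiable with $\psi'(s)=p(e^s)$, and the upper concave envelope $\hat I$ of $I$ satisfies $\hat I(\alpha)=I(\alpha)$ at every $\alpha$ in the range of $p$; by surjectivity of $p$ this is all of $[0,1]$, so $I=\hat I$ is concave. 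The superdifferential of $I$ at $\alpha\in(0,1)$ is then $\{-\log t:t>0,\ p(t)=\alpha\}$, which is a single point exactly when $p^{-1}(\alpha)$ is a singleton, i.e.\ exactly when $\alpha$ is irrational; so $I$ is differentiable precisely at the irrationals. At each rational the superdifferential is a nondegenerate interval, so $I$ has a genuine corner there and cannot be affine on any subinterval; hence $I$ is strictly concave.

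The main obstacle is the passage to the dual picture: establishing the formula $\log R(S_t)=\sup_\alpha(\alpha\log t+I(\alpha))$ rigorously — the ``$\le$'' direction requires controlling $\max_{N_1(w_n)=k}\log\lambda_1(w_n(a,b))$ uniformly in $k$ via Lemma \ref{word-comp}, and the ``$\ge$'' direction together with the identification of the Lyapunov exponent along $\mu_\alpha$ requires the bounded-distortion property of products in $a,b$ (their axes all lie in the strip between $\axe(a)$ and $\axe(b)$, giving a uniform gap between the first two singular values) — and then correctly extracting from the definition of a sturmian pair that the maximizing density is $p(t)$. Once this is done, concavity, strict concavity and the differentiability dichotomy are formal consequences of convex analysis.
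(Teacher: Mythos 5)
Your proposal is correct and is, at bottom, the same proof as in the paper: the key ingredients (the identity $\lambda_1(w(a,tb))=t^{N_1(w)}\lambda_1(w(a,b))$, the one-sided inequality $I(\alpha)\le\log R(S_t)-\alpha\log t$, the sturmian property selecting the density $p(t)$, and the fact that $p^{-1}(\alpha)$ is a nondegenerate interval precisely at rationals) are identical. The only difference is that the paper directly establishes, for rational $\alpha$ and then by continuity for all $\alpha$, the formula $I(\alpha)=\min_{t>0}\{\log R(S_t)-\alpha\log t\}$, which exhibits $I$ as an infimum of affine functions and hence is immediately concave; you instead pass through the dual Legendre formula $\log R(S_t)=\sup_\alpha(\alpha\log t+I(\alpha))$ and then argue that $I$ coincides with its concave envelope because the maximizers run over all of $[0,1]$. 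This is an equivalent route, but it makes you flag a "main obstacle" (establishing the full dual formula with uniform control via Lemma~\ref{word-comp}) that is not actually necessary: the $\ge$ direction of your dual formula is just the elementary fact $\max_{|w|=n}\lambda_1(w)^{1/n}\le R(S_t)$, and the $\le$ direction follows from the sturmian typical word exactly as in the paper's equation~(\ref{eqq3}), so there is no genuine difficulty there. Also, your strict monotonicity constant $(\beta-\alpha)/2$ is the correct one (the paper's eq.~after~(\ref{eqq1}) drops the factor $1/2$ from $\tau=2\log\lambda_1$, a harmless typo). Finally, your strict concavity argument (corners at a dense set of points) and the paper's (strict monotonicity of the minimizing $t_\alpha$) are two standard ways to phrase the same consequence of the structure of $p^{-1}$.
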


\begin{proof} Let $\beta>\alpha$. If $w$ is a word of length $n$ in $\{0,1\}$ with $|N_1(w)-\alpha n|<\eps n$, then changing $\lfloor(\beta-\alpha)n\rfloor$ digits $0$ into $1$'s, we obtain a word $w'$ with $|N_1(w')-\beta n|<\eps n+1$. Moreover Lemma \ref{corollary.substitute.and.increase} implies that $\tau_{w'(a,b)} \geq \tau_{w(a,b)} + \lfloor(\beta-\alpha)n\rfloor$. It follows that $I(\beta)\geq I(\alpha) + \beta-\alpha$. In particular the function $I$ is strictly increasing. The assertions about $I(0)$ and $I(1)$ follow from Lemma  \ref{finiteness}.


Let us show that $I$ is strictly concave: let $\alpha \in [0,1] \cap  \mathbb{Q}$. For any word $w$ of length $n$ in the alphabet $\{0,1\}$ satisfying $n\alpha=N_1(w)$ and for any $t>0$, we clearly have
\begin{equation}\label{eqq1}
\lambda_1(w(a,b)) = t^{-\alpha n}\lambda_1(w(a,tb)),
\end{equation}
so that for each such word and $t>0$,
\begin{equation*}
\log \lambda_1(w(a,b)) \leq  n(\log R(S_t) -\alpha  \log t).
\end{equation*}
By Lemma \ref{lemma.precise.at.rationals}, the last inequality implies that for every $t>0$,
\begin{equation}\label{eqq2}
I(\alpha) \leq \log R(S_t) -\alpha \log t.
\end{equation}
Now using the fact that the pair $(a,b)$ is full sturmian and that its associated parameter mapping $p:\mathbb{R}_+ \to [0,1]$ is surjective, choose $t_\alpha \in p^{-1}(\alpha)$. Plugging this value in (\ref{eqq1}) together with a maximizing finite word $w$, we get that 
\begin{equation} \label{eqq3}
n I(\alpha) \geq \lambda(w(a,b)) = n(\log R(S_{t_\alpha})-\alpha  \log t_{\alpha})
\end{equation}
Putting (\ref{eqq2}) and (\ref{eqq3}) together, we obtain that for every $\alpha \in [0,1] \cap \mathbb{Q}$,
\begin{equation} \label{eqq4}
I(\alpha)= \min_{t>0} \{\log R(S_t) - \alpha \log t \}= \log R(S_{t_\alpha})-\alpha  \log t_{\alpha}
\end{equation}
In view of the continuity of $I$, this establishes that $I$ is concave. Furthermore, the continuity of the parameter function $p$ and the joint spectral radius implies that the previous equation holds for every $\alpha \in [0,1]$. it is strictly concave, because $t_\alpha<t_\beta$ if $\alpha<\beta$.

Now recall that, for $\alpha \in [0,1]$, by assumption $p^{-1}(\alpha)=[t^{(1)}_\alpha,t^{(2)}_\alpha]$, where $t^{(1)}_\alpha<t^{(2)}_\alpha$ if $\alpha$ is rational and $t^{(1)}_\alpha=t^{(2)}_\alpha$ if not. Hence, by $(\ref{eqq4})$, $I'(\alpha^{-})=-\log t^{(1)}_\alpha$ and $I'(\alpha^{+})=-\log t^{(2)}_\alpha$. This establishes the claim about the derivative of $I$.
\end{proof}

\subsubsection{Proof of Proposition \ref{notpoly-bis}}

We start by observing that the pairs $(a,b^{(s)})$ defined in $(\ref{expair})$ satisfy the assumptions of Lemma  \ref{corollary.substitute.and.increase}.

\begin{lemma}
For every $s >0$, the pair $(a,b^{(s)})$ defined in $(\ref{expair})$ consists of two hyperbolic elements of $\SL(2,\mathbb{R})$ that have disjoint translation axes and are in the same direction. Moreover the distance between the translation axes of $a$ and $b^{(s)}$ is bounded by some $d>0$ as $s$ varies and $\tau_{b^{(s)}} - \tau_a \geq 2d+1$ for all $s \geq 10$.
\end{lemma}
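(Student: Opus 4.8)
The statement is an explicit computation about the two matrices
$$a=\begin{pmatrix}2 & 1\\ 3 & 2\end{pmatrix},\qquad b^{(s)}=\begin{pmatrix}2e^{-s} & 3e^s\\ e^{-s} & 2e^s\end{pmatrix},$$
so the plan is simply to carry out that computation and extract the geometric conclusions. First I would check hyperbolicity: $\tr(a)=4>2$, and $\tr(b^{(s)})=2e^{-s}+2e^s=4\cosh s>2$ for all $s$, so both are hyperbolic in $\SL_2(\R)$; note also $\tau_a=2\arcosh(2)$ is a fixed constant while $\tau_{b^{(s)}}=2\arcosh(2\cosh s)$ grows like $2s$ as $s\to+\infty$. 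In particular $\tau_{b^{(s)}}-\tau_a\to+\infty$, so the inequality $\tau_{b^{(s)}}-\tau_a\ge 2d+1$ will follow once $d$ is shown to be bounded and $s$ is taken large enough (the bound $s\ge 10$ in the statement being a convenient explicit threshold).

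The main point is to locate the four boundary fixed points and verify they are disjoint, cyclically ordered, and that the distance between the axes stays bounded. I would compute the fixed points of $a$ on $\partial\mathbb{D}$ (equivalently on $\partial\mathbb{H}$ via the standard Cayley transform): for $a$ the eigenvectors give fixed points at $\pm\frac{1}{\sqrt3}$ on $\mathbb{R}\cup\{\infty\}$ in the upper half-plane model. For $b^{(s)}$, the eigenvector equation $(2e^{-s}-\lambda)x+3e^s y=0$ shows the fixed points are $x/y=3e^{2s}/(\lambda-2e^{-s})$; writing $\lambda_\pm=2\cosh s\pm\sqrt{4\cosh^2 s-1}$ one finds the two fixed points of $b^{(s)}$ are of the form $3e^{2s}\cdot(\text{bounded away from }0)$, hence both are large and positive, and in fact tend to $\pm\infty$-scale positions that sandwich neither of the fixed points of $a$. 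So the four points $x_a^-=-\tfrac1{\sqrt3}$, $x_a^+=\tfrac1{\sqrt3}$, $x_b^+$, $x_b^-$ occur in this cyclic order around $\partial\mathbb{H}\cup\{\infty\}$, giving the "same direction" property; in particular the axes are disjoint. For the distance bound, I would use the standard formula for the hyperbolic distance between two disjoint geodesics with endpoints $(p,q)$ and $(r,s)$ in terms of the cross-ratio, namely $\cosh d = \big|\tfrac{(p-r)(q-s)}{(p-s)(q-r)}\big|^{1/2}$ (up to the usual normalization), and observe that as $s\to+\infty$ the fixed points of $b^{(s)}$ escape to $+\infty$ together in such a way that this cross-ratio converges to a finite limit; hence $\cosh d$ is uniformly bounded over $s$, so $d$ is bounded by an absolute constant, which is the $d$ in the statement.

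I expect the only genuinely delicate step to be controlling the distance $d$ uniformly: one must check that although both fixed points of $b^{(s)}$ run off to infinity as $s$ grows, they do so "at the same rate" so that the relevant cross-ratio with $\pm\tfrac1{\sqrt3}$ stays bounded. Concretely, after the substitution $\lambda_\pm = 2\cosh s\pm\sqrt{4\cosh^2 s - 1}$, the two fixed points of $b^{(s)}$ are $3e^{2s}/(\lambda_\pm - 2e^{-s})$; since $\lambda_+\sim 4\cosh s\sim 2e^s$ and $\lambda_-\sim \tfrac{1}{4\cosh s}\sim \tfrac12 e^{-s}$, one fixed point behaves like $\tfrac32 e^{s}$ and the other like $-3e^{2s}\cdot 2e^{s}$-scale — I would re-examine this asymptotics carefully, because the precise rate is what makes the cross-ratio bounded; the cleanest route is probably to conjugate $b^{(s)}$ by the diagonal matrix $\diag(e^{s/2},e^{-s/2})$, which turns it into $\begin{pmatrix}2e^{-s}&3\\1&2e^s\end{pmatrix}$ with bounded-away-from-singular off-diagonal entries, compute everything there, and then note that conjugation by a diagonal element moves the axis of $a$ only a bounded hyperbolic distance (since $a$ is fixed and the conjugating element, while unbounded, acts on $\mathbb{H}$ by $z\mapsto e^s z$, which does displace $\mathrm{axis}(a)$ — so this conjugation trick needs care and I would instead just do the cross-ratio estimate directly). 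Once $d$ is bounded, the final inequality $\tau_{b^{(s)}}\ge\tau_a+2d+1$ for $s\ge 10$ is an elementary check using $\tau_{b^{(s)}}=2\arcosh(2\cosh s)\ge 2s$ and $\tau_a+2d+1$ being an absolute constant well below $20$.
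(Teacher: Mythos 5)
Your plan is the right one — the paper's own proof is just ``Straightforward calculation,'' and what you describe (check traces for hyperbolicity, locate the boundary fixed points, verify the cyclic order, bound the cross-ratio to bound the distance between axes, then compare $\tau_{b^{(s)}}$ with $\tau_a + 2d + 1$) is precisely the calculation intended. But there is a concrete computational error in the middle that you already half-suspected: the eigenvector equation gives $x/y = 3e^{s}/(\lambda - 2e^{-s})$, not $3e^{2s}/(\lambda - 2e^{-s})$. With the correct formula, the attracting fixed point of $b^{(s)}$ in $\partial\mathbb{H}$ is
$$x_b^+ = \frac{3e^s}{2\sinh s + \sqrt{4\cosh^2 s - 1}} \longrightarrow \tfrac{3}{2},$$
so it stays bounded (not ``large''), while the repelling fixed point is
$$x_b^- = \frac{3e^s}{2\sinh s - \sqrt{4\cosh^2 s - 1}} \sim -2e^{2s},$$
which is large and \emph{negative}. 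So ``both are large and positive'' is wrong. The correct picture: for all $s>0$ one checks (elementary inequalities using $2\sinh s < \sqrt{4\cosh^2 s - 1} < 2\cosh s$) that $x_b^- < -1/\sqrt{3} = x_a^- < x_a^+ = 1/\sqrt{3} < x_b^+$, which does give the cyclic order $(x_a^-, x_a^+, x_b^+, x_b^-)$ required for ``same direction.'' The cross-ratio argument then still works: as $s\to\infty$, $x_b^+\to 3/2$ and $x_b^-\to-\infty$, so the cross-ratio of the four endpoints converges to a finite nonzero limit and $d$ is bounded (in fact one computes the limiting distance to be $\arcosh(3\sqrt{3}/2)\approx 1.62$, so $d\le 2$ works and $\tau_a + 2d + 1 < 8$, comfortably below $\tau_{b^{(s)}} \ge 2s$ once $s\ge 10$). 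Your instinct to distrust the conjugation-by-$\diag(e^{\pm s/2})$ shortcut was also correct — that conjugation displaces $\mathrm{axis}(a)$ by an unbounded amount, so the direct cross-ratio estimate is the way to go.
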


\begin{proof} Straightforward calculation. 
\end{proof}

Furthermore the pair $(a,b^{(s)})$ belongs to the class $\mathfrak{E}$ of Jenkinson-Pollicott therefore, according to \cite[Thm. 9]{jenkinson-pollicott}, it satisfies the assumptions of Proposition \ref{lemma.properties.ratio.constraint.function}. 

For ease of notation, we set $b=b^{(s)}$ and we let $T:=\{(e,a),(b,b),(a,a),(b,a)\}\subset \SL(2,\mathbb{R}) \times \SL(2,\mathbb{R})$. We now identify $J(T)\subset \R_+\times \R_+$ and show that it is not polygonal.

\begin{figure}[H]
\begin{flushleft}
\begin{minipage}{0.4\textwidth}
\begin{flushleft}
\begin{tikzpicture}

\tikzstyle{every node}=[draw]
\coordinate (A) at (0, 1) {};
\coordinate (B) at (4, 1) {};
\coordinate (C) at (4, 4) {};
\coordinate (X1) at (0.2, 2.5) {};
\coordinate (X2) at (1.1, 3.5) {};
\coordinate (X3) at (2.5, 3.95) {};

\draw[fill] (X1) circle [radius=0.02];
\draw[fill] (X2) circle [radius=0.02];
\draw[fill] (X3) circle [radius=0.02];

\filldraw[draw=black, fill=red, dashed]
(A.center) to[out=90, in=265] (X1.center) to[out=65, in=220] (X2.center) to[out=20, in=190] (X3.center) to[out=2, in=180] (C.center) -- (B) -- cycle;

\node[circle,inner sep=1pt,fill=black,label=left:{$\frac{\tau_a}{2}$}] at (0,1) {};
\node[circle,inner sep=1pt,fill=black,label=below:{$\frac{\tau_b}{2}$}] at (4,0) {};
\node[circle,inner sep=1pt,fill=black,label=left:{$\frac{\tau_b}{2}$}] at (0,4) {};
\draw[draw=black, line width=1.5pt]
(A) -- (B) --  (C);
\draw[semithick, ->] (0,0) -- (5,0);
\draw[semithick, ->] (0,0) -- (0,5);
\draw[dashed] (0,0)--(5,5);
\end{tikzpicture}
\end{flushleft}
\caption{} \label{fig.non-diff}
\end{minipage}
\begin{minipage}{0.08\textwidth}
${}$
\end{minipage}
\begin{minipage}{0.5\textwidth}
\vspace{-1.4cm}
We have already seen (in the first example in Subsection \ref{subsection.polygonal}) that  the subset $T_1:=T \setminus\{(e,a)\}$ has a triangular joint spectrum as in Figure \ref{fig.tri}. Moreover $J(T)$ contains a larger triangle $co(\lambda(T))$ which has vertices $(0,\frac{\tau_a}{2}),(\frac{\tau_b}{2},\frac{\tau_a}{2}),(\frac{\tau_b}{2},\frac{\tau_b}{2})$. We will show that $J(T)$ is yet strictly larger, as in Figure \ref{fig.non-diff} and its boundary curve between the vertices $(0,\frac{\tau_a}{2})$ and $(\frac{\tau_b}{2},\frac{\tau_b}{2})$ will be derived from the graph of the function $I$.
\end{minipage}
\end{flushleft}
\end{figure}



Note first that $J(T)$ is contained in the rectangle $\{(x,y), 0\leq x \leq \frac{\tau_a}{2} \leq y \leq  \frac{\tau_b}{2} \}$.  Since $J(T)$ is a convex body in the plane, for every $\alpha \in [0,1]$, the vertical line $x=\alpha \frac{\tau_b}{2}$ intersects $J(T)$ in a segment $[\frac{\tau_a}{2},f(\alpha)]$, for some concave function $f$. Furthermore, it follows from Lemma \ref{corollary.substitute.and.increase} that $f(0)=\frac{\tau_a}{2}$ and $f(1)=\frac{\tau_b}{2}$. We will now show that $f(\alpha)=I(\alpha)$ for all $\alpha \in [0,1]$, where $I$ is the function discussed in the previous subsection. 

We already know that $f$ is concave and, since $f(1)=\frac{\tau_b}{2}$ we also know that it is non-decreasing on $[0,1]$. Let now $\alpha \in (0,1)$ and $\eps>0$ be given. To ease the notation, let us put $x_0:=(e,a), x_1:=(b,b), x_2:=(a,a)$ and $x_3:=(b,a)$. By definition of the joint spectrum, there exist $n \in \mathbb{N}$, a word $w$ of length $n$ in the alphabet $\{0,1,2,3\}$ such that 
$$
0 \leq f(\alpha)-\frac{1}{n} \log \lambda_1(p_2(w(x_0,x_1,x_2,x_3))) <\eps \quad \text{and} \quad$$ $$|\frac{1}{n}\log \lambda_1(p_1(w(x_0,x_1,x_2,x_3)))-\alpha \frac{\tau_b}{2}|<\eps
$$
where $p_i:G \to \SL(2,\mathbb{R})$ denotes the projection onto $i^{th}$ factor for $i \in \{1,2\}$. Now by Lemma \ref{corollary.substitute.and.increase} if we replace any letter $x_3$ of $w(x_0,x_1,x_2,x_3)$ by $x_1$ we will increase $\lambda_1(p_2(w(x_0,x_1,x_2,x_3)))$ while not affecting $\lambda_1(p_1(w(x_0,x_1,x_2,x_3)))$. Hence
\begin{equation*}
\begin{aligned}
&\lambda_1(p_1(w(x_0,x_1,x_2,x_1)))=\lambda_1(p_1(w(x_0,x_1,x_2,x_3))) \qquad  \text{and} \\ & e^{nf(\alpha)} \geq \lambda_1(p_2(w(x_0,x_1,x_2,x_1))) \geq \lambda_1(p_2(w(x_0,x_1,x_2,x_3))).
\end{aligned}
\end{equation*}
This implies that we can restrict attention to the elements $x_0,x_1,x_2$ while studying the piece of the boundary of $J(T)$ described by the function $f$. 

Now, using Lemma \ref{lemma.calculating.product.of.hyperbolics}, for a finite word $w$ in the alphabet $\{0,1,2\}$, we clearly have 
\begin{equation*}
\begin{aligned}
& \lambda_1(p_1(w(x_0,x_1,x_2))) \geq \lambda_1(p_1(w(x_0,x_1,x_0))) \quad \text{and}\\  & \lambda_1(p_2(w(x_0,x_1,x_2)))=\lambda_1(p_2(w(x_0,x_1,x_0))).
\end{aligned}
\end{equation*}
These, together with the fact that $f$ is non-decreasing, imply that we can only consider the Jordan projections of the products of elements $x_0$ and $x_1$ in order to determine the boundary of $J(T)$ described by the function $f$. But then, since $x_0=(e,a)$ and $x_1=(b,b)$, by definition of the function $f$, we have
$$
f(\alpha)=\inf_{\eps>0}\sup_{n \geq 1} \{\frac{1}{n}\lambda(w_n(x_0,x_1))\, |\, w \in \{0,1\}^{\mathbb{N}}, \frac{1}{n}N_1(w_n)\in [\alpha-\eps,\alpha+\eps]\}=I(\alpha)
$$
establishing the desired equality. Together with Proposition \ref{lemma.properties.ratio.constraint.function} this ends the proof of Proposition \ref{notpoly-bis}.

\begin{remark}\label{remark.GL2.spectrum} This remark is due to Jairo Bochi, who also pointed out to us the reference \cite{morris-sidorov}. 
Using the analogous function $I$ associated to a certain pair of unipotent matrices in $\SL(2,\mathbb{R})$ studied in \cite{morris-sidorov}, one can also construct a non-polygonal joint spectrum in $\GL(2,\mathbb{R})$ having similar differentiability properties to the one constructed here. 
More precisely, it follows easily from \cite{morris-sidorov} that for any real $\alpha \in \mathbb{R}\setminus\{0,1\}$, the joint spectrum of $$\{ \begin{pmatrix} 1 & 1 \\ 
0 & 1
\end{pmatrix},\alpha \begin{pmatrix} 1 & 0\\
1 & 1
\end{pmatrix}\}$$ is a non-polygonal convex subset of non-empty interior in the Weyl chamber $\mathbb{R}\times \mathbb{R}_+$ of $\GL(2,\mathbb{R})$. Note also that for $\alpha=1$, the joint spectrum of the corresponding pair is a non-trivial line segment contained in  the Weyl chamber $\{0\} \times \mathbb{R}_+$  of the subgroup $\SL(2,\mathbb{R})$.
\end{remark}

\begin{remark}\label{remark.elementary.example}
In work in preparation \cite{breuillard-sert.finiteness} we give another proof of Proposition \ref{notpoly-bis} and develop a more direct geometric approach which yields a larger family of examples, including all pairs of hyperbolic elements in $\SL(2,\mathbb{R})$ with disjoint axes in the same direction and sufficiently large translation length, as well as all conjugate pairs of unipotent matrices satisfying a ping-pong condition.  
\end{remark}

\section{Further directions}\label{sec.further}
In this final section we gather a number of problems that are still open in connection with the joint spectrum. 

\bigskip

1) How quickly does $J(S)$ fill in if we look at $\kappa(S^n)/n$ or if we look at $\lambda(S^n)/n$? Namely given $\eps>0$, give an estimate on the first $n$ such that every point of $J(S)$ is at distance at most $\eps$ from $\kappa(S^n)/n$ or $\lambda(S^n)/n$.  This question has been studied for the joint spectral radius, see Morris \cite{morris-rapidly-converging} and Bochi-Garibaldi \cite{bochi-garibaldi}.  

\bigskip

2) What points on the boundary of $J(S)$ belong to the Lyapunov spectrum? Which ergodic measures realize them? Is it always the case that an ergodic measure realizing a boundary point of $J(S)$ as its Lyapunov has zero entropy, in the case of domination (see Bochi-Rams \cite[\S 1.7]{bochi-rams})?

\bigskip

3) Extend the results of this paper to non-invertible matrices. 

\bigskip

4) Describe which convex sets can arise as joint spectrum of a small perturbation of a given compact set $S$ without full domination. A partial answer is already given in \cite{bochi-morris} for the lower joint spectral radius. Describe the points of continuity of the map $S \mapsto J(S)$ for the Hausdorff metric.

\bigskip

5) The joint spectral radius is locally Lipchitz continuous, see \cite{wirth,kozyakin, bochi-garibaldi}. Does the same hold for the joint spectrum?

\bigskip

6) Bochi \cite{bochi-radius} shows that there are $k_0,c>0$ depending only on $d$ such that $$\max_{n\leq k_0, g \in S^n} \lambda_1(g)^{\frac{1}{n}} \geq c R(S)$$ for every $S \subset M_d(\C)$, where $R(S)$ is the joint spectral radius. Is there an analogous phenomenon for the joint spectrum?

\bigskip 

7) Given $x \in \R$, Feng studies in \cite{feng1,feng2} the family of sequences $b$ in $S^\N$ with $\lambda_1(b)=x$ and describes its Hausdorff dimension in terms of entropies of measures with the same Lyapunov exponent (see also the recent work \cite{diaz-gelfert-rams}). Can one obtain a multi-dimensional analogue, where one starts with $x \in J(S)$ and looks at the sequences $b$ such that $(\ref{convergence-sequence})$ holds?

\bigskip 

8) Sturmian measures are used to describe the boundary of the joint spectrum in the examples of Section \ref{section.non.polygonal.jointspectrum}. In the general case, say of a finite $S$ in a reductive group $G$, can one describe precisely the class of measures realizing boundary points in $J(S)$? 

\bigskip

9) Under what assumptions do the conclusions of Proposition \ref{confined} and Remark \ref{rk.example.not.attained} hold? Can one describe the i.i.d.\ Lyapunov spectrum more explicitly (e.g. how far is it away from the boundary of joint spectrum)? 

\bigskip

10) Extend the results of this paper to more general cocycles and base dynamics.

\end{document}